   \def\sH{{\mathfrak H}}   
   \def\sK{{\mathfrak K}}   \def\sL{{\mathfrak L}}
\def\sM{{\mathfrak M}}   \def\sN{{\mathfrak N}}
      \def\sX{{\mathfrak X}}
\def\sY{{\mathfrak Y}}
      \def\dC{{\mathbb C}}
   \def\dN{{\mathbb N}}   
      \def\dR{{\mathbb R}}
   \def\cW{{\mathcal W}}
\def\bm\chi{\mbox{\boldmath$\chi$}}
\def\half{{\frac{1}{2}}}
\def\ran{{\rm ran\,}}
\def\cran{{\rm \overline{ran}\,}}
\def\dom{{\rm dom\,}}
\def\dist{{\rm dist\,}}
\def\mul{{\rm mul\,}}
\def\cker{{\rm \overline{ker}\,}}
\def\cmul{{\rm \overline{mul}\,}}
\def\cdom{{\rm \overline{dom}\,}}
\def\clos{{\rm clos\,}}
\def\dim{{\rm dim\,}}
\let\xker=\ker \def\ker{{\xker\,}}
\def\span{{\rm span\,}}
\def\cspan{{\rm \overline{span}\, }}
\def\cmr{{\dC \setminus \dR}}
\def\liczp#1{{${#1}^{\text {\rm o}}$}}
\def\sbar#1{\,\overline{\!#1}}
\DeclareMathOperator{\lin}{span\,}
\DeclareMathOperator{\I}{i}
\DeclareMathOperator{\RE}{\mathfrak{re\,}}
\DeclareMathOperator{\IM}{\mathfrak{im\,}}
\DeclareMathOperator{\okr}{{\stackrel{{\scriptscriptstyle{def}}}{=}}}
\def\zb#1#2{\{#1\,;\;#2\}}
\DeclareMathOperator{\hplus}{\, \widehat{\,+} \,}
\DeclareMathOperator{\hoplus}{\, \widehat{\, \oplus} \,}
\newtheorem{theorem}{Theorem}[section]
\newtheorem{proposition}[theorem]{Proposition}
\newtheorem{corollary}[theorem]{Corollary}
\newtheorem{lemma}[theorem]{Lemma}
\theoremstyle{definition}
\newtheorem{example}[theorem]{Example}
\newtheorem{remark}[theorem]{Remark}
\numberwithin{equation}{section}
\begin{document}
\title[Decompositions of linear relations]
{Componentwise and Cartesian decompositions \\ of linear relations}
\author{S. Hassi}
\author{H.S.V. de Snoo}
\author{F.H. Szafraniec}
\address{Department of Mathematics and Statistics \\
University of Vaasa \\
P.O. Box 700, 65101 Vaasa \\
Finland}
\email{sha@uwasa.fi}
\address{Department of Mathematics and Computing Science\\
University of Groningen \\
P.O. Box 407, 9700 AK Groningen \\
Nederland}
\email{desnoo@math.rug.nl}
\address{Instytut        Matematyki,         Uniwersytet
   Jagiello\'nski, ul. \L ojasiewicza 6, 30 348 Krak\'ow, Poland}
\email{fhszafra@im.uj.edu.pl}


\dedicatory{Dedicated to Sch\^oichi \^Ota on the occasion of his
sixtieth birthday}

\date{relation26a.tex; May 28, 2009}

\thanks{The first and second author were supported by the V\"ais\"al\"a
Foundation of the Finnish Academy of Science and Letters. The
third author was supported by the Dutch Organization for
Scientific Research NWO and partially supported by the MNiSzW grant N201 026
32/1350. He also would like to acknowledge assistance of the EU
Sixth Framework Programme for the Transfer of Knowledge ``Operator
theory methods for differential equations'' (TODEQ) \#
MTKD-CT-2005-030042.}


\keywords{Relation, multivalued operator, graph, adjoint relation,
closable operator, regular relation, singular relation, operator part,
decomposable relation, orthogonal decomposition, Cartesian decomposition}

\subjclass[2000]{Primary 47A05, 47A06; Secondary 47A12}

\begin{abstract}
Let $A$ be a, not necessarily closed, linear relation in a Hilbert
space $\sH$ with a multivalued part $\mul A$. An operator $B$ in
$\sH$ with $\ran B\perp\mul A^{**}$ is said to be an operator part
of $A$ when $A=B \hplus (\{0\}\times \mul A)$, where the sum is
componentwise (i.e. span of the graphs). This decomposition provides
a counterpart and an extension for the notion of closability of
(unbounded) operators to the setting of linear relations. Existence
and uniqueness criteria for the existence of an operator part are
established via the so-called canonical decomposition of $A$. In
addition, conditions are developed for the decomposition to be
orthogonal (components defined in orthogonal subspaces of the
underlying space). Such orthogonal decompositions are shown to be
valid for several classes of relations. The relation $A$ is said to
have a Cartesian decomposition if $A=U+\I V$, where $U$ and $V$ are
symmetric relations and the sum is operatorwise. The connection
between a Cartesian decomposition of $A$ and the real and imaginary
parts of $A$ is investigated.
\end{abstract}

\maketitle \tableofcontents
\section{Introduction}

\subsection{Some background}

A linear \textit{relation} \index{relation} $A$ in a Hilbert space
$\sH$ is by definition a linear subspace of the product space $\sH
\times \sH$. A linear relation $A$ is (the graph of) a linear
operator if and only if $\mul A=\{0\}$, where the
\textit{multivalued} part \index{part of relation!multivalued
$\mul A$}$\mul A$ of $A$ is defined as $\{g\in \sH ;\, \{0,g\} \in
A\}$. The formal \textit{inverse} $A^{-1}$ \index{relation!inverse
$A^{-1}$}of a linear relation $A$ is given by $A^{-1}=\{
\{k,h\};\,\{h,k\}\in A\}$, so that $\dom A^{-1}=\ran A$, $\ran
A^{-1}=\dom A$, $\ker A^{-1}=\mul A$, and $\mul A^{-1}=\ker A$.
The \textit{closure} \index{relation!closure $\sbar A$} of a
linear relation is a linear relation which is obtained by taking
the closure of the corresponding subspace in $\sH \times \sH$. The
linear relation $A$ is called \textit{closed}
\index{relation!closed} as a relation in $\sH$ if the subspace is
closed in $\sH \times \sH$. If $A$ is (the graph of) a linear
operator, then $A$ is said to be closable if the closure of $A$ is
(the graph of) a linear operator. The \textit{adjoint}
\index{relation!adjoint $A^*$} $A^*=JA^\perp=(JA)^\perp$, with the
operator $J$ defined by $J\{f,f'\}=\{f',-f\}$, $\{f,f'\} \in \sH
\times \sH$, is automatically a closed linear relation in $\sH$.
Then the second adjoint $A^{**}$ is equal to the closure
$\sbar{A}$ of $A$. A relation is said to be \textit{symmetric}
\index{relation!symmetric} if $A \subset A^*$ and
\textit{selfadjoint} \index{relation!selfadjoint} if $A=A^*$. The
study of general relations was initiated by R.~Arens \cite{Ar61}.
Further work has been concerned with symmetric and selfadjoint
relations and, more generally, with normal, accretive,
dissipative, and sectorial relations, see for instance \cite{Ar2},
\cite{Yury}, \cite{Co73}, \cite{DS}, \cite{RB}.

Linear relations can be viewed as multivalued linear operators.
They show up in a natural way in a variety of problems. Some of
these will be presented for the convenience of the reader.

The first example shows the usefulness of relations by relating
results for $A$ to those for the formal inverse $A^{-1}$.

\begin{example}\label{adjex}
Let $A$ be a linear operator or a linear relation
in a Hilbert space $\sH$, which is not
necessarily closed or densely defined. An element $h \in
\sH$ belongs to $\dom A^*$ if and only if
\begin{equation}\label{Een}
  \sup \left\{\,(h,g)+(g,h)-(f,f)\,;\, \{f,g\} \in A\,\right\}<\infty,
\end{equation}
and an element $k \in \sH$ belongs to $\ran A^*$ if and only if
\begin{equation}\label{Twee}
  \sup\left\{\,(f,k)+(k,f)-(g,g)\,;\, \{f,g\} \in A\,\right\}<\infty.
\end{equation}
The formulas \eqref{Een} and \eqref{Twee} show the advantage of
the language of relations: the formula \eqref{Twee} is in fact the
same as the formula \eqref{Een} when the relation $A$ is replaced
by its formal inverse $A^{-1}$. Moreover,  \eqref{Een} is equivalent to
\begin{equation}\label{Drie}
  \sup \left\{\,|(g,h)|^2\,;\, \{f,g\} \in A, \, (f,f) \le
1\,\right\} <\infty,
\end{equation}
and  \eqref{Twee} is equivalent to
\begin{equation}\label{Vier}
  \sup\left\{\,|(f,k)|^2\, ;\, \{f,g\} \in A, \,(g,g) \le 1\,
\right\} <\infty.
\end{equation}
Again the relation between \eqref{Drie} and \eqref{Vier} via the
formal inverse of $A$ is evident. The last two characterizations are
versions of results which go back to Shmulyan for bounded operators $A$;
for more details see \cite{HSeS}.
\end{example}

As a second example
it is shown that under very general conditions
a densely defined closable operator can be decomposed as
the sum of a closable operator and
a singular operator (whose closure is a Cartesian product).

\begin{example}
Let $A$ be a densely defined closable operator in a Hilbert space $\sH$;
i.e., the closure $\overline{A}$ of $A$ in $\sH \times \sH$ is the graph
of a linear operator. Let $\varphi \in \sH$ and let $P_{\varphi}$ be
the orthogonal projection
from $\sH$ onto the linear space spanned by $\varphi$. Then the
operator $A$ admits the decomposition
\begin{equation}\label{decomp}
 A=B+C,
\end{equation}
with the densely defined operators $A$ and $C$ defined by
\begin{equation}\label{decomp1}
 B=(I-P_\varphi)A, \quad C=P_\varphi  A.
\end{equation}
Then the operator $B$ is closable for any choice of $\varphi \in
\sH$, but  the behaviour of the operator $C$ depends on the choice
of $\varphi \in \sH$.  If $\varphi \in \dom A^*$, then $\overline{C}
\in \boldsymbol{B}(\sH)$ and  $\overline{C} h=(h,A^* \varphi) \varphi$
for $h \in \sH$. However,  if $\varphi \in \sH \setminus \dom A^*$,
then $C$ is a so-called singular operator, i.e., $\ran C\subset \mul
\overline{C}$ and $\overline{C}=\sH \times \lin \{\varphi\}$. For
more details and the connection with Lebesgue type decompositions,
see \cite{HSS??}.
 \end{example}

As a third example consider the case of a monotonically increasing
sequence of bounded linear operators in the absence of a uniform
upper bound.

\begin{example}
Let $\sH$ be a Hilbert space and let $A_n \in \boldsymbol{B}(\sH)$
(bounded linear operators on $\sH$) be a nondecreasing sequence of
nonnegative operators, i.e., $0\le (A_{ m} h,h) \le (A_n h,h)$, $h
\in \sH$, for $n \ge m$. If the sequence $A_n$ is bounded from
above, i.e., $(A_n h,h) \le M (h,h)$, $h \in \sH$, for some $M \ge
0$, then it is known that there exists a strong limit $A_\infty
\in \boldsymbol{B}(\sH)$, i.e., $\|A_n h-A_\infty h\|\to 0$, $h
\in \sH$, and $A_\infty$ \index{relation!$A_\infty$}has the same
upper bound. The situation is different when the family $A_n$ does
not have an upper bound. The absence of a uniform bound leads to
phenomena, which involve unbounded operators and relations. In
fact there exists a selfadjoint relation $A_\infty$, which is
nonnegative, i.e., $(f',f) \ge 0$, $\{f,f'\}\in A_\infty$, such
that $A_n$ converges to $A_\infty$ in the strong resolvent sense,
i.e.,
\[
 (A_n-\lambda)^{-1} h \to (A_\infty-\lambda)^{-1}h, \quad h \in \sH, \quad
 \lambda \in \cmr.
\]
Morover, the domain of the square root of
$A_\infty$\index{relation!$A_\infty$} is given by
\begin{equation*}\label{quatre}
\dom
A_\infty^\half=\{\, h \in \sH\, ; \,  \sup_{n \in \dN} \, (A_n h,h) < \infty\,\}.
\end{equation*}
 For more details and the connection with monotone sequences of
semibounded closed forms, see \cite{BHSW}.
\end{example}

Often multivalued operators appear as extensions of symmetric
operators, like in boundary value problems for differential
operators. Boundary conditions impose restrictions in such a way
that an underlying symmetric operator becomes nondensely defined;
cf. \cite{CoDij}.
The situation can often be formalized in simple terms as follows.

\begin{example}
Let $A$ be a selfadjoint operator in a Hilbert space $\sH$ and let
$Z$ be, for simplicity, a finite-dimensional subspace of $\sH \times
\sH$. Then the intersection $A \cap Z^*$ is a symmetric restriction
of $A$, which may be nondensely defined, so that its adjoint $A
\hplus Z$, a componentwise sum, may be multivalued. In this case,
among the selfadjoint extensions of $A \cap Z^*$, there also occur
multivalued operators. In the connection of differential operators
this construction gives rise to nonstandard boundary conditions. If,
for instance, $A$ is a selfadjoint Sturm-Liouville operator, then
integral boundary conditions or perturbations via delta functions or
their derivatives fit into this framework with a proper choice of
the subspace $Z$; see \cite{HSSW} for more details
\end{example}

In general, the spectral theory of differential equations offers
many examples of multivalued operators. Linear relations provide the
natural context for the study of general selfadjoint boundary value
problems involving systems of differential equations; cf. \cite{BHSW1}.
In fact, the theory of boundary triplets and boundary relations has been
formulated to discuss all extensions (single-valued and multivalued)
of symmetric relations; see \cite{DHMS},  \cite{DM}.
For instance,  the description of selfadjoint extensions of a symmetric
operator or relation is always in terms of selfadjoint relations in a parameter
space; such selfadjoint relations also appear in Kre\u{\i}n's formula.

\subsection{Decomposition of relations}\label{decc}   \index{decomposition of relations}

There are many kinds of  decompositions of linear relations, for
instance, for semi-Fredholm relations and for quasi-Fredholm
relations there is a so-called Kato decomposition, see \cite{LSSW},
or Stone decomposition for closed linear relations, see
\cite{HSSS07}, \cite{Mez}. The decompositions appearing in the present
paper are concerned with splitting linear operators and relations
via components, that are closable, nonclosable, or purely
multivalued, and components involving the real and imaginary parts
of relations.

It is necessary to begin by explaining the so-called
\textit{canonical decomposition} \index{decomposition of
relations!canonical} of linear relations which has been studied
recently in \cite{HSSS07}. Let $A$ be a relation in a Hilbert
space and let $A^{**}$ be its closure. Let $P$ the orthogonal
projection from $\sH$ onto $\mul A^{**}$ and define the relations
$A_{\rm reg}$ and $A_{\rm sing}$, the \textit{regular} part and
the \textit{singular} part of $A$ respectively, \index{part of
relation!regular $A_{\rm reg}$} \index{part of relation!singular
$A_{\rm sing}$} by
\[
A_{\rm reg}=\{\,\{f,(I-P)f'\};\,\{f,f'\} \in A\,\},
\quad A_{\rm sing}=\{\,\{f,Pf'\};\,\{f,f'\} \in
A\,\}.
\]
Then $A$ admits the decomposition
\[
 A=A_{\rm reg}+A_{\rm sing}
 =\left\{\, \{f,h+k\};\,\{f,h\} \in A_{\rm reg}, \{f,k\} \in A_{\rm sing}\,\right\}.
\]
The regular part  $A_{\rm reg}$ is actually a closable operator,
whereas the singular part $A_{\rm sing}$ is a singular relation,
i.e., its closure is a Cartesian product, cf. \cite{HSSS07}. The
canonical decomposition of $A$ above is strongly related to the
Lebesgue decompositions of forms, see \cite{HSSS07}. The canonical
decomposition of a relation is an example of a decomposition as an
operatorwise sum. However, relations also admit componentwise
decompositions. The aim of this paper is to present several
decompositions of linear relations as operatorlike sums and as
componentwise sums.

The second type of decomposition introduced in the present paper
for general, not necessarily closed, linear relations is a
\textit{componentwise decomposition} \index{decomposition of
relations!componentwise $\hplus$} of a relation $A$ in an operator
part and a multivalued part of the form
\begin{equation}\label{een}
A=B \hplus A_{\rm mul},
\end{equation}
where the operator part $B$ is (the graph of) an operator in
$\sH$, $A_{\rm mul}=\{0\} \times \mul A$, and the sum in
\eqref{een} is componentwise (as indicated by $\hplus$). To make
the decomposition somewhat reasonable or unique it is necessary to
impose some additional assumptions on \eqref{een}. Assume for the
moment that the relation $A$ is closed. Then one possible choice
is $B=A_{\rm op}$ where $A_{\rm op}=\{\{f,f'\} \in A ;\, f' \perp
\mul A\}$, \index{part of relation!minimal operator $A_{\rm
op}$}so that $B$ is a closed operator. Since $\mul A$ is closed
and $\sH=\cdom A^* \oplus \mul A$, the identity \eqref{een}
follows. This motivates the construction in the general case. The
extra assumption that $\ran B \subset \cdom A^*=(\mul
A^{**})^\perp$ makes $B$ unique, namely $B=A_{\rm op}$, where now
\[
A_{\rm op}=\{\{f,f'\} \in A ;\, f' \perp \mul A^{**} \}
\]
is a closable operator. Observe that $A_{\rm op} \subset A_{\rm
reg}$. It will be shown that $B=A_{\rm op}$ satisfies \eqref{een}
precisely when $A_{\rm op} = A_{\rm reg}$. A relation $A$ which
allows a decomposition \eqref{een} with $\ran B \subset \cdom A^*$
will be called \textit{decomposable}\index{relation!decomposable}.

The third decomposition is related to the second type of
decomposition, so it is again componentwise. Assuming that $A$ is
decomposable the question is when the decomposition \eqref{een} is
orthogonal with regard to the \textit{orthogonal splitting}
\index{orthogonal splitting} of the Hilbert space
\[
\sH=\cdom A^* \oplus \mul A^{**}.
\]
A necessary and sufficient additional condition that appears now for
$A$ is
\[
\dom A \subset \cdom A^* \quad \mbox{or, equivalently,} \quad \mul
A^{**} \subset \mul A^*.
\]
Particular cases are studied for
decomposable relations $A$ which are in addition formally domain
tight and domain tight, i.e., satisfy
\[
\dom A \subset \dom A^* \quad \mbox{and} \quad \dom A=\dom A^*,
\]
respectively. Furthermore, decomposable relations are studied under
the condition that their numerical range is a proper subset of
$\dC$. Orthogonal decompositions for normal, selfadjoint, and, for
instance, maximal sectorial relations are obtained as byproducts.

The fourth type of decomposition to be studied in the present
paper is the Cartesian decomposition of a relation. By definition
a \textit{Cartesian decomposition} \index{decomposition of
relations!Cartesian}of a relation $A$ is of the form
\begin{equation}\label{twee}
A=\RE A+\I \IM A,
\end{equation}
where $\RE A$ and $\IM A$ \index{part of relation!real $\RE A$}
\index{part of relation!imaginary $\IM A$}are symmetric relations
in $\sH$, i.e.,
\[
\RE A\subset (\RE A)^*, \quad \IM A\subset (\IM A)^*,
\]
and where the sum in \eqref{twee} is now again operatorwise; see
\cite{StSz} for the operator case. It is a consequence of the
Cartesian decomposition \eqref{twee} that $A$ satisfies the
condition $\dom A \subset \dom A^*$, and it will be shown that
this is also a sufficient condition for the existence of a
Cartesian decomposition. The connection between the components
$\RE A$ and $\IM A$ of a Cartesian decomposition \eqref{twee} of
$A$ and the real and imaginary parts of $A$ is clear if $A$ is a
densely defined normal operator, cf. \cite{StSz}. In the general
case, the connection is vague, but the situation becomes clear
when the following extension of $A$ is introduced:
\index{relation!$A_\infty$}
\[
A_\infty=A \hplus (\{0\}\times \mul A^*).
\]
The special situation of Cartesian
decompositions for normal relations will be treated in
\cite{HSSz??}.

\subsection{Brief description}
Here is a brief review of the contents of the paper. Section~2
contains a number of preliminary definitions and facts concerning
linear relations. A number of results which are known for linear
operators are stated for the case of linear relations; for
completeness proofs are included. The notions of formally domain
tight and domain tight relations are introduced. Canonical
decompositions and decompositions of linear relations of the form
\eqref{een} are taken up in Section~3. The notion of decomposable
relation is characterized in various ways. A number of examples is
included illustrating relations which are not decomposable. The
question of the orthogonality of such decompositions is taken up in
Section~4. In particular, relations whose numerical range is a
proper subset of $\dC$ are treated. Cartesian decompositions of the
form \eqref{twee} are treated in Section~5. This section also
contains a treatment of the real and imaginary parts of a linear
relation.

\tableofcontents

\section{Preliminaries}

This section contains a number of basic definitions and results concerning
linear relations in a Hilbert space.
These results are analogs or natural extensions of results which are better
known in the case of operators.
It should be mentioned that many of the stated results have their analogs also
for linear relations acting from one Hilbert space to another Hilbert space.
However, for simplicity all the statements are formulated here for the case
of linear relations from a given Hilbert space back to itself.

\subsection{Linear relations in a Hilbert space}

Let $\sH$ be a Hilbert space with inner product $(\cdot,\cdot)$.
The Cartesian product $\sH \times \sH$ of $\sH$ with itself, will
be provided with the usual inner product. A linear
\textit{relation} (or relation, for short) $A$ in $\sH$ is by
definition a linear subspace of the Hilbert space $\sH \times
\sH$. The \textit{domain}\index{relation!domain $\dom A$},
\textit{range}\index{relation!range $\ran A$},
\textit{kernel}\index{relation!kernel $\ker A$}, and
\textit{multivalued part}\index{part of relation!multivalued $\mul
A$} of $A$ are denoted by $\dom A$, $\ran A$, $\ker A$, and $\mul
A$:
\[
\begin{array}{ll}
 \dom A \okr \zb{f}{\{f,f'\} \in A},
 &\ker A \okr \zb{f}{\{f,0\} \in A}, \\
 \ran A \okr \zb{f'}{\{f,f'\} \in A},
 &\mul A \okr \zb{f'}{\{0,f'\} \in A};
\end{array}
\]
they are linear subspaces of $\sH$. An operator\index{operator} is
a relation when it is identified with it graph. Clearly in this
sense A relation $A$ is an operator precisely when $\mul A
=\{0\}$. Define the \textit{inverse}\index{relation!inverse
$A^{-1}$} of $A$ by
\[
 A^{-1} \okr \{\,\{f',f\}\, ;\; \{f,f'\}\in A\,\},
\]
then, by complete symmetry,
\[
\begin{array}{ll}
 \dom A^{-1}=\ran A, & \ker A^{-1}=\mul A, \\
 \ran A^{-1}=\dom A, & \mul A^{-1}=\ker A.
\end{array}
\]
A relation $A$ is \textit{closed} if it is closed as a subspace of
$\sH\times \sH$, in which case $\ker A$ and $\mul A$ are closed
subspaces of $\sH$. The closure of a relation $A$ in $\sH \times
\sH$ is denoted by $\clos A$; the notations $\cdom A$ and $\cran
A$ indicate the closures of $\dom A$ and $\ran A$ in $\sH$,
respectively. The closure of (the graph of) an operator is a
closed relation which is not necessarily (the graph of) an
operator. An operator is said to be
\textit{closable}\index{operator!closable} if the closure of its
graph is (the graph of an) operator. In what follows, the class of
bounded everywhere defined operators on $\sH$ is denoted by
$\boldsymbol{B}(\sH)$.

Observe that
for any relation $A$ one has
\begin{equation}\label{impor}
 \cdom (\clos A)=\cdom A, \quad  \cran (\clos A)=\cran A.
\end{equation}
Sometimes these identities can be improved. The following result for
bounded operators is standard; an extension for linear relations
will appear later in Corollary~\ref{mulclosed} (see also
Proposition~\ref{Dclosed}). A proof is given here for completeness.

\begin{lemma}\label{first}
Let $A$ be a bounded, not necessarily densely defined,
operator in a Hilbert space $\sH$. Then
\begin{enumerate}
\def\labelenumi{(\roman{enumi})}

\item $A$ is closed if and only if $\dom A$ is closed;

\item $A$ is closable and $\clos A$ is bounded with $\|\clos
A\|=\|A\|$;

\item $\dom (\clos {A})=\cdom A$.
\end{enumerate}
\end{lemma}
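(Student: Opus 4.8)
The plan is to prove the three items essentially in the listed order, since (i) is the algebraic heart and (ii)--(iii) build on it. Write $A$ for the bounded operator with domain $\dom A$, so there is $M\ge 0$ with $\|Af\|\le M\|f\|$ for all $f\in\dom A$, and $\clos A$ denotes the closure of the graph of $A$ in $\sH\times\sH$.

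For (i), the direction ``$A$ closed $\Rightarrow$ $\dom A$ closed'' is the substantive one. Take a sequence $f_n\in\dom A$ with $f_n\to f$ in $\sH$. Since $\|Af_n-Af_m\|\le M\|f_n-f_m\|$, the sequence $(Af_n)$ is Cauchy in $\sH$, hence converges to some $g$; then $\{f_n,Af_n\}\to\{f,g\}$ in $\sH\times\sH$, and because $A$ is closed we get $\{f,g\}\in A$, so $f\in\dom A$. Thus $\dom A$ is closed. Conversely, if $\dom A$ is closed and $\{f_n,Af_n\}\to\{f,g\}$, then $f\in\dom A$ by closedness of the domain, and $Af_n\to Af$ by boundedness (continuity) of $A$ on its domain, so $g=Af$ and $\{f,g\}\in A$; hence $A$ is closed.

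For (ii), I would show directly that $\clos A$ is (the graph of) a bounded operator with the same bound. If $\{0,g\}\in\clos A$, pick $f_n\in\dom A$ with $\{f_n,Af_n\}\to\{0,g\}$; then $\|Af_n\|\le M\|f_n\|\to 0$, forcing $g=0$, so $\mul(\clos A)=\{0\}$ and $\clos A$ is an operator, i.e. $A$ is closable. Boundedness of $\clos A$ with $\|\clos A\|\le\|A\|$ follows by the same limiting argument: for $\{f,g\}\in\clos A$ with $\{f_n,Af_n\}\to\{f,g\}$ one has $\|g\|=\lim\|Af_n\|\le\|A\|\lim\|f_n\|=\|A\|\,\|f\|$. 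The reverse inequality $\|\clos A\|\ge\|A\|$ is immediate since $\clos A\supset A$. Finally (iii): $\dom(\clos A)\subset\cdom A$ holds by \eqref{impor}, namely $\cdom(\clos A)=\cdom A$, together with $\dom(\clos A)\subset\cdom(\clos A)$; and the reverse inclusion $\cdom A\subset\dom(\clos A)$ follows from part (i) applied to the bounded closed operator $\clos A$: since $\clos A$ is closed (as a closure) and bounded by (ii), its domain $\dom(\clos A)$ is closed by (i), and it contains $\dom A$, hence contains $\cdom A$.

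I do not expect a genuine obstacle here; the only point needing a little care is keeping the logical dependencies straight — in particular, using part (i) for the already-closed relation $\clos A$ inside the proof of (iii), which is legitimate once (ii) has established that $\clos A$ is a bounded operator. One should also be mildly careful that ``closable'' for a not-necessarily-densely-defined operator just means the closure of its graph is single-valued, which is exactly what the $\mul(\clos A)=\{0\}$ computation gives.
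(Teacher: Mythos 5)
Your proof is correct and follows essentially the same elementary sequence arguments as the paper's: identical treatment of (i) and of the closability and norm equality in (ii). The only (cosmetic) difference is in the reverse inclusion of (iii), where you invoke part (i) for the closed bounded operator $\clos A$ to conclude that $\dom(\clos A)$ is closed and hence contains $\cdom A$, whereas the paper re-runs the Cauchy-sequence argument directly; both are valid and your bootstrapping is legitimate since (ii) has already established that $\clos A$ is a bounded operator.
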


\begin{proof}
(i) Assume that $A$ is closed. If the sequence $f_n \in \dom A$
tends to $f \in \sH$, then the inequality $\|A(f_n-f_m)\| \le
\|A\|\|f_n-f_m\|$ shows that $Af_n$ is a Cauchy sequence, so that
$Af_n \to g$ for some $g \in \sH$. Therefore $\{f_n,Af_n\} \to
\{f,g\}$ which implies that $f \in \dom A$ and $g=Af$, since $A$
is closed. In particular, $\dom A$ is closed.

Conversely, assume that $\dom A$ is closed. Let the sequence
$\{f_n,Af_n\} \in A$ converge to $\{f,g\}$. Then $f \in \dom A$
since $\dom A$ is closed. It follows from the inequality
$\|Af_n-Af\|\le \|A\|\|f_n-f\|$ that $Af_n \to Af$, in other
words, $g=Af$ or, equivalently, $\{f,g\} \in A$. Hence, $A$ is
closed.

(ii) In order to show that $A$ is closable, assume that $\{0,g\}
\in \clos A$. Then there is a sequence $\{f_n,Af_n\}\in A$ such
that $\{f_n,Af_n\} \to \{0,g\}$, i.e., $f_n \to 0$ and
$Af_n \to g$. However, $f_n \to 0$ implies that $Af_n \to 0$, so that $g=0$.
Thus, $A$ is closable.

As to boundedness, recall that by definition
\[
 \|A\|=\sup\{\,\|Af\|:\, f\in\dom A,\,\, \|f\|\le 1 \,\}.
\]
Since $\clos A$ is an operator and every $f\in \dom(\clos A)$ can
be approximated by a sequence $f_n\in\dom A$ with $f_n\to f$ and
$Af_n\to (\clos A)f$, the equality $\|\clos A\|=\|A\|$ follows
easily from the above definition of the operator norm.

(iii) It follows from \eqref{impor} that $\dom (\clos {A})\subset
\cdom (\clos {A})=\cdom A$.

Conversely, assume that $f \in \cdom A$. Then there exists a
sequence $f_n \in \dom A$ such that $f_n \to f$. Since $Af_n$ is a
Cauchy sequence there exists an element $g$ such that $Af_n \to g$.
Observe that $\{f,g\} \in \clos A$. By (ii) $\clos A$ is an
operator, and hence $f \in \dom (\clos A)$ and $g=(\clos A)f$.
\end{proof}

The following statement concerning closable extensions of bounded
densely defined operators is an immediate consequence of
Lemma~\ref{first}.

\begin{corollary}\label{firstcor}
If $A \subset B$, $B$ is closable, and $A$ is bounded and densely
defined, then $B$ is bounded and, moreover, $B^{**}=A^{**}$.
\end{corollary}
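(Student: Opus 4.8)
The plan is to reduce the whole statement to Lemma~\ref{first} applied to $A$, together with the fact that passing to the closure (equivalently, to the second adjoint) preserves inclusions. The point is that $A$ being bounded and densely defined already forces $A^{**}$ to be an everywhere defined bounded operator, and then any operator extension of it must coincide with it.

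First I would record what Lemma~\ref{first} yields for $A$. Since $A$ is bounded and densely defined, part~(ii) gives that $A$ is closable and $\clos A=A^{**}$ is bounded with $\|A^{**}\|=\|A\|$, while part~(iii) gives $\dom(A^{**})=\cdom A=\sH$. Hence $A^{**}\in\boldsymbol{B}(\sH)$. Next, from $A\subset B$ I would pass to closures to obtain $A^{**}\subset B^{**}$, using that the closure is inclusion-preserving; and since $B$ is closable, $B^{**}$ is (the graph of) an operator. Now I would use that $\dom A^{**}=\sH$ to upgrade this inclusion to an equality: if $\{f,f'\}\in B^{**}$, then $f\in\sH=\dom A^{**}$, so $\{f,A^{**}f\}\in A^{**}\subset B^{**}$, and because $B^{**}$ is an operator this forces $f'=A^{**}f$; thus $B^{**}=A^{**}$.

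Finally, from $B\subset B^{**}=A^{**}\in\boldsymbol{B}(\sH)$ it follows that $B$ is bounded, and combining this with $A\subset B$ gives $\|A\|\le\|B\|\le\|A^{**}\|=\|A\|$, so in fact $\|B\|=\|A\|$. I do not anticipate any genuine obstacle: the only step requiring a moment's attention is that an operator extending an everywhere defined operator must equal it, and that step is immediate once one has $\dom A^{**}=\sH$, which is precisely what Lemma~\ref{first}(iii) supplies.
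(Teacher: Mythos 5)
Your proof is correct and follows exactly the route the paper intends: the corollary is stated as an immediate consequence of Lemma~\ref{first}, and your argument fills in the details in the natural way (parts (ii) and (iii) give $A^{**}\in\boldsymbol{B}(\sH)$ with full domain, inclusion passes to second adjoints, and an operator extension of an everywhere defined operator must coincide with it). The additional observation $\|B\|=\|A\|$ is a correct bonus not claimed in the statement.
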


The assumption that $B$ is closable is essential in
Corollary~\ref{firstcor}; cf. Example~\ref{exam1}.

\subsection{Adjoint relations}

Let $A$ be a relation in a Hilbert space $\sH$. The
\textit{adjoint}\index{relation!adjoint $A^*$} $A^*$ of $A$ is the
closed (automatically linear) relation defined by
\begin{equation*}\label{adjo}
 A^*\okr \{\,\{f,f'\} \in \sH\times\sH \,;\; \langle \{f,f'\},
 \{h,h'\}\rangle=0 \mbox{ for all } \{h,h'\} \in A\,\},
\end{equation*}
where the form $\langle \cdot,\cdot\rangle$ is defined by
\[
 \langle \{f,f'\}, \{h,h'\}\rangle=(f',h)-(f,h'), \quad \{f,f'\},
\{h,h'\} \in \sH\times\sH.
\]
Note that the adjoint $A^*$ is given by
\begin{equation}\label{FF}
A^*=JA^\perp=(JA)^\perp,
\end{equation}
where the operator $J$, defined by
\begin{equation}\label{J}
J\{f,f'\}=\{f',-f\}, \quad \{f,f'\} \in \sH \times \sH,
\end{equation}
is unitary in $\sH \times \sH$. If $A$ is a relation, then
$A^{**}=(A^*)^*$ gives the closure of $A$, i.e., $A^{**}=\clos A$,
due to \eqref{FF}.
Note that for two relations $A$ and $B$ one has
\begin{equation}\label{AB*}
  A\subset B \quad \Longrightarrow \quad B^* \subset A^*.
\end{equation}
Furthermore, it follows directly from the definition
that
\[
    (A^{-1})^*=(A^*)^{-1}.
\]

\begin{lemma}\label{lemrelate}
Let $A$ be a relation in a Hilbert space $\sH$. Then
\begin{equation}\label{eeeen}
(\dom A)^\perp=\mul A^*, \quad (\ran A)^\perp=\ker A^*,
\end{equation}
and, likewise,
\begin{equation}\label{tweee}
(\dom A^*)^\perp=\mul A^{**}, \quad (\ran A^*)^\perp =\ker A^{**}.
\end{equation}
\end{lemma}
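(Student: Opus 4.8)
The plan is to prove the first pair of identities in \eqref{eeeen} directly from the definition of $A^*$, and then obtain \eqref{tweee} by applying \eqref{eeeen} to the closed relation $A^*$ in place of $A$, using that $A^{**}=\clos A$.

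First I would establish $(\dom A)^\perp=\mul A^*$. By definition, $g\in\mul A^*$ means $\{0,g\}\in A^*$, i.e.\ $\langle\{0,g\},\{h,h'\}\rangle=0$ for all $\{h,h'\}\in A$. Since $\langle\{0,g\},\{h,h'\}\rangle=(g,h)$, this says precisely that $g\perp h$ for every $h\in\dom A$, that is, $g\in(\dom A)^\perp$. This is a one-line unravelling of the form $\langle\cdot,\cdot\rangle$. For the second identity $(\ran A)^\perp=\ker A^*$, I would argue the same way: $f\in\ker A^*$ means $\{f,0\}\in A^*$, i.e.\ $\langle\{f,0\},\{h,h'\}\rangle=0$ for all $\{h,h'\}\in A$; since $\langle\{f,0\},\{h,h'\}\rangle=-(f,h')$, this is equivalent to $f\perp h'$ for every $h'\in\ran A$, i.e.\ $f\in(\ran A)^\perp$. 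Alternatively, one can note that $\ran A=\dom A^{-1}$ and $\ker A^*=\mul (A^*)^{-1}=\mul (A^{-1})^*$, so the second identity in \eqref{eeeen} follows from the first applied to $A^{-1}$; I would mention this to emphasize the symmetry under inversion, which is a recurring theme in the paper.

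For \eqref{tweee}, I would apply the already-proved \eqref{eeeen} with $A$ replaced by $A^*$. Since $A^*$ is itself a relation in $\sH$, \eqref{eeeen} gives $(\dom A^*)^\perp=\mul (A^*)^*=\mul A^{**}$ and $(\ran A^*)^\perp=\ker (A^*)^*=\ker A^{**}$, which are exactly the two assertions in \eqref{tweee}. No closedness of $A$ is needed here, only the definition $A^{**}=(A^*)^*$.

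There is no real obstacle: the whole lemma is a direct computation with the bilinear form defining the adjoint, together with the formal substitution $A\rightsquigarrow A^*$. The only point requiring a modicum of care is the bookkeeping of signs in $\langle\{f,0\},\{h,h'\}\rangle=-(f,h')$ versus $\langle\{0,g\},\{h,h'\}\rangle=(g,h)$, and the observation that an overall sign is irrelevant when asserting orthogonality. If one prefers a more structural derivation, one can instead read off \eqref{eeeen} from $A^*=JA^\perp$ in \eqref{FF}: writing $A^\perp$ for the orthogonal complement of $A$ in $\sH\times\sH$ and using $J\{f,f'\}=\{f',-f\}$, the conditions $\{0,g\}\in A^*$ and $\{f,0\}\in A^*$ translate into $\{-g,0\}\in A^\perp$ and $\{0,f\}\in A^\perp$ respectively, which unwind to the same orthogonality relations; this variant makes the role of \eqref{FF} explicit but is otherwise equivalent.
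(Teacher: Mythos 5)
Your proposal is correct and follows essentially the same route as the paper: unravel the definition of $A^*$ (equivalently, use $A^*=JA^\perp$) to get \eqref{eeeen}, with the second identity obtainable either directly or by passing to $A^{-1}$ as the paper does, and then obtain \eqref{tweee} by substituting $A^*$ for $A$. No gaps.
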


\begin{proof}
The first identity in \eqref{eeeen} follows from
\[
 \{0,g\} \in A^* \Longleftrightarrow \{0,g\} \in J(A^{\perp})
 \Longleftrightarrow \{g,0\} \in A^{\perp} \Longleftrightarrow g
 \in (\dom A)^{\perp}.
\]
The second identity is obtained by going over to the inverse. The
identities in \eqref{tweee} follow from those in \eqref{eeeen} by
going over to the adjoint.
\end{proof}

In particular, observe that
\begin{equation}\label{neum}
    \mul A^{**}=\{0\} \quad \Longleftrightarrow \quad \dom A^*
\mbox{ dense in } \sH.
\end{equation}

\begin{lemma}\label{easy}
Let $A$ be a relation in a Hilbert space $\sH$. Then the following
equivalences are valid:
\begin{equation}\label{impor0}
 \dom A \subset \cdom A^* \quad \Longleftrightarrow \quad \cdom A
\subset \cdom A^* \quad \Longleftrightarrow \quad \mul A^{**}
\subset \mul A^*,
\end{equation}
and, likewise,
\begin{equation}\label{impor1}
 \dom A^* \subset \cdom A \quad \Longleftrightarrow \quad \cdom
 A^* \subset \cdom A \quad \Longleftrightarrow \quad \mul A^{*}
 \subset \mul A^{**}.
\end{equation}
In particular,
\begin{equation}\label{impor2}
 \cdom A = \cdom A^* \quad \Longleftrightarrow \quad \mul A^{**} =
 \mul A^*,
\end{equation}
\end{lemma}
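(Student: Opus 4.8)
The plan is to derive all three equivalences from Lemma~\ref{lemrelate} by passing to orthogonal complements. First I would rewrite the first identities in \eqref{eeeen} and \eqref{tweee} in the equivalent form
\[
 \cdom A = (\mul A^*)^\perp, \qquad \cdom A^* = (\mul A^{**})^\perp,
\]
obtained by taking orthogonal complements of $(\dom A)^\perp = \mul A^*$ and of $(\dom A^*)^\perp = \mul A^{**}$. Since $A^*$ and $A^{**}$ are closed relations, the subspaces $\mul A^*$ and $\mul A^{**}$ are closed, so nothing is lost in this manipulation; this is the only point requiring a little care, together with keeping in mind that $\cdom$ denotes the closure of the domain and not the domain of the closure.

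For \eqref{impor0}, the equivalence of $\dom A \subset \cdom A^*$ with $\cdom A \subset \cdom A^*$ is immediate, because $\cdom A^*$ is a closed subspace and hence contains $\dom A$ exactly when it contains its closure $\cdom A$. Using the two displayed identities, the inclusion $\cdom A \subset \cdom A^*$ reads $(\mul A^*)^\perp \subset (\mul A^{**})^\perp$; since $\perp$ reverses inclusions and is an involution on closed subspaces, this is equivalent to $\mul A^{**} \subset \mul A^*$. This establishes \eqref{impor0}.

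The chain \eqref{impor1} is then obtained by applying \eqref{impor0} with $A$ replaced by $A^*$: here one uses that $(A^*)^* = A^{**}$, that $(A^*)^{**} = A^{***} = A^*$ because $A^*$ is closed, and that $\cdom A^{**} = \cdom A$ by \eqref{impor}. Finally, \eqref{impor2} is simply the conjunction of \eqref{impor0} and \eqref{impor1}: the equality $\cdom A = \cdom A^*$ holds if and only if both $\cdom A \subset \cdom A^*$ and $\cdom A^* \subset \cdom A$, that is, if and only if $\mul A^{**} \subset \mul A^*$ and $\mul A^* \subset \mul A^{**}$. There is no real obstacle in this argument; the substance lies entirely in Lemma~\ref{lemrelate}, and what remains is a short exercise in orthogonal complementation.
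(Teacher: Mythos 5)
Your argument is correct and follows essentially the same route as the paper: the first equivalence from closedness of $\cdom A^*$, the second from the identities of Lemma~\ref{lemrelate} via orthogonal complements, \eqref{impor1} by replacing $A$ with $A^*$ together with \eqref{impor}, and \eqref{impor2} as the conjunction. No gaps.
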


\begin{proof}
The first equivalence in \eqref{impor0} is valid since the subspace
$\cdom A^*$ of $\sH$ is closed. The second equivalence in
\eqref{impor0} is based on the identity $\mul A^*=(\dom A)^\perp$.
The equivalences in \eqref{impor1} follow if in \eqref{impor0} the
relation  $A$ is replaced by the relation $A^*$ and the identity
\eqref{impor} is used. The identity \eqref{impor2} is now obvious.
\end{proof}

It is a consequence of Lemma \ref{lemrelate}  that the Hilbert space $\sH$ has
the following orthogonal decompositions:
\[
 \sH=\cdom A^{**} \oplus \mul A^*, \quad \sH=\cran A^{**} \oplus \ker A^*.
\]
However, there are also similar, nonorthogonal, decompositions of $\sH$.

\begin{lemma}\label{domranlemma}
Let $A$ be a relation in a Hilbert space $\sH$. Then
\begin{equation}\label{eqq1}
 \sH=\dom A^{**}+\ran A^*,\quad \sH=\ran A^{**}+\dom A^*.
\end{equation}
\end{lemma}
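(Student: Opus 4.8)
The plan is to reduce both identities in \eqref{eqq1} to the orthogonal decomposition
\[
 \sH \times \sH = A^{**} \oplus JA^*,
\]
which is the relation-theoretic form of von Neumann's formula. To justify it, recall from \eqref{J} that $J$ is unitary with $J^2=-I$; hence, by \eqref{FF}, $JA^*=J\big((JA)^\perp\big)=(J^2A)^\perp=A^\perp$, and since $A^{**}=\clos A=(A^\perp)^\perp$ the subspaces $A^{**}$ and $JA^*$ are mutually orthogonal complements in $\sH \times \sH$.

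Granting this, I would obtain the first identity as follows. Fix $h\in\sH$ and split the pair $\{h,0\}$ along the decomposition above: $\{h,0\}=\{f,f'\}+J\{g,g'\}$ with $\{f,f'\}\in A^{**}$ and $\{g,g'\}\in A^*$. Since $J\{g,g'\}=\{g',-g\}$, comparison of the first components gives $h=f+g'$ with $f\in\dom A^{**}$ and $g'\in\ran A^*$. Thus $\sH\subset\dom A^{**}+\ran A^*$, and the reverse inclusion is trivial.

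For the second identity there are two equally short routes. One can repeat the argument with $\{0,h\}$ in place of $\{h,0\}$: comparison of the second components then gives $h=f'-g$ with $f'\in\ran A^{**}$ and $g\in\dom A^*$, whence $h\in\ran A^{**}+\dom A^*$ because $\dom A^*$ is a linear subspace. Alternatively, one applies the first identity to the inverse relation $A^{-1}$ and invokes $(A^{-1})^{**}=(A^{**})^{-1}$, $(A^{-1})^*=(A^*)^{-1}$, together with $\dom (A^{**})^{-1}=\ran A^{**}$ and $\ran (A^*)^{-1}=\dom A^*$. I do not expect a genuine obstacle in this proof; the only point requiring a little care is reading off the correct component ($\dom$ versus $\ran$) of the split pair and noting that, since $\dom A^*$ and $\ran A^*$ are subspaces, the minus signs in $-g$ and $-g'$ are harmless.
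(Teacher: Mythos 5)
Your proof is correct and follows the paper's own route: the paper likewise observes that $JA^*=A^\perp$, hence $\sH\times\sH=A^{**}\oplus JA^*$, and reads off \eqref{eqq1} from the components; you have merely spelled out the component comparison that the paper leaves implicit.
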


\begin{proof}
Recall from \eqref{FF} that $JA^*=A^\perp$. This implies that
$\sH\times\sH= A^{**} \oplus JA^*$, which leads to \eqref{eqq1}.
\end{proof}

\subsection{Special relations}

A relation $A$ is said to be
\textit{symmetric}\index{relation!symmetric} if $A \subset A^*$; a
relation is symmetric if and only if $(g,f) \in \dR$ for all
$\{f,g\} \in A$. A relation $A$ is said to be \textit{essentially
selfadjoint}\index{relation!essentially selfadjoint} if
$A^{**}=A^*$ and it is said to be
\textit{selfadjoint}\index{relation!selfadjoint} if $A=A^*$. A
relation $A$ in a Hilbert space $\sH$ is said to be
\textit{formally normal}\index{relation!formally normal} if there
exists an isometry $V$ from $A$ into $A^*$ of the form
\[
V\{f,g\}=\{f,h\}, \quad \{f,g\} \in A, \quad \{f,h\} \in A^*,
\]
i.e., $V$ leaves the first component $f$ invariant and
$\|g\|=\|h\|$. A formally normal relation $A$ in a Hilbert space
$\sH$ is said to be \textit{normal}\index{relation!normal} if the
isometry $V$ is from $A$ onto $A^*$. Normal relations and
consequently selfadjoint relations are automatically closed.
Finally, a relation $A$ in $\sH$ is said to be
\textit{subnormal}\index{relation!subnormal} if there exists a
Hilbert space $\sK$ containing $\sH$ isometrically and a normal
relation $B$ in $\sK$ such that $A\subset B$.

\subsection{Sums and products} \index{sum of relations}

Let $A_1$ and $A_2$ be relations in $\sH$. The notation $A_1
\hplus A_2$ denotes the \textit{componentwise sum}\index{sum of
relations!componentwise $\hplus$} of $A_1$ and $A_2$:
\begin{equation}\label{jan00}
 A_1 \hplus A_2\okr\{\,\{f_1+f_2,f'_1+f'_2\}\,;\; \{f_1,f'_1\} \in
 A_1, \, \{f_2,f'_2\} \in A_2\,\}.
\end{equation}
In particular,
\begin{equation*}\label{dommul0}
\dom (A_1 \hplus A_2)=\dom A_1 + \dom A_2, \quad \mul (A_1 \hplus
A_2)=\mul A_1 +\mul A_2.
\end{equation*}

\begin{lemma}\label{basiclem}
The componentwise sum satisfies the identities
\[
(A_1 \hplus A_2)^*=A_1^* \cap A_2^*,
\quad
\clos (A_1 \hplus A_2)=(A_1^* \cap A_2^*)^*.
\]
\end{lemma}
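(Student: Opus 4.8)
The plan is to reduce both identities to one elementary fact about Hilbert spaces — that orthogonal complementation converts algebraic sums of subspaces into intersections — together with the formula $A^*=(JA)^\perp$ from \eqref{FF} and the identity $A^{**}=\clos A$ noted just after \eqref{J}.

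First I would observe that, by the definition \eqref{jan00}, the componentwise sum $A_1\hplus A_2$ is simply the algebraic (non-closed) sum $A_1+A_2$ of the two linear subspaces $A_1,A_2$ of $\sH\times\sH$. For arbitrary subspaces $M,N$ of a Hilbert space one has $(M+N)^\perp=M^\perp\cap N^\perp$, since a vector is orthogonal to $M+N$ precisely when it is orthogonal to both $M$ and $N$; note that no closedness is required here. Next, since the operator $J$ from \eqref{J} is linear (in fact unitary), it distributes over the sum: $J(A_1\hplus A_2)=JA_1+JA_2$. Combining these two observations with \eqref{FF} gives
\[
 (A_1\hplus A_2)^*=\bigl(J(A_1\hplus A_2)\bigr)^\perp=(JA_1+JA_2)^\perp=(JA_1)^\perp\cap(JA_2)^\perp=A_1^*\cap A_2^*,
\]
which is the first identity.

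For the second identity I would use that $A^{**}=\clos A$ for any relation $A$. Applying the first identity and then passing to the adjoint,
\[
 \clos (A_1\hplus A_2)=(A_1\hplus A_2)^{**}=\bigl((A_1\hplus A_2)^*\bigr)^*=(A_1^*\cap A_2^*)^*.
\]

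The argument is essentially formal, so I do not expect a genuine obstacle. The only point that deserves a moment's care is to use the complement-of-a-sum identity in its purely algebraic form — so that the componentwise sum $A_1\hplus A_2$ itself, and not merely its closure, appears on the left — and to note explicitly that $J$ commutes with the formation of the sum, which is immediate from its linearity.
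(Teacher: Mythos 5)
Your proof is correct and follows essentially the same route as the paper: both reduce the first identity to the elementary fact that orthogonal complementation turns sums of subspaces into intersections, combined with \eqref{FF} (the paper writes $A^*=JA^\perp$ and takes complements before applying $J$, while you use the equivalent form $A^*=(JA)^\perp$), and both obtain the second identity by taking adjoints in the first.
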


\begin{proof}
Observe that
\[
 (A_1 \hplus A_2)^*=J(A_1 \hplus A_2)^\perp=J(A_1^\perp \cap
 A_2^\perp)=JA_1^\perp \cap
 JA_2^\perp=A_1^*\cap A_2^*,
\]
according to the definition of the adjoint operation.
This gives the first identity and the second identity is obtained by taking
adjoints in the first one.
\end{proof}

The following identities are also clear:
\[
 \clos( A_1 \hplus A_2)=\clos (A_1 \hplus \clos A_2)=\clos (\clos
A_1 \hplus \clos A_2).
\]

The notation $A_1+A_2$ is reserved for the \textit{operatorwise
sum}\index{sum of relations!operatorwise $+$} of $A_1$ and $A_2$:
\begin{equation}\label{jan1}
 A_1+A_2\okr\{\,\{f,f'+f''\}\,;\; \{f,f'\} \in A_1, \, \{f,f''\} \in
A_2\,\}.
\end{equation}
In particular, it follows from the definition in \eqref{jan1} that
\begin{equation}\label{dommul}
\dom (A_1 +A_2)=\dom A_1 \cap \dom A_2, \quad \mul (A_1 +A_2)=\mul
A_1 +\mul A_2.
\end{equation}
In the case when $A_1$ and $A_2$ are operators this sum is the
(graph of the) usual operator sum.

\begin{lemma}
The operatorwise sum satisfies
\begin{equation}\label{jan1+}
 A_1^*+A_2^* \subset (A_1+A_2)^*.
\end{equation}
If $A_1$ or $A_2$ belongs to $\boldsymbol{B}(\sH)$, then
\begin{equation}\label{jan1++}
 A_1^*+A_2^* = (A_1+A_2)^*.
\end{equation}
\end{lemma}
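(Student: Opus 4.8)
The plan is to prove the inclusion \eqref{jan1+} first and then upgrade it to the equality \eqref{jan1++} under the boundedness hypothesis.

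For the inclusion, I would take $\{h,h'\}\in A_1^*+A_2^*$, so there are decompositions $h'=h'_1+h'_2$ with $\{h,h'_1\}\in A_1^*$ and $\{h,h'_2\}\in A_2^*$. Given any $\{f,f'+f''\}\in A_1+A_2$ with $\{f,f'\}\in A_1$, $\{f,f''\}\in A_2$, one computes $\langle\{f,f'+f''\},\{h,h'\}\rangle=(f'+f'',h)-(f,h')=[(f',h)-(f,h'_1)]+[(f'',h)-(f,h'_2)]=0$, using the defining orthogonality of $A_1^*$ against $A_1$ and of $A_2^*$ against $A_2$. Hence $\{h,h'\}\in(A_1+A_2)^*$, which is \eqref{jan1+}.

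For the reverse inclusion in \eqref{jan1++}, assume without loss of generality that $A_2\in\boldsymbol B(\sH)$; then $A_2^*\in\boldsymbol B(\sH)$ as well, $\dom A_2=\dom A_2^*=\sH$, and $\mul A_2=\mul A_2^*=\{0\}$. Let $\{h,h'\}\in(A_1+A_2)^*$; I want to produce the splitting $h'=(h'-A_2^*h)+A_2^*h$ and show that $\{h,h'-A_2^*h\}\in A_1^*$ and $\{h,A_2^*h\}\in A_2^*$, the latter being automatic. So I must check $\langle\{f,g\},\{h,h'-A_2^*h\}\rangle=0$ for every $\{f,g\}\in A_1$. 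Since $\dom A_2=\sH$, we have $\{f,g+A_2f\}\in A_1+A_2$, and $A_2^*h$ is characterized by $(A_2f,h)=(f,A_2^*h)$ for all $f\in\sH$. Expanding $\langle\{f,g+A_2f\},\{h,h'\}\rangle=0$ gives $(g+A_2f,h)-(f,h')=0$, i.e. $(g,h)-(f,h'-A_2^*h)=0$, which is exactly $\{f,g\}\in$ the graph we want against $\{h,h'-A_2^*h\}$. Thus $\{h,h'-A_2^*h\}\in A_1^*$, so $\{h,h'\}\in A_1^*+A_2^*$, completing the equality.

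The only delicate point is the bookkeeping of which relation is bounded: the argument uses $\dom A_2=\sH$ to form $\{f,g+A_2f\}\in A_1+A_2$ for arbitrary $\{f,g\}\in A_1$, and uses $A_2\in\boldsymbol B(\sH)$ (hence $A_2^*\in\boldsymbol B(\sH)$ everywhere defined) so that $h'-A_2^*h$ makes sense for all $h$; by the symmetry of the statement the case $A_1\in\boldsymbol B(\sH)$ is handled identically after interchanging indices. No genuine obstacle is expected; this is a routine verification.
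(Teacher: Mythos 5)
Your argument is correct and coincides with the paper's own proof: the inclusion is the same direct computation with the form $\langle\cdot,\cdot\rangle$, and the equality is obtained by exactly the same device of splitting $h'=(h'-A_2^*h)+A_2^*h$ and testing against elements $\{f,g+A_2f\}$ of $A_1+A_2$, which exhaust $A_1+A_2$ because $A_2$ is everywhere defined and single-valued. No changes needed.
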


\begin{proof}
Let $\{f,f_1'+f_2'\} \in A_1^*+A_2^*$ with $\{f,f_1'\}\in A_1^*$ and
$\{f,f_2'\}\in A_2^*$. Now assume that $\{h,h_1+h_2\}\in A_1+A_2$
with $\{h,h_1\}\in A_1$ and $\{h,h_2\}\in A_2$. Then
\[
 \langle \{f,f_1'+f_2'\}, \{h,h_1+h_2\}
 \rangle=(f_1',h)-(f,h_1)+(f_2',h)-(f,h_2)=0,
\]
which implies that $\{f,f_1'+f_2'\} \in (A_1+A_2)^*$. This shows
\eqref{jan1+}.

For the converse, let $\{f,f'\} \in (A_1+A_2)^*$, so that for all
$\{h,h_1\}\in A_1$ and $\{h,h_2\}\in A_2$
\[
 0=\langle \{f,f'\}, \{h,h_1+h_2\}\rangle=(f',h)-(f, h_1+h_2)=(f',h)-(f,h_1)-(f,h_2).
\]
Suppose that, for instance, $A_2 \in \boldsymbol{B}(\sH)$, then
$h_2=A_2h$ and the above identity implies that
\[
 (f'-A_2^*f,h)=(f,h_1),
\]
for all $\{h,h_1\} \in A_1$, so that $\{f, f'-A_2^*f\} \in A_1^*$.
Together with $\{f,A_2^*f\} \in A_2^*$, this means that $\{f,f'\}
\in A_1^*+A_2^*$. This shows \eqref{jan1++}.
\end{proof}

The notation $A_1 A_2$ denotes the \textit{product}\index{product
of relations} of $A_1$ and $A_2$:
\begin{equation}\label{jan0}
 A_1  A_2\okr\{\,\{f,f'\}\,;\; \{f,h\} \in
 A_2, \, \{h,f'\} \in A_1\,\}.
\end{equation}
In particular, $\mul A_1 \subset \mul (A_1 A_2)$.
Moreover, if $A_2$ is an operator, then  $\mul A_1 = \mul (A_1 A_2)$.
In the case when $A_1$ and $A_2$ are both  operators the product in \eqref{jan0} is the
(graph of the) usual operator product. The product of relations is
clearly associative. Observe that
\[
 AA^{-1}=I_{\ran A} \hplus (\{0\} \times \mul A), \quad
 A^{-1}A=I_{\dom A} \hplus (\{0\} \times \ker A),
\]
which shows that products of relation require some care. For
$\lambda \in \dC$ the notation $\lambda A$ agrees in this sense with
$(\lambda I)A$.

\begin{lemma}\label{prodlem*}
The product satisfies
\begin{equation}\label{feb1+}
A_2^* A_1^* \subset  (A_1A_2)^*.
\end{equation}
If $A_1$   belongs to $\boldsymbol{B}(\sH)$, then
\begin{equation}\label{feb1++}
 (A_1A_2)^* =A_2^*A_1^*.
\end{equation}
\end{lemma}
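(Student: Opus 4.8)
The plan is to prove the two parts of Lemma~\ref{prodlem*} in turn, treating \eqref{feb1+} by a direct computation with the form $\langle\cdot,\cdot\rangle$, and \eqref{feb1++} by establishing the reverse inclusion under the boundedness hypothesis on $A_1$.

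For the inclusion \eqref{feb1+}, I would take $\{f,f'\}\in A_2^*A_1^*$, so that there exists $h$ with $\{f,h\}\in A_1^*$ and $\{h,f'\}\in A_2^*$. To show $\{f,f'\}\in(A_1A_2)^*$, pick an arbitrary $\{u,u'\}\in A_1A_2$; by \eqref{jan0} there is $v$ with $\{u,v\}\in A_2$ and $\{v,u'\}\in A_1$. Then the key computation is
\[
\langle\{f,f'\},\{u,u'\}\rangle=(f',u)-(f,u')
=(f',u)-(f,u')+(h,v)-(h,v),
\]
and regrouping as $\bigl[(f,u')-(h,v)\bigr]$ coming from $\{f,h\}\in A_1^*$ tested against $\{v,u'\}\in A_1$, namely $\langle\{f,h\},\{v,u'\}\rangle=(h,v)-(f,u')=0$, together with $\langle\{h,f'\},\{u,v\}\rangle=(f',u)-(h,v)=0$ coming from $\{h,f'\}\in A_2^*$ tested against $\{u,v\}\in A_2$. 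Adding these two vanishing quantities gives $(f',u)-(f,u')=0$, which is exactly $\langle\{f,f'\},\{u,u'\}\rangle=0$. Since $\{u,u'\}\in A_1A_2$ was arbitrary, $\{f,f'\}\in(A_1A_2)^*$, proving \eqref{feb1+}. (Alternatively one could note $A_2^{-1}A_1^{-1}=(A_1A_2)^{-1}$ and combine with \eqref{FF}, but the direct form computation is cleanest.)

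For \eqref{feb1++}, assume $A_1\in\boldsymbol B(\sH)$; in view of \eqref{feb1+} it suffices to show $(A_1A_2)^*\subset A_2^*A_1^*$. Let $\{f,f'\}\in(A_1A_2)^*$. Since $A_1$ is everywhere defined and bounded, for every $v\in\dom A_2$ we have $\{v,A_1v\}\in A_1$ and hence $\{v,A_1v\}$ gives $\{u,A_1v\}\in A_1A_2$ whenever $\{u,v\}\in A_2$ — more precisely, for any $\{u,v\}\in A_2$ the pair $\{u,A_1v\}\in A_1A_2$. Testing $\{f,f'\}$ against these pairs yields $(f',u)-(f,A_1v)=0$, i.e. $(f',u)=(A_1^*f,v)$ for all $\{u,v\}\in A_2$. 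Writing $h:=A_1^*f$ (which is well defined since $A_1^*\in\boldsymbol B(\sH)$), this says precisely that $\{f',h\}$, read the right way, annihilates $A_2$: indeed $(A_1^*f,v)-(f',u)=0$ for all $\{u,v\}\in A_2$ means $\langle\{h,f'\},\{u,v\}\rangle=(f',u)-(h,v)=0$, so $\{h,f'\}\in A_2^*$. Combined with $\{f,h\}=\{f,A_1^*f\}\in A_1^*$, we get $\{f,f'\}\in A_2^*A_1^*$. This establishes the reverse inclusion and hence the equality \eqref{feb1++}.

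The only subtle point — the expected main obstacle — is bookkeeping the order of composition and the placement of adjoints: because $(A_1A_2)^*$ reverses the factors to $A_2^*A_1^*$, one must be careful about which relation the "bridge" element $h$ sits between, and about the sign/slot convention in $\langle\{f,f'\},\{h,h'\}\rangle=(f',h)-(f,h')$. Everything else is a routine manipulation of the defining bilinear form together with the fact that $A_1\in\boldsymbol B(\sH)$ forces $A_1^*\in\boldsymbol B(\sH)$ and $\dom A_1=\sH$, so that composing with $A_1$ does not shrink domains. No deeper ingredient (closedness, density, etc.) is needed; in particular this is the exact relational analogue of the operator identities $B^*A^*\subset(AB)^*$ and $(AB)^*=B^*A^*$ when $A$ is bounded everywhere defined.
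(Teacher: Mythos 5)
Your proposal is correct and follows essentially the same route as the paper: the inclusion \eqref{feb1+} by testing the form $\langle\cdot,\cdot\rangle$ through the intermediate element, and the reverse inclusion in \eqref{feb1++} by observing that for $A_1\in\boldsymbol{B}(\sH)$ every $\{u,v\}\in A_2$ yields $\{u,A_1v\}\in A_1A_2$, whence $(f',u)=(A_1^*f,v)$ and $\{A_1^*f,f'\}\in A_2^*$. No gaps.
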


\begin{proof}
Let $\{f,f'\} \in A_2^* A_1^*$, so that $\{f,g\} \in A_1^*$ and
$\{g,f'\}\in A_2^*$. Now assume that $\{h,h'\} \in A_1A_2$, so that
$\{h,k\}\in A_2$ and $\{k,h'\}\in A_1$. Then
\[
 \langle \{f,f'\}, \{h,h'\}\rangle=(f',h)-(f,h')=(g,k)-(g,k)=0,
\]
which yields $\{f,f'\} \in (A_1A_2)^*$. This shows \eqref{feb1+}.

Conversely, let $\{f,f'\} \in (A_1A_2)^*$, so that for all
$\{h,h'\}\in A_1A_2$ one has
\[
0=\langle \{f,f'\} , \{h,h'\} \rangle=(f',h)-(f,h').
\]
However, since $A_1 \in \boldsymbol{B}(\sH)$ it is easily seen that
$\{h,h'\}\in A_1A_2$ if and only if  $\{h,k\} \in A_2$ and
$h'=A_1k$. Hence, $\{f,f'\} \in (A_1A_2)^*$ if and only if for all
$\{h,k\} \in A_2$:
\[
 0=(f',h)-(f,A_1k)=(f',h)-(A_1^*f,k).
\]
Therefore $\{f',A_1^*f\} \in A_2^*$, and $\{f,f'\} \in A_2^*A_1^*$.
This shows \eqref{feb1++}.
\end{proof}

Now let $A_1$ and $A_2$ be relations in the Hilbert spaces $\sH_1$
and $\sH_2$, respectively. The notation $A_1 \hoplus A_2$ stands
for the \textit{componentwise orthogonal sum}\index{sum of
relations!componentwise!orthogonal $\hoplus$} of $A_1$ and $A_2$
in $(\sH_1 \oplus \sH_2)\times(\sH_1 \oplus \sH_2)$:
\begin{equation*}\label{jan3}
 A_1 \hoplus A_2 \okr\{\,\{f_1  \oplus f_2,f'_1 \oplus f'_2\}\,;\; \\
  \{f_1,f'_1\} \in A_1, \, \{f_2,f'_2\} \in A_2\}.
\end{equation*}
Hence $(A_1 \hoplus A_2)^*=A_1^* \hoplus A_2^*$,
where the adjoints are taken in the corresponding Hilbert spaces.

It follows from the definition \eqref{jan0}  that for any $R \in \boldsymbol{B}(\sH)$
the product  $AR$ is given by
\[
AR=\{\,\{f,f'\} ;\, \{Rf,f'\} \in A\,\}.
\]
This product can be made more explicit if $R$ or $I-R$ is an
orthogonal projection onto a closed subspace containing $\cdom A$.

\begin{lemma}\label{ap}
Let $A$ be a relation in a Hilbert space $\sH$ and let
$\sX$ and $\sY$ be closed subspaces of $\sH$ such that
 $\mul A^*=\sX \oplus \sY$
and let $R$ be the orthogonal projection onto $\cdom A \oplus \sX$.
Then
\[
 AR=A \hoplus (\sY  \times \{0\}), \quad A(I-R)=(\cdom A \oplus \sX) \times \mul A.
\]
In particular,
\[
 \dom AR=\dom A \oplus \sY, \quad \dom A(I-R)=\cdom A \oplus \sX.
\]
\end{lemma}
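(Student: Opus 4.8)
The plan is to verify the two displayed identities directly from the description of the product $AR$ recorded immediately before the lemma, namely $AS=\{\,\{f,f'\};\,\{Sf,f'\}\in A\,\}$ for $S\in\boldsymbol{B}(\sH)$, after first fixing the orthogonal geometry. By Lemma~\ref{lemrelate} one has $\mul A^*=(\dom A)^\perp$, so the hypothesis $\mul A^*=\sX\oplus\sY$ gives the orthogonal decomposition $\sH=\cdom A\oplus\sX\oplus\sY$. Hence $\cdom A\oplus\sX$ is a closed subspace with orthogonal complement $\sY$, so $R$ is well defined, $I-R$ is the orthogonal projection onto $\sY$, $R$ acts as the identity on $\cdom A\oplus\sX$ and annihilates $\sY$. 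Since $\dom A\subset\cdom A$, this also gives $\dom A\perp\sY$, and in particular $\dom A\cap\sY=\{0\}$.

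Next I would deal with $AR$. If $\{f,f'\}\in AR$, then $\{Rf,f'\}\in A$; writing $f=Rf+(I-R)f$ with $(I-R)f\in\sY$ exhibits $\{f,f'\}=\{Rf+(I-R)f,\,f'+0\}$ with $\{Rf,f'\}\in A$ and $\{(I-R)f,0\}\in\sY\times\{0\}$, so $\{f,f'\}\in A\hplus(\sY\times\{0\})$ by \eqref{jan00}. Conversely, if $\{f,f'\}=\{g+y,\,g'\}$ with $\{g,g'\}\in A$ and $y\in\sY$, then $Ry=0$ while $Rg=g$ because $g\in\dom A\subset\cdom A\oplus\sX$, so $Rf=g$ and $\{Rf,f'\}=\{g,g'\}\in A$, i.e.\ $\{f,f'\}\in AR$. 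This proves $AR=A\hplus(\sY\times\{0\})$; the summands have orthogonal domains $\dom A$ and $\sY$, which is why one may write $A\hoplus(\sY\times\{0\})$ and $\dom AR=\dom A\oplus\sY$. The identity for $A(I-R)$ is then shorter: $\{f,f'\}\in A(I-R)$ means $\{(I-R)f,f'\}\in A$, but $(I-R)f\in\sY$ together with $\dom A\cap\sY=\{0\}$ forces $(I-R)f=0$, i.e.\ $f\in\ran R=\cdom A\oplus\sX$, and then $\{0,f'\}\in A$ gives $f'\in\mul A$; conversely, any pair $\{f,f'\}$ with $f\in\cdom A\oplus\sX$ and $f'\in\mul A$ satisfies $\{(I-R)f,f'\}=\{0,f'\}\in A$, hence lies in $A(I-R)$. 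Thus $A(I-R)=(\cdom A\oplus\sX)\times\mul A$ and $\dom A(I-R)=\cdom A\oplus\sX$.

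I do not expect any real obstacle; the argument is essentially bookkeeping with the definition \eqref{jan00} of the componentwise sum and with the formula for $AS$. The only points deserving a moment's care are the two elementary observations that $R$ restricts to the identity on $\dom A$ (via $\dom A\subset\cdom A\subset\cdom A\oplus\sX$) and to zero on $\sY$ — which is precisely what makes the conditions $\{Rf,f'\}\in A$ and $\{(I-R)f,f'\}\in A$ collapse to the stated forms — and the use of $\dom A\perp\sY$ both to get $\dom A\cap\sY=\{0\}$ and to recognize $\dom A+\sY$ as an orthogonal direct sum.
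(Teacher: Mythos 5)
Your proof is correct and follows essentially the same route as the paper: both arguments reduce the two identities to the description $AR=\{\{f,f'\};\{Rf,f'\}\in A\}$, use the decomposition $\{f,f'\}=\{Rf,f'\}+\{(I-R)f,0\}$ for the inclusion $AR\subset A\hoplus(\sY\times\{0\})$, and derive $A(I-R)=\ker(I-R)\times\mul A$ from $\ran(I-R)\cap\dom A=\sY\cap\dom A=\{0\}$. The only cosmetic difference is that the paper obtains the reverse inclusion $A\hoplus(\sY\times\{0\})\subset AR$ by noting $A\subset AR$ and $\sY\times\{0\}\subset AR$ separately, whereas you verify it on a general componentwise sum; these are the same computation.
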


\begin{proof}
Since $\dom A\subset \ran R$ the definition of the product $AR$
shows that $A \subset AR$ and since $\sY=\ker R\subset \ker AR$ it
is also clear that $\sY \times \{0\} \subset AR$. Hence
\[
 A \hoplus (\sY \times \{0\}) \subset AR.
\]
For the converse inclusion, let $\{f,f'\} \in AR$. Then
\[
 \{f,f'\}=\{Rf,f'\} +\{(I-R)f,0\} \in A \hoplus (\sY \times \{0\}.
\]
This shows the first identity.

On the other hand, $\ran(I-R)\cap \dom A=\sY\cap \dom A=\{0\}$.
Hence, the definition of the product gives $A(I-R)=\ker(I-R) \times
\mul A$, which yields the second identity.
\end{proof}

\subsection{Some auxiliary results}

Let $\sH$ be a Hilbert space and let $\sM$ and $\sN$ be closed
subspaces of $\sH$. Then $\sM+\sN$ is closed if and only if
$\sM^\perp+\sN^\perp$ is closed; see, for instance,
\cite[IV,Theorem 4.8]{Ka}.

\begin{lemma}\label{krakow}
Let $A$ and $B$ be closed linear relations in a Hilbert space $\sH$.
Then the following statements are equivalent:
\begin{enumerate}\def\labelenumi{\rm (\roman{enumi})}
\item $A\hplus B$ is   closed;
\item $A^*\hplus B^*$ is closed.
\end{enumerate}
\end{lemma}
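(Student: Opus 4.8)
The plan is to reduce the statement to the auxiliary fact on sums of closed subspaces and their orthogonal complements quoted just above, transported by the unitary operator $J$.

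First I would observe that, for any relations $A$ and $B$, the componentwise sum $A \hplus B$ is precisely the algebraic sum of $A$ and $B$ viewed as linear subspaces of the Hilbert space $\sH \times \sH$; see \eqref{jan00}. Since $A$ and $B$ are assumed closed, they are \emph{closed} subspaces of $\sH \times \sH$, and I would apply the cited subspace result (with $\sM = A$ and $\sN = B$) to get
\begin{equation*}
A \hplus B \ \text{closed} \quad\Longleftrightarrow\quad A^\perp + B^\perp \ \text{closed}.
\end{equation*}

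Next I would bring in the adjoints through $J$. By \eqref{FF} and \eqref{J} one has $A^* = J A^\perp$ and $B^* = J B^\perp$, with $J$ unitary on $\sH \times \sH$; a unitary operator maps closed subspaces onto closed subspaces and preserves algebraic sums, so $J(A^\perp + B^\perp) = J A^\perp + J B^\perp = A^* \hplus B^*$. Hence $A^\perp + B^\perp$ is closed if and only if $A^* \hplus B^*$ is closed, and chaining this with the previous equivalence finishes the proof.

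The only step that really requires attention is the first one: one must recognize that $A \hplus B$ is literally an algebraic sum of two \emph{closed} subspaces of $\sH \times \sH$, which is exactly the situation covered by the quoted theorem of Kato. Everything afterwards is a mechanical application of the unitarity of $J$ together with the identity $A^* = J A^\perp$, so I do not anticipate any genuine obstacle; the mathematical substance is entirely carried by the cited subspace result.
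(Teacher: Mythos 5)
Your proof is correct and follows essentially the same route as the paper: both reduce the claim to the quoted fact that for closed subspaces $\sM,\sN$ of a Hilbert space the sum $\sM+\sN$ is closed if and only if $\sM^\perp+\sN^\perp$ is, applied in $\sH\times\sH$, and then transport $A^\perp\hplus B^\perp$ to $A^*\hplus B^*$ via the unitary $J$ using $A^*=JA^\perp$. The only cosmetic difference is that the paper phrases the second direction as following ``by symmetry'' from $A^{**}=A$, $B^{**}=B$, whereas you obtain both directions at once from the two-sided equivalences; the substance is identical.
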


\begin{proof}
(i) $\Longrightarrow$ (ii) The graphs of $A$ and $B$ are closed
linear subspaces of the Hilbert space $\sH\times \sH$. Hence, the
sum $A\hplus B$ is a closed linear subspace of $\sH\times\sH$ if
and only if the sum of the orthogonal complements
\begin{equation}
\label{csum}
 A^{\perp} \hplus B^{\perp}
\end{equation}
in $\sH \oplus \sH$ is also closed.
Recall that the adjoints of $A$ and $B$ are given by
$A^*=JA^\perp$ and $B^*=JB^\perp$, where the operator $J$ is
defined in \eqref{J}. Hence the sum in \eqref{csum} is closed in
$\sH\times\sH$ if and only if
\[
 J(A^{\perp} \hplus B^{\perp})
 = JA^{\perp} \hplus JB^{\perp}=A^* \hplus B^*
\]
is closed in $\sH\times\sH$.

(ii) $\Longrightarrow$ (i) Since $A$ and $B$ are closed one has
that $A^{**}=A$ and $B^{**}=B$. Hence this implication follows by
symmetry.
\end{proof}

The following observation, based on Lemma \ref{krakow}, goes back
to Yu.L. Shmul'jan \cite{Sm}. A weaker version for so-called range
space relations can be found in \cite{LSSW}.

\begin{theorem}\label{NEW}
Let $A$ be a closed relation in a Hilbert space $\sH$. Then
\begin{enumerate}\def\labelenumi{\rm (\roman{enumi})}
\item $\dom A$ closed $\Longleftrightarrow$ $\dom A^*$ closed;

\item $\ran A$ closed $\Longleftrightarrow$ $\ran A^*$ closed.
\end{enumerate}
\end{theorem}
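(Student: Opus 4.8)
The plan is to reduce both equivalences to Lemma~\ref{krakow} by adjoining to $A$ a well-chosen closed auxiliary relation that collapses one of the two coordinates. For part~(i) I would take $C=\{0\}\times\sH$, and for part~(ii) the relation $D=\sH\times\{0\}$. Both are closed; indeed, reading off $A^*=(JA)^\perp$ from \eqref{FF} gives $C^*=(J(\{0\}\times\sH))^\perp=(\sH\times\{0\})^\perp=\{0\}\times\sH=C$, and likewise $D^*=D$. Recording $C^*=C$ and $D^*=D$ explicitly is precisely what makes Lemma~\ref{krakow} applicable, since its hypothesis requires \emph{both} summands of the componentwise sum to be closed.

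For~(i), the definition of the componentwise sum yields $A\hplus C=\dom A\times\sH$: for a fixed $f\in\dom A$, choosing $f'$ with $\{f,f'\}\in A$, the second component $f'+g$ runs through all of $\sH$ as $g\in\sH$ varies. The same computation applied to $A^*$ gives $A^*\hplus C^*=A^*\hplus C=\dom A^*\times\sH$. Now a ``slab'' $\sM\times\sH$ is closed in $\sH\times\sH$ exactly when $\sM$ is closed in $\sH$ (which is elementary: if $f_n\to f$ with $f_n\in\sM$, then $\{f_n,0\}\to\{f,0\}$ in $\sM\times\sH$). Hence $A\hplus C$ is closed iff $\dom A$ is closed, and $A^*\hplus C^*$ is closed iff $\dom A^*$ is closed. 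Since $A$ and $C$ are closed, Lemma~\ref{krakow} identifies these two conditions, which is~(i).

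For~(ii), I would run the identical argument with $D=\sH\times\{0\}$ in place of $C$: now $A\hplus D=\sH\times\ran A$ and $A^*\hplus D^*=\sH\times\ran A^*$, and $\sH\times\sM$ is closed iff $\sM$ is closed, so Lemma~\ref{krakow} again does the job. Alternatively, (ii) is an immediate corollary of~(i): apply~(i) to the closed relation $A^{-1}$ and use $\dom A^{-1}=\ran A$ together with $(A^{-1})^*=(A^*)^{-1}$, so that $\dom(A^{-1})^*=\ran A^*$.

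I do not foresee a genuine obstacle; the content is entirely in spotting that the trivial relations $\{0\}\times\sH$ and $\sH\times\{0\}$ are exactly the right partners for Lemma~\ref{krakow}. The only points demanding any care are the closedness hypothesis in that lemma (handled by the observations $C^*=C$, $D^*=D$) and the harmless remark that a coordinate slab in $\sH\times\sH$ is closed precisely when its base subspace is closed.
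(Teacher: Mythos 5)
Your proof is correct and follows essentially the same route as the paper: both reduce the statement to Lemma~\ref{krakow} applied to $A$ together with the selfadjoint purely multivalued relation $\{0\}\times\sH$ (and its counterpart $\sH\times\{0\}$, or equivalently the passage to $A^{-1}$, for part~(ii)). The only cosmetic difference is that you identify $A^*\hplus(\{0\}\times\sH)$ directly as the slab $\dom A^*\times\sH$, whereas the paper reaches the same conclusion by computing the adjoint and second adjoint of that sum, which additionally records the identity $\dom A^*=(\mul A)^\perp$ when $\dom A$ is closed.
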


\begin{proof}
(i) First observe that $A=A^{**}$, since  $A$ is assumed to be
closed. Hence,
\begin{equation}\label{wtT*00}
 \left(A^* \hplus (\{0\} \times \sH )\right)^*
 =A \cap (\{0\} \times \sH)
 =\{0\}\times  \mul A.
\end{equation}
In particular, \eqref{wtT*00} leads to
\begin{equation}\label{wtT*0}
 \left(A^* \hplus (\{0\} \times \sH
)\right)^{**}=(\mul A)^\perp \times \sH.
\end{equation}

Assume that $\dom A$ is closed, so that $A \hplus (\{0\}\times \sH)$
is a closed subspace in $\sH \times \sH$. By Lemma~\ref{krakow} this
implies that $A^* \hplus  (\{0\} \times \sH)$ is a closed subspace
of $\sH\times\sH$, so that with \eqref{wtT*0} it follows that
\begin{equation}\label{wtT*}
  A^* \hplus  (\{0\} \times
  \sH) =(\mul A)^\perp \times \sH,
\end{equation}
or, equivalently, $\dom A^*  =(\mul A)^\perp$. Hence, $\dom A^*$ is
closed.

Now assume that $\dom A^*$ is closed, so that $A^* \hplus  (\{0\}
\times   \sH)$ is closed. By Lemma \ref{krakow}  this implies that
$A \hplus (\{0\}\times \sH)$ is closed, i.e., $\dom A$ is closed.

(ii) This can be seen by going over to the inverse of $A$.
\end{proof}

The next proposition augments the previous theorem by giving
necessary and sufficient conditions for
$\dom A^*$ and $\ran A^*$ to be closed, respectively.

\begin{proposition}\label{Dclosed}
Let $A$ be a relation in a Hilbert space $\sH$. Then the following
statements are equivalent:
\begin{enumerate}\def\labelenumi{\rm (\roman{enumi})}
\item $\dom A^*$ is closed;
\item $\ran PA^{**}\subset \dom A^*$, where
$P$ is the orthogonal projection onto $\cdom A^*$;
\item $\ran QA^{*}\subset \dom A^{**}$,
where $Q$ is the orthogonal projection onto $\cdom A$.
\end{enumerate}
Similarly the following
statements are equivalent:
\begin{enumerate}\def\labelenumi{\rm (\roman{enumi})}
\setcounter{enumi}{3}
\item $\ran A^*$ is closed;
\item $P'(\dom A^{**})\subset \ran A^*$,
where $P'$ is the orthogonal projection onto $\cran A^*$;
\item $Q'(\dom A^{*})\subset \ran A^{**}$,
where $Q'$ is the orthogonal projection onto $\cran A$.
\end{enumerate}
\end{proposition}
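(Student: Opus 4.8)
The plan is to establish the first triple of equivalences (i)\,$\Longleftrightarrow$\,(ii)\,$\Longleftrightarrow$\,(iii) by a direct argument, and then to deduce the second triple (iv)\,$\Longleftrightarrow$\,(v)\,$\Longleftrightarrow$\,(vi) by applying the first one to the inverse relation $A^{-1}$.

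For (i)\,$\Longleftrightarrow$\,(ii) the key is Lemma~\ref{domranlemma}, which provides the (in general nonorthogonal) decomposition $\sH=\ran A^{**}+\dom A^*$. I would apply the orthogonal projection $P$ onto $\cdom A^*$ to this identity; since $\dom A^*\subset\cdom A^*=\ran P$, the projection $P$ acts as the identity on $\dom A^*$, and one obtains $\cdom A^*=P(\ran A^{**})+\dom A^*$. By the definition of the product one has $\ran(PA^{**})=P(\ran A^{**})$, and both summands lie in the closed subspace $\cdom A^*$. Consequently the inclusion $\ran(PA^{**})\subset\dom A^*$ is equivalent to the equality $\dom A^*=\cdom A^*$, that is, to $\dom A^*$ being closed; this is exactly (i)\,$\Longleftrightarrow$\,(ii), and no analytic input beyond Lemma~\ref{domranlemma} is needed.

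For (i)\,$\Longleftrightarrow$\,(iii) I would first note that $A^*$ is closed, so that Theorem~\ref{NEW}, applied to $A^*$, gives that $\dom A^*$ is closed if and only if $\dom A^{**}=\dom(A^*)^*$ is closed. Next I would apply the equivalence (i)\,$\Longleftrightarrow$\,(ii) just proved, but to the relation $A^*$ in place of $A$: here $(A^*)^{**}=A^*$ and, by \eqref{impor}, $\cdom(A^*)^*=\cdom A^{**}=\cdom A$, so the projection occurring is precisely $Q$, and the statement reads ``$\dom A^{**}$ is closed $\Longleftrightarrow$ $\ran(QA^*)\subset\dom A^{**}$''. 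The latter inclusion is (iii), and combining the two observations yields (i)\,$\Longleftrightarrow$\,(iii).

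Finally, for (iv)\,$\Longleftrightarrow$\,(v)\,$\Longleftrightarrow$\,(vi) I would apply the whole first triple to the relation $B=A^{-1}$. Using $B^*=(A^{-1})^*=(A^*)^{-1}$ and $B^{**}=(A^{**})^{-1}$, one has $\dom B^*=\ran A^*$, $\ran B^*=\dom A^*$, $\cdom B^*=\cran A^*$, $\cdom B=\cran A$, and $\ran B^{**}=\dom A^{**}$; substituting these into (i), (ii), (iii) for $B$ transforms them verbatim into (iv), (v), (vi) for $A$, respectively. I expect the only point requiring care to be this final bookkeeping — matching up the four projections $P$, $Q$, $P'$, $Q'$ with the relations obtained by taking adjoints and inverses — rather than anything of substance, since the whole argument rests on the two ingredients Lemma~\ref{domranlemma} and Theorem~\ref{NEW}.
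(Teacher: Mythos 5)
Your proposal is correct and rests on the same two ingredients as the paper's proof, namely the nonorthogonal decompositions of Lemma~\ref{domranlemma} and Theorem~\ref{NEW}; your treatment of (i)$\Leftrightarrow$(ii) is verbatim the paper's argument. Deriving (iii) by applying (ii) to $A^*$ and the second triple by passing to $A^{-1}$ is only an organizational variant of the paper's direct applications of $Q$, $P'$, $Q'$ to the identities \eqref{eqq1} (the paper itself notes the inverse connection in the remark following the proposition), so the two proofs are essentially identical.
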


\begin{proof}
By Lemma~\eqref{domranlemma} $A$ satisfies the identities
\eqref{eqq1}. The implications (ii) $\Longrightarrow$ (i) and (v)
$\Longrightarrow$ (iv) are obtained by applying $P$ to the second
identity in \eqref{eqq1} and $P'$ to the first identity in
\eqref{eqq1}. The implications (iii) $\Longrightarrow$ (i) and
(vi) $\Longrightarrow$ (iv) follow by first applying $Q$ to the
first identity in \eqref{eqq1} and $Q'$ to the second identity in
\eqref{eqq1} to see that $\dom A^{**}$ and $\ran A^{**}$,
respectively, are closed; then apply Theorem \ref{NEW}.

The implications (i) $\Longrightarrow$ (ii) and (iv)
$\Longrightarrow$ (v) are clear, while the implications (i)
$\Longrightarrow$ (iii) and (iv) $\Longrightarrow$ (vi) follow
from Theorem~\ref{NEW}, because then equivalently $\dom A^{**}$
($\ran A^{**}$, respectively) is closed.
\end{proof}

Observe, that in Proposition~\ref{Dclosed} the statements (iv)--(iv)
are actually connected to statements (iv)--(iv) via the formal
inverse $A^{-1}$ of $A$.

The descriptions of $\dom A^*$ and $\ran A^*$ can be given
by means of certain functionals;
cf. Example \ref{adjex} (see  \cite{HSeS} for further details).

The following result (cf. \cite[Lemma~4.1]{HSS??}) follows easily
from Proposition \ref{Dclosed}.

\begin{corollary}\label{boundcor}
Let $A$ be a relation in a Hilbert space $\sH$. Then the following
statements are equivalent:
\begin{enumerate}\def\labelenumi{\rm (\roman{enumi})}
\item $\dom A^*=\sH$;
\item $\ran A^{**}\subset\dom A^*$;
\item $A$ (and thus also $A^{**}$) is the graph of a bounded operator.
\end{enumerate}
\end{corollary}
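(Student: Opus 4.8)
The plan is to derive the equivalences from Proposition~\ref{Dclosed} by recognising that Corollary~\ref{boundcor} is precisely the special case in which the relevant closed subspace $\cdom A^*$ is the whole space $\sH$. First I would show (i)~$\Rightarrow$~(ii): if $\dom A^*=\sH$, then in particular $\dom A^*$ is closed and $\cdom A^*=\sH$, so the projection $P$ in Proposition~\ref{Dclosed}(ii) is the identity; hence statement (ii) of that proposition reads $\ran A^{**}\subset\dom A^*$, which is exactly (ii) here.

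Next I would treat (ii)~$\Rightarrow$~(iii). Assuming $\ran A^{**}\subset\dom A^*$, I want to see that $A^{**}$ is the graph of a bounded operator. Since $A^{**}=\clos A$ is closed, $\mul A^{**}$ is a closed subspace, and $\ran A^{**}\subset\dom A^*=(\mul A^{**})^\perp$ by \eqref{tweee}; in particular $\mul A^{**}\subset\ran A^{**}\subset(\mul A^{**})^\perp$, forcing $\mul A^{**}=\{0\}$, so $A^{**}$ is the graph of a (closed) operator, call it $T$. It remains to see $T$ is bounded and everywhere defined. From $\mul A^{**}=\{0\}$ and \eqref{neum} (or \eqref{tweee}) one gets $\cdom A^*=\sH$; combined with the hypothesis $\ran T=\ran A^{**}\subset\dom A^*$ one can invoke Proposition~\ref{Dclosed}(ii) (now with $P=I$) to conclude $\dom A^*$ is closed, hence $\dom A^*=\sH$. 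Then by Theorem~\ref{NEW}(i), $\dom A^{**}=\dom T$ is closed; and a closed operator $T$ with $\ran T\subset\dom T^*=\dom A^*=\sH$ has its adjoint $T^*$ everywhere defined, hence $T^*$ is bounded by the closed graph theorem, hence $T=T^{**}$ is bounded and everywhere defined. Finally (iii)~$\Rightarrow$~(i) is immediate: if $A^{**}$ is the graph of a bounded everywhere defined operator $T\in\boldsymbol B(\sH)$, then $A^*=T^*\in\boldsymbol B(\sH)$ too (a standard fact, or via \eqref{FF}), so $\dom A^*=\sH$.

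The main obstacle I anticipate is the step passing from $\ran A^{**}\subset\dom A^*$ to the boundedness and total definedness of $A^{**}$: one must be careful not to assume a priori that $A^{**}$ is an operator, and the argument has to extract both $\mul A^{**}=\{0\}$ and the closedness of $\dom A^*$ from a single inclusion, feeding back through Proposition~\ref{Dclosed} and Theorem~\ref{NEW} before the closed graph theorem can be applied. The cleanest route is probably the one sketched above: first kill the multivalued part using \eqref{tweee}, then use Proposition~\ref{Dclosed}(ii) with $P=I$ to close $\dom A^*$, then use Theorem~\ref{NEW} to close $\dom A^{**}$, and only then apply the closed graph theorem to $(A^{**})^*$. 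Since all the heavy lifting is already packaged in the earlier results, the proof should be short.
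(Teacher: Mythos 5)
Your overall strategy---reducing (i) $\Leftrightarrow$ (ii) to Proposition~\ref{Dclosed}, extracting $\mul A^{**}=\{0\}$ from the chain $\mul A^{**}\subset\ran A^{**}\subset\dom A^*\subset(\mul A^{**})^\perp$, and then closing $\dom A^*$ and $\dom A^{**}$ via Proposition~\ref{Dclosed} and Theorem~\ref{NEW}---is sound and is essentially the paper's route. The gap is in the last step of (ii) $\Rightarrow$ (iii). You apply the closed graph theorem to $T^*=A^*$, but that requires $A^*$ to be (the graph of) an operator, i.e. $\mul A^*=(\dom A)^\perp=\{0\}$, which holds only when $A$ is densely defined---and density of $\dom A$ is not among the hypotheses. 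Statement (iii) deliberately does not assert that the bounded operator is densely or everywhere defined: for a proper closed subspace $\sM\subset\sH$ the relation $A=\sM\times\{0\}$ satisfies (i)--(iii), yet $\mul A^*=\sM^\perp\neq\{0\}$ and $\dom A^{**}=\sM\neq\sH$. So both your claim that $T^*$ is a bounded operator and your final conclusion that $T=T^{**}$ is everywhere defined fail in general. The repair is to drop the detour through the adjoint: once you know $\mul A^{**}=\{0\}$ and (via Proposition~\ref{Dclosed} and Theorem~\ref{NEW}) that $\dom A^{**}$ is closed, apply the closed graph theorem directly to the closed operator $A^{**}$ regarded as an operator on the Hilbert space $\dom A^{**}$; this is what the paper does.

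The same misreading of (iii) affects your (iii) $\Rightarrow$ (i): you assume $A^{**}=T\in\boldsymbol{B}(\sH)$, which is strictly stronger than (iii) and, as the example above shows, not implied by (i) or (ii). The paper instead argues: boundedness of $A$ gives boundedness of $A^{**}$ and closedness of $\dom A^{**}$ by Lemma~\ref{first}, hence $\dom A^*$ is closed by Theorem~\ref{NEW}; since $\mul A^{**}=\{0\}$, the domain $\dom A^*$ is also dense by \eqref{neum}, whence $\dom A^*=\sH$.
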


\begin{proof}
The equivalence of (i) and (ii) is obtained directly from Proposition \ref{Dclosed}.

(i) $\Longrightarrow$ (iii) If $\dom A^*=\sH$, then $\dom A^{**}$
is closed by Theorem \ref{NEW} and $\mul A^{**}=\{0\}$. Now apply
the closed graph theorem.

(iii) $\Longrightarrow$ (i) The boundedness of $A^{**}$ implies
that $\dom A^{**}$ is closed; see Lemma \ref{first}. Hence, also
$\dom A^*$ is closed by Theorem \ref{NEW}. It follows from $\mul
A^{**}=\{0\}$ that $\dom A^*$ is dense. Therefore $\dom A^*=\sH$.
\end{proof}

   \begin{remark} \label{t54}
   Note that the decomposition $A=B+C$ in Example 1.2 with a
nontrivial singular part $B$ is possible if and only if $\dom A^*
\neq \sH$; according to Corollary 2.13 this is equivalent to the
operator $A$ in Example 1.2 being unbounded.

   \end{remark}

There are similar corollaries characterizing $A^{-1}$, $A^*$,
or $A^{-*}$ to be a bounded
(single-valued) operator. It is also noted that $PA^{**}$
appearing in Proposition \ref{Dclosed}
is in fact the regular part of the closure $A$; see Section \ref{sec3}.
The connection between Proposition \ref{Dclosed}
and Corollary \ref{boundcor} can strengthened by
means of decompositions
in Section \ref{sec3}.

\subsection{Points of regular type and the resolvent set}

Let $A$ be a relation in a Hilbert space $\sH$. Then $\lambda \in
\dC$ is said to be an
\textit{eigenvalue}\index{relation!eigenvalue} of $A$ if
$\{f,\lambda f\} \in A$ for some nonzero $f \in \sH$. The set of
points of \textit{regular type}\index{relation!point of regular
type} of $A$ is denoted by $\gamma(A)$; it consists of those
$\lambda \in \dC$ for which there exists a positive constant
$c(\lambda)>0$ such that
\begin{equation}
\label{poslb}
 \|f'-\lambda f\|\ge c(\lambda) \|f\|, \quad \{f,f'\}\in A.
\end{equation}
In other words, $\lambda \in \dC$ is a point of \textit{regular
type} of $A$ if and only if $(A-\lambda)^{-1}$ is (the graph of) a
bounded linear operator, defined on $\ran (A-\lambda)$. In
particular, the relation $A$ is closed if and only if $\ran
(A-\lambda)$ is closed in $\sH$ for some $\lambda \in \dC$ of
regular type. Furthermore, $\gamma(\clos{A})=\gamma(A)$. It is clear
that $\gamma(A) \subset \gamma(\clos A)$. To see the other
inclusion, let $\lambda \in \gamma(A)$, so that $(A-\lambda)^{-1}$
is a bounded linear operator. From $\clos (A-\lambda)^{-1}=(\clos
A-\lambda)^{-1}$ it follows that $\lambda \in \gamma(\clos A)$.

It is well known that $\gamma(A)$ is an open set for operators, this remains true
also for relations; see \cite{Tay1}, \cite{Tay3}, cf. also \cite{DS1,DS2}.

\begin{theorem}\label{gamma}
Let $A$ be a relation in a Hilbert space $\sH$. Then $\gamma(A)$
is an open set. In particular, if $\mu \in \gamma(A)$ and
$|\lambda-\mu|\|(A-\mu)^{-1}\|<1$, then $\lambda \in \gamma(A)$
and
\begin{equation}
\label{est} \|(A-\lambda)^{-1}\| \leq \frac{\|(A-\mu)^{-1}\|}
{1-|\lambda-\mu|\, \|(A-\mu)^{-1}\|}.
\end{equation}
Moreover, if $\mu \in \gamma(A)$ and
$|\lambda-\mu|\|(A-\mu)^{-1}\|<1$, then $\cran (A-\lambda)$ is not a
proper subset of $\cran (A-\mu)$.
\end{theorem}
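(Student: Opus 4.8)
The plan is to deduce all three assertions --- openness of $\gamma(A)$, the estimate \eqref{est}, and the statement on ranges --- from the lower bound \eqref{poslb} by elementary perturbation arguments. We may assume $\dom A\neq\{0\}$, since otherwise $\gamma(A)=\dC$ and every assertion is trivial. Fix $\mu\in\gamma(A)$ and let $c(\mu)>0$ be the largest constant for which \eqref{poslb} holds at $\mu$; then $\|f'-\mu f\|\ge c(\mu)\|f\|$ for all $\{f,f'\}\in A$, and $\|(A-\mu)^{-1}\|=1/c(\mu)$ by the very definition of the operator norm of the bounded operator $(A-\mu)^{-1}\colon f'-\mu f\mapsto f$.

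First I would treat openness and \eqref{est} together. Assume $|\lambda-\mu|\,\|(A-\mu)^{-1}\|<1$, i.e.\ $|\lambda-\mu|<c(\mu)$. For every $\{f,f'\}\in A$, the decomposition $f'-\lambda f=(f'-\mu f)-(\lambda-\mu)f$ and the triangle inequality give
\[
 \|f'-\lambda f\|\ge\|f'-\mu f\|-|\lambda-\mu|\,\|f\|\ge\bigl(c(\mu)-|\lambda-\mu|\bigr)\|f\|,
\]
and $c(\mu)-|\lambda-\mu|>0$. Hence $\lambda\in\gamma(A)$ with $c(\lambda)\ge c(\mu)-|\lambda-\mu|$, so $\|(A-\lambda)^{-1}\|\le\bigl(c(\mu)-|\lambda-\mu|\bigr)^{-1}$; replacing $c(\mu)$ by $1/\|(A-\mu)^{-1}\|$ this is precisely \eqref{est}. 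Since $\mu\in\gamma(A)$ was arbitrary, the ball $\{\lambda:|\lambda-\mu|<1/\|(A-\mu)^{-1}\|\}$ lies in $\gamma(A)$, so $\gamma(A)$ is open. I foresee no difficulty in this part.

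For the range statement I would argue by contradiction: suppose $\cran(A-\lambda)$ is a \emph{proper} subset of $\cran(A-\mu)$; in particular $\lambda\neq\mu$, and there is a unit vector $e\in\cran(A-\mu)$ with $e\perp\cran(A-\lambda)$, hence $e\perp\ran(A-\lambda)$. For $\{f,f'\}\in A$ one has $f'-\lambda f\in\ran(A-\lambda)$ and, since $\mu\in\gamma(A)$, $f=(A-\mu)^{-1}(f'-\mu f)$; taking the inner product of $f'-\lambda f=(f'-\mu f)+(\mu-\lambda)f$ with $e$ and setting $g:=f'-\mu f$ gives
\[
 (g,e)=(\lambda-\mu)\,\bigl((A-\mu)^{-1}g,e\bigr),\qquad g\in\ran(A-\mu).
\]
Since $e\in\cran(A-\mu)=\cdom (A-\mu)^{-1}$, Lemma~\ref{first} lets me replace $(A-\mu)^{-1}$ here by its closure $R$, which is bounded on $\cran(A-\mu)$ with $\|R\|=\|(A-\mu)^{-1}\|$; both sides above are then bounded functionals of $g$ on $\cran(A-\mu)$ coinciding on the dense set $\ran(A-\mu)$, hence everywhere. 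Putting $g=e$ yields $\|e\|^{2}=(\lambda-\mu)(Re,e)$, so
\[
 1=\|e\|^{2}=\bigl|(\lambda-\mu)(Re,e)\bigr|\le|\lambda-\mu|\,\|R\|\,\|e\|^{2}=|\lambda-\mu|\,\|(A-\mu)^{-1}\|<1,
\]
a contradiction. Thus $\cran(A-\lambda)$ is not a proper subset of $\cran(A-\mu)$. The one slightly delicate point --- the only thing resembling an obstacle --- is the extension from $\ran(A-\mu)$ to its closure needed in order to insert $e$ into the scalar identity; one may instead approximate $e=\lim_{n}g_{n}$ with $g_{n}\in\ran(A-\mu)$ and pass to the limit in $(g_{n},e)=(\lambda-\mu)\bigl((A-\mu)^{-1}g_{n},e\bigr)$, using $\|(A-\mu)^{-1}g_{n}\|\le\|(A-\mu)^{-1}\|\,\|g_{n}\|$.
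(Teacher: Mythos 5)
Your proposal is correct and follows essentially the same route as the paper: the openness and the estimate \eqref{est} come from the identical triangle-inequality computation, and the range statement is proved by the same contradiction, choosing a unit vector in $\cran(A-\mu)\ominus\cran(A-\lambda)$ and arriving at $1\le|\lambda-\mu|\,\|(A-\mu)^{-1}\|$. The only cosmetic difference is that you package the final step as an inner-product identity for $(A-\mu)^{-1}$ (extended to $\cran(A-\mu)$ via Lemma~\ref{first} or by approximation), whereas the paper runs the same approximating sequence directly through norm estimates.
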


\begin{proof}
Let $\mu \in \gamma(A)$ and $\{f,g\} \in A$. Since $(A-\mu)^{-1}$
is a bounded linear operator, it follows from $(A-\mu)^{-1}(g-\mu
f)=f$ that
\[
  \|f\| \leq \|(A-\mu)^{-1}\|\,\|(g-\mu f)\|.
\]
For each $\lambda \in \dC$ one has
\[
   g-\lambda f=g-\mu f-(\lambda-\mu)f,
\]
which implies that
\[
  \|g-\lambda f\| \ge \|g-\mu f\|-|\lambda-\mu| \|f\|.
\]
Hence,
\[
\begin{split}
  \|(A-\mu)^{-1}\|\,\|g-\lambda f\| &\ge \|(A-\mu)^{-1}\|\,\|g-\mu
f\|-|\lambda-\mu|\,\|(A-\mu)^{-1}\|\, \|f\| \\
 & \ge \|f\|-|\lambda-\mu|\,\|(A-\mu)^{-1}\|\, \|f\| \\ &
 =(I-|\lambda-\mu|\,\|(A-\mu)^{-1}\|)\, \|f\| .
\end{split}
\]
With the inclusion $\{g-\lambda f,f\} \in (A-\lambda)^{-1}$ and the assumption
$|\lambda-\mu|\|(A-\mu)^{-1}\|<1$ this
inequality shows that $(A-\lambda)^{-1}$ is a bounded linear
operator, whose norm is estimated by \eqref{est}.

Assume that $\cran (A-\lambda)$ is a proper subset of $\cran
(A-\mu)$. Choose $k\in\cran (A-\mu)\ominus\cran (A-\lambda)$ with $\|k\|=1$.
Then $\|k-g\| \ge 1$ for all $g \in \cran (A-\lambda)$. Let
$k_n \in \ran (A-\mu)$ such that $k_n \to k$. Then there exist
$h_n$ such that $\{h_n,k_n\} \in A-\mu$, so that also
$\{h_n,k_n+(\mu-\lambda)h_n\} \in A-\lambda$. In particular
\[
\begin{split}
      1 &\le \|k-(k_n+(\mu-\lambda)h_n)\| \\
             &\le \|k-k_n\|+|\mu-\lambda|\,\|h_n\| \\
             &\le
             \|k-k_n\|+|\mu-\lambda|\,\|(A-\mu)^{-1}\|\|k_n\|.
\end{split}
\]
Letting $n \to \infty$ leads to
\[
 1 \le |\mu-\lambda|\,\|(A-\mu)^{-1}\|,
\]
a contradiction. Hence $\cran (A-\lambda)$ is not a proper subset
of $\cran (A-\mu)$.
\end{proof}

The \textit{resolvent set} $\rho(A)$ \index{resolvent set
$\rho(A)$ of $A$}of $A$ is the set of all $\lambda \in \dC$ such
that $\lambda\in\gamma(A)$ and for which $\ran (A-\lambda)$ is
dense in $\sH$. Observe that $\rho(\clos{A})=\rho(A)$.

\begin{theorem}
Let $A$ be a relation in a Hilbert space $\sH$. Then $\rho(A)$ is
open. In particular, if $\mu \in \rho(A)$ and
$|\lambda-\mu|\|(A-\mu)^{-1}\|<1$, then $\lambda \in \rho(A)$.
\end{theorem}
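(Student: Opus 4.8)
The plan is to reduce the openness of $\rho(A)$ to the already-established openness of $\gamma(A)$ from Theorem~\ref{gamma}, together with the last assertion of that theorem about closed ranges. First I would fix $\mu\in\rho(A)$ with $|\lambda-\mu|\,\|(A-\mu)^{-1}\|<1$. By definition $\mu\in\gamma(A)$, so Theorem~\ref{gamma} applies directly and gives $\lambda\in\gamma(A)$; the norm estimate \eqref{est} is automatic but not actually needed for the conclusion. It remains only to show that $\ran(A-\lambda)$ is dense in $\sH$, knowing that $\ran(A-\mu)$ is dense, i.e.\ that $\cran(A-\mu)=\sH$.

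The key step is the final clause of Theorem~\ref{gamma}: under the same smallness hypothesis $|\lambda-\mu|\,\|(A-\mu)^{-1}\|<1$, the closure $\cran(A-\lambda)$ is not a proper subset of $\cran(A-\mu)$. Since $\cran(A-\mu)=\sH$ by the density assumption, this says precisely that $\cran(A-\lambda)$ is not a proper subspace of $\sH$, hence $\cran(A-\lambda)=\sH$, i.e.\ $\ran(A-\lambda)$ is dense. Combined with $\lambda\in\gamma(A)$ this yields $\lambda\in\rho(A)$ by definition of the resolvent set, which establishes the claim; openness of $\rho(A)$ then follows, since every point of $\rho(A)$ has a disc around it (of radius $\|(A-\mu)^{-1}\|^{-1}$) contained in $\rho(A)$.

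There is essentially no obstacle here: both ingredients — that $\gamma(A)$ contains a disc around each of its points, and that ranges cannot shrink strictly under a small perturbation of $\lambda$ — were packaged into Theorem~\ref{gamma} for exactly this purpose. The only point requiring a word of care is that $\rho(A)$ is defined via the \emph{denseness} of $\ran(A-\lambda)$ rather than its closedness, so one must invoke the ``not a proper subset'' statement with $\cran(A-\mu)=\sH$ to upgrade ``not proper in $\sH$'' to ``equal to $\sH$''; this is immediate but is the one place the argument uses more than the openness of $\gamma(A)$ alone. One may also note in passing that the same reasoning applies to $\clos A$, consistent with the earlier observation $\rho(\clos A)=\rho(A)$.
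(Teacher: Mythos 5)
Your proposal is correct and follows essentially the same route as the paper: invoke Theorem~\ref{gamma} to get $\lambda\in\gamma(A)$, then use its final clause that $\cran(A-\lambda)$ is not a proper subset of $\cran(A-\mu)=\sH$ to conclude $\cran(A-\lambda)=\sH$, hence $\lambda\in\rho(A)$. No gaps.
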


\begin{proof}
Since $\mu \in \rho(A)$ one has $\mu \in \gamma(A)$ and $\cran
(A-\mu)=\sH$. Now by Theorem~\ref{gamma} $\lambda \in \gamma(A)$
and $\cran(A-\lambda)$
is not a proper subset of $\cran(A-\mu)$. Therefore
$\cran(A-\lambda)=\sH$, so that $\lambda \in \rho(A)$.
\end{proof}

If $A$ is closed, then $\lambda \in
\rho(A)$ if and only if $(A-\lambda)^{-1} \in \boldsymbol{B}(\sH)$.
Observe that $\rho(\clos{A})=\rho(A)$.

\subsection{Defect numbers}

It is useful to recall the notion of opening between subspaces.
Let $\sL_1$ and $\sL_2$ be linear (not necessarily closed)
subspaces of a Hilbert space $\sH$. Let $P_1$ and $P_2$ be the
orthogonal projections onto the closures $\overline{\sL_1}$ and
$\overline{\sL_2}$ of $\sL_1$ and $\sL_2$, respectively. The
\textit{opening}\index{opening of (two) subspaces}
$\theta(\sL_1,\sL_2)$ is defined by
$\theta(\sL_1,\sL_2)=\|P_1-P_2\|$. It is clear that
$\theta(\sL_1,\sL_2)=\theta(\overline{\sL_1},\overline{\sL_2})
  =\theta(\sL_1^{\perp},\sL_2^{\perp})$. Moreover, $\theta(\sL_1,\sL_2) \leq 1$,
and if $\theta(\sL_1,\sL_2)<1$, then  $\dim \sL_1=\dim \sL_2$. In
order to use the opening the following formula is useful:
\[
  \theta(\sL_1,\sL_2)=\max \left( \sup_{f \in \sL_1}
  \frac{\|(I-P_2)f\|}{\|f\|}, \sup_{f \in \sL_2}
  \frac{\|(I-P_1)f\|}{\|f\|} \right).
\]
The following result is a standard fact for operators, for relations it
appears precisely in the same form.

\begin{theorem}\label{dimen}
Let $A$ be a relation in a Hilbert space $\sH$. Then the defect
\index{relation!defect}
\[
\dim \ran(A-\lambda)^{\perp}
\]
is constant for $\lambda$ in connected components of $\gamma(A)$.
\end{theorem}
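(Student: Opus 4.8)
The plan is to use the fact, from Theorem~\ref{gamma}, that $\gamma(A)$ is open and that nearby points of regular type have controlled behaviour of $\cran(A-\lambda)$, together with the opening $\theta$ of subspaces. The strategy is local: fix $\mu \in \gamma(A)$, show that $\dim \ran(A-\lambda)^\perp$ is constant in a neighbourhood of $\mu$, and then invoke connectedness to conclude. Since $\dim\ran(A-\lambda)^\perp = \dim\bigl(\cran(A-\lambda)\bigr)^\perp$, the whole question is really about the orthogonal complements of the closed subspaces $\cran(A-\lambda)$.

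First I would recall from the discussion of the opening that if $\theta(\sL_1,\sL_2)<1$ then $\dim\sL_1 = \dim\sL_2$, and this applies equally to the complements since $\theta(\sL_1,\sL_2)=\theta(\sL_1^\perp,\sL_2^\perp)$. So it suffices to show that for $\mu \in \gamma(A)$ and $\lambda$ close enough to $\mu$, one has $\theta\bigl(\cran(A-\lambda),\cran(A-\mu)\bigr)<1$. Using the formula for the opening as a max of two suprema, I would estimate each supremum separately. For the first, take $g \in \ran(A-\lambda)$ of norm $1$; then $\{h,g\}\in A-\lambda$ for some $h$, hence $\{h, g+(\lambda-\mu)h\}\in A-\mu$, and since $\|h\| \le \|(A-\mu)^{-1}\|\,\|g+(\lambda-\mu)h\|$... more directly, $g+(\lambda-\mu)h \in \ran(A-\mu)$ and $\|(I-P_\mu)g\| \le \|g - (g+(\lambda-\mu)h)\| = |\lambda-\mu|\,\|h\|$, where $P_\mu$ projects onto $\cran(A-\mu)$. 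Bounding $\|h\|$ via the regular-type estimate from Theorem~\ref{gamma} applied at $\mu$ (valid since $\lambda$ is itself of regular type with a uniform bound near $\mu$) gives $\|(I-P_\mu)g\| \le C|\lambda-\mu|$ for a constant $C$ depending only on $\|(A-\mu)^{-1}\|$ and the chosen neighbourhood. The symmetric supremum is handled the same way with the roles of $\lambda$ and $\mu$ reversed, using that $\|(A-\lambda)^{-1}\|$ is bounded near $\mu$ by \eqref{est}. Hence $\theta\bigl(\cran(A-\lambda),\cran(A-\mu)\bigr) \to 0$ as $\lambda \to \mu$, so it is $<1$ on a neighbourhood of $\mu$, and there $\dim\ran(A-\lambda)^\perp = \dim\ran(A-\mu)^\perp$.

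The local constancy just established means that $\lambda \mapsto \dim\ran(A-\lambda)^\perp$ is a locally constant function on $\gamma(A)$. A locally constant integer-valued (or $\aleph_0$-valued) function on a connected topological space is constant, so the defect is constant on each connected component of $\gamma(A)$, which is the assertion.

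The main obstacle I anticipate is the bookkeeping in the opening estimate: one must be careful that the constant in the bound $\|h\|\le \text{const}\cdot\|g-\lambda f\|$ type inequality can be taken uniform over a whole neighbourhood of $\mu$, not just pointwise at each $\lambda$. This is exactly what \eqref{est} of Theorem~\ref{gamma} provides — a uniform bound on $\|(A-\lambda)^{-1}\|$ for $\lambda$ in a ball around $\mu$ — so the estimate goes through, but it is the step that requires the most care to state precisely. The passage from local constancy to constancy on components is entirely standard and needs no real work.
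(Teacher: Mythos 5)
Your proposal is correct and follows essentially the same route as the paper: both arguments reduce the statement to showing $\theta\bigl(\ran(A-\lambda)^{\perp},\ran(A-\mu)^{\perp}\bigr)<1$ for $\lambda$ near $\mu$ via the regular-type bound $\|f\|\le\|(A-\mu)^{-1}\|\,\|g-\mu f\|$ together with the uniform estimate \eqref{est}, and then conclude by local constancy on the connected components of the open set $\gamma(A)$. The only cosmetic difference is that you estimate the opening of the closed ranges while the paper estimates the opening of their orthogonal complements, which you correctly identify as equivalent via $\theta(\sL_1,\sL_2)=\theta(\sL_1^{\perp},\sL_2^{\perp})$.
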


\begin{proof}
Let $\lambda, \mu \in \gamma(A)$ and let $P_\lambda$ and $P_{\mu}$
be the orthogonal projections onto the subspaces $\ran
(A-\lambda)^{\perp}$ and $\ran (A-\mu)^{\perp}$, respectively.

{\em Step 1}. For each $h \in \sH$
\[
 \|(I-P_{\mu})h\|= \sup_{\{f,g\} \in A} \frac{|(h, g-\mu
 f)|}{\|g-\mu f\|} =\sup_{\{f,g\} \in A} \frac{|(h, g-\lambda
 f+(\lambda-\mu)f)|} {\|g-\mu f\|}.
\]
In particular, if $h \in \ran (A-\lambda)^{\perp}$, then
\[
 \|(I-P_{\mu})h\|= |\lambda-\mu|\sup_{\{f,g\} \in A}
 \frac{|(h,f)|}{\|g-\mu f\|}.
\]
Since $\|f\| \leq \|(A-\mu)^{-1}\|\, \|g-\mu f\|$, $ \{f,g\} \in
A$, it follows that
\[
 \|(I-P_{\mu})h\| \leq |\lambda-\mu| \, \|(A-\mu)^{-1}\| \, \|h\|.
\]

{\em Step 2}. Completely similar, it follows for $k \in \ran
(A-\mu)^{\perp}$ that
\[
 \|(I-P_{\lambda})k\|= |\lambda-\mu| \sup_{\{f,g\} \in A}
 \frac{|(k,f)|}{\|g -\lambda f\|} \le |\lambda-\mu| \,
 \|(A-\lambda)^{-1}\| \, \|k\|.
\]
Hence, if $|\lambda-\mu|\,\|(A-\mu)^{-1}\|<1$, then
\[
 \|(I-P_{\lambda})k\| \leq \frac{|\lambda-\mu|\, \|(A-\mu)^{-1}\|}
 {1-|\lambda-\mu|\, \|(A-\mu)^{-1}\|} \, \|k\|.
\]

{\em Step 3}. Now let $|\lambda-\mu|\,\|(A-\mu)^{-1}\|<\half$.
Then it follows from Steps 1 and 2 that
\[
 \theta(\ran (A-\lambda)^{\perp}, \ran (A-\mu)^{\perp} ) < 1,
\]
which implies the equality
\[
  \dim \ran (A-\lambda)^{\perp}=\dim \ran (A-\mu)^{\perp},
\]
see \cite{AG}, \cite{Ka}.

{\em Step 4}. For each $\mu \in \gamma(A)$ there exists a positive
number $\delta=\half \|(A-\mu)^{-1}\|^{-1}$, such that $|\lambda -
\mu|< \delta$ implies that $\lambda \in \gamma(A)$ and that at
$\lambda$ there is the same defect as at $\mu$. Now let $\Gamma$
be a connected open component of $\gamma(A)$. Then $\Gamma$ is
arcwise connected and each pair of points in $\Gamma$ can be
connected by a (piecewise) connected curve with compact image.
It remains to use compactness to divide the curve into pieces of length
$\delta/2$ to conclude that $\dim \ker (A^*-\bar{\lambda})$ is constant
in $\Gamma$.
\end{proof}

\subsection{The numerical range}

Let $A$ be a relation in a Hilbert space $\sH$. The
\textit{numerical range}\index{relation!numerical range $\cW (A)$}
$\cW (A)$ of $A$ is defined by
\[
\cW (A)=\{\, (f',f) ;\, \{f,f'\} \in A, \ \|f\|=1 \, \} \subset \dC,
\]
and by $\{ 0 \} \subset \dC$ if $A$ is purely multivalued, i.e. if
$\dom A = \{0\}$. Clearly, all eigenvalues of $A$ belong to the
numerical range $\cW(A)$ of $A$. Observe, that numerical range of the inverse of
$A$ is given by
\[
\cW (A^{-1})=\overline{\cW (A)}=\{\,\lambda\in \dC;\, \overline{\lambda}\in \cW (A)\,\}.
\]
The following result will be
proved along the lines of \cite{Stone}; cf. \cite{Ka}, \cite{Sch}.

\begin{proposition}\label{convex}
Let $A$ be a relation in a Hilbert space $\sH$. Then the numerical
range $\cW(A)$ is a convex set in $\dC$.
\end{proposition}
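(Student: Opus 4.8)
The plan is to follow the classical Toeplitz--Hausdorff argument adapted to the relation setting, as indicated by the reference to \cite{Stone}. Suppose $\alpha, \beta \in \cW(A)$ with $\alpha \neq \beta$; I must show that every point on the segment $[\alpha,\beta]$ lies in $\cW(A)$. Pick $\{f,f'\}, \{g,g'\} \in A$ with $\|f\| = \|g\| = 1$, $(f',f) = \alpha$, $(g',g) = \beta$. The first normalization step is to reduce to a convenient affine-normal form: after replacing $A$ by $cA + d$ for suitable $c,d \in \dC$ (which maps $\cW(A)$ by the affine map $w \mapsto cw + d$, hence preserves the property ``contains the segment joining any two of its points''), I may assume $\alpha = 0$ and $\beta = 1$. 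Note the elements $\{f,f'\}$ and $\{g,g'\}$ then satisfy $(f',f) = 0$, $(g',g) = 1$. Also $f$ and $g$ are linearly independent: if $g = \lambda f$ then $\{f, \lambda^{-1} g'\} \in A$ would force, comparing with $\{f,f'\}$, that $\{0, f' - \lambda^{-1}g'\} \in \mul A$, which does not immediately contradict anything, so I handle this degenerate case separately by a small perturbation, or observe directly that if $\dom A$ is one-dimensional the numerical range is a single point or handled trivially.

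The core step is the two-dimensional reduction. For $t \in [0,1]$ consider vectors of the form $h_t = (1-s(t)) f + s(t) e^{\I\varphi(t)} g$ where $s(t) \in [0,1]$ and $\varphi(t) \in \dR$ are to be chosen, together with the corresponding element $\{h_t, h_t'\} \in A$ obtained as the same linear combination $h_t' = (1-s(t)) f' + s(t) e^{\I\varphi(t)} g'$ (this is where linearity of the relation $A$ is used: $A$ is a linear subspace of $\sH\times\sH$, so such combinations stay in $A$). Then $(h_t', h_t)$ is a continuous function of the parameters; expanding the sesquilinear form one gets a quadratic expression in $s$ and in $e^{\I\varphi}$ involving the four numbers $(f',f) = 0$, $(g',g) = 1$, $(f',g)$, $(g',f)$. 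The standard Toeplitz--Hausdorff manipulation is to first choose $\varphi$ so that the ``cross term'' $2\,\RE\big(s(1-s) e^{\I\varphi}(g',f)\big)$ — or rather the imaginary part obstruction — is controlled; more precisely one chooses $\varphi$ so that $e^{\I\varphi}(g',f) + \overline{e^{\I\varphi}(f',g)}$ is real, making $t \mapsto (h_t', h_t)/\|h_t\|^2$ trace a continuous curve in $\dC$ from $0$ (at $s=0$) to $1$ (at $s=1$) whose imaginary part can be forced to vanish identically by the choice of $\varphi$. Then by the intermediate value theorem its real part attains every value in $[0,1]$, giving the whole segment.

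The main technical obstacle, compared with the operator case, is that an element of $\cW(A)$ is attached to a \emph{pair} $\{f,f'\} \in A$, not to a vector $f$ alone, so I cannot speak of ``$Af$'' and must throughout carry the second components $f', g'$ along and exploit linearity of $A$ to form the combinations $\{h_t, h_t'\}$; I must check $\|h_t\| \neq 0$ for $t \in (0,1)$, which follows from linear independence of $f$ and $g$ (the degenerate case dealt with above), so that $(h_t',h_t)/\|h_t\|^2 \in \cW(A)$ is legitimate. A second point requiring care is the purely multivalued convention: if $\dom A = \{0\}$ then $\cW(A) = \{0\}$ is trivially convex, so I may assume $\dom A \neq \{0\}$; and if $\dom A$ contains a nonzero vector but all elements $\{f,f'\}$ with $\|f\|=1$ give the same value, $\cW(A)$ is a single point. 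Otherwise the argument above applies verbatim. I expect the affine normalization and the choice of $\varphi$ to be the only places where one must be slightly attentive; everything else is the routine continuity-and-intermediate-value closing argument.
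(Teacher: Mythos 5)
Your strategy is essentially the paper's: it is the Stone/Toeplitz--Hausdorff argument, carried out by forming the combinations $\{(1-s)f+s e^{\I\varphi}g,\,(1-s)f'+s e^{\I\varphi}g'\}$ inside the linear subspace $A$, choosing the phase $e^{\I\varphi}$ so that the cross term $e^{\I\varphi}(g',f)+e^{-\I\varphi}(f',g)$ becomes real, and finishing with the intermediate value theorem; the paper's functions $F$, $G$, $H$ and its unimodular $\delta$ are precisely your $(h_s',h_s)$, $\|h_s\|^2$, and $e^{\I\varphi}$.

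The one genuine flaw is your fallback for the degenerate case $g=\lambda f$. The claim that a one-dimensional domain forces $\cW(A)$ to be a single point is false for relations: if $e\in\dom A\cap\mul A$ with $\|e\|=1$ and $\{e,f'\}\in A$, then $\{e,f'+ce\}\in A$ for every $c\in\dC$, so $\cW(A)=\dC$. A ``small perturbation'' is not available either, since $\cW(A)$ need not be closed. Fortunately the degenerate case is the easy one and stays inside your framework: if $g=\lambda f$ with $|\lambda|=1$, then $\bar\lambda\{g,g'\}=\{f,\bar\lambda g'\}\in A$ represents the same value $\beta=(\bar\lambda g',f)$, and the convex combination $(1-u)\{f,f'\}+u\{f,\bar\lambda g'\}\in A$ exhibits $(1-u)\alpha+u\beta\in\cW(A)$ directly for every $u\in[0,1]$. (Alternatively: the reality condition fixes $e^{\I\varphi}$ only up to sign, and the sign can always be chosen so that $\RE\bigl(e^{\I\varphi}(g,f)\bigr)\neq-1$, which forces $\|h_s\|^2>0$ on all of $[0,1]$.) The paper avoids the issue altogether by imposing the constraint $\|x_1f_1+x_2f_2\|^2=1$ and solving for the parameters, rather than dividing by $\|h_s\|^2$, so it never needs linear independence of the first components.
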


\begin{proof}
Let $\lambda_1, \lambda_2 \in \cW(A)$ and assume that $\lambda_1
\neq \lambda_2$.
It will be shown that
each point on the segment between $\lambda_1$ and $\lambda_2$
belongs to $\cW(A)$, i.e., it will be shown that for each $u \in
[0,1]$
\[
  u\lambda_1 +(1-u) \lambda_2 \in \cW(A).
\]
For this purpose write $\lambda_i=(g_i,f_i)$, where $\{f_i,g_i\}
\in A$, $\|f_i\|=1$, $i=1,2$, and define for $x_1,x_2 \in \dC$:
\[
F(x_1,x_2)=(x_1 g_1+x_2 g_2,x_1f_1+x_2f_2), \quad
G(x_1,x_2)=\|x_1f_1+x_2f_2\|^2,
\]
and
\[
H(x_1,x_2)=\frac{F(x_1,x_2)-\lambda_2 G(x_1,x_2)} {\lambda_1 -
\lambda_2}.
\]
Note that if $G(x_1,x_2)=1$, then $F(x_1,x_2) \in \cW(A)$, or, in
other words,
\[
 H(x_1,x_2) \lambda_1 +(1-H(x_1,x_2)) \lambda_2 =
 \lambda_2+H(x_1,x_2)(\lambda_1 - \lambda_2) \in \cW(A).
\]
Hence, the proof will be complete if for each $u \in [0,1]$ there
exist numbers $x_1, x_2 \in \dC$ for which
\[
 G(x_1,x_2)=1, \quad H(x_1,x_2)=u.
\]
Observe that $H(x_1,x_2)=x_1\bar{x}_1+c_1\bar{x}_1
x_2+c_2x_1\bar{x}_2$ for some $c_1,c_2 \in \dC$. Define
\[
  \delta=1 \mbox{ if } \bar{c}_1=c_2, \ \
  \delta=\frac{\bar{c}_1-c_2}{|\bar{c}_1-c_2|} \mbox{ if }
  \bar{c}_1 \neq c_2
\]
so that $|\delta|=1$. When $t_1,t_2 \in \dR$ it follows that
\[
  G(t_1,\delta t_2)=t_1^2+2 \beta t_1t_2 + t_2^2, \quad
  H(t_1,\delta t_2)=t_1^2+ \gamma t_1t_2,
\]
where $\beta=\RE (\delta(f_2,f_1))$ and $\gamma=\delta
c_1+\bar{\delta} c_2$. Hence $-1 \le \beta \leq 1$ and $\gamma \in
\dR$. For $t_1 \in [-1,1]$ note that $(1-\beta^2)t_1^2 \le 1$ and
choose
\[
  t_2=-\beta t_1 \pm \sqrt{1-(1-\beta^2)t_1^2},
\]
with the $+$ sign when $\beta \geq 0$ and the $-$ sign when $\beta
<0$. Then
\[
  G\left(t_1,\delta\left(-\beta t_1 \pm
  \sqrt{1-(1-\beta^2)t_1^2}\right)\right)=1,
\]
and
\[
  H\left(t_1,\delta\left(-\beta t_1 \pm
  \sqrt{1-(1-\beta^2)t_1^2}\right)\right)= (1-\beta \gamma)t_1^2
  \pm \gamma t_1 \sqrt{1-(1-\beta^2)t_1^2}.
\]
The last expression is a real continuous function in $t_1$ which
takes the value $0$ at $t_1=0$ and the value $1$ at $t_1=1$. Hence
the segment $[0,1]$ is in the range of values of this function.
\end{proof}

Hence either $\cW(A)=\dC$ or $\cW(A) \ne \dC$, in which case
$\cW(A)$ lies in some halfplane. The first case may actually occur
if, for instance, $\ker A\cap\mul A\neq \{0\}$, so that $A$
contains nontrivial elements $\{0,h\}$ and $\{h,0\}$.
If the relation $A'$ is an extension of $A$, i.e.,
$A \subset A'$, then $\cW(A) \subset\cW(A')$. In particular,
\begin{equation}\label{funda}
\cW(A) \subset \cW(\clos A) \subset \clos \cW (A),
\end{equation}
where the last inclusion is straightforward to verify. All sets in
\eqref{funda} are convex.

\subsection{An extension preserving the numerical range}

Let $A$ be a relation in a Hilbert space $\sH$ and associate with
it the relation $A_\infty$ \index{relation!$A_\infty$}defined by
\begin{equation}\label{+SF}
 A_\infty\okr A \hplus (\{0\} \times \mul A^*);
\end{equation}
the sum in \eqref{+SF} is direct if and only if $\mul A \cap \mul
A^*=\{0\}$. The relation $A_\infty$ is an extension of $A$ and
\begin{equation} \label{++ssf0}
\dom A_\infty=\dom A, \quad \mul A_\infty=\mul A+\mul A^*.
\end{equation}
Clearly, if $\mul A \subset \mul A^*$ then $\mul A_\infty= \mul
A^*$. Moreover, $A_\infty=A$ if and only $\mul A^* \subset \mul A$
(which is the case when, for instance, $A$ is densely defined).
Due to $\mul A^*=(\dom A)^\perp$ it follows from \eqref{+SF} that
\begin{equation}\label{funda++}
 \cW(A_\infty) = \cW(A).
\end{equation}
Constructions in terms of the extension $A_\infty$ can be found in
\cite{CS78} and \cite{HSSz??a}. A key observation is given in the following lemma.

\begin{lemma}\label{queen}
Let $A$ be a relation in a Hilbert space $\sH$. Then
$(A_\infty)^*$ \index{relation!$A_\infty$}can be expressed as a
restriction of $A^*$:
\begin{equation}\label{+ssf}
(A_\infty)^*=\{\,\{f,f'\} \in A^* \,;\; f \in \cdom A \,\}.
\end{equation}
In particular
\begin{equation} \label{++ssf}
\dom (A_\infty)^* =\cdom A \cap \dom A^*, \quad  \mul
(A_\infty)^*=\mul A^*.
\end{equation}
\end{lemma}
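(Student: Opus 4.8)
The plan is to compute $(A_\infty)^*$ directly from the componentwise‑sum formula \eqref{+SF} using Lemma~\ref{basiclem}, which tells us that the adjoint of a componentwise sum is the intersection of the adjoints. Writing $A_\infty = A \hplus (\{0\}\times \mul A^*)$, we get $(A_\infty)^* = A^* \cap (\{0\}\times \mul A^*)^*$. So the first step is to identify the relation $(\{0\}\times \mul A^*)^*$. Since $\{0\}\times\sL$ for a subspace $\sL$ is a closed relation when $\sL$ is closed (and $\mul A^*=(\dom A)^\perp$ is closed), its adjoint should be $\sL^\perp \times \sH$ when $\sL$ is closed; in general, for $\sL=\mul A^*$ one has $(\{0\}\times \mul A^*)^* = \overline{(\mul A^*)}^{\,\perp} \times \sH = \cdom A \times \sH$, using $\mul A^* = (\dom A)^\perp$ so that $(\mul A^*)^\perp = \cdom A$. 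A quick way to see this: $\{f,f'\}\in(\{0\}\times\mul A^*)^*$ iff $\langle\{f,f'\},\{0,k\}\rangle = (f',0)-(f,k) = -(f,k)=0$ for all $k\in\mul A^*$, i.e.\ iff $f\perp \mul A^* = (\dom A)^\perp$, i.e.\ iff $f\in\cdom A$; there is no constraint on $f'$.

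Combining, $(A_\infty)^* = A^* \cap (\cdom A \times \sH) = \{\,\{f,f'\}\in A^*\,;\; f\in\cdom A\,\}$, which is precisely \eqref{+ssf}. Then \eqref{++ssf} follows immediately: taking domains gives $\dom(A_\infty)^* = \{f\in\dom A^*\,;\; f\in\cdom A\} = \cdom A\cap\dom A^*$, and taking multivalued parts gives $\mul(A_\infty)^* = \{f'\,;\;\{0,f'\}\in A^*,\ 0\in\cdom A\} = \mul A^*$, since $0\in\cdom A$ always. Alternatively, $\mul(A_\infty)^* = (\dom A_\infty)^\perp = (\dom A)^\perp = \mul A^*$ by Lemma~\ref{lemrelate} together with \eqref{++ssf0}, which gives a second, independent check.

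There is essentially no serious obstacle here; the only point requiring a little care is the computation of $(\{0\}\times\mul A^*)^*$ — one must not confuse the pairing $\langle\cdot,\cdot\rangle$ (with its sign) with the ordinary inner product on $\sH\times\sH$, and one should note that it is the \emph{closure} of $\mul A^*$ that enters the orthogonal complement, which here is harmless because $\mul A^*$ is already closed. I would present the argument in the order above: first record $(\{0\}\times\mul A^*)^* = \cdom A\times\sH$ as a short computation, then invoke Lemma~\ref{basiclem} to intersect with $A^*$ and read off \eqref{+ssf}, and finally extract \eqref{++ssf} by inspecting domains and multivalued parts.
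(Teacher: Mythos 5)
Your proposal is correct and follows essentially the same route as the paper: apply Lemma~\ref{basiclem} to the componentwise sum \eqref{+SF}, identify $(\{0\}\times\mul A^*)^*=\cdom A\times\sH$ via $\mul A^*=(\dom A)^\perp$, and intersect with $A^*$ to obtain \eqref{+ssf} and hence \eqref{++ssf}. The extra care you take with the sign in the pairing and with the closure of $\mul A^*$ is exactly the detail the paper leaves implicit.
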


\begin{proof}
It follows from \eqref{+SF} and Lemma~\ref{basiclem} that
\[
 (A_\infty)^*= A^* \cap (\cdom A \times \sH),
\]
which leads to the description \eqref{++ssf} and the identities in \eqref{++ssf}.
\end{proof}

\subsection{Formally domain tight and domain tight relations}

A relation $A$ in a Hilbert space $\sH$ is said to be
\textit{formally domain tight}\index{relation!formally domain
tight} if
\begin{equation}\label{eq0}
 \dom A \subset \dom A^*.
\end{equation}
Formally normal and symmetric relations are formally domain tight.
If a relation $A$ is formally domain tight, then \eqref{eq0} shows
that
\begin{equation}\label{eq1+}
(\mul A \subset )\,\, \mul A^{**} \subset \mul A^*.
\end{equation}
A densely defined formally domain tight relation $A$ is (the graph
of) a closable operator, i.e., $\mul A^{**}=\{0\}$. Furthermore,
for a formally domain tight relation $A$ it follows that
\begin{equation}\label{denhaag}
 \mul A^* \subset \mul A \quad \Longrightarrow \quad \mul A =\mul
 A^*=\mul A^{**}.
\end{equation}
A relation $A$ in a Hilbert space $\sH$ is said to be
\textit{domain tight}\index{relation!domain tight} if
\begin{equation}\label{eq}
 \dom A=\dom A^*.
\end{equation}
Normal and selfadjoint relations are domain tight. If a relation
$A$ is domain tight, then
\begin{equation}\label{eq1++}
 \mul A^{**} = \mul A^*.
\end{equation}
A domain tight relation $A$ is densely defined if and only if $A$ is
(the graph of) a closable operator, i.e., $\mul A^{**}=\{0\}$.

\begin{remark}
The notions of formally domain tight and domain tight relations seem
to be new. It is clear that symmetric, formally normal, and
subnormal relations may be viewed as prototypes of formally domain
tight relations and that selfadjoint and normal relations may be
viewed as prototypes of domain tight relations. Densely defined
domain tight symmetric or formally normal operators must necessarily
be selfadjoint or normal, respectively;  on the other hand, domain
tight symmetric or domain tight formally normal relations are
selfadjoint or normal when extra information about the multivalued
parts is provided; cf. Corollary \ref{sysa}.
For subnormal operators the situation is different: in principle
they are not domain tight (see \cite{ass} for some discussion) but
even if they are, they may not be normal as their normal extensions
in most cases go beyond the initial space; this is less visible in
the case of relations and Section \ref{uhaha} sheds some more light
on that.
\end{remark}

\begin{remark}
Further examples of formally domain tight and domain tight relations
or operators come  from the $q$-deformation of the above mentioned
classes. This is motivated by the theory of quantum groups; the
relevant Hilbert space operators were introduced
by S. \^Ota \cite{ota1}, \cite{ota2}.
The balanced operators proposed by S.L. Woronowicz \cite{Wo} appear
to be in the same spirit.
\end{remark}

\begin{lemma}\label{haag}
Let $A$ be a relation in a Hilbert space $\sH$. Then
\begin{enumerate}\def\labelenumi{\rm (\roman{enumi})}
\item
$A$ is formally domain tight if and only if
\begin{equation}\label{denhaag1}
 \dom A \subset \cdom A \cap \dom A^*;
\end{equation}

\item
if $A$ is domain tight then
\begin{equation}\label{denhaag2}
 \dom A = \cdom A \cap \dom A^*.
\end{equation}
\end{enumerate}
\end{lemma}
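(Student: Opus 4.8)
The plan is to deduce everything from the single trivial fact that $\dom A \subset \cdom A$ for any relation, combined with the defining inclusions \eqref{eq0} and \eqref{eq}.

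For part (i), suppose first that $A$ is formally domain tight, so that $\dom A \subset \dom A^*$ by \eqref{eq0}. Since always $\dom A \subset \cdom A$, intersecting the two inclusions gives $\dom A \subset \cdom A \cap \dom A^*$, which is \eqref{denhaag1}. Conversely, \eqref{denhaag1} immediately yields $\dom A \subset \dom A^*$, i.e.\ \eqref{eq0}, so $A$ is formally domain tight. For part (ii), if $A$ is domain tight then $\dom A = \dom A^*$ by \eqref{eq}; as before $\dom A \subset \cdom A$, so $\dom A \subset \cdom A \cap \dom A^*$, and conversely $\cdom A \cap \dom A^* \subset \dom A^* = \dom A$, whence equality holds in \eqref{denhaag2}. (Note that in (ii) only the implication is claimed, since $\dom A = \cdom A \cap \dom A^*$ would give back merely $\dom A \subset \dom A^*$, not $\dom A = \dom A^*$.)

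There is no genuine obstacle here: the statement is a repackaging of the defining inclusions, the only ingredient being the harmless $\dom A \subset \cdom A$. If one wishes to highlight the point of these reformulations, one can invoke Lemma \ref{queen}: since $\dom (A_\infty)^* = \cdom A \cap \dom A^*$, conditions \eqref{denhaag1} and \eqref{denhaag2} say precisely that $\dom A \subset \dom (A_\infty)^*$, respectively $\dom A = \dom (A_\infty)^*$, thereby linking formal domain tightness and domain tightness of $A$ to the behaviour of the extension $A_\infty$.
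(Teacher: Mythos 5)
Your proof is correct and is essentially identical to the paper's: part (i) is the observation that $\dom A \subset \cdom A$ always holds, so \eqref{eq0} and \eqref{denhaag1} are equivalent, and part (ii) adds the reverse inclusion $\cdom A \cap \dom A^* \subset \dom A^* = \dom A$. Your closing remark tying \eqref{denhaag1} and \eqref{denhaag2} to $\dom (A_\infty)^*$ via Lemma~\ref{queen} is exactly the use the paper later makes of this lemma in Proposition~\ref{codi}.
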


\begin{proof}
(i) The inclusion $\dom A \subset \dom A^*$ is equivalent to the
inclusion in \eqref{denhaag1}.

(ii) If $A$ is formally domain tight, then \eqref{denhaag1} gives
\[
 \dom A \subset \cdom A \cap \dom A^* \subset \dom A^*.
\]
Hence, if $A$ is domain tight, then \eqref{denhaag2} follows.
\end{proof}

If $B$ is a formally domain tight relation, then any restriction
$A$ of $B$, i.e., $A \subset B$, is also formally domain tight; see \eqref{AB*}.
The following lemma contains a kind of converse statement.

\begin{lemma}\label{ciekawe}
Let $A$ and $B$ be relations in a Hilbert space $\sH$ which
satisfy $A \subset B$. If $A$ is domain tight and $B$ is formally
domain tight, then $ B$ is domain tight.
\end{lemma}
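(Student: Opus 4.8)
The plan is to chase domains and multivalued parts through the three relations $A\subset B\subset B^*$ and use the characterizations of (formally) domain tight relations already established. Since $A$ is domain tight we have $\dom A=\dom A^*$, and since $A\subset B$ we get $\dom A\subset\dom B$ and, by \eqref{AB*}, $B^*\subset A^*$, hence $\dom B^*\subset\dom A^*=\dom A\subset\dom B$. So already $\dom B^*\subset\dom B$, and what remains is the reverse inclusion $\dom B\subset\dom B^*$, which is exactly the assumption that $B$ is formally domain tight. Thus the statement is almost immediate once one writes down the chain of inclusions — the real content is just bookkeeping.

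First I would record the inclusions coming from $A\subset B$: applying \eqref{AB*} gives $B^*\subset A^*$, so $\dom B^*\subset\dom A^*$. Next I would invoke domain tightness of $A$ in the form $\dom A^*=\dom A$ from \eqref{eq}. Then the trivial inclusion $\dom A\subset\dom B$ (again from $A\subset B$) yields $\dom B^*\subset\dom A^*=\dom A\subset\dom B$. Combining this with the hypothesis $\dom B\subset\dom B^*$ that expresses formal domain tightness of $B$ gives $\dom B=\dom B^*$, i.e.\ $B$ is domain tight.

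I expect there to be essentially no obstacle: the lemma is a direct diagram chase on domains, and every ingredient — the monotonicity \eqref{AB*} of the adjoint, and the defining equalities/inclusions \eqref{eq0} and \eqref{eq} for formally domain tight and domain tight relations — is already available in the excerpt. If one wanted to phrase the argument via multivalued parts instead of domains one could equally use \eqref{impor0}, \eqref{eq1+}, and \eqref{eq1++}, but the domain formulation is the cleanest. The only point to be slightly careful about is that $A\subset B$ does not by itself say anything about $\dom B$ versus $\dom B^*$ in general; it is precisely the hypothesis on $B$ that closes the loop, and the hypothesis on $A$ that forces $\dom B^*$ down into $\dom A\subset\dom B$. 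So the proof is just these two observations chained together.
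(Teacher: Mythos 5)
Your argument is correct and is essentially the paper's own proof: both use $A\subset B\Rightarrow B^*\subset A^*$ together with $\dom A=\dom A^*$ and $\dom B\subset\dom B^*$ to close the chain of inclusions $\dom A\subset\dom B\subset\dom B^*\subset\dom A^*=\dom A$. Nothing is missing.
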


\begin{proof}
The inclusion $A \subset B$ implies that $B^* \subset A^*$.
Therefore, it follows that
\begin{equation} \label{nowy}
\dom A \subset \dom B, \quad \dom B^* \subset \dom A^*.
\end{equation}
The assumptions on $A$ and $B$ are
\[
\dom A=\dom A^*, \quad \dom B \subset \dom B^*.
\]
Combining these assumptions with the inclusions in \eqref{nowy}
gives
\[
\dom A \subset \dom B \subset \dom B^* \subset \dom A^*=\dom A,
\]
which leads to $\dom B=\dom B^*$, i.e., $B$ is domain tight.
\end{proof}

\begin{remark}\label{tight}
Let $A$ be a relation in a Hilbert space. Then clearly
\[
A \quad \mbox{domain tight} \quad \Longrightarrow \quad A \quad
\mbox{and} \quad A^* \quad \mbox{formally domain tight}.
\]
Moreover, if $\dom A^{**}=\dom A$, then
\[
  A \quad \mbox{and} \quad A^* \quad \mbox{formally domain tight}
  \quad \Longrightarrow \quad A \quad \mbox{domain tight}.
\]
If $A^{**}$ is formally domain tight, then $A$ is formally domain
tight.
\end{remark}

The relation $A_\infty$ introduced in \eqref{+SF} can be used to obtain
a characterization for $A$ to be domain tight or formally domain tight.

\begin{proposition}\label{codi}
Let $A$ be a relation in a Hilbert space $\sH$ and let the extension
$A_\infty$ of $A$ be defined by \eqref{+SF}. Then
\begin{enumerate}\def\labelenumi{\rm (\roman{enumi})}
\item
$A$ is formally domain tight if and only if
$A_\infty$ is formally domain tight;

\item
$A_\infty$ is domain tight if and only if\/ $\dom
A=\cdom A \cap \dom A^*$;

\item
$A$ is domain tight if and only if $A_\infty$ is domain
tight and $\dom A^*\subset \cdom A$. Furthermore, in this case
$(A_\infty)^*=A^*=(A^*)_\infty$.
\end{enumerate}
\end{proposition}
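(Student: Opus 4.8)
The plan is to deduce all three statements directly from the descriptions of $A_\infty$ and $(A_\infty)^*$ that are already available. Recall from \eqref{++ssf0} that $\dom A_\infty=\dom A$, and from Lemma~\ref{queen} that $(A_\infty)^*$ is the restriction of $A^*$ described in \eqref{+ssf}, so that $\dom(A_\infty)^*=\cdom A\cap\dom A^*$ and $\mul(A_\infty)^*=\mul A^*$ by \eqref{++ssf}. The only elementary ingredient used below is the inclusion $\dom A\subset\cdom A$.

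For (i): the relation $A_\infty$ is formally domain tight exactly when $\dom A_\infty\subset\dom(A_\infty)^*$, i.e.\ $\dom A\subset\cdom A\cap\dom A^*$; since $\dom A\subset\cdom A$ holds automatically, this amounts to $\dom A\subset\dom A^*$, which by Lemma~\ref{haag}(i) is precisely the statement that $A$ is formally domain tight. For (ii): the relation $A_\infty$ is domain tight exactly when $\dom A_\infty=\dom(A_\infty)^*$, i.e.\ $\dom A=\cdom A\cap\dom A^*$, which is the asserted identity verbatim.

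For (iii): if $A$ is domain tight, then $\dom A^*=\dom A\subset\cdom A$ and $\cdom A\cap\dom A^*=\cdom A\cap\dom A=\dom A$, so by (ii) the relation $A_\infty$ is domain tight; this settles one direction. Conversely, assume $A_\infty$ is domain tight and $\dom A^*\subset\cdom A$. Then $\cdom A\cap\dom A^*=\dom A^*$, and (ii) gives $\dom A=\cdom A\cap\dom A^*=\dom A^*$, so $A$ is domain tight. For the additional identities, assume $A$ domain tight. Then $\dom A^*=\dom A\subset\cdom A$, so every $\{f,f'\}\in A^*$ has $f\in\cdom A$ and \eqref{+ssf} collapses to $(A_\infty)^*=A^*$. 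Moreover $\cdom A=\cdom A^*$, so Lemma~\ref{easy} (the equivalence \eqref{impor2}) gives $\mul A^{**}=\mul A^*$; since $(A^*)_\infty=A^*\hplus(\{0\}\times\mul A^{**})$ and $\mul A^{**}\subset\mul A^*$, adjoining $\{0\}\times\mul A^{**}$ to $A^*$ changes nothing, whence $(A^*)_\infty=A^*$.

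All steps are routine bookkeeping with the formulas from Lemma~\ref{queen}; the only place deserving a second look is the identity $(A^*)_\infty=A^*$ in the last clause, which rests on domain tightness of $A$ forcing $\mul A^{**}=\mul A^*$ through Lemma~\ref{easy}.
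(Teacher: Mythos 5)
Your proof is correct and follows essentially the same route as the paper: both arguments reduce everything to the identities $\dom A_\infty=\dom A$ and $\dom(A_\infty)^*=\cdom A\cap\dom A^*$ from Lemma~\ref{queen}, invoke Lemma~\ref{haag} (explicitly or in the equivalent unwound form using $\dom A\subset\cdom A$) for parts (i) and (iii), and obtain $(A^*)_\infty=A^*$ from $\mul A^{**}=\mul A^*$, which the paper cites as \eqref{eq1++} and you derive from Lemma~\ref{easy}. No gaps.
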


\begin{proof}
(i) According to \eqref{++ssf0} and \eqref{++ssf} the relation
$A_\infty$ is formally domain tight (i.e., $\dom A_\infty \subset
\dom (A_\infty)^*$) if and only if
\begin{equation*} \label{+++ssf}
   \dom A \subset \cdom A \cap \dom A^*.
\end{equation*}
Hence, the statement follows from Lemma \ref{haag}.

(ii) The assertion follows from \eqref{++ssf0} and \eqref{++ssf}.

(iii) Let $A$ be domain tight. Then $\dom A=\cdom A \cap \dom A^*$
by Lemma \ref{haag}. Hence, $A_\infty$ is domain tight
by (ii). Moreover, $\dom A^*=\dom A\subset \cdom A$.

Conversely, if $A_\infty$ is domain tight and $\dom A^*\subset \cdom A$,
then part (ii) implies that $\dom A=\cdom A \cap \dom A^*=\dom A^*$.
Thus, $A$ is domain tight.

It is clear for a domain tight relation $A$ that
\[
 \{\,\{f,f'\} \in A^* \,;\; f \in \cdom A \,\}=A^*.
\]
Hence, the identity  \eqref{+ssf} implies that $(A_\infty)^*=A^*$.
In general, $(A^*)_\infty$ is an extension of $A^*$, and
$A^*=(A^*)_\infty$ if and only if $\mul A^{**} \subset \mul A^*$.
Therefore, if $A$ is domain tight, the identity \eqref{eq1++}
implies that $A^*=(A^*)_\infty$.
\end{proof}

\begin{lemma}\label{marci}
Let $A$ be a relation in a Hilbert space $\sH$ and let the extension
$A_\infty$ of $A$ be defined by \eqref{+SF}. If $A$ is formally
domain tight, then
\begin{enumerate}\def\labelenumi{\rm (\roman{enumi})}
\item
$\mul A_\infty=\mul A^*$;

\item
$A_\infty=A$ if and only $\mul A^* = \mul A$;

\item
$A \cap (\{0\} \times \mul A^*)=\{0\} \times \mul
A$, and the sum in \eqref{+SF} is direct if and only if $A$ is an
operator;

\item
$A_\infty$ is an operator if and only if $A$ is
densely defined.
\end{enumerate}
Moreover, if $A$ is domain tight and $\mul A^{**}=\mul A$,  then
$A_\infty=A$. In particular, if $A$ is domain tight and closed, then
$A_\infty=A$.
\end{lemma}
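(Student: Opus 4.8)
The plan is to dispatch the four numbered items in turn, in each case exploiting the single inclusion $\mul A\subset \mul A^*$, which holds for a formally domain tight relation by \eqref{eq1+}, together with the elementary identities for $A_\infty$ recorded in \eqref{++ssf0}. For (i): by \eqref{++ssf0} one has $\mul A_\infty=\mul A+\mul A^*$, and since $\mul A\subset \mul A^*$ this sum collapses to $\mul A^*$. For (ii): the relation $A_\infty$ always contains $A$, and $A_\infty=A$ if and only if $\{0\}\times\mul A^*\subset A$, that is, $\mul A^*\subset\mul A$ (as already noted in the text following \eqref{++ssf0}); combining this with $\mul A\subset\mul A^*$ shows that $A_\infty=A$ if and only if $\mul A^*=\mul A$.

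For (iii): an element of $A\cap(\{0\}\times\mul A^*)$ is necessarily of the form $\{0,g\}$ with $g\in\mul A\cap\mul A^*$, and formal domain tightness gives $\mul A\cap\mul A^*=\mul A$, whence $A\cap(\{0\}\times\mul A^*)=\{0\}\times\mul A$. The componentwise sum in \eqref{+SF} is direct precisely when this intersection is trivial, i.e.\ when $\mul A=\{0\}$, i.e.\ when $A$ is (the graph of) an operator; equivalently one may invoke the criterion $\mul A\cap\mul A^*=\{0\}$ stated after \eqref{+SF}. For (iv): $A_\infty$ is an operator if and only if $\mul A_\infty=\{0\}$, and by (i) this multivalued part equals $\mul A^*=(\dom A)^\perp$ by Lemma~\ref{lemrelate}, which is trivial exactly when $\dom A$ is dense in $\sH$.

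For the final assertions, suppose $A$ is domain tight. Then $A$ is in particular formally domain tight and, by \eqref{eq1++}, $\mul A^{**}=\mul A^*$; the additional hypothesis $\mul A^{**}=\mul A$ then yields $\mul A^*=\mul A$, so $A_\infty=A$ by (ii). If in addition $A$ is closed, then $A=A^{**}$, so $\mul A^{**}=\mul A$ holds automatically and the preceding line applies. I do not anticipate any genuine difficulty here: the content is entirely bookkeeping, and the only points that require care are to keep track of which conclusions need merely ``formally domain tight'' and which need the stronger ``domain tight,'' and to use $\mul A^*=(\dom A)^\perp$ at the right place in~(iv).
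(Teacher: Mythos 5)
Your proof is correct and follows essentially the same route as the paper: each item is reduced to the inclusion $\mul A\subset\mul A^*$ from \eqref{eq1+} together with the identities \eqref{++ssf0}, and the final assertions are deduced from \eqref{eq1++} and item (ii) exactly as in the original. No gaps.
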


\begin{proof}
(i) Since $A$ is formally domain tight, \eqref{eq1+} shows that
$\mul A \subset \mul A^*$. This shows the assertion.

(ii) Note that $A=A_\infty$ if and only if $\mul A^* \subset \mul
A$. If $A$ is formally domain tight, then \eqref{denhaag} implies
that the inclusion  $\mul A^* \subset \mul A$ is equivalent to the
identity $\mul A^*=\mul A$.

(iii) Since $A$ is formally domain tight,  the inclusion $\mul A
\subset \mul A^*$ in \eqref{eq1+} leads to the assertions.

(iv) If $A$ is densely defined, then $\mul A^*=\{0\}$, so that $\mul
A^{**}=\{0\}$ by \eqref{eq1+}, and $A$ is a closable operator.
Hence, $A_\infty$ is an operator. Conversely, if $A_\infty$ is an
operator, then necessarily $\mul A^*=\{0\}$, so that $A$ is densely
defined.

For the last statement, observe that \eqref{eq} implies \eqref{eq1++}.
The assumption $\mul A^{**} = \mul A$ implies that $\mul A^* = \mul
A$. The assertion now follows from (ii).
\end{proof}

\subsection{Selfadjointness of symmetric relations}

Let $A$ be a symmetric relation in a Hilbert space $\sH$. Then its
closure $A^{**}$ is formally domain tight, as the closure is
symmetric. If $A$ is densely defined, then $\mul A^{**}=\{0\}$, so
that, in fact, $A$ is a closable operator.

If $A$ is a selfadjoint relation, then, in particular,  $A$ is
symmetric, domain tight, and $\mul A^* \subset \mul A$.  \marginpar{changed}

\begin{lemma}\label{sysa}
Let $A$ be a symmetric domain tight  relation in a Hilbert space
$\sH$, such that $\mul A^{*} \subset \mul A$. Then $A$ is
selfadjoint. In particular, a closed domain tight symmetric relation
is selfadjoint.
\end{lemma}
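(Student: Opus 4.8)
The plan is to show that under the stated hypotheses $A^* \subset A$; combined with the symmetry assumption $A \subset A^*$ this yields $A = A^*$. The only real content is the inclusion $A^* \subset A$, and the idea is to use the domain tightness $\dom A = \dom A^*$ together with $\mul A^* \subset \mul A$ to reduce comparison of $A^*$ and $A$ to a single-valued comparison on a common domain.

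First I would take an arbitrary $\{h,h'\} \in A^*$ and try to produce $\{h, k\} \in A$ with the same first component. Since $h \in \dom A^* = \dom A$, there is some $k$ with $\{h,k\} \in A \subset A^*$. Then $\{0, h' - k\} = \{h,h'\} - \{h,k\} \in A^*$, so $h' - k \in \mul A^*$. By hypothesis $\mul A^* \subset \mul A$, hence $h' - k \in \mul A$, i.e. $\{0, h'-k\} \in A$. Adding this to $\{h,k\} \in A$ gives $\{h, h'\} \in A$. Thus $A^* \subset A$, and with symmetry $A = A^*$, so $A$ is selfadjoint.

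For the final sentence, I would invoke the observations already recorded in the excerpt: a closed symmetric relation is formally domain tight (being symmetric) and, by Lemma~\ref{krakow}-type reasoning is not needed here — rather, one uses that a closed domain tight symmetric relation automatically satisfies $\mul A^{**} = \mul A^*$ by \eqref{eq1++}, and since $A$ is closed, $\mul A = \mul A^{**} = \mul A^*$, so in particular $\mul A^* \subset \mul A$. Hence the first part of the lemma applies and $A$ is selfadjoint.

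I do not anticipate a serious obstacle: the argument is a short diagram chase exploiting that a relation and its adjoint, once they share a domain, differ only in their multivalued parts, and the extra hypothesis $\mul A^* \subset \mul A$ precisely closes that gap. The one point to be careful about is that for the ``in particular'' claim one must be sure the closedness gives $\mul A = \mul A^{**}$ (true since $A = A^{**}$ when $A$ is closed) and that domain tightness then forces $\mul A^{**} = \mul A^*$ via \eqref{eq1++}; chaining these equalities delivers the needed inclusion.
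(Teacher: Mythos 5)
Your argument is correct and coincides with the paper's own proof: the same diagram chase shows $A^*\subset A$ using $\dom A^*=\dom A$ and $\mul A^*\subset\mul A$, and the closed case is handled identically via $\mul A=\mul A^{**}=\mul A^*$ from \eqref{eq1++}. No issues.
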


\begin{proof}
It suffices to show that $A^* \subset A$.  Let $\{f,g\} \in A^*$, so
that $f \in \dom A^*=\dom A$, which implies that there is an element
$h$ such that $\{f,h\}\in A (\subset A^*)$. Hence, $g-h\in \mul A^*
\subset \mul A$. Therefore
\[
 \{f,g\}=\{f,h\}+\{0,g-h\} \in A.
\]
Hence, $A^*\subset A$, and thus $A$ is selfadjoint.

When $A$ is closed and domain tight,   it follows from  \eqref{eq1++}
that $\mul A^*=\mul A$. Hence, the last observation is clear.
\end{proof}

If $A$ is a symmetric relation, then, clearly, also the extension
$A_\infty$ is symmetric (for instance, see \eqref{funda++}). The
following result goes back to \cite{CS78}.

\begin{lemma}\label{CS}
Let $A$ be a relation in a Hilbert space $\sH$ and let the extension
$A_\infty$ of $A$ be defined by \eqref{+SF}. Then $A_\infty$ is
selfadjoint if and only if $A$ is symmetric and $\dom A=\cdom A \cap
\dom A^*$.
\end{lemma}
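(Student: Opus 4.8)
The plan is to prove the two implications separately, with Lemma~\ref{queen} doing most of the work: it supplies both the description of $(A_\infty)^*$ as a restriction of $A^*$ and the identity $\dom(A_\infty)^*=\cdom A\cap\dom A^*$. For the implication from left to right, suppose $A_\infty$ is selfadjoint. Then $A_\infty$ is symmetric, and since $A\subset A_\infty$ we get $(A_\infty)^*\subset A^*$ by \eqref{AB*}, so that $A\subset A_\infty\subset(A_\infty)^*\subset A^*$; thus $A$ is symmetric. Moreover a selfadjoint relation is domain tight, so $\dom A_\infty=\dom(A_\infty)^*$; substituting $\dom A_\infty=\dom A$ from \eqref{++ssf0} on the left and $\dom(A_\infty)^*=\cdom A\cap\dom A^*$ from \eqref{++ssf} on the right gives $\dom A=\cdom A\cap\dom A^*$ with no further effort.

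For the converse, assume $A$ is symmetric and $\dom A=\cdom A\cap\dom A^*$. First observe that $A_\infty$ is symmetric: an arbitrary element of $A_\infty$ has the form $\{f,g+m\}$ with $\{f,g\}\in A$ and $m\in\mul A^*=(\dom A)^\perp$, so $(g+m,f)=(g,f)\in\dR$ because $f\in\dom A$; hence $A_\infty\subset(A_\infty)^*$. It remains to prove $(A_\infty)^*\subset A_\infty$. Let $\{f,f'\}\in(A_\infty)^*$. By \eqref{++ssf} one has $f\in\dom(A_\infty)^*=\cdom A\cap\dom A^*$, which by hypothesis equals $\dom A$, so there is $h$ with $\{f,h\}\in A$. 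Since $A$ is symmetric, $\{f,h\}\in A^*$, and by Lemma~\ref{queen} also $\{f,f'\}\in A^*$; subtracting, $f'-h\in\mul A^*$. Therefore $\{f,f'\}=\{f,h\}+\{0,f'-h\}\in A\hplus(\{0\}\times\mul A^*)=A_\infty$. Combining the two inclusions yields $A_\infty=(A_\infty)^*$.

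The argument is short and there is no genuine obstacle; the one point that must be placed carefully is the role of the domain hypothesis. The quantity $\dom(A_\infty)^*$ is intrinsically $\cdom A\cap\dom A^*$ irrespective of any assumption on $A$, and it is exactly the equality $\dom A=\cdom A\cap\dom A^*$ that allows the passage from $f\in\dom(A_\infty)^*$ to the existence of $h$ with $\{f,h\}\in A$ — the only nontrivial step in the inclusion $(A_\infty)^*\subset A_\infty$. Everything else is bookkeeping with \eqref{+SF}, \eqref{++ssf0}, and \eqref{++ssf}; one could instead route the domain-tightness and symmetry translations through Lemma~\ref{haag}, but the direct appeal to Lemma~\ref{queen} is the cleanest.
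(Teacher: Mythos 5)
Your proof is correct and follows essentially the same route as the paper: both directions hinge on the identity $\dom(A_\infty)^*=\cdom A\cap\dom A^*$ from Lemma~\ref{queen}, and your converse argument (producing $h$ with $\{f,h\}\in A$ and writing $\{f,f'\}=\{f,h\}+\{0,f'-h\}$ with $f'-h\in\mul A^*$) is exactly the proof of Lemma~\ref{sysa} inlined, where the paper instead cites Proposition~\ref{codi}, Lemma~\ref{sysa}, and Lemma~\ref{marci}. The only difference is that yours is self-contained where the paper's is modular.
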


\begin{proof}
($\Longrightarrow$) If $A_\infty$ is selfadjoint, then $A$ is
symmetric and $A_\infty$ is domain tight, so that $\dom A=\cdom A
\cap \dom A^*$; cf. Proposition \ref{codi}.

($\Longleftarrow$) If $A$ is symmetric and $\dom A=\cdom A \cap
\dom A^*$, then $A_\infty$ is symmetric and domain tight. Moreover
$\mul (A_\infty)^*=\mul A^*$ by Lemma \ref{queen} and $\mul
A_\infty=\mul A^*$ by Lemma \ref{marci}, so that $A_\infty$ is
selfadjoint; cf. Lemma \ref{sysa}.
\end{proof}

\subsection{Extensions in larger Hilbert spaces}\label{uhaha}

Let $\sH$ and $\sK$ be two Hilbert space with the inclusion
$\sH\subset\sK$ being isometric. Let $A$
be a relation in the Hilbert space $\sH$ and let $B$ be a relation
in the Hilbert space $\sK$. Assume that $B$ is an extension of $A$,
i.e.,
\begin{equation} \label{iwota08}
A\subset B.
\end{equation}
Then it is clear that
\begin{equation} \label{iwota10}
\dom A\subset \dom B \cap \sH, \quad P(\dom B^*) \subset \dom A^*,
\end{equation}
where $P$ is the orthogonal projection of $\sK$ onto $\sH$. The
assumption $A \subset B$ implies the first inclusion in
\eqref{iwota10} trivially and it implies the second inclusion in
\eqref{iwota10} since $\{Pf,Pg\} \in A^*$ for all $\{f,g\} \in
B^*$. The relation $B$ is said to be a {\em tight
extension}\,\index{extension!tight} of $A$ if
\begin{equation*}
\dom A=\dom B\cap\sH,
\end{equation*}
and, likewise, $B$ is said to be a {\em $*$-tight
extension}\index{extension!$*$-tight} of $A$ if
\begin{equation*}
P(\dom B^*)=\dom A^*.
\end{equation*}
Tight and $*$-tight extensions will be discussed only in this
subsection. If the relation $B$ is formally domain tight in $\sK$, then
\begin{equation}\label{mary}
\dom B \cap \sH \subset P(\dom B^*).
\end{equation}
Hence, if $B$ is a tight and $*$-tight extension of $A$, and if $B$
is formally domain tight in $\sK$, then \eqref{mary} shows that $A$
is formally domain tight in $\sH$. The next result is a counterpart
to Lemma \ref{ciekawe}.

\begin{lemma}\label{ciekawe1}
Let $A$ be a relation in the Hilbert space $\sH$ and let $B$ be a
relation in the Hilbert space $\sK$ which satisfy \eqref{iwota08}.
\begin{enumerate}\def\labelenumi{\rm (\roman{enumi})}
\item
If $A$ is domain tight in $\sH$ and  $B$  is
formally domain tight in $\sK$, then
\begin{equation}\label{iw}
\dom B\cap\sH=P(\dom B^*),
\end{equation}
and $B$ is a tight and $*$-tight extension of $A$.

\item
If the identity \eqref{iw} holds and if $B$ is a tight
and $*$-tight extension of $A$, then $A$ is domain tight in $\sH$.
\end{enumerate}
\end{lemma}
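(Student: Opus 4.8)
The plan is to treat the two parts in turn, exploiting the basic inclusions \eqref{iwota10} together with the notions of (formally) domain tight relations as characterized in Lemma \ref{haag}. For part (i), I would start from the hypotheses $\dom A=\dom A^*$ (domain tightness of $A$ in $\sH$) and $\dom B\subset\dom B^*$ (formal domain tightness of $B$ in $\sK$). The second of these, together with the fact that $P$ is a contraction, gives $\dom B\cap\sH\subset P(\dom B)\subset P(\dom B^*)$, which is precisely \eqref{mary}; this is the inclusion that always holds for formally domain tight $B$ and does not yet use domain tightness of $A$. For the reverse inclusion $P(\dom B^*)\subset\dom B\cap\sH$, I would chain the two inclusions in \eqref{iwota10} with the hypothesis on $A$: indeed $P(\dom B^*)\subset\dom A^*=\dom A\subset\dom B$, and since $\dom A\subset\sH$ the right-hand side in fact lies in $\dom B\cap\sH$. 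Combining the two inclusions yields \eqref{iw}. Once \eqref{iw} is in hand, the tight and $*$-tight properties follow by squeezing: from $\dom A\subset\dom B\cap\sH$ and $P(\dom B^*)\subset\dom A^*=\dom A$ together with \eqref{iw} one gets $\dom A\subset\dom B\cap\sH=P(\dom B^*)\subset\dom A$, forcing equality throughout, i.e.\ $\dom A=\dom B\cap\sH$ (tightness) and $\dom A^*=\dom A=P(\dom B^*)$ ($*$-tightness).

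For part (ii) the argument runs in the opposite direction. Now the hypotheses are the identity \eqref{iw}, $\dom A=\dom B\cap\sH$ (tightness), and $P(\dom B^*)=\dom A^*$ ($*$-tightness). Substituting the tightness and $*$-tightness identities into \eqref{iw} immediately gives $\dom A=\dom B\cap\sH=P(\dom B^*)=\dom A^*$, which is exactly the statement that $A$ is domain tight in $\sH$. So part (ii) is essentially a direct substitution, with no extra input needed beyond the three hypotheses.

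I do not expect any genuine obstacle here: the whole lemma is a formal manipulation of set inclusions, with Lemma \ref{haag} and the always-valid inclusions \eqref{iwota10} and \eqref{mary} doing the conceptual work. The only point requiring a little care is to make sure, in part (i), that one really closes the loop of inclusions in the right order — in particular that the inclusion $P(\dom B^*)\subset\dom A^*$ from \eqref{iwota10} is combined with $\dom A^*=\dom A$ \emph{before} invoking $\dom A\subset\dom B$, so that the resulting set lands inside $\dom B\cap\sH$ rather than merely inside $\dom B$. Everything else is bookkeeping.
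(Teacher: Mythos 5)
Your proof is correct and follows essentially the same route as the paper: the paper simply writes the single chain $\dom A \subset \dom B \cap \sH \subset P(\dom B^*) \subset \dom A^*$ (using \eqref{iwota10} and formal domain tightness of $B$) and observes that domain tightness of $A$ collapses it to equalities, which is exactly your squeeze argument rearranged. The only cosmetic quibble is that $\dom B\cap\sH\subset P(\dom B)$ holds because $P$ restricts to the identity on $\sH$, not because $P$ is a contraction; the conclusion is unaffected.
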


\begin{proof}
(i) If the extension $B$ of $A$ is formally domain tight in $\sK$,
then
\begin{equation}\label{william}
\dom A \subset \dom B \cap \sH \subset P (\dom B^*) \subset \dom
A^*.
\end{equation}
The second inclusion follows from
$\dom B \subset \dom B^*$.
The other inclusions follow from \eqref{iwota10}.
The assumption that $A$ is domain tight
in $\sH$ and the inclusions in \eqref{william} imply the identity in
\eqref{iw}. In particular,
$B$ is a tight and $*$-tight extension of $A$.

(ii) Assume that the identity \eqref{iw} holds and that
$B$ is a tight and $*$-tight extension of $A$.
By the definitions of tight and $*$-tight
extensions it follows that $\dom A=\dom A^*$.
\end{proof}

If $B$ is a tight extension of $A$, then any tight extension of $B$
is again a tight extension of $A$. There is a similar statement for $*$-tight
extensions of $A$.

A densely defined symmetric operator always has a tight selfadjoint
extension; a detailed argument is given in \cite{ass}, which in turn
implements the suggestion made in \cite{AG}, where a tight extension
is called an extension of the second kind. A densely defined
subnormal operator need not have any tight normal extensions; an
example of \^Ota \cite{ota} gives a negative answer to the question
in \cite{ass}.

Tight and $*$-tight extensions as discussed in \cite{ccr} are
essential in identifying solutions of the commutation relation of
the $q$-harmonic oscillator as $q$-creation operators when $q>1$, in
which case nonuniqueness of normal extensions occurs, see
\cite[Theorem 21]{qccr}.

\subsection{Range tight relations}

Let $A$ be a relation in a Hilbert space $\sH$. The notions of
formally domain tight and domain tight refer to properties
relative to the domains $\dom A$ and $\dom A^*$. Similar notions
exist relative to the ranges $\ran A$ and $\ran A^*$.
A relation $A$ in a Hilbert space $\sH$ is said to be
\textit{formally range tight}\index{relation!formally range
tight} if
\[
 \ran A \subset \ran A^*,
\]
and it is said to be
\textit{range tight}\index{relation!range tight} if \[
 \ran A=\ran A^*.
\]
Clearly, a relation $A$ is (formally) range tight if and only if the relation $A^{-1}$ is (formally)
domain tight.  Hence, all earlier statements for
(formally) domain tight relations have their counterparts for (formally)
range tight relations. As an example consider the following consequence of
Lemma \ref{sysa}.

Let $A$ be a symmetric range tight relation in a Hilbert space
$\sH$, such that $\ker A^{*} \subset \ker A$. Then $A$ is
selfadjoint. In particular, a closed range tight symmetric
relation is selfadjoint. The same result for densely defined
closed range tight symmetric operators was obtained independently
by Z. Sebestyen and Z. Tarcsay (personal comunication).

\subsection{Maximality with respect to the numerical range}

The following results are included for completeness. In some form or
other they go back to R.~McKelvey (unpublished lecture notes) and
F.S. Rofe-Beketov \cite{RB}; see also \cite{HSSW??}.

\begin{lemma}\label{numRange2}
Let $A$ be a relation in a Hilbert space $\sH$ with $\cW (A) \ne
\dC$. Let $\lambda \notin \clos \cW (A)$, i.e.,  $d(\lambda) = \dist
(\lambda,\clos \cW (A))>0$, then
\begin{enumerate}\def\labelenumi{\rm (\roman{enumi})}

\item $(A-\lambda)^{-1}$ is a bounded linear operator with
\begin{equation}\label{nR-1}
\|(A-\lambda)^{-1}\| \le 1/d(\lambda) ;
\end{equation}

\item $\mul A \subset \mul A^*$.

\end{enumerate}
\end{lemma}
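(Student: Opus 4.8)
The plan is to derive both parts from one elementary estimate relating $\|f'-\lambda f\|$ to the distance $d(\lambda)=\dist(\lambda,\clos\cW(A))$. First I would record that for $\{f,f'\}\in A$ with $\|f\|=1$ one has $(f',f)\in\cW(A)$, hence $|(f',f)-\lambda|\ge d(\lambda)$; since $(f'-\lambda f,f)=(f',f)-\lambda$, Cauchy--Schwarz gives $\|f'-\lambda f\|\ge|(f'-\lambda f,f)|\ge d(\lambda)$. Rescaling an arbitrary $\{f,f'\}\in A$ with $f\ne0$ (the case $f=0$ being trivial) then yields
\[
  \|f'-\lambda f\|\ge d(\lambda)\,\|f\|,\qquad \{f,f'\}\in A.
\]

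For part (i), this inequality says precisely that $\lambda\in\gamma(A)$ with constant $c(\lambda)=d(\lambda)>0$, so by the definition of a point of regular type $(A-\lambda)^{-1}$ is (the graph of) a bounded linear operator defined on $\ran(A-\lambda)$. Writing $g=f'-\lambda f$, so that $\{g,f\}\in(A-\lambda)^{-1}$, the inequality reads $\|f\|\le d(\lambda)^{-1}\|g\|$, which is exactly the bound \eqref{nR-1}.

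For part (ii), I would use Lemma~\ref{lemrelate}, by which $\mul A^*=(\dom A)^\perp$; thus it suffices to prove $\mul A\perp\dom A$. Arguing by contradiction, suppose $h\in\mul A$ and $f\in\dom A$ satisfy $(h,f)\ne0$, so that $f\ne0$. Pick $f'$ with $\{f,f'\}\in A$; since $A$ is a linear subspace containing both $\{f,f'\}$ and $\{0,h\}$, one has $\{f,f'+th\}\in A$ for every $t\in\dC$, and after normalization
\[
  \frac{(f',f)+t(h,f)}{\|f\|^2}\in\cW(A),\qquad t\in\dC.
\]
Since $(h,f)\ne0$, the left-hand side takes every value in $\dC$ as $t$ varies; choosing $t$ so that it equals $\lambda$ contradicts $\lambda\notin\clos\cW(A)\supseteq\cW(A)$. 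Hence $(h,f)=0$ for all $f\in\dom A$, i.e.\ $h\in(\dom A)^\perp=\mul A^*$, which proves $\mul A\subset\mul A^*$.

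I do not expect any real obstacle: the content is just the two observations above. The only things needing mild care are the harmless exclusion of $f=0$ in the normalization and, for the hypothesis to be non-vacuous, the fact (via Proposition~\ref{convex}) that a proper convex numerical range has proper closure, so that points $\lambda$ with $d(\lambda)>0$ exist. As a byproduct, part (ii) shows that every relation with $\cW(A)\ne\dC$ is formally domain tight.
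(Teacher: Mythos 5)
Your proof is correct and follows essentially the same route as the paper's: part (i) is the Cauchy--Schwarz estimate $\|f'-\lambda f\|\ge|(f'-\lambda f,f)|\ge d(\lambda)\|f\|$ identifying $\lambda$ as a point of regular type, and part (ii) uses the fact that adding $t h$, $h\in\mul A$, to a fixed pair $\{f,f'\}$ would sweep out all of $\dC$ in the numerical range unless $(h,f)=0$, combined with $\mul A^*=(\dom A)^\perp$. No gaps.
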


\begin{proof}
(i) Let $\lambda \notin \clos \cW (A)$ and let $\{f,f'\} \in A$ with
$\|f\|=1$. Then
\[
 (f',f)-\lambda=(f',f)-\lambda (f,f)=(f'-\lambda f,f),
\]
so that
\[
 d(\lambda) \le |(f',f)-\lambda| \le \|f'-\lambda f\|,
\quad \{f,f'-\lambda f\} \in  A-\lambda.
\]
Since $\lambda$ is not an eigenvalue of $A$, the inequality in
\eqref{nR-1} follows from the above inequality.

(ii) Let $\varphi\ \in \mul A$, so that $\{f,f'+c \varphi\} \in A$
for all $\{f,f'\}\in A$ and all $c \in \dC$. Since $\cW (A) \ne
\dC$, the identity
\[
 (f'+c \varphi,f)=(f',f)+c(\varphi,f),
\]
shows that $(\varphi,f)=0$. Hence $\mul A \subset (\dom
A)^\perp=\mul A^*$.
\end{proof}

Let $A$ be a relation in a Hilbert space $\sH$ with $\cW (A) \ne
\dC$. According to Lemma \ref{numRange2} the complement
$\Delta(A)=\dC \setminus \clos \cW(A)$ is a subset of the set of
regular points of $A$. Hence $\ran (A-\lambda)$ is closed for some
$\lambda \notin \clos \cW (A)$ if and only if $A$ is closed.
Since $\clos \cW(A)$ is a closed convex set
(see Proposition \ref{convex} and \eqref{funda}),
it follows that $\Delta(A)$ is an open connected set or $\Delta(A)$ consists of two
open connected components (if $\cW(A)$ is a strip bounded by two
parallel straight lines). Furthermore, by Theorem~\ref{dimen}
$\dim \ker (A^*-\bar{\lambda})$ is constant for
$\lambda \in \Delta(A)$ or for $\lambda$ in each of the connected
components of $\Delta(A)$. If $\ker (A^*-\bar{\lambda})=\{0\}$ for
some $\lambda \in \dC \setminus \clos \cW(A)$ then $\Delta(A)$ or
the corresponding component (to which $\lambda$ belongs) is a subset
of $\rho(A)$.

Note that in the statements (i) and (ii) of Lemma \ref{numRange2}
the relation $A$ may be replaced by the closure $A^{**}$.  In
particular, this shows that a densely defined relation $A$ with $\cW
(A) \ne \dC$ satisfies $\mul A^{**}=\{0\}$; in other words, $A$ is a
closable operator. Furthermore, it follows that $\ran (A^{**}
-\lambda)$ is closed. These observations lead to the following
useful result.

\begin{corollary}
Let $A$ be a relation in a Hilbert space $\sH$ with $\cW (A) \ne
\dC$. Let $\lambda \notin \clos \cW (A)$, then
\[
\ran (A^*-\bar{\lambda})=\sH.
\]
\end{corollary}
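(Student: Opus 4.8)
The plan is to show that $\ran(A^{*}-\bar\lambda)$ is simultaneously dense and closed in $\sH$; since a closed dense linear subspace of $\sH$ is all of $\sH$, this proves the assertion.

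First I would transfer the relevant facts to the closure $A^{**}$, which is a closed relation. By Lemma~\ref{numRange2}(i) the hypothesis $\lambda\notin\clos\cW(A)$ implies that $(A-\lambda)^{-1}$ is a bounded linear operator, i.e.\ $\lambda\in\gamma(A)$; since $\gamma(\clos A)=\gamma(A)$, also $\lambda\in\gamma(A^{**})$, so $(A^{**}-\lambda)^{-1}$ is a bounded linear operator. In particular $(A^{**}-\lambda)^{-1}$ is single-valued, so that $\ker(A^{**}-\lambda)=\mul(A^{**}-\lambda)^{-1}=\{0\}$. Moreover $A^{**}-\lambda=\clos(A-\lambda)$ is closed, hence its bounded inverse $(A^{**}-\lambda)^{-1}$ is closed and therefore has closed domain by Lemma~\ref{first}(i); equivalently, $\ran(A^{**}-\lambda)$ is closed.

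Next I would run the adjoint calculus. Using \eqref{jan1++} with the bounded summand $-\lambda I$ gives $(A-\lambda)^{*}=A^{*}-\bar\lambda$, and applying it once more gives $(A-\lambda)^{**}=A^{**}-\lambda$. For density, Lemma~\ref{lemrelate} yields
\[
 \bigl(\ran(A^{*}-\bar\lambda)\bigr)^{\perp}=\bigl(\ran(A-\lambda)^{*}\bigr)^{\perp}=\ker(A-\lambda)^{**}=\ker(A^{**}-\lambda)=\{0\},
\]
so $\ran(A^{*}-\bar\lambda)$ is dense in $\sH$. For closedness, observe that $(A-\lambda)^{*}$ is a closed relation, so Theorem~\ref{NEW}(ii) shows that $\ran(A-\lambda)^{*}$ is closed if and only if $\ran(A-\lambda)^{**}$ is closed; and $\ran(A-\lambda)^{**}=\ran(A^{**}-\lambda)$ is closed by the step above. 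Hence $\ran(A^{*}-\bar\lambda)$ is both dense and closed, and therefore equals $\sH$.

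I do not expect a real obstacle here; the statement is essentially a repackaging of Lemma~\ref{numRange2}, Lemma~\ref{lemrelate} and Theorem~\ref{NEW}. The one place that needs a moment's care is the closedness of $\ran(A^{**}-\lambda)$, and this is precisely why one passes to the closed relation $A^{**}$ before taking orthogonal complements: a direct attempt to deduce closedness of $\ran(A^{*}-\bar\lambda)$ while working only with the possibly non-closed relation $A$ is where the argument would stall.
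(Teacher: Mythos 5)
Your proposal is correct and follows essentially the same route as the paper: the paper's proof likewise gets density from $\sH=\cran(A^*-\bar{\lambda})\oplus\ker(A^{**}-\lambda)$ together with the triviality of $\ker(A^{**}-\lambda)$ supplied by Lemma~\ref{numRange2}, and gets closedness of $\ran(A^*-\bar{\lambda})$ from the closedness of $\ran(A^{**}-\lambda)$ via Theorem~\ref{NEW}. You have merely made explicit the adjoint identities and the reason $\ran(A^{**}-\lambda)$ is closed, which the paper delegates to the remarks preceding the corollary.
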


\begin{proof}
In general $\sH=\cran(A^*-\bar{\lambda}) \oplus \ker
(A^{**}-\lambda)$. By Lemma \ref{numRange2} and the above remarks,
it follows that $\sH=\cran(A^*-\bar{\lambda})$ and that $\ran(A^{**}
-\lambda)$ is closed. Then also $\ran (A^*-\bar{\lambda})$ is
closed by Theorem \ref{NEW}, so that $\ran(A^*-\bar{\lambda})=\sH$.
\end{proof}

A relation $A$ in a Hilbert space $\sH$ with $\cW (A) \ne \dC$ is
said to be \textit{maximal} with respect to the numerical range
$\cW(A)$ if $\ran (A - \lambda) = \sH$ for some $\lambda \notin
\clos \cW(A)$. Then, clearly, $\lambda \in \rho(A)$ and $A$ is
closed. In fact, $A$ is maximal if and only if some open connected
component of $\Delta(A)$ belongs to the resolvent set of $A$.

\begin{lemma}\label{max2}
Let $A$ be a relation in a Hilbert space $\sH$ with $\cW (A) \ne
\dC$. Assume that $A$ is maximal with respect to $\cW(A)$. Then
\begin{equation}\label{maxEq3}
\mul A=\mul A^{*}.
\end{equation}
\end{lemma}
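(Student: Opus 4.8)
The plan is to prove the two inclusions $\mul A\subset\mul A^{*}$ and $\mul A^{*}\subset\mul A$ separately; combined they give \eqref{maxEq3}. The first inclusion costs nothing: maximality with respect to $\cW(A)$ presupposes $\cW(A)\neq\dC$, so Lemma~\ref{numRange2}(ii) applies directly and yields $\mul A\subset\mul A^{*}$. Hence the only real task is the reverse inclusion $\mul A^{*}\subset\mul A$.

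For the reverse inclusion I would exploit the defining property of maximality: there is a point $\lambda\notin\clos\cW(A)$ with $\ran(A-\lambda)=\sH$. Take an arbitrary $g\in\mul A^{*}$; by the first identity in \eqref{eeeen} this says precisely that $g\perp\dom A$. Since $g\in\sH=\ran(A-\lambda)$, pick $h$ with $\{h,g\}\in A-\lambda$, equivalently $\{h,g+\lambda h\}\in A$. In particular $h\in\dom A$, so the orthogonality gives $(g,h)=0$. The key step is the numerical-range computation: if $h\neq 0$, then $\{h/\|h\|,(g+\lambda h)/\|h\|\}\in A$ has a unit first component, and
\[
 \left(\frac{g+\lambda h}{\|h\|},\frac{h}{\|h\|}\right)=\frac{(g,h)+\lambda\|h\|^{2}}{\|h\|^{2}}=\lambda ,
\]
so that $\lambda\in\cW(A)\subset\clos\cW(A)$, contradicting the choice of $\lambda$. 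Therefore $h=0$, whence $\{0,g\}\in A$, i.e.\ $g\in\mul A$. This establishes $\mul A^{*}\subset\mul A$ and completes the proof.

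The main obstacle is conceptual rather than computational: one has to realize that the right move is to feed a vector of $\mul A^{*}=(\dom A)^{\perp}$ through the surjectivity of $A-\lambda$ and then use orthogonality to $\dom A$ to kill the cross term in the numerical-range quotient. Once this is seen, no further machinery is needed; in particular the argument uses neither closedness of $A$, nor boundedness of $(A-\lambda)^{-1}$, nor $\lambda\in\rho(A)$, only $\ran(A-\lambda)=\sH$ together with $\lambda\notin\clos\cW(A)$.
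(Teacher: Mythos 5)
Your proof is correct, but it follows a genuinely different route from the paper's. You argue directly: take $g\in\mul A^{*}=(\dom A)^{\perp}$ (the first identity in \eqref{eeeen}), use the surjectivity $\ran(A-\lambda)=\sH$ to write $\{h,g+\lambda h\}\in A$, and observe that the orthogonality $(g,h)=0$ forces the numerical--range quotient to equal $\lambda$ whenever $h\neq 0$, contradicting $\lambda\notin\clos\cW(A)$; hence $h=0$ and $g\in\mul A$. The paper instead routes the argument through the extension $A_\infty=A\hplus(\{0\}\times\mul A^{*})$ of \eqref{+SF}: since $\cW(A_\infty)=\cW(A)$ by \eqref{funda++} and $\ran(A_\infty-\lambda)\supset\ran(A-\lambda)=\sH$, one gets that $A_\infty$ is closed, $(A_\infty-\lambda)^{-1}\in\boldsymbol{B}(\sH)$, and $(A-\lambda)^{-1}=(A_\infty-\lambda)^{-1}$, whence $A_\infty=A$ and so $\mul A^{*}\subset\mul A$. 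Your version is more elementary and self-contained --- as you note, it uses only surjectivity of $A-\lambda$ and $\lambda\notin\clos\cW(A)$, with no appeal to closedness of $A$ or to resolvent bounds --- whereas the paper's version fits into its systematic use of the $A_\infty$ machinery and yields the extra information $A_\infty=A$ (hence $A$ closed and $\lambda\in\rho(A)$) as a by-product, which is exploited elsewhere (e.g.\ in Proposition~\ref{num}). Both handle the easy inclusion $\mul A\subset\mul A^{*}$ identically via Lemma~\ref{numRange2}.
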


\begin{proof}
In order to prove the identity \eqref{maxEq3} it suffices to show
that $\mul A^*\subset \mul A$; cf. Lemma \ref{numRange2}. Let
$A_\infty$ be the extension of $A$ defined in \eqref{+SF}. Then
$\cW(A_\infty)=\cW(A)$ according to \eqref{funda++}. Hence, if
$\lambda \notin \clos \cW(A)$, then $\lambda$ is not an eigenvalue
of $A_\infty$. Moreover, since $A_\infty$ is an extension  of $A$ it
follows that $\ran (A-\lambda)\subset \ran (A_\infty-\lambda)$. It
follows from $\cW(A_\infty)=\cW(A)$ and $\ran
(A_\infty-\lambda)=\sH$ that $A_\infty$ is closed.  Therefore
$(A_\infty-\lambda)^{-1} \in \boldsymbol{B}(\sH)$, so that $\lambda
\in \rho(A_\infty)$. It follows from $(A-\lambda)^{-1} \subset
(A_\infty-\lambda)^{-1}$ that
 $(A-\lambda)^{-1} =(A_\infty-\lambda)^{-1}$, in other words
 $A_\infty=A$. This shows that $\mul A^* \subset \mul A$.
\end{proof}

\section{Componentwise decompositions of relations}\label{sec3}

In this section the canonical operatorwise decomposition of a
relation in a Hilbert space is used to characterize  componentwise
decompositions by means of an operator part.
Again, for simplicity, the statements are formulated for linear relations in a Hilbert space,
instead of linear relations acting from one Hilbert space to another Hilbert space.

\subsection{Canonical decompositions of relations}

A relation $A$ in a Hilbert space $\sH$
(or a relation from a Hilbert space $\sH$ to an other Hilbert
space $\sK$) is said to be \textit{singular} if
\begin{equation}
\label{sing01} \ran A \subset \mul A^{**} \quad \mbox{or
equivalently} \quad \cran A \subset \mul A^{**}.
\end{equation}
The equivalence here is due to the closedness of $\mul
A^{**}$. Furthermore, the inclusion
\begin{equation}
\label{sing0} \mul A^{**} \subset \cran A,
\end{equation}
follows from \eqref{impor} as $\mul A^{**} \subset \ran
A^{**}$. Therefore, a linear relation $A$ is singular if and
only if
\begin{equation}
\label{sing00} \cran A = \mul A^{**},
\end{equation}
which follows from \eqref{sing01} and \eqref{sing0}.
There is also an alternative characterization in terms of sequences
which goes back to \^Ota \cite{Ot87} in the case of densely defined
operators; cf. \cite{HSS??}.

\begin{proposition}\label{zoli5}
Let $A$ be a relation in a Hilbert space $\sH$. Then the
following statements are equivalent:
\begin{enumerate}\def\labelenumi{\rm (\roman{enumi})}
\item $A$ is singular;
\item for each $\varphi' \in \ran A$ there exists
a sequence $\{h_n,h_n'\} \in A$
such that $h_n \to 0$ and $h_n' \to \varphi'$.
\end{enumerate}
\end{proposition}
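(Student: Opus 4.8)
The plan is to reduce the equivalence to the definition of singularity, $\ran A\subset\mul A^{**}$ (equation \eqref{sing01}), together with the identity $A^{**}=\clos A$ and the elementary fact that, in the metric space $\sH\times\sH$, membership in the closure $\clos A$ is witnessed by a sequence from $A$ converging in the product topology, i.e. coordinatewise. No machinery beyond Lemma~\ref{lemrelate}-level facts is needed; everything is an unwinding of definitions.

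First I would prove (i) $\Longrightarrow$ (ii). Assume $A$ is singular and let $\varphi'\in\ran A$. Then \eqref{sing01} gives $\varphi'\in\mul A^{**}$, which by the definition of the multivalued part means $\{0,\varphi'\}\in A^{**}=\clos A$. Hence there is a sequence $\{h_n,h_n'\}\in A$ with $\{h_n,h_n'\}\to\{0,\varphi'\}$ in $\sH\times\sH$; since convergence there is componentwise, this is precisely $h_n\to 0$ and $h_n'\to\varphi'$, which is statement (ii).

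Next I would prove (ii) $\Longrightarrow$ (i). Let $\varphi'\in\ran A$ and choose, by (ii), a sequence $\{h_n,h_n'\}\in A$ with $h_n\to 0$ and $h_n'\to\varphi'$. Then $\{h_n,h_n'\}\to\{0,\varphi'\}$ in $\sH\times\sH$, so $\{0,\varphi'\}\in\clos A=A^{**}$, i.e. $\varphi'\in\mul A^{**}$. As $\varphi'\in\ran A$ was arbitrary, $\ran A\subset\mul A^{**}$, which is exactly \eqref{sing01}; since $\mul A^{**}$ is closed this upgrades to $\cran A\subset\mul A^{**}$, and combined with \eqref{sing0} it yields \eqref{sing00}. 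Thus $A$ is singular.

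I do not expect a genuine obstacle here: the argument is essentially definitional. The only points that require a moment of care are invoking $A^{**}=\clos A$ and using that the topology on $\sH\times\sH$ is the product topology, so that a single sequence $\{h_n,h_n'\}$ serves both coordinates simultaneously. If one wished, statement (ii) could be strengthened to allow any $\varphi'\in\cran A$ (equivalently any $\varphi'\in\mul A^{**}$) by a routine diagonal-sequence argument, but this refinement is not needed for the stated equivalence.
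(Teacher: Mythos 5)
Your argument is correct and is exactly the paper's approach: the paper's proof is a one-line remark that the equivalence follows by rewriting $\ran A\subset\mul A^{**}$ elementwise via the definition of the closure $A^{**}=\clos A$, which is precisely the unwinding you carry out in both directions.
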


\begin{proof}
The equivalence is obtained by rewriting the condition $\ran A\subset \mul A^{**}$
elementwise using the definition of the closure $A^{**}$ of $A$.
\end{proof}

In what follows a relation
$A$ in a Hilbert space $\sH$ (or a relation from a Hilbert space
$\sH$ to an other Hilbert space $\sK$) is said to be
\textit{regular} if its closure $A^{**}$ is an operator. Thus a
regular relation is automatically an operator.

Let $A$ be a not necessarily closed relation in $A$ in the Hilbert space $\sH$
and define the subspace space $\sH_{A}$ by
\begin{equation}\label{oppa1}
 \sH_A \okr \cdom A^*=\sH \ominus \mul A^{**}.
\end{equation}
Since $\mul A \subset \mul A^{**}$, it follows that
\begin{equation}\label{oppa2}
 \sH_A \subset \sH \ominus \mul A.
\end{equation}
Let $P$ be the orthogonal projection from $\sH$ onto $\sH_A$.
Introduce the following relations:
\begin{equation}\label{reg}
 A_{\rm reg}\okr PA=\{\,\{f,P g\};\, \{f,g\}\in A \,\},
\end{equation}
called the \textit{regular part} of $A$, and
\begin{equation}\label{sing}
 A_{\rm sing}\okr (I-P)A=\{\,\{f,(I-P) g\};\, \{f,g\}\in A \,\},
\end{equation}
called the \textit{singular part} of $A$. Observe that $\dom A_{\rm
reg}=\dom A_{\rm sing}=\dom A$. The following operatorwise sum
decomposition for linear relations acting from one Hilbert space to
another Hilbert space was proved in \cite[Theorem~4.1]{HSSS07}; in
the case that $A$ is an operator it can be found from \cite{Ot87},
\cite{J80}. A short proof of this result can be given by means of
Lemma~\ref{prodlem*} and Lemma \ref{ap}.

\begin{theorem}\label{HSS}
Let $A$ be a relation in a Hilbert space $\sH$. Then $A$ admits
the canonical operatorwise sum decomposition
\begin{equation}\label{HSSSdec}
 A= A_{\rm reg} +  A_{\rm sing},
\end{equation}
where $ A_{\rm reg}$ is a regular operator in $\sH$ and $ A_{\rm
sing}$ is a singular relation in $\sH$ with
\begin{equation}\label{HSSSdec3}
({A}_{\rm{reg}})^{**}=({A^{**}})_{\rm{reg}},
\quad
 ({A}_{\rm{sing}})^{**} =(({A^{**}})_{\rm{sing}})^{**},
\quad
 \mul A_{\rm{sing}} =\mul A.
\end{equation}
\end{theorem}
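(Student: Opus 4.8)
The plan is to use the product representations $A_{\rm reg}=PA$ and $A_{\rm sing}=(I-P)A$, where $P\in\boldsymbol{B}(\sH)$ is the orthogonal projection onto $\sH_A=\cdom A^*=\sH\ominus\mul A^{**}$, and to transport everything through the adjoint by means of Lemma~\ref{prodlem*}, Lemma~\ref{ap} and Lemma~\ref{basiclem}. I would first settle the elementary assertions. Since $\mul A\subset\mul A^{**}=\ker P$, every $g\in\mul A$ satisfies $Pg=0$ and $(I-P)g=g$; hence $\mul A_{\rm reg}=P(\mul A)=\{0\}$, so that $A_{\rm reg}$ is an operator, and $\mul A_{\rm sing}=(I-P)(\mul A)=\mul A$, which is the third identity in \eqref{HSSSdec3}. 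For the operatorwise decomposition \eqref{HSSSdec}, the inclusion $A\subset A_{\rm reg}+A_{\rm sing}$ is immediate from $g=Pg+(I-P)g$; conversely, if $\{f,Pg_1\}\in A_{\rm reg}$ and $\{f,(I-P)g_2\}\in A_{\rm sing}$ arise from $\{f,g_1\},\{f,g_2\}\in A$, then $g_1-g_2\in\mul A\subset\ker P$, so $Pg_1=Pg_2$ and $Pg_1+(I-P)g_2=g_2$, giving $\{f,Pg_1+(I-P)g_2\}=\{f,g_2\}\in A$.

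Next I would compute the adjoints of the two parts. By Lemma~\ref{prodlem*}, applied with the bounded factors $P$ and $I-P$, one has $(A_{\rm reg})^*=A^*P$ and $(A_{\rm sing})^*=A^*(I-P)$. To make these explicit, apply Lemma~\ref{ap} to the closed relation $A^*$: its second adjoint is $A^{**}$ with multivalued part $\mul A^{**}$, which I split as $\sX\oplus\sY$ with $\sX=\{0\}$ and $\sY=\mul A^{**}$, and $P$ is exactly the orthogonal projection onto $\cdom A^*\oplus\sX=\sH_A$. Lemma~\ref{ap} then gives
\[
 A^*P=A^*\hplus(\mul A^{**}\times\{0\}),\qquad A^*(I-P)=\sH_A\times\mul A^*,
\]
so $(A_{\rm sing})^*=\sH_A\times\mul A^*$ is already a Cartesian product of closed subspaces.

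Finally I would take adjoints once more. For the singular part, $(A_{\rm sing})^{**}=(\sH_A\times\mul A^*)^*=(\mul A^*)^\perp\times\sH_A^\perp=\cdom A\times\mul A^{**}$, using $(\dom A)^\perp=\mul A^*$ and $(\dom A^*)^\perp=\mul A^{**}$ from Lemma~\ref{lemrelate}; since $\ran A_{\rm sing}\subset\mul A^{**}=\mul(A_{\rm sing})^{**}$ this already shows $A_{\rm sing}$ is singular, and running the same computation with $A^{**}$ in place of $A$, together with $\cdom A^{**}=\cdom A$ and $\mul(A^{**})^{**}=\mul A^{**}$, gives $((A^{**})_{\rm sing})^{**}=\cdom A\times\mul A^{**}$, the second identity in \eqref{HSSSdec3}. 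For the regular part, Lemma~\ref{basiclem} applied to the componentwise sum above yields
\[
 (A_{\rm reg})^{**}=(A^*P)^*=A^{**}\cap(\mul A^{**}\times\{0\})^*=A^{**}\cap(\sH\times\sH_A)=\{\,\{f,f'\}\in A^{**}\,;\,f'\perp\mul A^{**}\,\},
\]
and a one-line check shows this relation equals $PA^{**}=(A^{**})_{\rm reg}$ (note $(A^{**})^*=A^*$, so the projection relevant to $A^{**}$ is again $P$); this is the first identity, and it also shows $(A_{\rm reg})^{**}$ is an operator, i.e.\ $A_{\rm reg}$ is regular.

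The proof is largely mechanical once the product representations are in place; the point to handle carefully is the adjoint of $A^*P$. One must recognize, through Lemma~\ref{ap}, that $A^*P$ is the componentwise sum $A^*\hplus(\mul A^{**}\times\{0\})$, so that Lemma~\ref{basiclem} computes its adjoint, and then observe that the resulting relation $A^{**}\cap(\sH\times\sH_A)$ is exactly $(A^{**})_{\rm reg}$ — it is this coincidence that carries the substantive content of the theorem, namely that passing to the regular part commutes with taking closures. The rest is bookkeeping with Lemmas~\ref{prodlem*}, \ref{ap}, \ref{basiclem} and the orthogonality relations of Lemma~\ref{lemrelate}.
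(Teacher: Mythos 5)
Your proof is correct and follows essentially the same route as the paper's: both use the product representations $A_{\rm reg}=PA$, $A_{\rm sing}=(I-P)A$ together with Lemma~\ref{prodlem*} and Lemma~\ref{ap} to get $(A_{\rm reg})^*=A^*\hoplus(\mul A^{**}\times\{0\})$ and $(A_{\rm sing})^*=\cdom A^*\times\mul A^*$, and then pass to second adjoints. The only cosmetic differences are that you verify the operatorwise decomposition explicitly (the paper calls it clear) and compute $(A_{\rm sing})^{**}$ and $((A^{**})_{\rm sing})^{**}$ outright rather than just matching their adjoints; your "one-line check" that $A^{**}\cap(\sH\times\sH_A)=PA^{**}$ is exactly the paper's closedness argument for $(A^{**})_{\rm reg}$.
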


\begin{proof}
Let $P$ be the orthogonal projection from $\sH$ onto $\sH_A=\cdom A^*$.
The decomposition \eqref{HSSSdec} is clear.

By definition $A_{\rm{reg}}=PA$ and hence by Lemma~\ref{prodlem*} and Lemma \ref{ap}
\[
 (A_{\rm{reg}})^*=(PA)^*=A^*P=A^* \hoplus (\mul A^{**} \times \{0\}).
\]
In particular, $\dom (A_{\rm{reg}})^*= \dom A^* \oplus \mul A^{**}$, so that
$\cdom (A_{\rm{reg}})^*=\sH$, which is equivalent to $\mul (A_{\rm{reg}})^{**}=\{0\}$; cf.
Lemma~\ref{lemrelate}. Thus, the relation $A_{\rm{reg}}$ in \eqref{reg} is
regular.

Again, by definition $A_{\rm{sing}}=(I-P)A$ and hence by Lemma~\ref{prodlem*}
and Lemma \ref{ap}
\[
 (A_{\rm{sing}})^*=((I-P)A)^*=A^*(I-P)=\cdom A^* \times \mul A^*.
\]
Since  $\dom (A_{\rm{sing}})^*=\cdom A^*$, it follows that
$\mul (A_{\rm{sing}})^{**}=\mul A^{**}$; cf.
Lemma~\ref{lemrelate}. Therefore,
$\ran A_{\rm{sing}}\subset \mul A^{**}=\mul (A_{\rm{sing}})^{**}$ and
$A_{\rm{sing}}$ is singular.

It remains to prove the identities in \eqref{HSSSdec3}. The identities
$(PA)^*=A^*P=(PA^{**})^*$ show that
\[
 (A_{\rm{reg}})^{*}=A^*P=((A^{**})_{\rm{reg}})^*
\]
and hence $(A_{\rm{reg}})^{**}=((A^{**})_{\rm{reg}})^{**}$. Since
$\ran (I-P)=\mul A^{**}$ it follows that $(A^{**})_{\rm{reg}}\subset
A^{**}$. This implies that $(A^{**})_{\rm{reg}}$ is closed: indeed,
if $\{f_n,f_n'\}\in A^{**}$ and $\{f_n,Pf_n'\}\to \{f,f'\}$, then
$\{f,f'\}\in A^{**}$ and $f'=Pf'$, so that $\{f,f'\}=\{f,Pf'\}\in
(A^{**})_{\rm{reg}}$. Therefore,
$((A^{**})_{\rm{reg}})^{**}=(A^{**})_{\rm{reg}}$ yielding the first
identity in \eqref{HSSSdec3}.

Likewise, the
equalities $((I-P)A)^*=A^*(I-P)=((I-P)A^{**})^*$ imply that
\[
 (A_{\rm{sing}})^{*}=A^*(I-P)=((A^{**})_{\rm{sing}})^*.
\]
Hence $(A_{\rm{sing}})^{**}=((A^{**})_{\rm{sing}})^{**}$, and the
second indentity in \eqref{HSSSdec3} is proved.

Finally, since $\mul A\subset \mul A^{**}$, one obtains
\[
  \mul A_{\rm{sing}} =\{\,(I-P) f' :\, \{0,f'\}\in A\,\}= \{\,f' :\,
  \{0,f'\}\in A\,\}=\mul A.
\]
This completes the proof.
\end{proof}

Several illustrations of Theorem~\ref{HSS} can be found in \cite{HSS??},
\cite{HSSS07}. Canonical decompositions of
relations have their counterparts in the canonical decomposition of
pairs of nonnegative sesquilinear forms (see \cite{HSS??}).

It is clear from the definitions that $A$ is regular if and only if
in \eqref{HSSSdec} $A_{\rm{sing}}$ is the zero operator on $\dom A$,
and similarly, $A$ is singular if and only if in \eqref{HSSSdec}
$A_{\rm{reg}}$ is the zero operator on $\dom A$. The condition that
$A$ is singular
can be characterized also as follows; cf. \cite{HSSS07}.

\begin{proposition}
\label{sin} Let $A$ be a relation in a Hilbert space $\sH$. Then the
following statements are equivalent:
\begin{enumerate}
\def\labelenumi{\rm (\roman{enumi})}

\item $A$ is singular;

\item $\dom A^* \subset \ker A^*$ or, equivalently, $\dom A^*=\ker
A^*$;

\item $A^*=\dom A^* \times \mul A^*$;

\item $A^{**}=\cdom A \times \mul A^{**}$.
\end{enumerate}
In particular, if one of the relations $A$, $A^{-1}$, $A^*$, or
$A^{**}$ is singular, then all of them are singular.
\end{proposition}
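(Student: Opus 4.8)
The plan is to prove the cycle (i) $\Rightarrow$ (ii) $\Rightarrow$ (iii) $\Rightarrow$ (i), to handle (i) $\Leftrightarrow$ (iv) separately, and then to read off the final assertion from the stability of singularity under the operations $*$ and $^{-1}$. The only tools needed are the orthogonality identities of Lemma~\ref{lemrelate} and the characterization \eqref{sing00}, namely that $A$ is singular precisely when $\cran A=\mul A^{**}$, equivalently (using \eqref{sing0}) when $\cran A\subseteq\mul A^{**}$. For (i) $\Leftrightarrow$ (ii): taking orthogonal complements in $\cran A\subseteq\mul A^{**}$ and using $(\ran A)^\perp=\ker A^*$ together with $\mul A^{**}=(\dom A^*)^\perp$ rewrites singularity as $\cdom A^*\subseteq\ker A^*$; since $\ker A^*$ is closed this is the same as $\dom A^*\subseteq\ker A^*$, and since $\ker A^*\subseteq\dom A^*$ always, this is exactly (ii).

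For (ii) $\Leftrightarrow$ (iii): assuming $\dom A^*=\ker A^*$, any $\{f,f'\}\in A^*$ has $\{f,0\}\in A^*$, hence $\{0,f'\}=\{f,f'\}-\{f,0\}\in A^*$, so $f'\in\mul A^*$; this gives $A^*\subseteq\dom A^*\times\mul A^*$, while the reverse inclusion follows by adding $\{f,0\}\in A^*$ and $\{0,f'\}\in A^*$; and conversely, if $A^*=\dom A^*\times\mul A^*$ then $0\in\mul A^*$ forces $\{f,0\}\in A^*$ for every $f\in\dom A^*$, i.e. (ii). For (i) $\Leftrightarrow$ (iv): if $A^{**}=\cdom A\times\mul A^{**}$ then $\ran A\subseteq\ran A^{**}=\mul A^{**}$, so $A$ is singular; conversely, when $A$ is singular the chain $\mul A^{**}\subseteq\ran A^{**}\subseteq\cran A^{**}=\cran A=\mul A^{**}$ gives $\ran A^{**}=\mul A^{**}$, so every $\{f,g\}\in A^{**}$ has $g\in\mul A^{**}$ and hence $\{f,g'\}=\{f,g\}+\{0,g'-g\}\in A^{**}$ for all $g'\in\mul A^{**}$; together with $A^{**}\subseteq\dom A^{**}\times\ran A^{**}$ this makes $A^{**}$ the Cartesian product $\dom A^{**}\times\mul A^{**}$, and $\dom A^{**}$ is closed because $A^{**}$ is (if $f_n\in\dom A^{**}$ and $f_n\to f$, then $\{f_n,0\}\to\{f,0\}\in A^{**}$), so $\dom A^{**}=\cdom A^{**}=\cdom A$ by \eqref{impor}.

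For the last clause the crucial observation is that $A$ is singular if and only if $A^*$ is: for $\{f,f'\}\in A$ and $\{g,g'\}\in A^*$ the defining identity gives $(g',f)=(g,f')$, so the condition $\ran A\perp\dom A^*$ (which is (i), by \eqref{tweee}) is equivalent to $\ran A^*\perp\dom A$, and the latter is singularity of $A^*$ by \eqref{sing00} applied to the closed relation $A^*$ (so that $(A^*)^{**}=A^*$). Hence $A^{**}=(A^*)^*$ is singular iff $A^*$ is, i.e. iff $A$ is; and $A^{-1}$ is singular iff $\dom A=\ran A^{-1}\subseteq\mul(A^{-1})^{**}=\mul(A^{**})^{-1}=\ker A^{**}=(\ran A^*)^\perp$, i.e. iff $\ran A^*\perp\dom A$, i.e. iff $A$ is singular. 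Thus $A$, $A^{-1}$, $A^*$, $A^{**}$ are singular simultaneously. I do not expect any genuine obstacle here; the only thing to keep straight is the bookkeeping with orthogonal complements and the identities $(A^{-1})^*=(A^*)^{-1}$ and $\mul B^{-1}=\ker B$.
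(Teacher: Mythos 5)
Your proof is correct and follows essentially the same route as the paper's: the equivalences (i)$\Leftrightarrow$(ii)$\Leftrightarrow$(iii) are handled exactly as in the text via the orthogonality identities of Lemma~\ref{lemrelate} and the characterization $\cran A=\mul A^{**}$. The only (harmless) deviations are that you prove (i)$\Leftrightarrow$(iv) by a direct elementwise argument instead of taking adjoints in (iii) as the paper does, and that you make explicit, via the duality $(g',f)=(g,f')$, the simultaneous-singularity claim that the paper dismisses as ``clear''; both variants are sound.
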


\begin{proof}
(i) $\Longrightarrow$ (ii) The identity in \eqref{sing00} implies
that $(\cran A)^\perp=(\mul A^{**})^\perp$, which is equivalent to
$\ker A^*=\cdom A^*$ by Lemma \ref{lemrelate}. In particular,
$\dom A^* \subset \ker A^*$.

(ii) $\Longrightarrow$ (iii) Let $\{f,g\} \in A^*$. Now $f \in
\dom A^*$ implies that $f \in \ker A^*$. Therefore $\{f,0\} \in
A^*$ and then also $\{0,g\} \in A^*$, or $g \in \mul A^*$. This
shows that $\{f,g\} \in \dom A^* \times \mul A^*$. Conversely, let
$\{f,g\} \in \dom A^* \times \mul A^*$. Then $\{0,g\} \in A^*$.
Moreover, $f \in \dom A^*$ and by (ii) $f \in \ker A^*$, i.e.,
$\{f,0\} \in A^*$. Thus $\{f,g\} \in A^*$.

(iii) $\Longrightarrow$ (iv) Taking adjoints in (iii) yields
$A^{**}=(\mul A^*)^\perp \times (\dom A^*)^\perp$, which gives
(iv) by means of Lemma \ref{lemrelate}.

(iv) $\Longrightarrow$ (i) Now $\ran A^{**} = \mul A^{**}$ gives
$\ran A \subset \mul A^{**}$. Thus $A$ is singular.

The last statement is clear from the equivalence of (i)--(iv).
\end{proof}

The following characterizations for regularity of $A$ are immediate
from definitions. Further characterizations for regularity are given
after componentwise decompositions have been introduced; see
Proposition~\ref{operdec}.

\begin{proposition}
\label{regchar} Let $A$ be a relation in a Hilbert space $\sH$. Then
the following statements are equivalent:
\begin{enumerate}
\def\labelenumi{\rm (\roman{enumi})}

\item $A$ is regular, i.e., a closable operator;

\item $\mul A^{**}=\{0\}$;

\item $A^*$ is densely defined.
\end{enumerate}
\end{proposition}
\begin{proof}
The equivalence of (i) and (ii) holds by definition of closability.
The equivalence of (ii) and (iii) is obtained from
Lemma~\ref{lemrelate}.
\end{proof}

Boundedness of the regular and singular part of $A$ in
Theorem~\ref{HSS} can be characterized as follows.

\begin{proposition}\label{RSbounded}
Let $A$ be a relation in a Hilbert space $\sH$. Then:
\begin{enumerate}\def\labelenumi{\rm (\roman{enumi})}
\item $A_{\rm reg}$ is a bounded operator if and only if $\dom A^*$ is closed;
\item $A_{\rm sing}$ is a bounded operator if and only if it is the zero operator on
$\dom A$, i.e., $A_{\rm sing}=\dom A\times \{0\}$.
\end{enumerate}
In particular, if $\ran A_{\rm sing}\neq \{0\}$ then $A_{\rm sing}$
is either an unbounded operator or it is a multivalued relation with
$\mul A_{\rm sing}=\mul A$.
\end{proposition}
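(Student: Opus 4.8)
The plan is to extract everything from the adjoint formulas for $A_{\rm reg}$ and $A_{\rm sing}$ that were already derived inside the proof of Theorem~\ref{HSS}. Recall from there that, with $P$ the orthogonal projection onto $\sH_A=\cdom A^*$, one has $(A_{\rm reg})^*=A^*\hoplus(\mul A^{**}\times\{0\})$, hence $\dom(A_{\rm reg})^*=\dom A^*\oplus\mul A^{**}$, and $(A_{\rm sing})^*=\cdom A^*\times\mul A^*$, hence $\mul(A_{\rm sing})^{**}=\mul A^{**}$; moreover $\ran A_{\rm sing}\subset\ran(I-P)=\mul A^{**}$ and $\mul A_{\rm sing}=\mul A$ by \eqref{HSSSdec3}. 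The one point to watch is that ``$A_{\rm reg}$ is a bounded operator'' and ``$A_{\rm sing}$ is a bounded operator'' mean bounded on the (possibly non-closed, even non-dense) domain $\dom A$, \emph{not} membership in $\boldsymbol{B}(\sH)$; the correct tool is Corollary~\ref{boundcor}, whose condition (iii) is literally ``graph of a bounded operator'', with no hypothesis on the domain. I expect this to be the only subtlety; the rest is bookkeeping.

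For (i): since $A_{\rm reg}$ is a regular relation, it is an operator (Theorem~\ref{HSS}), so Corollary~\ref{boundcor} applied to $A_{\rm reg}$ says $A_{\rm reg}$ is a bounded operator if and only if $\dom(A_{\rm reg})^*=\sH$. I would then combine $\sH=\cdom A^*\oplus\mul A^{**}$ (Lemma~\ref{lemrelate}) with $\dom(A_{\rm reg})^*=\dom A^*\oplus\mul A^{**}$ and the inclusion $\dom A^*\subset\cdom A^*$ to see that $\dom(A_{\rm reg})^*=\sH$ is equivalent to $\dom A^*=\cdom A^*$, i.e. to $\dom A^*$ being closed. This proves (i).

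For (ii): the implication $\Leftarrow$ is immediate since $\dom A\times\{0\}$ is (the graph of) the zero operator. For $\Rightarrow$, assume $A_{\rm sing}$ is a bounded operator; by Lemma~\ref{first}(ii) its closure $(A_{\rm sing})^{**}$ is then also a bounded operator, so $\mul(A_{\rm sing})^{**}=\{0\}$, whence $\mul A^{**}=\{0\}$, and then $\ran A_{\rm sing}\subset\mul A^{**}=\{0\}$ forces $A_{\rm sing}=\dom A_{\rm sing}\times\{0\}=\dom A\times\{0\}$. Finally, the ``in particular'' clause follows at once: if $\ran A_{\rm sing}\neq\{0\}$ then by (ii) $A_{\rm sing}$ is not a bounded operator, so it is either an unbounded operator (which, under the standing assumption $\ran A_{\rm sing}\neq\{0\}$, happens precisely when $\mul A_{\rm sing}=\mul A=\{0\}$) or not an operator at all, i.e. a multivalued relation, and in that case $\mul A_{\rm sing}=\mul A$ by \eqref{HSSSdec3}.
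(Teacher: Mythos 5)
Your proof is correct and is essentially the paper's argument: part (ii) and the final clause are verbatim the same reasoning (boundedness passes to the closure, singularity forces $\ran A_{\rm sing}\subset\mul(A_{\rm sing})^{**}=\{0\}$), and your identification of $\mul(A_{\rm sing})^{**}$ with $\mul A^{**}$ is an equivalent detour. For part (i) you route through Corollary~\ref{boundcor} and the identity $\dom(A_{\rm reg})^*=\dom A^*\oplus\mul A^{**}$ rather than through $(A_{\rm reg})^{**}=(A^{**})_{\rm reg}$, Lemma~\ref{first} and Theorem~\ref{NEW} as the paper does, but the paper itself records your route as an equivalent observation in the paragraph immediately following the proposition, so this is only a cosmetic difference.
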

\begin{proof}
(i) According to Theorem~\ref{HSS} ${A}_{\rm{reg}}$ is regular (i.e. closable) and
$({A}_{\rm{reg}})^{**}=({A^{**}})_{\rm{reg}}$. Hence by Lemma~\ref{first}
${A}_{\rm{reg}}$ is bounded if and only if $({A^{**}})_{\rm{reg}}$ is bounded,
or equivalently, $\dom ({A^{**}})_{\rm{reg}}=\dom A^{**}$ is closed.
Then, equivalently, $\dom A^*$ is closed by Theorem \ref{NEW}.

(ii)
Assume that $A_{\rm sing}$ is a bounded operator, so that also
$({A}_{\rm{sing}})^{**}$ is a bounded operator.
According to Theorem~\ref{HSS} ${A}_{\rm{sing}}$ is singular, so that
\[
\ran {A}_{\rm{sing}}\subset \mul ({A}_{\rm{sing}})^{**}=\{0\}.
\]
Therefore, ${A}_{\rm{sing}}=\dom A\times\{0\}$. Conversely, if
$A_{\rm sing}=\dom A\times \{0\}$ then $\ran {A}_{\rm{sing}}=\{0\}$,
and $A_{\rm{sing}}$ is bounded and singular.

The last statement is immediate from (ii) and \eqref{HSSSdec3} in
Theorem~\ref{HSS}.
\end{proof}

Note that by Proposition~\ref{RSbounded} $\dom A^*$ is closed if and
only if $A_{\rm reg}$ is bounded, which by Corollary~\ref{boundcor}
is equivalent to $\dom ({A}_{\rm{reg}})^*=\sH$. Thus, $\dom A^*$ is
closed if and only if $\dom ({A}_{\rm{reg}})^*=\sH$, which  is also
clear from the identity
\[
 \dom ({A}_{\rm{reg}})^*=\dom A^* \oplus \mul A^{**}.
\]
From Proposition~\ref{Dclosed} one obtains
for part (i) in Proposition~\ref{RSbounded} the following formally
weaker, but equivalent, criterion for boundedness of $A_{\rm reg}$.

\begin{corollary}
$A_{\rm reg}$ is a bounded operator if and only if
$\ran (A^{**})_{\rm reg}\subset \dom A^*$,
or equivalently, $\ran (A_{\rm reg})^{**}\subset \dom A^*$.
\end{corollary}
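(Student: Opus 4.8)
The plan is to deduce the corollary directly by feeding the characterization of closedness of $\dom A^*$ from Proposition~\ref{Dclosed} into part~(i) of Proposition~\ref{RSbounded}. Recall that Proposition~\ref{RSbounded}(i) asserts that $A_{\rm reg}$ is a bounded operator precisely when $\dom A^*$ is closed, so it suffices to rewrite the condition ``$\dom A^*$ closed'' in the two forms stated in the corollary.

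First I would invoke Proposition~\ref{Dclosed}, and in particular the equivalence of (i) and (ii) there, which gives that $\dom A^*$ is closed if and only if $\ran PA^{**}\subset \dom A^*$, where $P$ is the orthogonal projection onto $\cdom A^*$. The small point to check is that this $P$ is exactly the projection used to define the regular part: since $(A^{**})^*=A^*$, the subspace $\sH_{A^{**}}=\cdom(A^{**})^*$ coincides with $\sH_A=\cdom A^*$, so by the definition \eqref{reg} one has $PA^{**}=(A^{**})_{\rm reg}$. Hence $\ran PA^{**}=\ran (A^{**})_{\rm reg}$, and ``$\dom A^*$ closed'' is equivalent to $\ran (A^{**})_{\rm reg}\subset \dom A^*$.

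For the second formulation I would simply use the first identity in \eqref{HSSSdec3} of Theorem~\ref{HSS}, namely $(A_{\rm reg})^{**}=(A^{**})_{\rm reg}$; passing to ranges gives $\ran (A_{\rm reg})^{**}=\ran (A^{**})_{\rm reg}$, so the conditions $\ran (A^{**})_{\rm reg}\subset \dom A^*$ and $\ran (A_{\rm reg})^{**}\subset \dom A^*$ are literally the same. Combining this chain of equivalences with Proposition~\ref{RSbounded}(i) completes the argument. Since every step is a quotation of an already-established statement, there is no genuine obstacle here; the only point demanding a moment's attention is the identification of the projection $P$ across the two propositions, which is immediate from $(A^{**})^*=A^*$.
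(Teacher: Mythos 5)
Your proposal is correct and follows the paper's own argument exactly: boundedness of $A_{\rm reg}$ is reduced via Proposition~\ref{RSbounded}(i) to closedness of $\dom A^*$, which is rewritten through the equivalence of (i) and (ii) in Proposition~\ref{Dclosed}, and the two formulations are identified using $(A_{\rm reg})^{**}=(A^{**})_{\rm reg}$ from Theorem~\ref{HSS}. Your explicit remark that the projection $P$ of Proposition~\ref{Dclosed} coincides with the one defining $(A^{**})_{\rm reg}$ (because $\sH_{A^{**}}=\sH_A=\cdom A^*$) is a correct and worthwhile detail that the paper leaves implicit.
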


\begin{proof}
By Theorem~\ref{HSS} $({A}_{\rm{reg}})^{**}=({A^{**}})_{\rm{reg}}$ and hence
the assertions follows from Proposition~\ref{RSbounded} (i) and
the equivalence of items (i) and (ii) in Proposition~\ref{Dclosed}.
\end{proof}

\begin{corollary}\label{rud}
Let $A$ be a relation in a Hilbert space $\sH$. Then
\[
 A \subset A_{\rm reg} \hplus (A^{**})_{\rm mul}
 \subset (A^{**})_{\rm reg} \hplus (A^{**})_{\rm mul}.
\]
\end{corollary}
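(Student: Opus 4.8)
The plan is to verify both inclusions directly from the definitions, using only that the orthogonal projection $P$ onto $\sH_A=\cdom A^*$ satisfies $\ran(I-P)=\mul A^{**}$ and that $A_{\rm reg}=PA$, $(A^{**})_{\rm mul}=\{0\}\times\mul A^{**}$.

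For the first inclusion $A\subset A_{\rm reg}\hplus(A^{**})_{\rm mul}$, I would take an arbitrary $\{f,f'\}\in A$ and split $f'=Pf'+(I-P)f'$. By the definition \eqref{reg} of the regular part, $\{f,Pf'\}\in A_{\rm reg}$; on the other hand $(I-P)f'\in\ran(I-P)=\mul A^{**}$, so $\{0,(I-P)f'\}\in\{0\}\times\mul A^{**}=(A^{**})_{\rm mul}$. Adding these two pairs componentwise gives $\{f,f'\}=\{f,Pf'\}+\{0,(I-P)f'\}\in A_{\rm reg}\hplus(A^{**})_{\rm mul}$, as required.

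For the second inclusion it is enough to show $A_{\rm reg}\subset(A^{**})_{\rm reg}$, since the componentwise sum is monotone in its first argument. Here I would note that $(A^{**})^*=A^*$ (because $A^*$ is closed), hence $\cdom(A^{**})^*=\cdom A^*$ and the regular part of $A^{**}$ is formed with the \emph{same} projection $P$; thus $(A^{**})_{\rm reg}=PA^{**}$, and from $A\subset A^{**}$ we get $A_{\rm reg}=PA\subset PA^{**}=(A^{**})_{\rm reg}$. Combining with monotonicity yields $A_{\rm reg}\hplus(A^{**})_{\rm mul}\subset(A^{**})_{\rm reg}\hplus(A^{**})_{\rm mul}$. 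I do not expect a real obstacle here; the only point needing a moment's care is bookkeeping with the conventions — recognising that $\ran(I-P)=\mul A^{**}$ and that $P$ is shared by $A$ and $A^{**}$ — after which both inclusions are one-line computations.
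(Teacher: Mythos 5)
Your proof is correct. The first inclusion is established exactly as in the paper: split $f'=Pf'+(I-P)f'$ and observe $\{f,f'\}=\{f,Pf'\}+\{0,(I-P)f'\}$ with $\{f,Pf'\}\in A_{\rm reg}$ and $(I-P)f'\in\mul A^{**}$. For the second inclusion, however, you take a genuinely different (and more elementary) route. The paper deduces $A_{\rm reg}\subset(A^{**})_{\rm reg}$ from the chain $A_{\rm reg}\subset (A_{\rm reg})^{**}=(A^{**})_{\rm reg}$, where the last identity is the nontrivial first equality in \eqref{HSSSdec3} of Theorem~\ref{HSS}. You instead note that $(A^{**})^*=A^*$, so $\sH_{A^{**}}=\sH_A$ and the same projection $P$ defines both regular parts, whence $A\subset A^{**}$ immediately gives $PA\subset PA^{**}$, i.e.\ $A_{\rm reg}\subset (A^{**})_{\rm reg}$. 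Your argument needs only the definitions and the monotonicity of $B\mapsto PB$, avoiding any appeal to Theorem~\ref{HSS}; what the paper's formulation buys in exchange is that it records the stronger fact that $(A^{**})_{\rm reg}$ is actually the closure of $A_{\rm reg}$, not merely an extension of it. Both arguments are sound.
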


\begin{proof}
Let $\{f,f'\} \in A$ and consider $f'=P f'+ (I-P) f'$. This leads to
\[
 \{f,f'\}=\{f, P f'\} +\{0, (I-P)f'\}.
\]
Hence, the first inclusion is clear. Furthermore, the second
inclusion follows from
\[
  A_{\rm reg}\subset ( A_{\rm reg})^{**}=(A^{**})_{\rm reg}.
\]
where the identity holds by Theorem~\ref{HSS}.
\end{proof}

\begin{remark}\label{numi}
Let $A$ be a relation in a Hilbert space $\sH$, which satisfies
$\mul A^{**} \subset \mul A^*$. Then $\cW(A)=\cW(A_{\rm reg})$. To
see this, observe that
\[
(A_{\rm reg} f,f)=(P f',f)=(f',f), \quad \{f,f'\} \in A,
\]
cf. Lemma \ref{easy}.
\end{remark}

\subsection{Componentwise decompositions of relations via operator part}

By means of the Hilbert space $\sH_{A}$ the restriction $A_{\rm
op}$ \index{part of relation!minimal operator $A_{\rm op}$} of $A$
is defined by
\begin{equation}\label{oppa}
 A_{\rm op} \okr \{\,\{f,g\}\in A;\, g\in\sH_{A} \,\}.
\end{equation}
Equivalently, $A_{\rm op}$ can be written in the following way:
\begin{equation}\label{oppaa}
 A_{\rm op}=A \cap (\sH \times \sH_A).
\end{equation}
By definition $A_{\rm op}$ is (the graph of) an operator in $\sH$
(see \eqref{impor}) and clearly
\begin{equation}\label{reg00}
 A_{\rm op}\subset A_{\rm reg},
\end{equation}
where $A_{\rm reg}$ is as in \eqref{reg}. Since $A_{\rm reg}$ is
closable in $\sH$, the operator $A_{\rm op}$ is also closable in
$\sH$. By means of the multivalued part of $A$ the restriction
$A_{\rm mul}$ of $A$ is defined by
\begin{equation}\label{mul}
 A_{\rm mul}\okr\{0\} \times \mul A.
\end{equation}
In particular, the relation $A_{\text{mul}}$ is closed in $\sH\times
\sH$ if and only if the subspace $\mul A$ is closed in $\sH$. By
taking adjoints in \eqref{mul} one gets
\begin{equation}\label{mulA*}
 (A_{\rm mul})^*=(\mul A)^\perp \times \sH,
\end{equation}
so that $A_{\rm mul}$ is a symmetric relation in $\sH$.
By taking adjoints in \eqref{mulA*} one gets
\begin{equation}\label{mulA**}
 (A_{\rm mul})^{**}=\{0\} \times \cmul A.
\end{equation}
The following theorem is concerned with the decomposition of a,
not necessarily closed, relation $A$ in the graph sense via its
multivalued part.

\begin{theorem}\label{unidec}
Let $A$ be a relation in a Hilbert space $\sH$. If there exists a
relation $B$ in $\sH$, such that
\begin{equation}\label{graphdecB}
 A=B \hplus A_{\rm mul}, \quad \ran B\subset \sH_{A},
\end{equation}
then the sum in \eqref{graphdecB} is direct and $B$ is a closable
operator which coincides with $A_{\rm op}$. In particular, the
decomposition of $A$ in \eqref{graphdecB} is unique.
\end{theorem}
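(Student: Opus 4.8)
The plan is to extract every asserted property of $B$ from the two conditions in \eqref{graphdecB}, together with the orthogonal splitting $\sH=\sH_A\oplus\mul A^{**}$ coming from \eqref{oppa1} and the basic inclusion $\mul A\subset\mul A^{**}$. First I would record that, since $A_{\rm mul}=\{0\}\times\mul A$ and $B$ are both linear subspaces of $\sH\times\sH$ (so each contains $\{0,0\}$), the componentwise sum in \eqref{graphdecB} immediately gives the inclusions $B\subset A$ and $A_{\rm mul}\subset A$. From $B\subset A$ one gets $\mul B\subset\mul A\subset\mul A^{**}$, while at the same time $\mul B\subset\ran B\subset\sH_A=(\mul A^{**})^\perp$; hence $\mul B=\{0\}$ and $B$ is (the graph of) an operator. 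Consequently $B\cap A_{\rm mul}\subset\{0\}\times\mul B=\{\{0,0\}\}$, so the sum in \eqref{graphdecB} is direct.

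Next I would identify $B$ with $A_{\rm op}$. Since $B\subset A$ and $\ran B\subset\sH_A$, the description \eqref{oppaa} yields $B\subset A\cap(\sH\times\sH_A)=A_{\rm op}$. For the reverse inclusion, take $\{f,g\}\in A_{\rm op}$, so that $g\in\sH_A$, and use \eqref{graphdecB} to write $\{f,g\}=\{f,g'\}+\{0,h\}$ with $\{f,g'\}\in B$ and $h\in\mul A$. Then $g'\in\ran B\subset\sH_A$ and $h\in\mul A\subset\mul A^{**}=\sH_A^{\perp}$, so $g=g'+h$ is precisely the orthogonal decomposition of $g$ with respect to $\sH=\sH_A\oplus\mul A^{**}$; since $g\in\sH_A$, uniqueness of that decomposition forces $h=0$ and $g'=g$, hence $\{f,g\}\in B$. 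Thus $A_{\rm op}\subset B$, and therefore $B=A_{\rm op}$.

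Finally, closability of $B$ follows because $B=A_{\rm op}\subset A_{\rm reg}$ by \eqref{reg00}, the operator $A_{\rm reg}$ is closable by Theorem~\ref{HSS}, and a restriction of a closable operator is closable: applying \eqref{AB*} twice to $B\subset A_{\rm reg}$ gives $B^{**}\subset(A_{\rm reg})^{**}$, which is an operator, so $\mul B^{**}=\{0\}$. Uniqueness of the decomposition \eqref{graphdecB} is then immediate, since any admissible $B$ must coincide with $A_{\rm op}$.

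I do not expect a serious obstacle here; the argument is essentially bookkeeping with adjoints and orthogonal complements. The one step that needs a little care is the reverse inclusion $A_{\rm op}\subset B$: it must be argued via uniqueness of the orthogonal decomposition along $\sH=\sH_A\oplus\mul A^{**}$ (using $\mul A\subset\mul A^{**}$), not merely by a set-theoretic manipulation of the componentwise sum, since a priori the representation of an element of $A$ in $B\hplus A_{\rm mul}$ is only known to be unique because $B$ is an operator — which is itself part of what is being proved.
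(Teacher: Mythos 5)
Your proof is correct and follows essentially the same route as the paper: both arguments first deduce $B\subset A$ and $\ran B\subset\sH_A$, hence $B\subset A_{\rm op}$ (so $B$ is a closable operator), and then upgrade this to equality. The only variation is in the last step, where the paper obtains $A_{\rm op}\subset B$ from $\dom B=\dom A=\dom A_{\rm op}$ together with single-valuedness of $A_{\rm op}$, while you instead use uniqueness of the orthogonal decomposition along $\sH=\sH_A\oplus\mul A^{**}$; both are sound.
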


\begin{proof}
It follows from \eqref{impor} that the sum in \eqref{graphdecB} is
direct. The equality in \eqref{graphdecB} implies that $B\subset
A$ and $\dom B=\dom A$. Since $\ran B\subset \sH_{A}$, it follows
from \eqref{oppa} that $B\subset A_{\rm op}$; in particular, it
follows that $B$ is a closable operator in $\sH$. Furthermore, the
inclusion $B \subset A_{\rm op}s$ implies that $\dom A=\dom
B\subset \dom A_{\rm op}$, and thus $\dom A_{\rm op}=\dom A$.
Since $A_{\rm op}$ and $B\subset A_{\rm op}$ are (closable)
operators with $\dom B=\dom A_{\rm op}$, the equality $B=A_{\rm
op}$ follows.
\end{proof}

Hence if $A$ admits a componentwise sum decomposition of the form
\eqref{graphdecB}, then it follows that
\begin{equation}\label{graphdec}
 A=A_{\rm op} \hplus A_{\rm mul},
\end{equation}
and $A_{\rm op}$ in \eqref{oppa} can be viewed as the
\textit{minimal operator part}\index{part of relation!minimal
operator $A_{\rm op}$} of $A$ which together with $A_{\rm mul}$
decomposes $A$ as a componentwise sum, cf. \eqref{jan00}. Clearly,
by \eqref{oppa2} the condition $\ran B\subset \sH_{A}=\sH \ominus
\mul A^{**}$ implies that $\ran B\subset \sH \ominus \mul A$. It
is precisely in the case that $\mul A$ is dense in $\mul A^{**}$
(recall that $A$ is not necessarily closed) where the condition
$\ran B\subset \sH_{A}$ in \eqref{graphdecB} is equivalent to the
condition $\ran B\subset \sH \ominus \mul A$.

A relation $A$ in a Hilbert space $\sH$ is said to be
\textit{decomposable}\index{relation!decomposable} if the
componentwise decomposition \eqref{graphdecB}, or equivalently,
\eqref{graphdec} is valid; cf. Subsection \ref{decc}. The next
theorem gives necessary and sufficient conditions for $A$ to be
decomposable and, furthermore, relates the decomposition of the
relation $A$ in \eqref{graphdec} to the the operatorwise sum
decomposition of $A$ in \eqref{HSSSdec}.

\begin{theorem}\label{reglem}
Let $A$ be a relation in a Hilbert space $\sH$, let $P$ be the
orthogonal projection from $\sH$ onto $\sH_A=\cdom A^*$, and let
the relations $A_{\rm reg}$, $A_{\rm mul}$, and $A_{\rm op}$ be
defined as above. Then the following statements are equivalent:
\begin{enumerate}\def\labelenumi{\rm (\roman{enumi})}

\item $A$ is decomposable;

\item $\dom A_{\rm op}=\dom A$;

\item $A_{\rm reg} = A_{\rm op}$;

\item $A_{\rm reg}\subset A$;

\item $\ran(I-P)A\subset \mul A$;

\item $A=A_{\rm reg} \hplus A_{\rm mul}$.
\end{enumerate}
\end{theorem}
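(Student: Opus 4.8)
The plan is to establish all six equivalences by running a single implication cycle
\[
{\rm (i)} \Rightarrow {\rm (vi)} \Rightarrow {\rm (iv)} \Rightarrow {\rm (v)} \Rightarrow {\rm (iii)} \Rightarrow {\rm (ii)} \Rightarrow {\rm (i)},
\]
using throughout the facts already recorded above: $A_{\rm op}\subset A_{\rm reg}$ by \eqref{reg00}; both $A_{\rm reg}$ and $A_{\rm op}$ are (closable) operators with $\dom A_{\rm reg}=\dom A$; $\ran A_{\rm reg}=\ran PA\subset \sH_A$; $A_{\rm mul}=\{0\}\times\mul A\subset A$; and consequently $A_{\rm op}\hplus A_{\rm mul}\subset A$ since $A$ is a linear subspace. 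The one identity that does all the work is $\{f,Pg\}=\{f,g\}-\{0,(I-P)g\}$ for $\{f,g\}\in A$, which ties membership of $\{f,Pg\}$ in $A$ to the condition $(I-P)g\in\mul A$.

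First I would prove ${\rm (i)}\Rightarrow{\rm (vi)}$: if $A$ is decomposable then by the uniqueness part of Theorem~\ref{unidec} the relation $B$ in \eqref{graphdecB} is $A_{\rm op}$, so $A=A_{\rm op}\hplus A_{\rm mul}$; comparing domains gives $\dom A_{\rm op}=\dom A=\dom A_{\rm reg}$, and since $A_{\rm op}\subset A_{\rm reg}$ are operators with the same domain, $A_{\rm op}=A_{\rm reg}$, whence $A=A_{\rm reg}\hplus A_{\rm mul}$. The step ${\rm (vi)}\Rightarrow{\rm (iv)}$ is immediate. For ${\rm (iv)}\Rightarrow{\rm (v)}$ I would use the identity: if $A_{\rm reg}\subset A$ then for $\{f,g\}\in A$ we have $\{f,Pg\}\in A$, hence $\{0,(I-P)g\}\in A$, i.e. $(I-P)g\in\mul A$, so $\ran(I-P)A\subset\mul A$. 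Conversely, for ${\rm (v)}\Rightarrow{\rm (iii)}$ the same identity shows that each $\{f,Pg\}\in A_{\rm reg}$ lies in $A\cap(\sH\times\sH_A)=A_{\rm op}$, giving $A_{\rm reg}\subset A_{\rm op}$ and hence $A_{\rm reg}=A_{\rm op}$. Then ${\rm (iii)}\Rightarrow{\rm (ii)}$ is just $\dom A_{\rm op}=\dom A_{\rm reg}=\dom A$. Finally, for ${\rm (ii)}\Rightarrow{\rm (i)}$: given $\dom A_{\rm op}=\dom A$, any $\{f,f'\}\in A$ has $f\in\dom A_{\rm op}$, so there is $\{f,g\}\in A_{\rm op}$; then $\{0,f'-g\}\in A$ forces $f'-g\in\mul A$ and $\{f,f'\}=\{f,g\}+\{0,f'-g\}\in A_{\rm op}\hplus A_{\rm mul}$. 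Combined with $A_{\rm op}\hplus A_{\rm mul}\subset A$ and $\ran A_{\rm op}\subset\sH_A$, this exhibits $A$ as decomposable.

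I do not anticipate a genuine obstacle: every step is an elementary manipulation of subspaces of $\sH\times\sH$. The only place where earlier machinery is essential is ${\rm (i)}\Rightarrow{\rm (vi)}$, where Theorem~\ref{unidec} is needed to identify $B$ with $A_{\rm op}$. The conceptual crux throughout is simply the splitting $f'=Pf'+(I-P)f'$ of the second component together with the fact that $(I-P)f'\in\mul A^{**}$ but possibly not in $\mul A$; decomposability is exactly the vanishing of this obstruction, which is what condition (v) renders explicit.
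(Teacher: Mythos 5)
Your proof is correct and uses essentially the same ingredients as the paper's: the splitting $\{f,g\}=\{f,Pg\}+\{0,(I-P)g\}$, the inclusion $A_{\rm op}\subset A_{\rm reg}$ forcing equality once the domains agree, and Theorem~\ref{unidec} to identify the operator part. You merely traverse the six conditions in a different cyclic order than the paper ((i)$\Rightarrow$(vi)$\Rightarrow$(iv)$\Rightarrow$(v)$\Rightarrow$(iii)$\Rightarrow$(ii)$\Rightarrow$(i) versus the paper's (i)$\Rightarrow$(ii)$\Rightarrow$(iii)$\Rightarrow$(iv)$\Leftrightarrow$(v)$\Rightarrow$(vi)$\Rightarrow$(i)), which is a cosmetic difference.
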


\begin{proof}
(i) $\Longrightarrow$ (ii) This implication is clear, since $\dom
A_{\rm mul}=\{0\}$.

(ii) $\Longrightarrow$ (iii) The assumption gives $\dom A_{\rm
op}=\dom A=\dom A_{\rm reg}$. Now \eqref{reg00} implies that
$A_{\rm op}=A_{\rm reg}$, since $A_{\rm op}$ and $A_{\rm reg}$ are
operators.

(iii) $\Longrightarrow$ (iv) This implication is clear, since
$A_{\rm op}\subset A$ by definition.

(iv) $\Longleftrightarrow$ (v) Let $\{f,g\}\in A$ and write
$\{f,g\}=\{f,P g\}\hplus\{0,(I-P)g\}$. Here $\{f,P g\}\in A_{\rm
reg}$ and the condition $\{f,P g\}\in A$ is equivalent to
$\{0,(I-P)g\} \in A$. This shows that $A_{\rm reg}\subset A$ if
and only if $(I-P)(\ran A)\subset \mul A$, which proves the claim.

(iv), (v) $\Longrightarrow$ (vi) By decomposing $\{f,g\}\in A$ as
$\{f,g\}=\{f,P g\}\hplus\{0,(I-P)g\}$ one concludes that $A\subset
A_{\rm reg} \hplus A_{\rm mul}$. The reverse inclusion is clear,
and thus (vi) follows.

(vi) $\Longrightarrow$ (i) It suffices to prove that $A_{\rm reg}
= A_{\rm op}$. The equality in (vi) implies that $A_{\rm reg}
\subset A$. Hence, if $\{f,g\}\in A_{\rm reg}$ then $\{f,g\} \in
A$, $g\in\sH_{A}$, and thus $\{f,g\} \in A_{\rm op}$. Therefore,
$A_{\rm reg} \subset A_{\rm op}$, while the reverse inclusion is
always true; cf. \eqref{reg00}.

This completes the proof.
\end{proof}

Recall that $A$ is a bounded operator if and only if $\ran
A^{**}\subset \dom A^*$; see Corollary \ref{boundcor}. From Theorem
\ref{reglem} one gets the following characterization for the
essentially weaker condition $\ran A\subset \cdom A^*$.

\begin{proposition}\label{operdec}
Let $A$ be a relation in a Hilbert space $\sH$.
Then the following statements are equivalent:
\begin{enumerate}\def\labelenumi{\rm (\roman{enumi})}

\item $\ran A\subset \cdom A^*\,(=\sH_A)$;

\item $A_{\rm op}=A$;

\item $A$ is regular, i.e., a closable operator;

\item $\sH_A=\sH$;

\item $A$ is a decomposable operator.

\end{enumerate}
\end{proposition}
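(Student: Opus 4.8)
The plan is to prove the circle of implications (i) $\Rightarrow$ (iii) $\Rightarrow$ (v) $\Rightarrow$ (ii) $\Rightarrow$ (i), supplemented by the equivalence (iii) $\Leftrightarrow$ (iv). All of these are short; the bulk of the argument is simply unwinding the definitions of $\sH_A$, $A_{\rm op}$, and $A_{\rm mul}$, and the single step carrying real content is (i) $\Rightarrow$ (iii).

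First I would record the two purely formal equivalences. For (i) $\Leftrightarrow$ (ii): by \eqref{oppaa} one always has $A_{\rm op}=A\cap(\sH\times\sH_A)\subset A$, so $A_{\rm op}=A$ precisely when every $\{f,g\}\in A$ has $g\in\sH_A$, i.e. precisely when $\ran A\subset\sH_A$. For (iii) $\Leftrightarrow$ (iv): by definition $\sH_A=\sH\ominus\mul A^{**}$, hence $\sH_A=\sH$ if and only if $\mul A^{**}=\{0\}$, which by Proposition~\ref{regchar} is exactly regularity of $A$. The substantive step is (i) $\Rightarrow$ (iii): assuming $\ran A\subset\sH_A$ and using that $\sH_A$ is closed, one gets $\cran A\subset\sH_A=\sH\ominus\mul A^{**}$, while \eqref{sing0} always gives $\mul A^{**}\subset\cran A$; combining these forces $\mul A^{**}\subset\sH\ominus\mul A^{**}$, so $\mul A^{**}=\{0\}$ and $A$ is regular by Proposition~\ref{regchar}.

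The remaining two implications are immediate. For (iii) $\Rightarrow$ (v): a regular relation is automatically an operator, so $\mul A=\{0\}$ and $A_{\rm mul}=\{0\}\times\{0\}$ is the trivial relation, while $\mul A^{**}=\{0\}$ gives $\sH_A=\sH$, hence $A_{\rm op}=A\cap(\sH\times\sH)=A$; thus $A=A_{\rm op}\hplus A_{\rm mul}$ exhibits $A$ as a decomposable operator. For (v) $\Rightarrow$ (ii): if $A$ is an operator then $\mul A=\{0\}$, so $A_{\rm mul}=\{0\}\times\{0\}$ and the decomposition $A=A_{\rm op}\hplus A_{\rm mul}$ collapses to $A=A_{\rm op}$. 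Finally (iv) $\Rightarrow$ (i) is trivial, since then $\ran A\subset\sH=\sH_A$, closing the circle; and (iv) has already been linked to (iii).

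The only point that needs care — the ``main obstacle'', such as it is — is in (i) $\Rightarrow$ (iii): one must invoke the inclusion $\mul A^{**}\subset\cran A$ from \eqref{sing0}, not merely $\mul A\subset\ran A$, since the latter would only yield $\mul A=\{0\}$ and not the full regularity $\mul A^{**}=\{0\}$. Everything else is a direct reading of the definitions.
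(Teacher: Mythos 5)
Your proof is correct; every implication checks out against the definitions in the paper. The overall plan (a cycle of short implications among the five conditions, with (i) $\Leftrightarrow$ (ii) and (iii) $\Leftrightarrow$ (iv) read off directly from \eqref{oppaa} and \eqref{oppa1}) matches the paper's proof in spirit, but you route the one substantive step differently. The paper derives regularity from (ii): since $A_{\rm op}\subset A_{\rm reg}$ and $A_{\rm reg}$ is closable by Theorem~\ref{HSS}, the identity $A_{\rm op}=A$ makes $A$ closable. You instead go (i) $\Rightarrow$ (iii) via the elementary inclusion $\mul A^{**}\subset\cran A$ of \eqref{sing0}, squeezing $\mul A^{**}$ into its own orthogonal complement. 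Your version is more self-contained, as it does not lean on the canonical decomposition machinery behind the closability of $A_{\rm reg}$; the paper's version is shorter given that machinery is already in place. Likewise, for (iv) $\Rightarrow$ (v) the paper passes through $A_{\rm reg}=A$ and Theorem~\ref{reglem}, whereas you exhibit the decomposition $A=A_{\rm op}\hplus A_{\rm mul}$ directly with $A_{\rm mul}$ trivial, and for the return to (i)/(ii) you use only the definition of decomposability rather than condition (v) of Theorem~\ref{reglem}. Both routes are sound; yours trades reliance on Theorem~\ref{reglem} and Theorem~\ref{HSS} for a couple of lines of direct verification. Your closing caution is also well placed: one really does need $\mul A^{**}\subset\cran A$ rather than $\mul A\subset\ran A$ to conclude regularity rather than mere single-valuedness.
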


\begin{proof}
(i) $\Longleftrightarrow$ (ii) This is clear from the definition
of $A_{\rm op}$ in \eqref{oppa}.

(ii) $\Longrightarrow$ (iii) If $A_{\rm op}=A$ then, together with
$A_{\rm op}$, $A$ is closable.

(iii) $\Longrightarrow$ (iv) If $A$ is closable, then $\mul
A^{**}=\{0\}$ and hence $\sH_A=\sH$.

(iv) $\Longrightarrow$ (v) If $\sH_A=\sH$ then $A_{\rm reg}=A$ and
hence $A$ is decomposable by Theorem~\ref{reglem} (iv).

(v) $\Longrightarrow$ (i) If $A$ is a decomposable operator, then
$A_{\rm mul}=\{0\}\times\{0\}$ and hence $(I-P)A=0$ by
Theorem~\ref{reglem} (v). This means that $\ran A\subset
\ker(I-P)=\sH_A$.
\end{proof}

The next result is clear from Proposition~\ref{operdec}.

\begin{corollary}\label{vasa+}
An operator $A$ in a Hilbert space $\sH$ is decomposable if and only
if it is regular, i.e., $A_{\rm sing}=0$.
\end{corollary}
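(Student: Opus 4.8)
The plan is to read the statement off from Proposition~\ref{operdec} together with the canonical decomposition of Theorem~\ref{HSS}; essentially all the work has already been done there, so the task is mainly one of bookkeeping. The first step is to record what decomposability means when $A$ is an operator. In that case $\mul A=\{0\}$, so the relation $A_{\rm mul}=\{0\}\times\mul A$ is the trivial subspace $\{0\}\times\{0\}$ and the componentwise sum in \eqref{graphdec} (equivalently in \eqref{graphdecB}) degenerates: the identity $A=A_{\rm op}\hplus A_{\rm mul}$ is nothing but $A=A_{\rm op}$. Hence, for an operator $A$, being decomposable is exactly the same as the equality $A=A_{\rm op}$.

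The second step is to feed this observation into Proposition~\ref{operdec}. By the equivalence of items (ii), (iii) and (v) there, the equality $A=A_{\rm op}$ holds if and only if $A$ is regular, i.e., a closable operator (and, as stated there, for operators this is in turn synonymous with $A$ being a decomposable operator). Combined with the first step this already yields the assertion: an operator $A$ in $\sH$ is decomposable if and only if it is regular.

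Finally I would translate ``regular'' into the phrasing used in the corollary. As noted immediately after Theorem~\ref{HSS}, $A$ is regular precisely when the singular part $A_{\rm sing}$ in the canonical decomposition \eqref{HSSSdec} is the zero operator on $\dom A$, that is, $A_{\rm sing}=0$; chaining this with the equivalence obtained in the second step finishes the proof. The one place that calls for attention is the degeneration in the first step — the fact that for an operator the multivalued summand of \eqref{graphdec} drops out, so that decomposability collapses to $A=A_{\rm op}$ — and once that is granted everything else is immediate from the cited results, so no genuine obstacle remains.
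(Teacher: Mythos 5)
Your proof is correct and follows essentially the same route as the paper, which simply observes that the corollary is immediate from Proposition~\ref{operdec} (the equivalence of items (iii) and (v) there) together with the remark after Theorem~\ref{HSS} identifying regularity with $A_{\rm sing}=0$. Your extra observation that $A_{\rm mul}=\{0\}\times\{0\}$ for an operator, so that decomposability collapses to $A=A_{\rm op}$, is exactly the implicit content of that reduction.
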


Hence, an operator $A$ is decomposable in the sense of Theorem
\ref{unidec} if and only if it is closable; in this case $A_{\rm
mul}=\{0\}\times\{0\}$ and $A=A_{\rm op}$. In this sense the
decomposability property introduced via Theorem~\ref{unidec} can
be seen as an extension of the notion of closability of operators
for linear relations.

Singular operators and relations are not in general decomposable;
for them the following result holds.

\begin{proposition}\label{singu}
Let $A$ be a relation in a Hilbert space $\sH$. Then
\begin{enumerate}\def\labelenumi{\rm (\roman{enumi})}

\item $A$ is singular and decomposable if and only if
$A=\dom A \times \mul A$, or equivalently, $\dom A=\ker A$.

\item A singular operator $A$ is  decomposable if and only if it is
bounded, or equivalently, $A$ is the zero operator on its domain,
i.e., $A=\dom A \times \{0\}$.
\end{enumerate}
\end{proposition}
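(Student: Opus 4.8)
The plan is to reduce both statements to the characterization of decomposability in Theorem~\ref{reglem} --- that $A$ is decomposable if and only if $A_{\rm reg}\subset A$ --- combined with the fact, already noted after Theorem~\ref{HSS}, that a relation $A$ is singular if and only if its regular part is the zero operator on its domain, i.e.\ $A_{\rm reg}=\dom A\times\{0\}$. For completeness one may re-derive the latter in one line: with $P$ the orthogonal projection of $\sH$ onto $\sH_A=\sH\ominus\mul A^{**}$, singularity of $A$ means $\ran A\subset\mul A^{**}=\ker P$, so $Pg=0$ for all $\{f,g\}\in A$ and hence $A_{\rm reg}=PA=\dom A\times\{0\}$; conversely $A_{\rm reg}=\dom A\times\{0\}$ forces $\ran A\subset\ker P=\mul A^{**}$.

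For (i), suppose first that $A$ is singular and decomposable. Then $A_{\rm reg}=\dom A\times\{0\}$ by the above, and $A_{\rm reg}\subset A$ by Theorem~\ref{reglem}~(iv); hence $\{f,0\}\in A$ for every $f\in\dom A$, so $\dom A\subset\ker A$ and therefore $\dom A=\ker A$. Now for any $\{f,g\}\in A$ the element $\{0,g\}=\{f,g\}-\{f,0\}$ lies in $A$, so $\ran A\subset\mul A$; together with the reverse inclusion $\dom A\times\mul A\subset A$, obtained from $\{f,g\}=\{f,0\}+\{0,g\}$ with $\{f,0\}\in A$ and $\{0,g\}\in A$, this gives $A=\dom A\times\mul A$. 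Conversely, if $A=\dom A\times\mul A$ then $\dom A=\ker A$ and $\ran A=\mul A\subset\mul A^{**}$, so $A$ is singular, while $A_{\rm reg}=\dom A\times\{0\}\subset A$, so $A$ is decomposable by Theorem~\ref{reglem}~(iv). Finally the equivalence $A=\dom A\times\mul A\Leftrightarrow\dom A=\ker A$ is immediate: one direction was just noted, and if $\dom A=\ker A$ the splitting $\{f,g\}=\{f,0\}+\{0,g\}$ again produces $A=\dom A\times\mul A$.

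For (ii), if a singular operator $A$ is decomposable, then part (i) yields $A=\dom A\times\mul A=\dom A\times\{0\}$ since $\mul A=\{0\}$; in particular $A$ is bounded. Conversely, every bounded operator is closable --- equivalently, regular --- by Lemma~\ref{first}, hence decomposable by Corollary~\ref{vasa+}; and if moreover $A$ is singular, then regularity gives $\mul A^{**}=\{0\}$, so by \eqref{sing00} we get $\ran A\subset\cran A=\mul A^{**}=\{0\}$, i.e.\ $A=\dom A\times\{0\}$. Since the zero operator on $\dom A$ is trivially bounded, this shows that for a singular operator, decomposability, boundedness, and being the zero operator on its domain are all equivalent.

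I do not expect a genuine obstacle: once the shape $A_{\rm reg}=\dom A\times\{0\}$ of the regular part of a singular relation is in hand, part (i) is bookkeeping with componentwise sums via Theorem~\ref{reglem}, and part (ii) is the same reduction together with the elementary fact (Lemma~\ref{first}) that a bounded operator has an operator closure, which is exactly what converts ``bounded'' into ``range $\{0\}$'' in the singular case. The mildest points to watch are invoking the correct item of Theorem~\ref{reglem} and keeping the three characterizations in (ii) genuinely cyclically linked.
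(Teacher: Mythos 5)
Your proof is correct and follows essentially the same route as the paper: both reduce singularity to the statement $A_{\rm reg}=\dom A\times\{0\}$ and then invoke Theorem~\ref{reglem} (you use item (iv), $A_{\rm reg}\subset A$, where the paper uses the equivalent item (vi), $A=A_{\rm reg}\hplus A_{\rm mul}$), and for (ii) the paper cites Proposition~\ref{RSbounded}~(ii) while you route through Lemma~\ref{first} and Corollary~\ref{vasa+}, which amounts to the same reduction. No gaps.
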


\begin{proof}
(i) The relation $A$ is singular if $\ran A \subset \mul A^{**}$.
This is equivalent to $A_{\rm reg}=\dom A \times \{0\}$. By Theorem
\ref{reglem}, $A$ is decomposable if and only if $A=A_{\rm reg}
\hplus A_{\rm mul}$. Hence, if $A$ is singular and decomposable,
then $A=\dom A \times \mul A$. Conversely, if $A$ is of the form
$A=\dom A \times \mul A$, then clearly $A$ is singular and
decomposable. Furthermore, it is easy to check that $A=\dom A \times
\mul A$ is equivalent to $\dom A=\ker A$.

(ii) This is clear from part (i) and
Proposition~\ref{RSbounded}~(ii).
\end{proof}

Next some sufficient conditions for decomposability of relations are given.

\begin{corollary}\label{muldec}
If the relation $A$ satisfies $\mul A=\mul A^{**}$, then $A$ is
decomposable and the relation $A_{\rm mul}$ is closed.
\end{corollary}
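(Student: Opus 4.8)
The plan is to treat the two assertions separately, as each is a short consequence of results already in place.

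First, the closedness of $A_{\rm mul}$. Recall from the discussion following \eqref{mul} that the relation $A_{\rm mul}=\{0\}\times\mul A$ is closed in $\sH\times\sH$ precisely when the subspace $\mul A$ is closed in $\sH$. Now $\mul A^{**}$ is the multivalued part of the closed relation $A^{**}$, and multivalued parts of closed relations are closed subspaces; hence the hypothesis $\mul A=\mul A^{**}$ immediately gives that $\mul A$ is a closed subspace of $\sH$, and therefore $A_{\rm mul}$ is closed.

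Second, decomposability. The cleanest route is to verify condition (v) of Theorem~\ref{reglem}, namely $\ran(I-P)A\subset\mul A$, where $P$ is the orthogonal projection of $\sH$ onto $\sH_A=\cdom A^*=\sH\ominus\mul A^{**}$. Since $I-P$ is then the orthogonal projection onto $\mul A^{**}$, for every $\{f,f'\}\in A$ one has $(I-P)f'\in\mul A^{**}=\mul A$ by hypothesis; thus $\ran(I-P)A\subset\mul A$, and by the equivalence of (i) and (v) in Theorem~\ref{reglem} the relation $A$ is decomposable. (Equivalently one could check (iv): writing $f'=Pf'+(I-P)f'$ for $\{f,f'\}\in A$, the fact that $(I-P)f'\in\mul A^{**}=\mul A$ gives $\{0,(I-P)f'\}\in A$, whence $\{f,Pf'\}=\{f,f'\}-\{0,(I-P)f'\}\in A$, i.e.\ $A_{\rm reg}\subset A$.)

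There is no genuine obstacle: the statement is a direct corollary of the characterizations in Theorem~\ref{reglem} together with the elementary fact that closed relations have closed multivalued parts. The only point to keep straight is that $I-P$ projects onto $\mul A^{**}$ rather than onto $\mul A$, so that $\ran(I-P)A\subset\mul A^{**}$ holds automatically and it is precisely the assumption $\mul A=\mul A^{**}$ that upgrades this to the inclusion required by Theorem~\ref{reglem}.
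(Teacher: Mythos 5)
Your proof is correct and follows essentially the same route as the paper: decomposability is obtained by verifying condition (v) of Theorem~\ref{reglem} via $\ran(I-P)A\subset\mul A^{**}=\mul A$, and closedness of $A_{\rm mul}$ follows since $\mul A=\mul A^{**}$ is the (closed) multivalued part of the closed relation $A^{**}$.
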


\begin{proof}
Note that $I-P$ is the orthogonal projection onto $\mul A^{**}$.
Therefore, in this case $\ran(I-P)A\subset \mul A^{**}=\mul A$, and hence
$A$ is decomposable by Theorem~\ref{reglem} (v). Since $A^{**}$ is closed,
also $\mul A=\mul A^{**}$ and $A_{\rm mul}$ are closed.
\end{proof}

\begin{corollary}\label{closeddec}
If the relation $A$ is a closed, then $A$ is decomposable and the
relations $A_{\rm op}=A_{\rm reg}$ and $A_{\rm mul}$ are closed.
\end{corollary}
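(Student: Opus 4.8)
The plan is to obtain this statement as an immediate consequence of Corollary~\ref{muldec}, combined with the observation (recorded already in the introductory discussion of closed relations) that a closed relation has closed multivalued part, and with the description \eqref{oppaa} of $A_{\rm op}$.

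First I would note that if $A$ is closed, then $A^{**}=A$, and in particular $\mul A^{**}=\mul A$; moreover this common subspace is closed, since $\mul A$ is a closed subspace of $\sH$ whenever $A$ is closed. Thus the hypothesis $\mul A=\mul A^{**}$ of Corollary~\ref{muldec} is satisfied, and that corollary yields at once that $A$ is decomposable and that $A_{\rm mul}=\{0\}\times\mul A$ is closed in $\sH\times\sH$.

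It remains to check that the operator part $A_{\rm op}$, which now equals $A_{\rm reg}$ by the equivalence (i)$\Longleftrightarrow$(iii) in Theorem~\ref{reglem}, is closed. For this I would invoke the alternative description \eqref{oppaa}, namely $A_{\rm op}=A\cap(\sH\times\sH_A)$. By definition $\sH_A=\cdom A^*$ is a closed subspace of $\sH$, so $\sH\times\sH_A$ is a closed subspace of $\sH\times\sH$; intersecting it with the closed subspace $A$ produces a closed subspace. Hence $A_{\rm op}=A_{\rm reg}$ is closed, which completes the argument.

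I do not expect any genuine obstacle here: every ingredient is already in place, and the proof is just a matter of chaining them correctly. The only point requiring a moment's attention is the identification $A_{\rm op}=A_{\rm reg}$, which is exactly the content of Theorem~\ref{reglem} once decomposability of $A$ has been established via Corollary~\ref{muldec}.
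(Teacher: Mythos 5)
Your proposal is correct and follows essentially the same route as the paper: it deduces decomposability and closedness of $A_{\rm mul}$ from Corollary~\ref{muldec} via $\mul A=\mul A^{**}$, identifies $A_{\rm op}=A_{\rm reg}$ through Theorem~\ref{reglem}, and obtains closedness of $A_{\rm op}$ from the description \eqref{oppaa} as an intersection of closed subspaces. No gaps.
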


\begin{proof}
Since $A$ is closed, $\mul A=\mul A^{**}$ and the first statement
is obtained from Corollary~\ref{muldec}. Moreover, it is clear
from \eqref{oppaa} that $A_{\rm op}$ is closed.
\end{proof}

Later, in Proposition~\ref{muldec2}, it is shown that if $\mul A$ is
closed then the sufficient condition $\mul A=\mul A^{**}$ for
decomposability becomes also necessary.

Let $A$ be a relation in the Hilbert space $\sH$ which is not
necessarily closed. Then the closure of $A$ is given by $A^{**}$;
recall that $(A^{**})_{\rm mul}=\{0\} \times \mul A^{**}$.  It is
useful to observe that
\[
\mul A \subset \cmul A \subset \mul A^{**},
\]
and, furthermore,  that
\begin{equation}\label{mmuull}
(A_{\rm mul})^{**}=(A^{**})_{\rm mul} \quad \Longleftrightarrow
\quad \cmul A=\mul A^{**},
\end{equation}
cf. \eqref{mulA**}. Observe that $\sH_{A^{**}}=\sH_{A}$. Therefore
the operator $(A^{**})_{\rm op}$ is given by
\begin{equation}\label{burb}
 (A^{**})_{\rm op}=A^{**} \cap (\sH \times \sH_A).
\end{equation}
It is clear from \eqref{oppaa}, \eqref{mul}, and \eqref{burb} that
$A_{\rm op}\subset(A^{**})_{\rm op}$ and $A_{\rm mul}\subset
(A^{**})_{\rm mul}$. Therefore, Corollary \ref{closeddec}, applied
to $A^{**}$, implies that
\[
(A_{\rm op})^{**} \subset(A^{**})_{\rm op}, \quad (A_{\rm
mul})^{**} \subset (A^{**})_{\rm mul}.
\]
The following result is a direct consequence of Theorem
\ref{reglem}.

\begin{proposition}\label{vasa}
Let $A$ be a relation in a Hilbert space $\sH$. Then $A^{**}$ is
decomposable and has the following componentwise sum
decomposition:
\begin{equation}\label{grdec**}
 A^{**}=(A^{**})_{\rm op} \hplus (A^{**})_{\rm mul}.
\end{equation}
Moreover, if the relation $A$ is decomposable, then
\begin{equation}\label{rud0}
 (A_{\rm op})^{**}=(A^{**})_{\rm op}, \quad (A_{\rm
mul})^{**}=(A^{**})_{\rm mul}.
\end{equation}
\end{proposition}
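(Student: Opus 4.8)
The plan is to derive everything from three results already in hand: the canonical decomposition (Theorem~\ref{HSS}), the characterisations of decomposability (Theorem~\ref{reglem}), and the fact that a closed relation is decomposable (Corollary~\ref{closeddec}).

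First I would dispose of the opening assertion. Since $A^{**}$ is an adjoint it is closed, so Corollary~\ref{closeddec} applies to $A^{**}$ and gives that $A^{**}$ is decomposable, with $(A^{**})_{\rm op}=(A^{**})_{\rm reg}$, and with both $(A^{**})_{\rm op}$ and $(A^{**})_{\rm mul}$ closed; being decomposable, $A^{**}$ satisfies the componentwise identity \eqref{graphdec}, which for the relation $A^{**}$ is exactly \eqref{grdec**}. Next, assuming $A$ decomposable, I would treat the operator parts: Theorem~\ref{reglem}~(iii) gives $A_{\rm op}=A_{\rm reg}$, the identity $(A_{\rm reg})^{**}=(A^{**})_{\rm reg}$ from \eqref{HSSSdec3} then yields $(A_{\rm op})^{**}=(A^{**})_{\rm reg}$, and combining this with $(A^{**})_{\rm op}=(A^{**})_{\rm reg}$ from the first part gives the first identity in \eqref{rud0}.

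It remains to prove $(A_{\rm mul})^{**}=(A^{**})_{\rm mul}$. By \eqref{mulA**} one has $(A_{\rm mul})^{**}=\{0\}\times\cmul A$, while $(A^{**})_{\rm mul}=\{0\}\times\mul A^{**}$ by definition, so in view of \eqref{mmuull} the whole point is the density statement $\cmul A=\mul A^{**}$. The inclusion $\cmul A\subset\mul A^{**}$ always holds. For the reverse inclusion I would use a sequence argument that genuinely exploits decomposability: write $A=A_{\rm op}\hplus A_{\rm mul}$ and take $g\in\mul A^{**}$, so that $\{0,g\}$ is a limit in $\sH\times\sH$ of elements $\{p_n,\,q_n'+q_n''\}$ with $\{p_n,q_n'\}\in A_{\rm op}$ and $q_n''\in\mul A$. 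Since $\ran A_{\rm op}\subset\sH_A$ by definition of $A_{\rm op}$, while $q_n''\in\mul A\subset\mul A^{**}=\sH\ominus\sH_A$ by \eqref{oppa1}, the decomposition $q_n'+q_n''$ is orthogonal; hence, applying the projection $P$ onto $\sH_A$ and using $g\in\sH\ominus\sH_A$, one gets $q_n'=Pq_n\to Pg=0$, so $q_n''\to g$, and therefore $g\in\cmul A$. This last step is the only place where decomposability is really used, and it is where I expect the main effort; everything else is routine bookkeeping with the quoted results. The main obstacle, accordingly, is the equality $\cmul A=\mul A^{**}$: it fails for arbitrary relations, and the projection argument above is precisely where the hypothesis that $A$ is decomposable enters.
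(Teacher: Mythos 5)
Your proof is correct, and it rests on the same underlying observation as the paper's: because $\ran A_{\rm op}\subset\sH_A$ while $A_{\rm mul}\subset\{0\}\times\mul A^{**}$, the two summands of $A=A_{\rm op}\hplus A_{\rm mul}$ sit in orthogonal subspaces of $\sH\times\sH$, and this is what forces the closure to respect the splitting. The organization differs in two respects. For the identity $(A_{\rm op})^{**}=(A^{**})_{\rm op}$ the paper does not go through the regular part at all: it first establishes $A^{**}=(A_{\rm op})^{**}\hplus(A_{\rm mul})^{**}$ directly from the orthogonality, observes that $(A_{\rm op})^{**}$ is an operator with range in $\sH_A$, and then invokes the uniqueness statement of Theorem~\ref{unidec} applied to $A^{**}$; your chain $A_{\rm op}=A_{\rm reg}$ (Theorem~\ref{reglem}), $(A_{\rm reg})^{**}=(A^{**})_{\rm reg}$ (Theorem~\ref{HSS}), $(A^{**})_{\rm reg}=(A^{**})_{\rm op}$ (Corollary~\ref{closeddec}) is equally valid and is in fact recorded by the authors as a remark right after the proposition. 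For the identity $\cmul A=\mul A^{**}$ the paper extracts it from the multivalued part of the same closure identity (since $(A_{\rm op})^{**}$ is single-valued, $\mul A^{**}=\cmul A$ follows at once), whereas your sequence argument --- projecting $q_n'+q_n''\to g$ onto $\sH_A$ to kill the $A_{\rm op}$-component --- is a hands-on unpacking of the same fact; it has the small advantage of not requiring you to first justify that the closure of the componentwise sum equals the componentwise sum of the closures. One cosmetic slip: $Pq_n$ should read $P(q_n'+q_n'')$, but the intent is unambiguous. Also, decomposability is used not only there but already in the step $A_{\rm op}=A_{\rm reg}$; this does not affect correctness.
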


\begin{proof}
Let $A$ be any relation in $\sH$. Then $A^{**}$ is closed, and by
Corollary \ref{closeddec}, $A^{**}$ is decomposable, which leads to
the decomposition \eqref{grdec**}.

Now assume that the relation $A$ is decomposable with
decomposition $A=A_{\rm op} \hplus A_{\mul}$. Then it follows from
$\ran A_{\rm op} \subset \sH_A$, that
\begin{equation}\label{rud00}
 A^{**}=(A_{\rm op})^{**} \hplus (A_{\mul})^{**}.
\end{equation}
The identities \eqref{rud00} and   \eqref{mulA**} lead to the
following decomposition
\begin{equation}\label{rud1}
 A^{**}=(A_{\rm op})^{**} \hplus (\{0\} \times \cmul A).
\end{equation}
The operator $A_{\rm op}$ is closable and $\ran A_{\rm op} \subset
\sH_A$. Hence, $(A_{\rm op})^{** }$ is an operator and $\ran
(A_{\rm op})^{**} \subset \sH_A$. Because $(A_{\rm op})^{**}$ is
an operator, it follows from \eqref{rud1} that $\mul A^{**}=\cmul
A$; thus \eqref{rud1} reads as
\begin{equation*}\label{rud2}
 A^{**}=(A_{\rm op})^{**} \hplus (A^{**})_{\rm mul}.
\end{equation*}
An application of Theorem \ref{unidec}, applied to $A^{**}$, shows
that $(A_{\rm op})^{**}=(A^{**})_{\rm op}$. This completes the
proof.
\end{proof}

If a relation $A$ is closed, then it is decomposable by
Corollary~\ref{closeddec}, and Proposition~\ref{vasa} is a
refinement of earlier results. Observe, that in
Proposition~\ref{vasa} one has
\begin{equation}\label{eqadd0}
 (A^{**})_{\rm op}=(A^{**})_{\rm reg}=(A_{\rm reg})^{**}
\end{equation}
by Theorem \ref{reglem} and Theorem \ref{HSS}. For a
relation $A$ which is not necessarily decomposable, it follows from
$A \subset A^{**}$ and \eqref{grdec**} that
\begin{equation*}
 A \subset (A^{**})_{\rm op} \hplus (A^{**})_{\rm mul}.
\end{equation*}
This inclusion also can be seen from Corollary \ref{rud}. If $A$ is
a relation and one of the identities in \eqref{rud0} is not
satisfied, then $A$ is not decomposable. Although the conditions in
\eqref{rud0} are necessary for $A$ to be decomposable, they are not
sufficient. In fact, it is possible that both identities in
\eqref{rud0} are satisfied, while $A$ is not decomposable; see
Example~\ref{exam2}.

A relation $A$, whose regular part is bounded need not be
decomposable; see e.g. Example \ref{exam1}. Decomposability of such
relations is characterized in the next result.

\begin{proposition}\label{Dom*cl}
Let $A$ be a relation in a Hilbert space $\sH$.
Then the following statements are equivalent:
\begin{enumerate}\def\labelenumi{\rm (\roman{enumi})}

\item $A$ is decomposable with a bounded operator part $A_{\rm
op}$;

\item $A_{\rm reg}=A_{\rm op}$ is bounded;

\item $\dom A^*$ is closed and $A_{\rm reg}=A_{\rm op}$.

\end{enumerate}
Furthermore, the following weaker statements are equivalent:
\begin{enumerate}\def\labelenumi{\rm (\roman{enumi})}
\setcounter{enumi}{3}

\item $A_{\rm op}$ is bounded, densely defined in $\cdom A$, and
$(A_{\rm mul})^{**}=(A^{**})_{\rm mul}$;

\item $A_{\rm reg}$ is bounded and the conditions in \eqref{rud0} are satisfied;

\item $\dom A^*$ is closed and the conditions in \eqref{rud0} are satisfied.

\end{enumerate}
If, in addition, $\ran(I-P)A\subset \mul A$ or $\mul A$ is closed,
then the conditions {\rm (iv)--(vi)} are also equivalent to the
conditions {\rm (i)--(iii)}.
\end{proposition}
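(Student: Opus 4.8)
The plan is to treat the three blocks of the statement separately: the equivalence of (i)--(iii), the equivalence of (iv)--(vi), and the bridge between them under the extra hypothesis. The first two are largely formal once Theorem~\ref{reglem} and Proposition~\ref{RSbounded} are available, while the last one rests on recognizing that the additional hypothesis forces $A$ to be decomposable.

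\emph{Equivalence of (i)--(iii).} By Theorem~\ref{reglem}, $A$ is decomposable exactly when $A_{\rm reg}=A_{\rm op}$; hence (i) asserts precisely that $A_{\rm reg}=A_{\rm op}$ and that this common operator is bounded, which is (ii). By Proposition~\ref{RSbounded}(i), $A_{\rm reg}$ is bounded if and only if $\dom A^*$ is closed, and this rephrasing turns (ii) into (iii).

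\emph{Equivalence of (iv)--(vi).} Since (v)$\Leftrightarrow$(vi) is again Proposition~\ref{RSbounded}(i), the content is (iv)$\Leftrightarrow$(v), and the crucial point is that (iv) already forces $A_{\rm reg}$ to be bounded. The ingredients I would use are $A_{\rm op}\subset A_{\rm reg}$ from \eqref{reg00}; the inclusion $(A_{\rm op})^{**}\subset(A^{**})_{\rm op}$, which follows from $A_{\rm op}\subset(A^{**})_{\rm op}$ and the closedness of $(A^{**})_{\rm op}$ (Corollary~\ref{closeddec} applied to $A^{**}$); the identity $(A^{**})_{\rm op}=(A_{\rm reg})^{**}$ from \eqref{eqadd0}; and $\dom(A_{\rm reg})^{**}\subset\cdom(A_{\rm reg})^{**}=\cdom A$ from \eqref{impor}. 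Given (iv), $A_{\rm op}$ is bounded and dense in $\cdom A$, so Lemma~\ref{first}(iii) gives $\dom(A_{\rm op})^{**}=\cdom A$; since $\cdom A=\dom(A_{\rm op})^{**}\subset\dom(A_{\rm reg})^{**}\subset\cdom A$, the domain $\dom(A_{\rm reg})^{**}=\dom(A^{**})_{\rm op}=\dom A^{**}$ (the last equality from the decomposition of $A^{**}$ in Proposition~\ref{vasa}) equals $\cdom A$ and is closed, so $\dom A^*$ is closed by Theorem~\ref{NEW} and $A_{\rm reg}$ is bounded. Once $A_{\rm reg}$ is bounded, $A_{\rm op}\subset A_{\rm reg}$ is bounded, and $(A_{\rm op})^{**}$ and $(A^{**})_{\rm op}=(A_{\rm reg})^{**}$ both have domain $\cdom A$ by Lemma~\ref{first}(iii), hence coincide, giving the first identity in \eqref{rud0}; the multivalued identity is literally common to (iv) and (v), and in the passage from (v) to (iv) the density $\cdom A_{\rm op}=\cdom A$ again follows by comparing domains via Lemma~\ref{first}(iii). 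I expect this sandwich argument---that boundedness and density of $A_{\rm op}$ together with $(A^{**})_{\rm op}=(A_{\rm reg})^{**}$ force $\dom A^*$ to be closed---to be the technical heart of the proof.

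\emph{The bridge.} The implication (i)--(iii) $\Rightarrow$ (iv)--(vi) needs no extra hypothesis: decomposability gives $A_{\rm op}=A_{\rm reg}$, which is dense in $\cdom A$ since $\dom A_{\rm reg}=\dom A$, and \eqref{rud0} holds by Proposition~\ref{vasa}, so (i) yields (v). For the converse I would show that the extra hypothesis makes $A$ decomposable: ``$\ran(I-P)A\subset\mul A$'' is decomposability by Theorem~\ref{reglem}(v); and if $\mul A$ is closed, then the identity $(A_{\rm mul})^{**}=(A^{**})_{\rm mul}$ present in (iv)--(vi) means $\cmul A=\mul A^{**}$ by \eqref{mmuull}, hence $\mul A=\mul A^{**}$, and $A$ is decomposable by Corollary~\ref{muldec}. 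In either case $A_{\rm op}=A_{\rm reg}$, so (v) reads ``$A_{\rm reg}$ bounded'', which is (ii), and all six statements coincide. The only delicate point here is to notice that the role of the extra hypothesis is precisely to collapse $A_{\rm op}$ onto $A_{\rm reg}$, after which the second block reduces to the first.
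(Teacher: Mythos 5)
Your proof is correct and follows essentially the same route as the paper: Theorem~\ref{reglem} and Proposition~\ref{RSbounded} for (i)--(iii), a transfer of boundedness from $A_{\rm op}$ to $A_{\rm reg}$ via $(A_{\rm op})^{**}=(A_{\rm reg})^{**}=(A^{**})_{\rm op}$ for (iv)--(vi), and the observation that the extra hypothesis forces decomposability (Theorem~\ref{reglem}(v), respectively \eqref{mmuull} plus Corollary~\ref{muldec}) for the bridge. The only cosmetic difference is in (iv)$\Rightarrow$(v), where the paper invokes Corollary~\ref{firstcor} directly to obtain $(A_{\rm op})^{**}=(A_{\rm reg})^{**}$, whereas you reach the same conclusion by a domain sandwich through Lemma~\ref{first}(iii) and Theorem~\ref{NEW}; both arguments rest on the same facts.
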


\begin{proof}
(i) $\Longleftrightarrow$ (ii) This is clear from
Theorem~\ref{reglem}; see items (i) and (iii).

(ii) $\Longleftrightarrow$ (iii) This is an immediate consequence
of Proposition \ref{RSbounded}.

(iv) $\Longrightarrow$ (v) Since $A_{\rm op} \subset A_{\rm reg}$
and the operator $A_{\rm reg}$ is closable, the assumption that
$A_{\rm op}$ is densely defined and bounded in $\cdom A$ leads to
the equality
\[
 (A_{\rm op})^{**}=(A_{\rm reg})^{**},
\]
cf. Corollary~\ref{firstcor}. Hence $(A_{\rm reg})^{**}$ and, in
particular, $A_{\rm reg}$ is bounded. Moreover, $(A_{\rm
op})^{**}=(A^{**})_{\rm op}$ is now obtained from \eqref{eqadd0}.

(v) $\Longrightarrow$ (iv) If $A_{\rm reg}$ is bounded then
$A_{\rm op}\subset A_{\rm reg}$ is bounded, too. By
Proposition~\ref{vasa} $(A^{**})_{\rm reg}=(A^{**})_{\rm op}$.
Hence, if $(A_{\rm op})^{**}=(A^{**})_{\rm op}$ then $\cdom A_{\rm
op}=\dom (A^{**})_{\rm op}=\cdom (A^{**})_{\rm reg}=\cdom A$ (cf.
Lemma \ref{first}), i.e., $A_{\rm op}$ is densely defined in
$\cdom A$.

(v) $\Longleftrightarrow$ (vi) Again this holds by Proposition
\ref{RSbounded}.

To prove the last statement note that (i) implies (v) by Proposition \ref{vasa}.
On the other hand, if $\ran(I-P)A\subset \mul A$ then $A$
is decomposable by Theorem~\ref{reglem}
and thus (iv) implies (i). Similarly, the assumption $\mul A$ is closed together with
$(A_{\rm mul})^{**}=(A^{**})_{\rm mul}$ implies that $\mul A=\mul A^{**}$, so that
$A$ is decomposable by Corollary \ref{muldec}. Hence, again (iv) implies (i).
\end{proof}

Proposition \ref{Dom*cl} indicates that even in the case where
$A_{\rm reg}$ is a bounded operator, the equalities in \eqref{rud0}
are not sufficient for the decomposability of $A$. In fact, this may
happen also in the case where $A_{\rm reg}$ is closed and bounded;
see Example~\ref{exam0a}. However, if $\mul A$ is closed then the
situation is different; see Corollary~\ref{mulclosed}.

\subsection{Componentwise decompositions for relations via the multivalued part}

Theorem \ref{reglem} shows that a relation $A$ in a Hilbert $\sH$
is decomposable in the sense of Theorem \ref{unidec} if and only
if $A_{\rm op}=A_{\rm reg}$, where $A_{\rm reg} =PA$ with $P$ the
orthogonal projection from $\sH$ onto $\sH_A=\sH\ominus \mul
A^{**}$. Closely related to the regular part $A_{\rm reg}$ is the
relation
\begin{equation}\label{Pm}
 A_{\rm m} \okr P_{\rm m} A=\{\, \{f,P_{\rm m} f'\};\, \{f,f'\}\in
A \},
\end{equation}
where $P_{\rm m}$ is the orthogonal projection from $\sH$ onto
$\sH\ominus \cmul A$. $A_{\rm m}$ can be thought of as the {\em
maximal operator part}\index{part of relation!maximal operator
$A_{\rm m}$} of $A$, cf. Theorem \ref{muldecthm} below. Observe
that
\[
 \mul A_{\rm m} =\{\, P_{\rm m} f';\, \{0,f'\}\in A \,\}=\{0\},
\]
i.e., $A_{\rm m}$ is an operator. Note that
\begin{equation}\label{muu}
\sH=\cdom A \oplus \mul A^{**}=\cdom A \oplus (\mul A^{**} \ominus \cmul A) \oplus \cmul A,
\end{equation}
so that $\ran P \subset \ran P_{\rm m}$. Therefore $A_{\rm reg}=P
A_{m}$ and, in addition,
\begin{equation}\label{Am}
 A_{\rm op} \subset A_{m}.
\end{equation}
The operator $A_{\rm m}$ can be used to give one further
equivalent condition for $A$ to be decomposable, which is stated
as item (ii) in the next theorem.

\begin{theorem}\label{muldecthm}
Let $A$ be a relation in a Hilbert space $\sH$. Then $A_{\rm m}$
is an operator and the following statements are equivalent:
\begin{enumerate}\def\labelenumi{\rm (\roman{enumi})}

\item $A$ is decomposable;

\item $A_{\rm m} =A_{\rm op}$.
\end{enumerate}
Furthermore, the following weaker statements are equivalent:
\begin{enumerate}\def\labelenumi{\rm (\roman{enumi})}
\setcounter{enumi}{2}

\item $A_{\rm m} =A_{\rm reg}$;

\item $\ran A_{\rm m} \subset \sH_A$;

\item $\mul A^{**}=\cmul A$;

\item $A_{\rm m}$ is a closable operator.

\end{enumerate}
If, in addition, $\mul A$ is closed, then the conditions
{\rm{(iii)--(vi)}} are also equivalent to the conditions
\rm{(i)--(ii)}.
\end{theorem}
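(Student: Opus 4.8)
The plan is to split the statement into the block (i) $\Leftrightarrow$ (ii), the block (iii)--(vi), and the addendum under the hypothesis that $\mul A$ is closed, relying on two facts recorded just before the theorem: since $\cmul A\subset\mul A^{**}$ one has $\ran P\subset\ran P_{\rm m}$, hence $P_{\rm m}P=PP_{\rm m}=P$, which yields both the bridge identity $A_{\rm reg}=PA_{\rm m}$ and the inclusion $A_{\rm op}\subset A_{\rm m}$ of \eqref{Am}; and Theorem~\ref{reglem}, by which $A$ is decomposable precisely when $A_{\rm reg}=A_{\rm op}$. The assertion that $A_{\rm m}$ is an operator is the computation $\mul A_{\rm m}=\{P_{\rm m}f';\,f'\in\mul A\}=\{0\}$, valid because $\mul A\subset\cmul A=\ker P_{\rm m}$; note also $\dom A_{\rm m}=\dom A$.

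For (i) $\Leftrightarrow$ (ii), in view of \eqref{Am} only the reverse inclusion $A_{\rm m}\subset A_{\rm op}$ is at stake in one direction. If $A$ is decomposable and $\{f,g\}\in A_{\rm m}$, then $\{f,Pg\}\in PA_{\rm m}=A_{\rm reg}=A_{\rm op}\subset A_{\rm m}$ by Theorem~\ref{reglem} and \eqref{Am}; since $A_{\rm m}$ is single-valued with domain $\dom A$, this forces $g=Pg\in\sH_A$, so $\{f,g\}=\{f,Pg\}\in A_{\rm op}$, giving $A_{\rm m}=A_{\rm op}$. Conversely, if $A_{\rm m}=A_{\rm op}$, then $\ran A_{\rm op}\subset\sH_A=\ran P$ gives $PA_{\rm op}=A_{\rm op}$, whence $A_{\rm reg}=PA_{\rm m}=PA_{\rm op}=A_{\rm op}$ and $A$ is decomposable by Theorem~\ref{reglem}.

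For the block (iii)--(vi) I would take (v), i.e. $\mul A^{**}=\cmul A$, as the hub. If (v) holds then $\sH_A=\sH\ominus\cmul A$, so $P_{\rm m}=P$ and $A_{\rm m}=A_{\rm reg}$, which is (iii); and (iii) $\Rightarrow$ (iv) is immediate since $\ran A_{\rm reg}=P(\ran A)\subset\sH_A$. For (iv) $\Rightarrow$ (v): given $z\in\mul A^{**}\ominus\cmul A$, one has $z\perp\cmul A$, hence $z=P_{\rm m}z$, and for each $f'\in\ran A$ then $(f',z)=(P_{\rm m}f',z)=0$ because $P_{\rm m}f'\in\ran A_{\rm m}\subset\sH_A$ while $z\perp\sH_A$; so $z\perp\cran A$, while on the other hand $z\in\mul A^{**}\subset\ran A^{**}\subset\cran A^{**}=\cran A$ by \eqref{impor}, forcing $z=0$ and hence (v). For (v) $\Leftrightarrow$ (vi) I would first use Lemma~\ref{prodlem*} to get $(A_{\rm m})^*=(P_{\rm m}A)^*=A^*P_{\rm m}$, then apply Lemma~\ref{ap} with $A^*$ in place of $A$, $\sX=\mul A^{**}\ominus\cmul A$, $\sY=\cmul A$: since $\cdom A^*=\sH_A$ and $\sH_A\oplus\sX=\sH\ominus\cmul A$, the projection in that lemma is exactly $P_{\rm m}$, so $\dom(A_{\rm m})^*=\dom A^*\oplus\cmul A$ and thus $\cdom(A_{\rm m})^*=\sH_A\oplus\cmul A$. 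By Proposition~\ref{regchar}, $A_{\rm m}$ is a closable operator iff $\cdom(A_{\rm m})^*=\sH$, i.e. iff $\sH_A\oplus\cmul A=\sH$, which (since $\sH=\sH_A\oplus\cmul A\oplus(\mul A^{**}\ominus\cmul A)$) is exactly (v). This equivalence (v) $\Leftrightarrow$ (vi), notably pinning down $(A_{\rm m})^*$ and the right instance of Lemma~\ref{ap}, is the step I expect to require the most care; the implication (iv) $\Rightarrow$ (v) also hinges on the observation $\mul A^{**}\subset\cran A$.

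Finally, for the addendum assume $\mul A$ is closed, so $\cmul A=\mul A$. If $A$ is decomposable, then $(A_{\rm mul})^{**}=(A^{**})_{\rm mul}$ by Proposition~\ref{vasa}, which by \eqref{mulA**} means $\cmul A=\mul A^{**}$, i.e. (v); conversely, (v) then reads $\mul A=\mul A^{**}$, so $A$ is decomposable by Corollary~\ref{muldec}. Hence (i) $\Leftrightarrow$ (v) under this hypothesis, and combined with (i) $\Leftrightarrow$ (ii) and the equivalence of (iii)--(vi) this shows that all of (i)--(vi) coincide.
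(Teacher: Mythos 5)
Your proof is correct and follows the same overall architecture as the paper's: (i) $\Leftrightarrow$ (ii) via Theorem~\ref{reglem} and the inclusion $A_{\rm op}\subset A_{\rm m}$; the cycle (v) $\Rightarrow$ (iii) $\Rightarrow$ (iv) $\Rightarrow$ (v); and (v) $\Leftrightarrow$ (vi) from $(A_{\rm m})^*=A^*P_{\rm m}=A^*\hoplus(\cmul A\times\{0\})$ together with Proposition~\ref{regchar} -- this last computation, which you flag as the delicate step, is exactly the paper's. The one place you genuinely diverge is (iv) $\Rightarrow$ (v): the paper passes to second adjoints, using $(A_{\rm m})^{**}=(A^*P_{\rm m})^*\supset P_{\rm m}A^{**}$ from Lemma~\ref{prodlem*} and $\cran(A_{\rm m})^{**}=\cran A_{\rm m}$ to get $P_{\rm m}(\mul A^{**})\subset\sH_A$, whereas you argue directly on $A$: any $z\in\mul A^{**}\ominus\cmul A$ satisfies $(f',z)=(P_{\rm m}f',z)=0$ for $f'\in\ran A$ since $\ran A_{\rm m}\subset\sH_A\perp z$, while $z\in\mul A^{**}\subset\cran A$ by \eqref{sing0}, forcing $z=0$. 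Your version is more elementary (no closures of graphs needed) and rests on the same key fact $\mul A^{**}\subset\cran A$. Two minor remarks: for (i) $\Rightarrow$ (ii) the paper simply writes $\{f,f'\}=\{f,A_{\rm op}f\}+\{0,\varphi\}$ with $\varphi\in\mul A\subset\ker P_{\rm m}$ and reads off $P_{\rm m}f'=A_{\rm op}f$, avoiding your appeal to single-valuedness of $A_{\rm m}$ -- both are fine; and the assertion implicit in the word ``weaker'', namely that (i)--(ii) imply (iii)--(vi), which the paper proves as a separate step, is covered in your write-up by the observation (valid without assuming $\mul A$ closed) that decomposability gives $(A_{\rm mul})^{**}=(A^{**})_{\rm mul}$, hence (v).
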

\begin{proof}
(i) $\Longrightarrow$ (ii) It suffices to show that $A_{\rm m}
\subset A_s$. Let $A$ be decomposed as in \eqref{graphdec}. If
$\{f,f'\} \in A$, then $\{f,f'\}=\{f,A_{\rm op}f\} +\{0,
\varphi\}$ with $\varphi \in \mul A$. In particular, $P_{\rm m}
f'=A_{\rm op}f$ for $\{f,f'\} \in A$; i.e. $A_{\rm m} \subset
A_{\rm op}$.

(ii) $\Longrightarrow$ (i) If $A_{\rm m} =A_{\rm op}$, then $\dom
A_{\rm op}=\dom A$ and by Theorem~\ref{reglem} this is equivalent
to $A$ being decomposable.

Next the equivalence of (iii)--(vi) will be proved.

(iii) $\Longrightarrow$ (iv) This is clear since $\ran A_{\rm
reg}\subset \sH_A$.

(iv) $\Longrightarrow$ (v) Observe, that $(A_{\rm m})^*=(P_{\rm m}
A)^*=A^*P_{\rm m}$ and $(A_{\rm m})^{**}=(A^*P_{\rm m})^*\supset
P_{\rm m} A^{**}$ by Lemma~\ref{prodlem*}. The assumption in (iv)
implies that $\ran (A_{\rm m})^{**}\subset \sH_A$; cf.
\eqref{impor}. Then also $\ran P_{\rm m} A^{**}\subset \sH_A$ and,
in particular, $P_{\rm m} (\mul A^{**})\subset \sH_A$, which means
that $\mul A^{**}=\cmul A$; cf. \eqref{muu}.

(v) $\Longleftrightarrow$ (vi) It follows from Lemma \ref{ap},
that $(A_{\rm m})^*=A^*P_{\rm m}=A^* \hoplus (\cmul A \times
\{0\})$, so that
\[
 \dom (A_{\rm m})^*=\dom A^*\oplus\cmul A.
\]
Now the operator $A_{\rm m}$ is closable if and only if $(A_{\rm
m})^*$ is densely defined (cf. Proposition \ref{regchar}), which
is equivalent to $\mul A^{**}=\cmul A$.

(v) $\Longrightarrow$ (iii) If $\mul A^{**}=\cmul A$ then $P_{\rm
m}=P$ and, therefore, $A_{\rm m} =PA=A_{\rm reg}$.

Next it is shown that the conditions (iii)--(vi) follow from the
conditions (i) and (ii). Namely, if $A_{\rm m} =A_{\rm op}$ or,
equivalently, $A$ is decomposable, then $A_{\rm op}=A_{\rm reg}$
holds by Theorem~\ref{reglem}, and hence $A_{\rm m} =A_{\rm
op}=A_{\rm reg}$ follows.

As to the last statement of the theorem observe, that if $\mul A$ is
closed then the condition (v) implies (i) by Corollary \ref{muldec}.
\end{proof}

It is emphasized that the conditions (iii)--(vi) in
Theorem~\ref{muldecthm} do not in general imply decomposability of
$A$; see for instance Example~\ref{exam2}.

Next decompositions of linear relations $A$ whose multivalued part
$\mul A$ is closed in the Hilbert space $\sH$ will be briefly
treated.

\begin{lemma}\label{Pmdecom}
Let $A$ be a relation in a Hilbert space $\sH$ with $\mul A$  closed.
Then $A$ admits the decomposition
\begin{equation}\label{graphdecM}
 A=A_{\rm m} \hplus A_{\rm mul},
\end{equation}
where $A_{\rm m} $ is an operator with $\dom A_{\rm m} =\dom A$.
\end{lemma}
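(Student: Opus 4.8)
The plan is to establish the two inclusions $A\subset A_{\rm m}\hplus A_{\rm mul}$ and $A_{\rm m}\hplus A_{\rm mul}\subset A$ directly, exploiting the orthogonal splitting induced by $P_{\rm m}$. The decisive observation is that closedness of $\mul A$ means $\cmul A=\mul A$, so that $I-P_{\rm m}$ is precisely the orthogonal projection of $\sH$ onto $\mul A$. Moreover $A_{\rm m}$ is an operator with $\dom A_{\rm m}=\dom A$, as already recorded after \eqref{Pm}; this settles the final assertion of the lemma, so only the identity \eqref{graphdecM} needs to be proved.

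For the inclusion $A\subset A_{\rm m}\hplus A_{\rm mul}$, I would take $\{f,f'\}\in A$ and split $f'=P_{\rm m}f'+(I-P_{\rm m})f'$. By the definition \eqref{Pm} the pair $\{f,P_{\rm m}f'\}$ belongs to $A_{\rm m}$, while $(I-P_{\rm m})f'\in\mul A$ because $I-P_{\rm m}$ projects onto $\mul A$; hence $\{0,(I-P_{\rm m})f'\}\in A_{\rm mul}$. Writing $\{f,f'\}=\{f,P_{\rm m}f'\}+\{0,(I-P_{\rm m})f'\}$ then exhibits $\{f,f'\}$ as an element of the componentwise sum $A_{\rm m}\hplus A_{\rm mul}$ (cf. the definition \eqref{jan00}).

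For the reverse inclusion I would take a generic element of $A_{\rm m}\hplus A_{\rm mul}$, namely $\{f,P_{\rm m}g+\varphi\}$ with $\{f,g\}\in A$ and $\varphi\in\mul A$, and verify that it lies in $A$. Since both $\varphi$ and $(I-P_{\rm m})g$ lie in $\mul A$, so does $\varphi-(I-P_{\rm m})g$, whence $\{0,\varphi-(I-P_{\rm m})g\}\in A_{\rm mul}\subset A$. Adding this element to $\{f,g\}\in A$ and using $g-(I-P_{\rm m})g=P_{\rm m}g$ gives $\{f,P_{\rm m}g+\varphi\}\in A$, as required.

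There is essentially no serious obstacle: the statement follows at once from the fact that closedness of $\mul A$ forces $P_{\rm m}$ to be the projection complementary to $\mul A$, so the splitting $f'=P_{\rm m}f'+(I-P_{\rm m})f'$ automatically respects the decomposition into the operator part $A_{\rm m}$ and the multivalued part $A_{\rm mul}$. The only point requiring a little care is the bookkeeping for the componentwise sum $\hplus$ and the repeated use of $A_{\rm mul}=\{0\}\times\mul A\subset A$, which holds by the very definition of $\mul A$.
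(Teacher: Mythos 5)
Your proof is correct and follows essentially the same route as the paper: both arguments rest on the observation that closedness of $\mul A$ gives $\cmul A=\mul A$, so that $I-P_{\rm m}$ projects onto $\mul A$ and the splitting $f'=P_{\rm m}f'+(I-P_{\rm m})f'$ yields the forward inclusion, while $A_{\rm mul}\subset A$ (hence $P_{\rm m}A\subset A$) yields the reverse one. Your write-up of the reverse inclusion is just a slightly more explicit elementwise version of the paper's remark that $P_{\rm m}A+(I-P_{\rm m})A\subset A$.
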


\begin{proof}
It has been shown that $A_{\rm m} $ is an operator and that $\dom
A_{\rm m} =\dom A$. Now, rewrite $A$ as follows
\[
 A=P_{\rm m} A + (I-P_{\rm m})A =\{\,\{f,P_{\rm m}
 g\}\hplus\{0,(I-P_{\rm m})g\};\,\{f,g\}\in A\,\}.
\]
This implies that $A \subset A_{\rm m} \hplus A_{\rm mul}$.

Conversely, since $\mul A$ is closed, one has $A_{\rm mul}\subset
A$ and thus also $P_{\rm m} A\subset A$. Therefore, $P_{\rm m} A +
(I-P_{\rm m})A\subset A$.
\end{proof}

The decomposition of $A$ in Lemma~\ref{Pmdecom} for relations $A$
with $\mul A$ closed is not of the type as introduced via
Theorem~\ref{unidec}, since the condition $\ran A_{\rm m} \subset
\sH_A\,(=\cdom A^*)$ need not be satisfied. This implies that the
decomposition given in Lemma~\ref{Pmdecom} does not behave well
for instance under closures: in particular, the operator $A_{\rm
m} $ in \eqref{graphdecM} is not in general closable. In fact,
when $\mul A$ is closed, Theorem~\ref{muldecthm} shows that the
operator $A_{\rm m} $ is closable precisely when $A$ is
decomposable in the sense of Theorem \ref{unidec}.

One can reformulate the situation also by means of the
decompositions of the form \eqref{graphdec} alone.

\begin{corollary}\label{Pmcor}
Let $A$ be a relation in a Hilbert space $\sH$ with $\mul A$ closed.
Then
\[
 A \text{ is decomposable }\quad \Longleftrightarrow \quad A_{\rm
 m} \text{ is a decomposable operator.}
\]
In this case, the decomposition of $A_{\rm m} $ is trivial, i.e.,
$A_{\rm m} =(A_{\rm m} )_{\rm op}$, and the decompositions in
\eqref{graphdec} and \eqref{graphdecM} coincide:
\begin{equation}\label{graphdecM2}
 A=A_{\rm m} \hplus A_{\rm mul}=A_{\rm op}\hplus A_{\rm mul}.
\end{equation}
\end{corollary}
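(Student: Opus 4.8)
The plan is to assemble the statement from results already in hand, principally Theorem~\ref{muldecthm}, Lemma~\ref{Pmdecom}, and the characterizations of decomposability for operators in Proposition~\ref{operdec} and Corollary~\ref{vasa+}.

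First I would record that $A_{\rm m}$ is always an operator (this is part of Theorem~\ref{muldecthm}), so that the phrase ``decomposable operator'' makes sense for $A_{\rm m}$. By Corollary~\ref{vasa+} (equivalently, by the equivalence of items (iii) and (v) in Proposition~\ref{operdec} applied to $A_{\rm m}$), the operator $A_{\rm m}$ is a decomposable operator precisely when $A_{\rm m}$ is regular, i.e. a closable operator. This last condition is exactly item (vi) of Theorem~\ref{muldecthm}. Since $\mul A$ is assumed closed, Theorem~\ref{muldecthm} asserts that its items (i)--(vi) are all equivalent; in particular item (i), that $A$ is decomposable, is equivalent to item (vi), that $A_{\rm m}$ is a closable operator. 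Chaining these two observations yields the asserted equivalence
\[
 A \text{ decomposable} \quad \Longleftrightarrow \quad A_{\rm m} \text{ a decomposable operator.}
\]

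For the remaining assertions, assume these equivalent conditions hold. Since $A_{\rm m}$ is then a decomposable operator, Proposition~\ref{operdec} applied to $A_{\rm m}$ (items (ii) and (v)) gives $(A_{\rm m})_{\rm op}=A_{\rm m}$, i.e. the decomposition of $A_{\rm m}$ is trivial. Because $\mul A$ is closed, Lemma~\ref{Pmdecom} supplies the decomposition $A=A_{\rm m} \hplus A_{\rm mul}$, while decomposability of $A$ supplies $A=A_{\rm op} \hplus A_{\rm mul}$ via \eqref{graphdec}. Finally, the equivalence of items (i) and (ii) of Theorem~\ref{muldecthm} shows that $A_{\rm m}=A_{\rm op}$, so the two decompositions are literally the same, which is precisely \eqref{graphdecM2}.

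I do not anticipate a genuine obstacle: the argument is entirely a matter of matching ``decomposable operator'' to ``closable operator'' via Corollary~\ref{vasa+} and then invoking the equivalences of Theorem~\ref{muldecthm} under the standing hypothesis that $\mul A$ is closed. The one point deserving a moment's care is that $(A_{\rm m})_{\rm op}$ is formed using the subspace $\cdom (A_{\rm m})^*$ rather than $\sH_A$; but once $A_{\rm m}$ is known to be closable that subspace is all of $\sH$, so $(A_{\rm m})_{\rm op}=A_{\rm m}$ as needed and no confusion arises.
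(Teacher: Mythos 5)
Your proposal is correct and follows essentially the same route as the paper's proof: both reduce decomposability of the operator $A_{\rm m}$ to its closability via Proposition~\ref{operdec} (equivalently Corollary~\ref{vasa+}) and then invoke the equivalences of Theorem~\ref{muldecthm} under the standing hypothesis that $\mul A$ is closed to conclude $A_{\rm m}=A_{\rm op}$ and the coincidence of the two decompositions. Your extra remark that $\sH_{A_{\rm m}}=\sH$ once $A_{\rm m}$ is closable is a correct and welcome clarification, but it is not a substantive deviation.
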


\begin{proof}
According to Proposition~\ref{operdec} the operator $A_{\rm m} $
is decomposable if and only if $(A_{\rm m} )_{\rm op}=A_{\rm m} $,
or equivalently, $A_{\rm m} $ is closable. This means that $\mul
A=\mul A^{**}$. By Theorem~\ref{muldecthm} this last condition is
equivalent to $A$ being decomposable. In this case $A_{\rm
m}=A_{\rm op}$ and \eqref{graphdecM2} follows.
\end{proof}

The characterizations of decomposability of $A$ in the case that
$\mul A$ is closed are  collected in the next result. It shows
that decomposability of $A$ (with $\mul A$ closed) is a natural
counterpart and extension of the notion of closability of operators;
see also Proposition~\ref{operdec}.

\begin{proposition}\label{muldec2}
Let $A$ be a relation in a Hilbert space $\sH$ with $\mul A$ closed.
Then the following statements are equivalent:
\begin{enumerate}\def\labelenumi{\rm (\roman{enumi})}

\item $A$ is decomposable;

\item $A_{\rm m} =A_{\rm op}$;

\item $A_{\rm m} =A_{\rm reg}$;

\item $\ran A_{\rm m} \subset \sH_A$;

\item $\mul A^{**}=\mul A$;

\item $A_{\rm m} $ is a closable operator.
\end{enumerate}
\end{proposition}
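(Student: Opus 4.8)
The plan is to obtain Proposition~\ref{muldec2} as a direct consequence of Theorem~\ref{muldecthm}, exploiting only the extra hypothesis $\mul A=\cmul A$. Recall that Theorem~\ref{muldecthm} already establishes, for an \emph{arbitrary} relation, that $A_{\rm m}$ is (the graph of) an operator, that (i) $\Leftrightarrow$ (ii) holds unconditionally, and that the four statements ``$A_{\rm m}=A_{\rm reg}$'', ``$\ran A_{\rm m}\subset\sH_A$'', ``$\mul A^{**}=\cmul A$'', ``$A_{\rm m}$ is closable'' are mutually equivalent. So the first step is simply to quote those facts.

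The key observation in the second step is that, since $\mul A$ is assumed closed, one has $\cmul A=\mul A$, and therefore the condition $\mul A^{**}=\cmul A$ appearing in Theorem~\ref{muldecthm} coincides verbatim with condition (v) of the present proposition, $\mul A^{**}=\mul A$. Consequently the block (iii)--(vi) here is literally the block (iii)--(vi) of Theorem~\ref{muldecthm}, and these four assertions are equivalent. It remains only to link this block to (i)--(ii). For this I would invoke the final clause of Theorem~\ref{muldecthm}, which asserts precisely that when $\mul A$ is closed the conditions (iii)--(vi) are also equivalent to (i)--(ii). Splicing together (i)$\Leftrightarrow$(ii), (iii)$\Leftrightarrow\cdots\Leftrightarrow$(vi), and the bridge (iii)--(vi)$\Leftrightarrow$(i)--(ii) gives the equivalence of all six statements.

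If a reader prefers not to lean on the final clause of Theorem~\ref{muldecthm}, the bridge can be re-derived in two lines: (v)$\Rightarrow$(i) is exactly Corollary~\ref{muldec}, since $\mul A=\mul A^{**}$ forces decomposability; conversely, if $A$ is decomposable then $A_{\rm op}=A_{\rm reg}$ by Theorem~\ref{reglem} and $A_{\rm m}=A_{\rm op}$ by the implication (i)$\Rightarrow$(ii) of Theorem~\ref{muldecthm}, so $A_{\rm m}=A_{\rm reg}$, which is (iii). I do not expect any genuine obstacle here: the proposition is a repackaging of Theorem~\ref{muldecthm} in the special case $\mul A$ closed, and the only point requiring a little care is the bookkeeping of replacing $\cmul A$ by $\mul A$ and checking that the closing clause of Theorem~\ref{muldecthm} supplies a full equivalence with (i)--(ii), not merely a one-sided implication.
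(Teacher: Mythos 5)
Your proof is correct and is essentially the paper's own argument: the paper's proof of this proposition is the one-line observation that, since $\mul A$ is closed, everything follows from Theorem~\ref{muldecthm} (with $\cmul A=\mul A$ turning the theorem's condition into statement (v) here). Your optional two-line re-derivation of the bridge via Corollary~\ref{muldec} and Theorem~\ref{reglem} is also sound, but not needed.
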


\begin{proof}
Since $\mul A$ is closed, the result is obtained from
Theorem~\ref{muldecthm}.
\end{proof}

The next result augments Proposition~\ref{Dom*cl}.

\begin{corollary}\label{mulclosed}
Let $A$ be a relation in a Hilbert space $\sH$ with $\mul A$ closed.
Then the following statements are equivalent:
\begin{enumerate}\def\labelenumi{\rm (\roman{enumi})}

\item $A$ is decomposable with a bounded operator part $A_{\rm
op}$;

\item the operator $A_{\rm m} $ in \eqref{Pm} is bounded.
\end{enumerate}
\end{corollary}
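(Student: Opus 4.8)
The plan is to reduce the statement almost immediately to Proposition~\ref{muldec2}, which already records that, when $\mul A$ is closed, decomposability of $A$ is equivalent to closability of the operator $A_{\rm m}$ from \eqref{Pm}, and that in this situation $A_{\rm m}=A_{\rm op}$. The only extra ingredient needed is the elementary fact from Lemma~\ref{first}(ii) that a bounded, not necessarily densely defined, operator is automatically closable. So the corollary is essentially a matter of transporting the words ``closable'' and ``$A_{\rm op}$'' through the identity $A_{\rm m}=A_{\rm op}$ under the boundedness hypothesis; I do not expect a genuine obstacle, and the bookkeeping around which part of Proposition~\ref{muldec2} is invoked is the only thing to be careful about.

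For the implication (ii) $\Longrightarrow$ (i), I would argue as follows. Recall that $A_{\rm m}$ is an operator (cf. Lemma~\ref{Pmdecom}). If $A_{\rm m}$ is bounded, then Lemma~\ref{first}(ii) shows that $A_{\rm m}$ is closable. By Proposition~\ref{muldec2} (the equivalence of (vi) with (i)), the relation $A$ is then decomposable, and by the equivalence of (i) with (ii) in the same proposition one has $A_{\rm op}=A_{\rm m}$. Hence $A_{\rm op}$ is bounded, which gives (i). For the converse (i) $\Longrightarrow$ (ii), assume $A$ is decomposable with $A_{\rm op}$ bounded; once more by Proposition~\ref{muldec2} the decomposability of $A$ forces $A_{\rm m}=A_{\rm op}$, so $A_{\rm m}$ is bounded, i.e.\ (ii).

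If one preferred an argument that does not pass through Proposition~\ref{muldec2}, an alternative would be to use the identity $\dom (A_{\rm m})^*=\dom A^*\oplus\cmul A$ established in the proof of Theorem~\ref{muldecthm}: since $\cmul A=\mul A$ is closed and orthogonal to $\dom A^*\subset\sH_A$, closedness of $\dom (A_{\rm m})^*$ is equivalent to closedness of $\dom A^*$, and boundedness of $A_{\rm m}$ can then be matched with condition (iii) of Proposition~\ref{Dom*cl} (``$\dom A^*$ closed and $A_{\rm reg}=A_{\rm op}$''), using $A_{\rm m}=A_{\rm reg}$ when $\mul A^{**}=\mul A$. This route is longer and duplicates work already done, so I would present the short argument above and only mention this as a remark if at all.
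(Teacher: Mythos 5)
Your argument is correct and coincides with the paper's own proof: both directions reduce to Proposition~\ref{muldec2} (giving $A_{\rm m}=A_{\rm op}$ under decomposability, and closability of $A_{\rm m}$ implying decomposability), with Lemma~\ref{first} supplying the fact that a bounded operator is closable. The alternative route you sketch is unnecessary, as you note.
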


\begin{proof}
(i) $\Longrightarrow$ (ii) Since $A$ is decomposable,
Proposition~\ref{muldec2} shows that $A_{\rm m} =A_{\rm op}$, and
hence (ii) follows.

(ii) $\Longrightarrow$ (i) If $A_{\rm m} $ is bounded, then it is
closable (see Lemma~\ref{first}). Hence, by
Proposition~\ref{muldec2} $A$ is decomposable and $A_{\rm
op}=A_{\rm m} $ is bounded.
\end{proof}

Observe that the condition (ii) in Corollary~\ref{mulclosed} is
essentially weaker than, for instance, the condition (ii) (or (v))
in Proposition~\ref{Dom*cl}; in particular, no equality $A_{\rm
m}=A_{\rm op}$ is assumed in part (ii) of
Corollary~\ref{mulclosed}. In fact, Corollary~\ref{mulclosed} is a
natural extension of the basic Lemma~\ref{first} stating that a
bounded operator is closable.

\subsection{Componentwise decompositions of adjoint relations}

Let $A$ be a relation in a Hilbert space $\sH$, then its adjoint
$A^*$ is automatically a closed (linear) relation. Let
$\sH_{A^*}=\sH \ominus \mul A^*$ and let $P_*$ be the orthogonal
projection from $\sH$ onto $\sH_{A^*}$. Recall the definitions of
the regular part   of $A^*$:
\[
 (A^*)_{\rm reg}\okr \{\,\{f,P_{*} g\};\, \{f,g\}\in A^* \,\},
\]
and of the singular part of $A^*$:
\[
(A^*)_{\rm sing}\okr \{\,\{f,(I-P_{*}) g\};\, \{f,g\}\in A^* \,\}.
\]
Observe that $\dom (A^*)_{\rm reg}=\dom (A^*)_{\rm sing}=\dom
A^*$. The relation $A^*$ admits the canonical operatorwise sum
decomposition
\begin{equation*}\label{**}
  A^*=(A^*)_{\rm reg}+ (A^*)_{\rm sing},
\end{equation*}
where $ (A^*)_{\rm reg}$ is a regular operator in $\sH$ and $
(A^*)_{\rm sing}$ is a singular relation in $\sH$; cf. Theorem
\ref{HSS}. By means of the Hilbert space $\sH_{A^*}$ the following
restriction $(A^*)_{\rm op}$ of $A^*$ is defined by
\begin{equation*}\label{oppa*}
 (A^*)_{\rm op} \okr \{\,\{f,g\}\in A^*;\, g\in\sH_{A^*} \,\}.
\end{equation*}
Observe that $(A^*)_{\rm op}$ can be rewritten in the following
way:
\begin{equation*}\label{oppaa*}
 (A^*)_{\rm op}=A^* \cap (\sH \times \sH_{A^*}).
\end{equation*}
The next decomposition result follows from Theorem \ref{unidec},
Theorem \ref{reglem},  and Corollary \ref{closeddec}.

\begin{theorem}\label{adjoin1}
Let $A$ be a relation in a Hilbert space $\sH$. Then   $(A^*)_s=
(A^*)_{\rm reg}$ is a closed operator and $A^*$ has the following
componentwise decomposition
\begin{equation}\label{einz}
 A^*=(A^*)_{\rm op} \hplus (A^*)_{\rm mul}.
\end{equation}
If there exists a relation $B$ in $\sH$, such that
\begin{equation}\label{einzz}
 A^*=B \hplus (A^*)_{\rm mul}, \quad \ran B\subset \sH_{A^*},
\end{equation}
then the sum in \eqref{einzz} is direct and $B=(A^*)_{\rm op}$ is
a closed operator. In particular, the decomposition of $A^*$ in
\eqref{einzz} is unique.
\end{theorem}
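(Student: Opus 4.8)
The plan is to apply the decomposition theorems of the preceding subsections directly to the relation $A^*$, using the single extra fact that $A^*$ is automatically closed.

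First, since $A^*$ is closed, Corollary~\ref{closeddec} applied to $A^*$ shows that $A^*$ is decomposable and that the operators $(A^*)_{\rm op}=(A^*)_{\rm reg}$, as well as $(A^*)_{\rm mul}$, are closed; this gives the first assertion. Theorem~\ref{reglem}, item (vi), applied to $A^*$, translates decomposability of $A^*$ into the identity $A^*=(A^*)_{\rm reg}\hplus(A^*)_{\rm mul}$, and since $(A^*)_{\rm reg}=(A^*)_{\rm op}$ this is precisely the componentwise decomposition \eqref{einz}.

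For the uniqueness statement, suppose $B$ is a relation with $A^*=B\hplus(A^*)_{\rm mul}$ and $\ran B\subset\sH_{A^*}$. Because $A^*$ is closed one has $\mul(A^*)^{**}=\mul A^*$, so the subspace $\sH_{A^*}=\sH\ominus\mul A^*$ is exactly what Definition~\eqref{oppa1} produces when fed the relation $A^*$, and $(A^*)_{\rm mul}=\{0\}\times\mul A^*$ is likewise the object \eqref{mul} for $A^*$; hence the displayed hypothesis is precisely the hypothesis of Theorem~\ref{unidec} for the relation $A^*$. Theorem~\ref{unidec} then gives that the sum is direct, that $B$ is a closable operator, and that $B=(A^*)_{\rm op}$, whence uniqueness. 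Finally, $B=(A^*)_{\rm op}=A^*\cap(\sH\times\sH_{A^*})$ is an intersection of two closed subspaces of $\sH\times\sH$ and is therefore closed, so $B$ is in fact a closed --- not merely closable --- operator.

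The argument is essentially bookkeeping: there is no genuine obstacle, and the only point needing a line of justification is the identification of the objects $\sH_{A^*}$, $(A^*)_{\rm reg}$, $(A^*)_{\rm op}$, $(A^*)_{\rm mul}$ defined in this subsection with the corresponding objects produced by feeding $A^*$ into Definitions~\eqref{oppa1}--\eqref{mul}, which is immediate from the closedness of $A^*$ (so that $\mul(A^*)^{**}=\mul A^*$ and $\cmul A^*=\mul A^*$).
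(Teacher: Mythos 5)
Your proof is correct and follows exactly the route the paper intends: the paper gives no separate proof of this theorem but states that it "follows from Theorem~\ref{unidec}, Theorem~\ref{reglem}, and Corollary~\ref{closeddec}," which are precisely the three results you invoke, applied to the closed relation $A^*$. Your explicit identification of $\sH_{A^*}$, $(A^*)_{\rm op}$, and $(A^*)_{\rm mul}$ with the objects obtained by feeding $A^*$ into the general definitions (using $(A^*)^{**}=A^*$), and your upgrade of "closable" to "closed" via $(A^*)_{\rm op}=A^*\cap(\sH\times\sH_{A^*})$, supply the bookkeeping the paper leaves implicit.
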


\subsection{Some examples of operators or relations
which are not decomposable}
  Let $A$ be a relation in a Hilbert space $\sH$. If $A$ is
decomposable then Proposition \ref{vasa} shows that both
identities in \eqref{rud0} are satisfied. By Corollary \ref{vasa+}
any operator which is not regular or, equivalently, closable is
not decomposable, as it violates the second identity in
\eqref{rud0} in Proposition \ref{vasa}. This subsection contains
examples which illustrate the absence of decomposability.

The first example provides
a singular operator, which is not regular, but for which the first
identity in \eqref{rud0}   holds. The second example shows a
relation which is not decomposable as it violates the first identity
in \eqref{rud0}, while the second identity in \eqref{rud0} is
satisfied. In the second example there is also a relation for which
both identities \eqref{rud0}  are satisfied, while the relation is
not decomposable. The third example gives a relation $A$ which is
not decomposable and which does not satisfy either of the identities
\eqref{rud0}. Finally, the fourth example shows that a decomposable
relation $A$ whose operator part is bounded, can become
nondecomposable after one-dimensional perturbation of its operator
part.

\begin{example}\label{exam1}
Let $T=T^*$ be an unbounded selfadjoint operator in a Hilbert space
$\sH$ and let $\sH_+\subset \sH \subset \sH_-$ be the rigged Hilbert
spaces associated with $|T|^\half$; cf. \cite{Be}. Denote the
duality between $\sH_+$ and $\sH_-$ by $(f,\varphi)$, $f\in\sH_+$
and $\varphi\in\sH_-$. With elements $\varphi \in
\sH_-\backslash\sH$ and $y_0 \in \sH$ define the following unbounded
operator $A$ in $\sH$:
\begin{equation}\label{Aexam}
 Af \okr (f,\varphi)y_0, \quad f \in \dom A\okr \sH_+=\dom |T|^{\half}.
\end{equation}
Clearly, the operator $A$ is densely defined.  To determine $A^*$
assume that $\{h,k\}\in \sH \times \sH$ satisfies
\[
 0=(k,f)-(h,Af)=(k,f)-(h,(f,\varphi)y_0)=(k-(h,y_0)\varphi,f)
\]
for all $f\in \dom A$; here $(k,f)$ is written as duality. Since
$\dom A=\dom |T|^\half=\sH_+$, the previous identities imply that
$k-(h,y_0)\varphi=0$. Now $k\in \sH$ and $\varphi \in
\sH_-\backslash\sH$, thus $k=0$ and $(h,y_0)=0$. Conversely, if
$\{h,k\} \in \sH \times \sH$, and $(h,y_0)=0$ and $k=0$, then
$\{h,k\} \in A^*$. Therefore, $A^*$ is given by
\begin{equation*}\label{Aexam*}
 A^*=\{\,\{h,0\} \in \sH\times\sH \,;\; (h,y_0)=0 \,\}.
\end{equation*}
Note that $A^*$ is (the graph of) an operator (since $A$ is densely
defined) and that $\dom A^*$ is not dense. Clearly,
\begin{equation}\label{Aexam**}
 A^{**}=\{\,\{f,g\} \in \sH\times\sH \,;\; g \in \lin \{y_0\}
\,\}=\sH \times \lin \{y_0\},
\end{equation}
so that
\begin{equation}\label{mul1}
\mul A^{**}=\lin \{y_0\}.
\end{equation}
The orthogonal projection $P $ onto $\sH_A=(\lin\{y_0\})^\perp$
satisfies $P y_0=0$. Therefore the canonical decomposition
\eqref{HSSSdec}  of $A$ is trivial:
\begin{equation}\label{cano}
A_{\rm reg}=\{\,\{f,0\};\, f \in \dom A\,\}, \quad A=A_{\rm sing}.
\end{equation}
Next, observe that the operator $A_{\rm op}$ in \eqref{oppa} is
given by
\begin{equation}\label{cano1}
A_{\rm op}=\{\,\{f,0\} ;\, f\in \dom A, \, (f,\varphi)=0\,\}.
\end{equation}
It follows from the identity \eqref{mul1} that the operator $A$ is
not decomposable (cf. Corollary \ref{vasa+}); of course, this also
follows by comparing \eqref{cano} and \eqref{cano1}. Since $A_{\rm
op}$ is densely defined it follows that
\[
(A_{\rm op})^{**}=\sH \times \{0\},
\]
and it follows from \eqref{Aexam**} that
\[
 (A^{**})_{\rm op}=\{\,\{f,g\}\in A^{**};\, g\in\sH_{\rm op} \,\}
 =\sH\times \{0\}.
\]
Hence, the first equality in \eqref{rud0} is satisfied, and the
second equality in \eqref{rud0} is not satisfied. Finally, observe
that while the operator $A$ in \eqref{Aexam} is singular and not
decomposable, its closure $A^{**}$ is singular and decomposable (cf.
\eqref{Aexam**} and Proposition \ref{singu}).
\end{example}

\begin{example}\label{exam2}
Let $\sM$ be a dense subspace of the Hilbert space $\sH$ and let $B$
be a relation in $\sH$. Define the relation $A$ in $\sH$ by
\begin{equation}\label{A2}
 A\okr B\hplus (\{0\}\times \sM),
\end{equation}
so that $\dom A=\dom B$ and  $\mul A=\mul B + \sM$. It follows from
\eqref{A2} that
\[
 A^* =B^*\cap (\sM^\perp \times \sH),
\]
and, since $\sM$ is dense,  one obtains
\begin{equation}\label{A2++}
A^{**}=\clos (B^{**} \hplus (\{0\} \times\sH)).
\end{equation}
Observe that $B^{**} \hplus (\{0\}\times \sH)=\dom B^{**} \times
\sH$. Hence, by means of \eqref{impor} it follows from \eqref{A2++}
that
\begin{equation}\label{A2+}
 A^{**}=  \cdom B^{**} \times \sH=\cdom B \times \sH.
\end{equation}
It is clear that
\begin{equation}\label{A2+++}
\cmul A=\sH, \quad \mul A^{**}=\sH.
\end{equation}
In particular, $\sH_{A}=\{0\}$ (see \eqref{oppa1}), so that the
orthogonal projection $P$ is trivial: $P=0$. Therefore the canonical
decomposition \eqref{HSSSdec}  of $A$ is trivial:
\begin{equation}\label{B10}
 A_{\rm reg}=\dom B \times \{0\}, \quad A=A_{\rm sing}.
\end{equation}
Next, observe that $A_{\rm op}$ in \eqref{oppa} is given by
\begin{equation}\label{B1}
 A_{\rm op} =A\cap (\sH \times \{0\})=\ker A\times \{0\}.
\end{equation}
It follows from \eqref{B10} and \eqref{B1} that
\begin{equation}\label{D}
 \text{$A$ decomposable} \quad \Longleftrightarrow \quad \ker
 A=\dom B;
\end{equation}
cf. Proposition~\ref{singu}. The identities \eqref{B1} and
\eqref{A2+} give
\begin{equation}\label{B11}
 (A_{\rm op})^{**} =\cker A\times \{0\},
\end{equation}
and
\begin{equation}\label{B111}
 (A^{**})_{\rm op} =\cdom B \times \{0\}.
\end{equation}
Hence, as to the first equality in \eqref{rud0} of Proposition
\ref{vasa}, a comparison of \eqref{B11} and \eqref{B111} leads to:
\begin{equation}\label{B111+}
 (A_{\rm op})^{**}=(A^{**})_{\rm op} \quad \Longleftrightarrow
\quad \cker A=\cdom B.
\end{equation}
It follows from \eqref{A2+++} that  the second equality  $(A_{\rm
mul})^{**}=(A^{**})_{\rm mul}$ in \eqref{rud0} is satisfied. The
conditions \eqref{D} and \eqref{B111+} will now be reformulated in a
special case.

\begin{lemma}
Let $\sM$ be a dense subspace of the Hilbert space $\sH$ and let $B$
be a relation in $\sH$. Define the relation $A$ by \eqref{A2} and
assume that $\ran B\cap\sM=\{0\}$. Then
\begin{equation}\label{D-}
 \text{$A$ decomposable} \quad \Longleftrightarrow \quad \ker
 B=\dom B \quad \Longleftrightarrow \quad \text{$B$ singular and
 decomposable},
\end{equation}
and
\begin{equation}\label{B-}
 (A_{\rm op})^{**}=(A^{**})_{\rm op} \quad \Longleftrightarrow
 \quad \cker B=\cdom B \quad \Longrightarrow \quad \text{$B$
 singular}.
\end{equation}
\end{lemma}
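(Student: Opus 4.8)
The plan is to reduce everything to the identities for $A=B\hplus(\{0\}\times\sM)$ already recorded in Example~\ref{exam2}, the only genuinely new ingredient being the hypothesis $\ran B\cap\sM=\{0\}$, which I will use to identify $\ker A$ with $\ker B$. So the first step is to compute $\ker A$: if $\{f,0\}\in A$ then, by the definition \eqref{jan00} of the componentwise sum, there are $\{f_1,f_1'\}\in B$ and $f_2'\in\sM$ with $f=f_1$ and $0=f_1'+f_2'$; hence $f_1'=-f_2'\in\sM$, so that $f_1'\in\ran B\cap\sM=\{0\}$, which forces $\{f,0\}\in B$, i.e.\ $f\in\ker B$. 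Since $0\in\sM$, the reverse inclusion $\ker B\subset\ker A$ is trivial. Thus $\ker A=\ker B$, and in particular $\cker A=\cker B$.

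With this in hand the equivalence \eqref{D-} follows by chaining known facts. By \eqref{D} in Example~\ref{exam2}, $A$ is decomposable if and only if $\ker A=\dom B$; by the first step this is the same as $\ker B=\dom B$. Since $\ker B\subset\dom B$ holds for every relation, Proposition~\ref{singu}(i), applied to $B$, says exactly that $\dom B=\ker B$ is equivalent to $B=\dom B\times\mul B$, i.e.\ to $B$ being singular and decomposable. Combining the two equivalences gives \eqref{D-}.

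For \eqref{B-} I would argue similarly. The identity \eqref{B111+} in Example~\ref{exam2} gives $(A_{\rm op})^{**}=(A^{**})_{\rm op}$ if and only if $\cker A=\cdom B$, and by the first step this reads $\cker B=\cdom B$; this is the asserted equivalence. It remains to show that $\cker B=\cdom B$ implies $B$ singular, and this is the one place where a short direct argument is needed rather than a citation. I would verify $\ran B\subset\mul B^{**}$: given $\{f,g\}\in B$ we have $f\in\dom B\subset\cdom B=\cker B$, so choose $k_n\in\ker B$ with $k_n\to f$; then $\{f-k_n,g\}=\{f,g\}-\{k_n,0\}\in B$ for every $n$, and since $f-k_n\to0$ while the second entry stays equal to $g$, passing to the limit yields $\{0,g\}\in B^{**}$, i.e.\ $g\in\mul B^{**}$. (This is precisely the sequential criterion for singularity in Proposition~\ref{zoli5}.) I do not foresee any real difficulty; the only point requiring attention is the bookkeeping in the first step — making sure the hypothesis $\ran B\cap\sM=\{0\}$ is applied to the $B$-component $f_1'$ and not confused with the fact that $\mul A=\mul B+\sM$ is generally large.
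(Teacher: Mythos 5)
Your proof is correct and follows essentially the same route as the paper's: establish $\ker A=\ker B$ from the hypothesis $\ran B\cap\sM=\{0\}$, then read off both equivalences from \eqref{D}, \eqref{B111+}, and Proposition~\ref{singu}. The only (immaterial) divergence is in the last implication of \eqref{B-}, where you verify $\ran B\subset\mul B^{**}$ directly by the sequential criterion of Proposition~\ref{zoli5}, whereas the paper notes that $\cdom B=\cker B\subset\ker B^{**}$ makes $B^{-1}$ singular and invokes the last part of Proposition~\ref{sin}.
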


\begin{proof}
It follows from the definition \eqref{A2} that $\ker B \subset \ker
A$. To show the converse inclusion, let $\{f,0\} \in A$, so that
$\{f,0\}=\{f,g\} +\{0,\varphi\}$ with $\{f,g\} \in B$ and $\varphi
\in \sM$. The condition $\ran B\cap\sM=\{0\}$ implies that $g=0$ and
$\varphi=0$. In particular, $\{f,0\} \in B$. Hence $\ker B=\ker A$.
The first equivalences in \eqref{D-} and \eqref{B-} now follow from
\eqref{D}, \eqref{B111+}. The second equivalence in \eqref{D-} holds
by Proposition~\ref{singu}. Finally, to see the implication in
\eqref{B-} observe that $\cdom B=\cker B\subset \ker B^{**}$, so
that $B^{-1}$ and, thus also, $B$ is singular by
Proposition~\ref{sin}.
\end{proof}

Let $B$ be a nontrivial injective operator which satisfies $\ran B
\cap \sM=\{0\}$. Then $\cdom B \neq \cker B=\{0\}$ and the first
equality in \eqref{rud0} is not satisfied (and $A$ is not
decomposable). For instance, take $B=\span\{h,h\}$ where $h \in \sH$
is nontrivial and $h\not\in \sM$.

Let $B$ be the densely defined operator in Example \ref{exam1}
(see \eqref{Aexam}),
where $y_0 \not\in \sM$ is a nontrivial vector, so that $\ran B \cap
\sM=\{0\}$. Then $B$ satisfies $\cdom B=\cker B$ and the first
equality in \eqref{rud0} is satisfied. Clearly, $B$  does not
satisfy $\ker B=\dom B$, so that $A$ is not decomposable.
\end{example}

\begin{example}
Let $\sM$ be a nonclosed subspace and let $y_0 \in \sH$, and assume
that $\sH=\clos \sM \oplus \lin \{y_0\}$. Let $B$ be a densely
defined singular operator with $\mul B^{**}=\lin \{y_0\}$; cf. e.g.
Example~\ref{exam1}.
Let the bounded operator $C \in \boldsymbol{B}(\sH)$, $C\neq 0$,
have the property that $\ran C \subset \clos \sM \setminus \sM$. The
operator $B+C$ is densely defined with $\dom (B+C)=\dom B$, and
according to \eqref{jan1+}
\[
 (B+C)^*=B^*+C^*, \quad (B+C)^{**}=B^{**}+C,
\]
so that
\[
 \dom (B+C)^{**}=\dom B^{**}, \quad \mul (B+C)^{**}=\lin \{y_0\},
\]
cf. \eqref{dommul}.
Define the relation $A$ in $\sH$ by
\[
 A \okr (B+C) \hplus (\{0\}\times \sM),
\]
so that $A^*=(B+C)^* \cap (\sM^\perp \times \sH)$, which leads to
\begin{equation}\label{one}
 A^{**} =\cspan \left\{ (B+C)^{**} \hplus (\{0\}\times \clos \sM)
 \right\}.
\end{equation}
Observe that $(B+C)^{**}=(B+C)^{**}\hplus (\{0\}\times \lin\{y_0\})$
and since $\clos \sM \oplus \lin \{y_0\}=\sH$, one concludes that
\begin{equation}\label{two}
 (B+C)^{**} \hplus (\{0\}\times \clos \sM)
 =(B+C)^{**}\hplus (\{0\}\times \sH)=\dom B^{**} \times \sH.
\end{equation}
 A combination of \eqref{one} and \eqref{two} leads to
\begin{equation}\label{three}
 A^{**}=\cdom B^{**} \times \sH=\sH \times \sH,
\end{equation}
since $\dom B$ is dense in $\sH$. In particular, $\mul A^{**}=\sH$,
so that $\sH_{A}=\{0\}$ (see \eqref{oppa1}) and the orthogonal
projection $P$ is trivial: $P=0$. Therefore the canonical
decomposition \eqref{HSSSdec}  of $A$ is trivial:
\begin{equation}\label{B10-}
 A_{\rm reg}=\dom B \times \{0\}, \quad A=A_{\rm sing}.
\end{equation}
Next, observe that $A_{\rm op}$ in \eqref{oppa} is given by
$A_{\rm op}=A \cap (\sH \times \{0\})$, so that
\begin{equation}\label{B10+}
 A_{\rm op}=(\ker B \cap \ker C)\times \{0\},
\end{equation}
since $\ran (B+C)\cap\sM=\{0\}$ and $\ran B\cap\ran C=\{0\}$.
Therefore, a comparison of \eqref{B10-} and \eqref{B10+} shows that
the relation $A$ is not decomposable; already $\dom B\neq \ker B$
since by construction $B$ is an operator with $\ran B=\lin \{y_0\}$.
Furthermore, note that \eqref{B10+} implies that
\begin{equation}\label{three1}
 (A_{\rm op})^{**}=\clos (\ker B \cap \ker C)\times \{0\},
\end{equation}
while it follows from \eqref{three} that
\begin{equation}\label{three2}
 (A^{**})_{\rm op}=\sH \times \{0\}.
\end{equation}
A comparison of \eqref{three1} and \eqref{three2} shows that the
first identity of \eqref{rud0} is not satisfied, since $\ker C\neq
\sH$ by the assumption $C\neq 0$. Finally, the identities $\mul
A^{**}=\sH$ and $\mul A=\sM$ and $\cmul A=\clos \sM$ imply that the
second identity of \eqref{rud0} is not satisfied, cf.
\eqref{mmuull}.

Hence, the relation $A$ in this example is not decomposable and, moreover,
the two identities \eqref{rud0} in Proposition \ref{vasa} are not satisfed.
Another way to construct such an example is to take the orthogonal
sum of the relations in Example \ref{exam1} and Example
\ref{exam2}.
\end{example}

The next example shows that a decomposable relation $A$ whose operator part is bounded,
can become nondecomposable after one-dimensional perturbation of its operator part.

\begin{example}\label{exam0a}
Let $B$ be a bounded operator in $\sH$ and let $\sM\subset
\sH\ominus\cran B$ be a nonclosed subspace. Define the relation $A$ by $A=B\hplus
(\{0\}\times\sM)$, so that
\[
 A^{**}=B^{**} \hplus (\{0\} \times \clos \sM).
\]
The relation $A$ is decomposable with $A_{\rm reg}=B$
and $A_{\rm mul}=\{0\}\times\sM$. Let $f_0\in\cdom B$ and let $e\in
(\clos \sM)\setminus\sM$. Define $B_e f=Bf+(f,f_0)e$, $f\in\dom B$
and define the relation $A_e$ by $A_e=B_e\hplus (\{0\}\times\sM)$, so that
\[
 A_e^{**}=B_e^{**} \hplus (\{0\} \times \clos \sM).
\]
Observe that  $\mul A_e=\mul A=\sM$
and $\mul A_e^{**}=\mul A^{**}=\clos\sM$. However,
$\ran(I-P)A_e=\span\{e\}+\sM$ so that $\ran(I-P)A_e\not\subset\mul
A_e$, and thus $A_e$ is not decomposable by Theorem~\ref{reglem}. In
this case $A_e$ still satisfies the equalities in \eqref{rud0}:
\[
 ((A_e)_{\rm
op})^{**}=(B\upharpoonright_{f_0^\perp})^{**}=(B_e^{**}\hplus
(\{0\}\times\clos\sM))_{\rm op}=((A_e)^{**})_{\rm op}
\]
and
\[
 ((A_e)_{\rm mul})^{**}=\{0\}\times\clos \sM=((A_e)^{**})_{\rm mul}.
\]
\end{example}

\section{Orthogonal componentwise decompositions of relations}

Let $A$ be a decomposable relation in a Hilbert space $\sH$, so that
it has a componentwise sum decomposition as in \eqref{graphdec}.
Furthermore, the adjoint $A^*$, being closed, has a componentwise
decomposition as in \eqref{einz}.  Necessary and sufficient
conditions for these componentwise decompositions to be orthogonal
will be given.

\subsection{Orthogonality for componentwise sum decompositions
of relations}

For any relation $A$ in a Hilbert space $\sH$ the identities
\[
(A_{\rm mul})^*=(\mul A)^\perp \times \sH, \quad (A_{\rm
mul})^{**}=\{0\} \times \cmul A,
\]
are valid, where the adjoint is, as usual, with respect to the
Hilbert space $\sH$. The last identity is concerned with taking
closures, which are automatically with respect to the Hilbert
space $\mul A^{**}$. It is also useful to consider the adjoint of
$A_{\rm mul}$ as a relation in the Hilbert space $\mul A^{**}$.
The proof of the following lemma is straightforward.

\begin{lemma}\label{ADj}
Let $A$ be a relation in a Hilbert space $\sH$. The adjoint of the
relation $A_{\rm mul}=\{0\}\times \mul A$ in the Hilbert space
$\mul A^{**}$ is given by
\[
 (A_{\rm mul})^*=(\mul A^{**} \ominus \mul A) \times \mul A^{**}.
\]
In particular,
\[
 \cmul A=\mul A^{**}\quad \Longleftrightarrow \quad (A_{\rm
mul})^*=(A_{\rm mul})^{**},
\]
and
\[
 \mul A=\mul A^{**} \quad \Longleftrightarrow \quad (A_{\rm
mul})^*= A_{\rm mul}.
\]
\end{lemma}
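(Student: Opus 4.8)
The plan is to compute $(A_{\rm mul})^*$ directly from the definition of the adjoint, now taken with respect to the Hilbert space $\mul A^{**}$ rather than $\sH$. A pair $\{f,f'\}$ with $f,f'\in\mul A^{**}$ lies in this adjoint precisely when $\langle\{f,f'\},\{0,g\}\rangle=(f',0)-(f,g)=-(f,g)$ vanishes for every $g\in\mul A$; that is, when $f$ is orthogonal to $\mul A$ inside $\mul A^{**}$, with no condition imposed on $f'$. Since $\mul A^{**}$ is a closed subspace of $\sH$, the orthogonal complement of $\mul A$ within $\mul A^{**}$ coincides with $\mul A^{**}\ominus\cmul A=\mul A^{**}\ominus\mul A$. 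Hence $(A_{\rm mul})^*=(\mul A^{**}\ominus\mul A)\times\mul A^{**}$, which is the asserted formula.

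For the two equivalences I would first recall that $(A_{\rm mul})^{**}=\{0\}\times\cmul A$; this is the identity \eqref{mulA**}, and it is unaffected by whether the closure is taken in $\mul A^{**}$ or in $\sH$, because $\{0\}\times\mul A$ already lies in $\mul A^{**}\times\mul A^{**}$ and $\mul A^{**}$ is closed. Both required equivalences then follow by comparing domains and multivalued parts of the two relations in question. For the first: if $\cmul A=\mul A^{**}$ then $\mul A^{**}\ominus\mul A=\{0\}$, so $(A_{\rm mul})^*=\{0\}\times\mul A^{**}=\{0\}\times\cmul A=(A_{\rm mul})^{**}$; conversely, equating the domains of $(A_{\rm mul})^*$ and $(A_{\rm mul})^{**}$ forces $\mul A^{**}\ominus\mul A=\{0\}$, i.e. $\cmul A=\mul A^{**}$. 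For the second: if $\mul A=\mul A^{**}$ then $A_{\rm mul}=\{0\}\times\mul A^{**}$ and again $\mul A^{**}\ominus\mul A=\{0\}$, so $(A_{\rm mul})^*=\{0\}\times\mul A^{**}=A_{\rm mul}$; conversely, equating the multivalued parts of $(A_{\rm mul})^*$ and $A_{\rm mul}$ gives $\mul A^{**}=\mul A$.

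There is essentially no obstacle here, which matches the remark that the proof is straightforward; the only point requiring care is keeping track of the ambient space in which the adjoint is formed, namely that $f'$ ranges over $\mul A^{**}$ (not over $\sH$), while the closure operation $\mul A\mapsto\cmul A$ produces the same subspace regardless of whether it is performed in $\mul A^{**}$ or in $\sH$.
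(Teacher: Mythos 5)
Your proof is correct and is exactly the direct computation the paper has in mind when it states that the proof of this lemma is straightforward (the paper omits the argument entirely). The verification of the adjoint formula, the identification $(A_{\rm mul})^{**}=\{0\}\times\cmul A$ independent of the ambient space, and the two equivalences by comparison of domains and multivalued parts are all carried out correctly.
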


Hence, the relation $A_{\rm mul}$ is essentially selfadjoint in the
Hilbert space $\mul A^{**}$ if and only if $\cmul A=\mul A^{**}$,
and the relation $A_{\rm mul}$ is selfadjoint in the Hilbert space
$\mul A$ if and only if $\mul A=\mul A^{**}$. The following
proposition is a further specification of the results in Proposition
\ref{vasa}. Recall that $\sH_A=\sH_{A^{**}}$; it will be shown that
the decompositions \eqref{graphdec} and \eqref{grdec**} are
orthogonal with respect to the splitting $\sH=\sH_A \oplus \mul
A^{**}$, simultaneously.

\begin{proposition}\label{orthchar}
Let $A$ be a decomposable relation in a Hilbert space $\sH$.   Then
the componentwise sum decomposition \eqref{graphdec} of $A$ is
orthogonal
\begin{equation}\label{deco00}
 A=A_{\rm op} \hoplus A_{\rm mul}
\end{equation}
if and only if
\begin{equation}\label{14.4.1}
\dom A \subset \cdom A^* \quad \mbox{or, equivalently,} \quad \mul
A^{**} \subset \mul A^*.
\end{equation}
In this case $A_{\rm mul}$ is essentially selfadjoint in $\mul
A^{**}$. Moreover, in this case the componentwise sum
decomposition \eqref{grdec**} of $A^{**}$ is automatically
orthogonal
\begin{equation}\label{deco00a}
 A^{**}=(A^{**})_{\rm op} \hoplus (A^{**})_{\rm mul},
\end{equation}
and $(A^{**})_{\rm mul}$ is selfadjoint in $\mul A^{**}$.
\end{proposition}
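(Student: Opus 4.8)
The plan is to decompose the claim into its constituent assertions and dispatch them in order. Since $A$ is decomposable, Theorem~\ref{reglem} gives $A=A_{\rm op}\hplus A_{\rm mul}$ with $A_{\rm op}=A_{\rm reg}$, $\dom A_{\rm op}=\dom A$, and $\ran A_{\rm op}\subset\sH_A=\cdom A^*$. By definition $\ran A_{\rm mul}=\mul A\subset\mul A^{**}=\sH\ominus\sH_A$. Thus the ranges of the two summands already lie in orthogonal subspaces; the only issue for \eqref{deco00} is the \emph{domains}. The componentwise sum becomes the componentwise \emph{orthogonal} sum precisely when $\dom A_{\rm op}\perp\dom A_{\rm mul}$; but $\dom A_{\rm mul}=\{0\}$, so orthogonality of domains is vacuous in the literal sense. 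The correct reading — consistent with the notation $\hoplus$ defined for relations in a direct orthogonal sum of Hilbert spaces $\sH_A\oplus\mul A^{**}$ — is that $A_{\rm op}$ should act as a relation \emph{inside} $\sH_A$ (i.e.\ $\dom A_{\rm op}\subset\sH_A$) while $A_{\rm mul}$ acts inside $\mul A^{**}$. Since $\ran A_{\rm op}\subset\sH_A$ holds automatically, the extra requirement is exactly $\dom A_{\rm op}=\dom A\subset\sH_A=\cdom A^*$. This is the first equivalence in \eqref{14.4.1}; the second, $\mul A^{**}\subset\mul A^*$, is then immediate from Lemma~\ref{easy} (the equivalence $\dom A\subset\cdom A^*\Llr\mul A^{**}\subset\mul A^*$ in \eqref{impor0}).

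Next, under \eqref{14.4.1} I would verify that $A_{\rm mul}$ is essentially selfadjoint in $\mul A^{**}$. By Lemma~\ref{ADj} this amounts to showing $\cmul A=\mul A^{**}$. To see this, note $\dom A\subset\cdom A^*$ together with $\mul A^*=(\dom A)^\perp$ (Lemma~\ref{lemrelate}) and $\mul A^{**}=(\dom A^*)^\perp$ (\eqref{tweee}) gives $\mul A^{**}\subset\mul A^*\subset\mul A^{**}$... wait, more carefully: I want $\cmul A=\mul A^{**}$. Using Theorem~\ref{muldecthm}: when $A$ is decomposable, conditions (iii)--(vi) there hold, and (v) is exactly $\mul A^{**}=\cmul A$. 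So decomposability \emph{alone} already yields $\cmul A=\mul A^{**}$, hence $A_{\rm mul}$ is essentially selfadjoint in $\mul A^{**}$ by Lemma~\ref{ADj} — no need even for \eqref{14.4.1} here, which streamlines the argument. (I should state it this way to avoid a spurious dependence.)

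For the statement about $A^{**}$: by Proposition~\ref{vasa}, $A^{**}$ is decomposable with $A^{**}=(A^{**})_{\rm op}\hplus(A^{**})_{\rm mul}$, and when $A$ is decomposable $(A_{\rm op})^{**}=(A^{**})_{\rm op}$ and $(A_{\rm mul})^{**}=(A^{**})_{\rm mul}=\{0\}\times\mul A^{**}$ (using also \eqref{mmuull} and $\cmul A=\mul A^{**}$). Now $\dom(A^{**})_{\rm op}=\dom A^{**}=\cdom A=\cdom A^*$, where the last equality is \eqref{impor2} of Lemma~\ref{easy}, valid precisely because \eqref{14.4.1} holds (together with its reverse-inclusion counterpart — actually $\cdom A\subset\cdom A^*$ is \eqref{impor0} and $\cdom A^*\subset\cdom A$ would need more; let me instead argue directly that $\dom A^{**}\subset\sH_A$: an element $f\in\dom A^{**}$ is a limit of $f_n\in\dom A\subset\cdom A^*=\sH_A$, and $\sH_A$ is closed, so $f\in\sH_A$). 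Hence $\dom(A^{**})_{\rm op}\subset\sH_A$ and $\ran(A^{**})_{\rm op}\subset\sH_A$, so $(A^{**})_{\rm op}$ lives in $\sH_A$ while $(A^{**})_{\rm mul}=\{0\}\times\mul A^{**}$ lives in $\mul A^{**}$, giving the orthogonal decomposition \eqref{deco00a}. Finally, $(A^{**})_{\rm mul}=\{0\}\times\mul A^{**}$ is manifestly selfadjoint in the Hilbert space $\mul A^{**}$ (it equals its own adjoint there, by Lemma~\ref{ADj} with $\mul A$ replaced by the closed subspace $\mul A^{**}$, or directly). The main obstacle is bookkeeping: making sure each inclusion of domains/ranges is justified by the right lemma and that the notion of $\hoplus$ is being applied with the correct underlying orthogonal splitting $\sH=\sH_A\oplus\mul A^{**}$; the actual content is light once decomposability is invoked.
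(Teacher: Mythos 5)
Your proposal is correct and follows essentially the same route as the paper: orthogonality is reduced to the single condition $\dom A_{\rm op}\subset\sH_A$ (the ranges being automatically orthogonal), which via Theorem~\ref{reglem} and Lemma~\ref{easy} is \eqref{14.4.1}, while $\cmul A=\mul A^{**}$ from decomposability (Proposition~\ref{vasa}) plus Lemma~\ref{ADj} handles the (essential) selfadjointness claims. Your observation that the essential selfadjointness of $A_{\rm mul}$ needs only decomposability, not \eqref{14.4.1}, matches the paper's proof, and your explicit verification of \eqref{deco00a} via $\dom A^{**}\subset\cdom A\subset\cdom A^*$ is a correct filling-in of a step the paper leaves implicit.
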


\begin{proof}
It is assumed that $A$ is decomposable, i.e., $A=A_{\rm op} \hplus
A_{\rm mul}$. Clearly, the subspaces $\ran A_{\rm op}$ and $\mul
A$ are orthogonal, cf. \eqref{oppaa}. Hence, the componentwise sum
decomposition is orthogonal if and only if the condition $\dom
A_{\rm op} \subset \sH \ominus \mul A^{**}$ is satisfied. Note
that by Theorem \ref{reglem} this last condition is equivalent to
\eqref{14.4.1}, cf. Lemma \ref{easy}. Furthermore, the
decomposability of $A$ implies that $\cmul A=\mul A^{**}$, cf.
Proposition \ref{vasa}. Lemma \ref{ADj} now guarantees that
$A_{\rm mul}$ is essentially selfadjoint in $\mul A^{**}$. It is
clear that $(A^{**})_{\rm mul}$ is selfadjoint in $\mul A^{**}$;
cf. Lemma \ref{ADj}.
\end{proof}

\begin{corollary} \label{william4mary}
Let $A$ be a decomposable relation in a Hilbert space $\sH$. Then
the following statements are equivalent:
\begin{enumerate}\def\labelenumi{\rm (\roman{enumi})}
\item \text{$\mul A^{**}\subset\mul A^*$ and $(A^{**})_{\rm
op}\in\boldsymbol B(\cdom A^*)$}; \item $\dom A^{**}=\cdom A^*$.
\end{enumerate}
\end{corollary}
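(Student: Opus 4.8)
The plan is to reduce everything to a couple of facts about the closed relation $A^{**}$. First I would record the standing observations. Since $\sH_A=\cdom A^*$ by \eqref{oppa1} and $\sH_{A^{**}}=\sH_A$, and since $A^{**}$ is closed, Corollary~\ref{closeddec} tells us $A^{**}$ is decomposable with $(A^{**})_{\rm op}=(A^{**})_{\rm reg}$, that $(A^{**})_{\rm op}=A^{**}\cap(\sH\times\sH_A)$ is a \emph{closed} operator (intersection of two closed subspaces of $\sH\times\sH$), that $\ran (A^{**})_{\rm op}\subset\sH_A=\cdom A^*$ (cf. \eqref{oppaa}, \eqref{impor}), and that $\dom (A^{**})_{\rm op}=\dom A^{**}$ by Theorem~\ref{reglem}. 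These identities, not the decomposability hypothesis on $A$ itself, carry the argument.

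For the implication (i)\,$\Rightarrow$\,(ii): if $(A^{**})_{\rm op}\in\boldsymbol B(\cdom A^*)$ then in particular $\dom (A^{**})_{\rm op}=\cdom A^*$; combining with $\dom (A^{**})_{\rm op}=\dom A^{**}$ gives $\dom A^{**}=\cdom A^*$ at once. For (ii)\,$\Rightarrow$\,(i): from $\dom A^{**}=\cdom A^*$ we see $\dom A^{**}$ is closed, hence by Theorem~\ref{NEW} (applied to the closed relation $A^{**}$) the subspace $\dom A^*$ is closed, so $\dom A^*=\cdom A^*=\dom A^{**}$. Now Lemma~\ref{lemrelate}, applied once to $A^*$ and once to $A^{**}$, yields $\mul A^{**}=(\dom A^*)^\perp$ and $\mul A^*=(\dom A^{**})^\perp$, whence $\mul A^{**}=\mul A^*$; in particular $\mul A^{**}\subset\mul A^*$. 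It remains to note $\dom (A^{**})_{\rm op}=\dom A^{**}=\cdom A^*$, so $(A^{**})_{\rm op}$ is a closed operator defined on all of the Hilbert space $\cdom A^*$ with range in $\cdom A^*$; the closed graph theorem then forces it to be bounded, i.e. $(A^{**})_{\rm op}\in\boldsymbol B(\cdom A^*)$.

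The only point requiring a moment's thought — and the thing I would flag as the ``obstacle'' — is recognizing that the boundedness clause in (i) is not an extra assumption: it is automatic from the domain equality via the closed graph theorem. Dually, one must see that the apparently weaker condition (ii) already forces the full equality $\mul A^{**}=\mul A^*$ (not just the inclusion appearing in (i)), and this is exactly what Theorem~\ref{NEW} together with the orthogonality identities of Lemma~\ref{lemrelate} delivers. Everything else is bookkeeping with $\sH_A=\cdom A^*$ and the description $(A^{**})_{\rm op}=A^{**}\cap(\sH\times\sH_A)$.
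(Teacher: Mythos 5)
Your proof is correct and follows essentially the same route as the paper: identify $\dom (A^{**})_{\rm op}$ with $\dom A^{**}$ via the componentwise decomposition of the closed relation $A^{**}$, and obtain boundedness from the domain equality by the closed graph theorem. The only cosmetic differences are that you derive $\mul A^{**}=\mul A^*$ from Theorem~\ref{NEW} and Lemma~\ref{lemrelate} where the paper cites Lemma~\ref{easy}, and you correctly observe that neither the inclusion $\mul A^{**}\subset\mul A^*$ nor the decomposability of $A$ itself is actually needed for (i)\,$\Rightarrow$\,(ii).
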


\begin{proof}
(i) $\Longrightarrow$ (ii) If $\mul A^{**}\subset\mul A^*$, then
\eqref{deco00a} holds by Proposition \ref{orthchar}. Moreover, if
$(A^{**})_{\rm op}\in \boldsymbol B(\cdom A^*)$, then
\begin{equation*}
   \dom A^{**}=\dom (A^{**})_{\rm op}=\cdom A^*.
\end{equation*}

(ii) $\Longrightarrow$ (i) If $\dom A^{**}=\cdom A^*$, then $\mul
A^{**}=\mul A^*$; cf. Lemma \ref{easy}. Hence \eqref{deco00a}
holds by Proposition \ref{orthchar}. Furthermore,
\begin{equation*}
   \dom (A^{**})_{\rm op}=\dom A^{**}=\cdom A^*.
\end{equation*}
Hence the closed operator $(A^{**})_{\rm op}$ is defined on all of
$\dom A^*$, so that it is bounded by the closed graph theorem.
\end{proof}

Let $A$ be a decomposable relation. Then it has already been shown
in Proposition \ref{vasa}   that
\[
 (A_{\rm op})^{**}=(A^{**})_{\rm op}, \quad (A_{\rm
 mul})^{**}=(A^{**})_{\rm mul}.
\]
When $A$ is decomposable and satisfies \eqref{14.4.1}, then these
equalities follow now also from a comparison between
\eqref{deco00} and \eqref{deco00a}. Under the same circumstances,
$A$ is closed if and only if $A_{\rm op}$ and $A_{\rm mul}$ are
closed; and $A_{\rm op}$ is densely defined if and only if $\cdom
A = \cdom A^*$; which is equivalent to $\mul A^{**}=\mul A^*$.

\begin{proposition}\label{orthchar+}
Let $A$ be a  relation in a Hilbert space $\sH$. Assume that there
is a closable operator $B$ (in $\sH_{A}$) such that
\begin{equation}\label{odec0}
 A=B \hoplus A_{\rm mul},
\end{equation}
then $B$ coincides with $A_{\rm op}$. In particular, the relation
$A$ is decomposable and satisfies the condition \eqref{14.4.1}.
\end{proposition}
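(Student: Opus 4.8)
The plan is to obtain this proposition as an almost immediate consequence of Theorem~\ref{unidec}. The first point is to notice that the orthogonal componentwise sum in \eqref{odec0} is in particular an ordinary componentwise sum. Indeed, write $\sH=\sH_{A}\oplus\mul A^{**}$; the assumption that $B$ is a closable operator \emph{in} $\sH_{A}$ means precisely that $B\subset\sH_{A}\times\sH_{A}$, so that $\dom B\subset\sH_{A}$ and $\ran B\subset\sH_{A}$, while $A_{\rm mul}=\{0\}\times\mul A$ is viewed as a relation in $\mul A^{**}$ (which is legitimate since $\mul A\subset\mul A^{**}$). Hence \eqref{odec0} gives $A=B\hplus A_{\rm mul}$ with $\ran B\subset\sH_{A}$, i.e. exactly the hypothesis \eqref{graphdecB} of Theorem~\ref{unidec}.

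The second step is then to apply Theorem~\ref{unidec} directly: it yields that the sum in \eqref{odec0} is direct, that $B$ is a closable operator, and that $B=A_{\rm op}$; in particular the componentwise decomposition \eqref{graphdec} holds, so $A$ is decomposable in the sense of Subsection~\ref{decc}.

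The third step is to verify \eqref{14.4.1}. Since $\dom A_{\rm mul}=\{0\}$, the orthogonal sum \eqref{odec0} forces $\dom A=\dom B$, and because $B$ is a relation in $\sH_{A}=\cdom A^*$ (cf. \eqref{oppa1}) we get $\dom A\subset\cdom A^*$, which is the first form of \eqref{14.4.1}; its equivalence with $\mul A^{**}\subset\mul A^*$ is provided by Lemma~\ref{easy} (the chain of equivalences \eqref{impor0}). Alternatively, having identified $B=A_{\rm op}$ one sees that \eqref{odec0} is literally the orthogonal decomposition \eqref{deco00}, whence \eqref{14.4.1} can be read off from Proposition~\ref{orthchar}.

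I expect no genuine obstacle here: the whole argument is a citation of Theorem~\ref{unidec} together with a short unwinding of notation. The only place demanding a little attention is disentangling what ``$B$ in $\sH_{A}$'' and the symbol $\hoplus$ encode, namely that they deliver simultaneously $\ran B\subset\sH_{A}$ (used to invoke Theorem~\ref{unidec}) and $\dom B\subset\sH_{A}$ (used to obtain \eqref{14.4.1}).
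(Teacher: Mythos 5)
Your proposal is correct and follows essentially the same route as the paper: invoke Theorem~\ref{unidec} after observing that \eqref{odec0} supplies the hypothesis $\ran B\subset\sH_{A}$, conclude $B=A_{\rm op}$ and decomposability, and then read off \eqref{14.4.1} from $\dom A=\dom B\subset\sH_{A}=\cdom A^*$. The only cosmetic difference is that the paper passes through Theorem~\ref{reglem}~(ii) to assert decomposability, whereas you get it directly from the definition; both are fine.
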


\begin{proof}
The assumption \eqref{odec0} implies that the condition in Theorem
\ref{unidec} is satisfied, so that $B=A_{\rm op}$. In particular,
it follows that $\dom A_{\rm op}=\dom A$, so that $A$ is
decomposable by Theorem \ref{reglem}. Since $B$ is an operator in
$\sH_{\rm op}$ it is clear that $\dom A=\dom A_{\rm op}=\dom B
\subset \sH_{\rm A}=\cdom A^*$, which leads to \eqref{14.4.1}.
\end{proof}

A combination of Propositions \ref{orthchar} and \ref{orthchar+}
leads to the following corollary.

\begin{corollary}
Let $A$ be a relation in a Hilbert space $\sH$. Then $A$ has an
orthogonal decomposition of the form \eqref{deco00} if and only if
$A$ is decomposable and satisfies \eqref{14.4.1}.
\end{corollary}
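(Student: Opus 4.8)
The plan is to obtain this corollary as a direct amalgamation of the two immediately preceding results, Proposition~\ref{orthchar} and Proposition~\ref{orthchar+}, each of which already supplies one of the two implications; no new computation is needed.

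First I would treat the ``if'' direction. Assume $A$ is decomposable and satisfies \eqref{14.4.1}. Then $A$ meets exactly the hypotheses of Proposition~\ref{orthchar}, whose conclusion is that the componentwise sum decomposition \eqref{graphdec}, namely $A=A_{\rm op}\hplus A_{\rm mul}$, is in fact orthogonal, i.e. $A=A_{\rm op}\hoplus A_{\rm mul}$. This is precisely an orthogonal decomposition of the form \eqref{deco00}, so this half is immediate.

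Next I would treat the ``only if'' direction. Suppose $A$ admits an orthogonal decomposition of the form \eqref{deco00}. The operator part appearing there is a closable operator: indeed, by \eqref{reg00} it is a restriction of the closable operator $A_{\rm reg}$, and by \eqref{oppaa} its range lies in $\sH_A$. Thus $A$ is written as $B\hoplus A_{\rm mul}$ with $B$ a closable operator in $\sH_A$, which is exactly the setting of Proposition~\ref{orthchar+}. Its conclusion yields that $A$ is decomposable and satisfies \eqref{14.4.1}, finishing the argument. (If one reads ``of the form \eqref{deco00}'' as allowing an a priori different closable operator $B$, the uniqueness built into Theorem~\ref{unidec} and reused in Proposition~\ref{orthchar+} still forces $B=A_{\rm op}$, so the conclusion is unchanged.)

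I do not expect any genuine obstacle here; the only point deserving a line of care is verifying in the ``only if'' direction that the operator occurring in the assumed orthogonal decomposition is closable with range in $\sH_A$ so that Proposition~\ref{orthchar+} applies, and this is routine from \eqref{reg00} and \eqref{oppaa}.
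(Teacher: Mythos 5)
Your proposal is correct and is exactly the argument the paper intends: the authors state the corollary as ``a combination of Propositions~\ref{orthchar} and \ref{orthchar+}'', with the ``if'' direction coming from Proposition~\ref{orthchar} and the ``only if'' direction from Proposition~\ref{orthchar+}, just as you have written. Your extra remark on why the operator part in the assumed orthogonal decomposition is closable with range in $\sH_A$ (so that Proposition~\ref{orthchar+} applies) is a sensible piece of care that the paper leaves implicit.
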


\begin{corollary}\label{multiA**}
Let $A$ be a relation in a Hilbert space $\sH$ which satisfies
$\mul A=\mul A^{**}$, so that $A$ is decomposable and $A_{\rm
mul}$ is selfadjoint in $\mul A^{**}$. Then $A$ admits the
orthogonal composition \eqref{deco00} if and only if $\mul
A\subset \mul A^{*}$.
\end{corollary}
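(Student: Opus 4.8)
The plan is to derive everything from the already-established Proposition~\ref{orthchar} together with the two auxiliary facts that package the multivalued bookkeeping. First I would record that the hypothesis $\mul A=\mul A^{**}$ immediately puts us in the decomposable setting: by Corollary~\ref{muldec} the relation $A$ is decomposable and $A_{\rm mul}$ is closed, and by Lemma~\ref{ADj} the equality $\mul A=\mul A^{**}$ is precisely the condition under which $(A_{\rm mul})^*=A_{\rm mul}$, i.e.\ $A_{\rm mul}$ is selfadjoint in $\mul A^{**}=\mul A$. This justifies the parenthetical assertions already present in the statement and makes Proposition~\ref{orthchar} applicable.

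The main step is then a direct appeal to Proposition~\ref{orthchar}. Since $A$ is decomposable, that proposition tells us that the componentwise sum decomposition $A=A_{\rm op}\hplus A_{\rm mul}$ is orthogonal, i.e.\ \eqref{deco00} holds, exactly when $\mul A^{**}\subset\mul A^*$ (equivalently $\dom A\subset\cdom A^*$, cf.\ Lemma~\ref{easy}). Substituting the hypothesis $\mul A=\mul A^{**}$ into this criterion, the condition $\mul A^{**}\subset\mul A^*$ becomes literally $\mul A\subset\mul A^*$, which is the asserted equivalence. I do not expect any genuine obstacle here; the content has all been absorbed into Proposition~\ref{orthchar}, Lemma~\ref{ADj} and Corollary~\ref{muldec}, and the corollary is just the specialization of that criterion to relations whose multivalued part is already closed and saturated.

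For transparency I would also spell out the ``only if'' direction without invoking Proposition~\ref{orthchar}: if $A=A_{\rm op}\hoplus A_{\rm mul}$, then $\dom A=\dom A_{\rm op}\subset\sH_A=\cdom A^*$, so $\dom A\subset\cdom A^*$, which by Lemma~\ref{easy} is equivalent to $\mul A^{**}\subset\mul A^*$ and hence to $\mul A\subset\mul A^*$ under the hypothesis. The ``if'' direction is where one actually uses $A_{\rm op}=A_{\rm reg}$ (Theorem~\ref{reglem}), so that $\ran A_{\rm op}\perp\mul A$ holds automatically and the only additional requirement for orthogonality of \eqref{graphdec} is $\dom A_{\rm op}\perp\mul A^{**}$, i.e.\ $\dom A\subset\cdom A^*$, which is granted by $\mul A\subset\mul A^*=\mul A^{**}\cap\mul A^*\supset\mul A^{**}$ together with Lemma~\ref{easy}. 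Thus the whole argument reduces to three or four lines once the cited results are in hand.
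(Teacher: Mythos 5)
Your proposal is correct and follows essentially the same route as the paper: decomposability from Corollary~\ref{muldec}, selfadjointness of $A_{\rm mul}$ from Lemma~\ref{ADj}, and then the observation that the criterion $\mul A^{**}\subset\mul A^*$ of Proposition~\ref{orthchar} reduces to $\mul A\subset\mul A^*$ under the hypothesis $\mul A=\mul A^{**}$. The extra unpacking of the two directions is harmless (though the displayed chain $\mul A^*=\mul A^{**}\cap\mul A^*\supset\mul A^{**}$ is written a bit loosely) and adds nothing beyond the paper's two-line argument.
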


\begin{proof}
The condition $\mul A=\mul A^{**}$ implies that $A$ is decomposable;
cf. Corollary \ref{muldec} and Lemma \ref{ADj}. Furthermore, the
condition \eqref{14.4.1} in Proposition \ref{orthchar} is now
equivalent to $\mul A\subset \mul A^*$.
\end{proof}

If $A$ is a closed relation in a Hilbert space $\sH$, then $A$ is
decomposable and $A_{\rm mul}$ is selfadjoint in $\mul A^{**}$; cf.
Corollary \ref{closeddec} and Lemma \ref{ADj}. Hence $A$ admits the
orthogonal composition \eqref{deco00} if and only if $\mul A\subset
\mul A^{*}$ (see Corollary \ref{multiA**}).

\subsection{Orthogonality for componentwise sum decompositions of adjoint
relations}

Let $A$ be a relation in a Hilbert space $\sH$. Since the relation
$A^*$ is closed it is decomposable and has the componentwise
decomposition \eqref{einz}, cf. Theorem \ref{adjoin1}.  The adjoint
of the relation $(A^*)_{\rm mul}$ in the Hilbert space $\mul A^{*}$
is given by
\[
 ((A^*)_{\rm mul})^*=\{0\} \times \mul A^{*}=(A^*)_{\rm mul},
\]
and the relation $(A^*)_{\rm mul}$ is selfadjoint in the Hilbert
space $\mul A^*$, cf. Lemma \ref{ADj}. The following result is
obtained by combining Lemma \ref{easy}, Corollary \ref{closeddec},
Proposition \ref{orthchar}, and Proposition \ref{orthchar+}. The
orthogonal componentwise decomposition is with respect to the
orthogonal splitting $\sH=\sH_{A^*} \oplus \mul A^*$.

\begin{proposition}\label{how}
Let $A$ be a relation in a Hilbert space $\sH$. Then the
componentwise sum decomposition of $A^*$ in \eqref{einz} is
orthogonal
\begin{equation}\label{how1}
 A^*=(A^*)_{\rm op} \hoplus (A^*)_{\rm mul},
\end{equation}
if and only if
\begin{equation}\label{how2}
 \mul A^* \subset \mul A^{**}.
\end{equation}
\end{proposition}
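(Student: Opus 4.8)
The plan is to obtain this as a direct specialization of Proposition~\ref{orthchar} to the relation $A^*$. The point is that $A^*$ is automatically a closed relation, so by Corollary~\ref{closeddec} it is decomposable, with $(A^*)_{\rm op}=(A^*)_{\rm reg}$ a closed operator and $(A^*)_{\rm mul}$ closed; this is precisely the componentwise decomposition \eqref{einz} recorded in Theorem~\ref{adjoin1}. Thus $A^*$ meets the standing hypothesis of Proposition~\ref{orthchar}, and I can apply that proposition verbatim with $A$ replaced by $A^*$ and with the relevant orthogonal splitting being $\sH=\sH_{A^*}\oplus\mul(A^*)^{**}$.

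First I would note that, since $A^*$ is closed, $(A^*)^{**}=A^*$ and $(A^*)^{*}=A^{**}$, so that $\mul(A^*)^{**}=\mul A^*$, $\mul(A^*)^{*}=\mul A^{**}$, and $\sH_{A^*}=\sH\ominus\mul A^*$; in particular the splitting in Proposition~\ref{orthchar} for $A^*$ is exactly the splitting $\sH=\sH_{A^*}\oplus\mul A^*$ appearing in the statement. Applying Proposition~\ref{orthchar} to $A^*$ then says: the decomposition \eqref{einz} is orthogonal, i.e.\ \eqref{how1} holds, if and only if $\dom A^*\subset\cdom(A^*)^{*}=\cdom A^{**}=\cdom A$, or equivalently $\mul(A^*)^{**}\subset\mul(A^*)^{*}$, which by the above identities is exactly $\mul A^*\subset\mul A^{**}$, i.e.\ \eqref{how2}. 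The equivalence of the two forms of the condition ($\cdom A^*\subset\cdom A$ versus $\mul A^*\subset\mul A^{**}$) is just \eqref{impor1} of Lemma~\ref{easy}. For completeness one may also observe, via Proposition~\ref{orthchar+} applied to $A^*$, that whenever $A^*$ admits any orthogonal componentwise decomposition of the form \eqref{odec0} its operator part necessarily coincides with $(A^*)_{\rm op}$, so the orthogonal decomposition in \eqref{how1}, if it exists, is unique and is the canonical one; but since $A^*$ is automatically decomposable this refinement is not actually needed for the stated equivalence.

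I do not expect any real obstacle here: the ``easy half'' of orthogonality, namely $\ran(A^*)_{\rm op}\perp\mul A^*$, is immediate from $(A^*)_{\rm op}=A^*\cap(\sH\times\sH_{A^*})$, and the only substantive content is the domain inclusion, which is handled inside Proposition~\ref{orthchar}. The one place requiring a little care is the bookkeeping of the double adjoints when passing from $A$ to $A^*$, i.e.\ using the closedness of $A^*$ to rewrite $(A^*)^{**}=A^*$ and $(A^*)^{*}=A^{**}$ so that condition \eqref{14.4.1} for $A^*$ collapses to \eqref{how2}.
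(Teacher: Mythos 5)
Your proof is correct and follows exactly the route the paper intends: the paper states that Proposition~\ref{how} ``is obtained by combining Lemma~\ref{easy}, Corollary~\ref{closeddec}, Proposition~\ref{orthchar}, and Proposition~\ref{orthchar+}'', which is precisely your argument of specializing Proposition~\ref{orthchar} to the closed (hence decomposable) relation $A^*$ and using $(A^*)^{**}=A^*$, $(A^*)^*=A^{**}$ to collapse condition \eqref{14.4.1} to \eqref{how2}. The bookkeeping of the adjoints and of the splitting $\sH=\sH_{A^*}\oplus\mul A^*$ is carried out correctly.
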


\begin{corollary} \label{mary1}
Let $A$ be a relation in a Hilbert space $\sH$. Then the following
statements are equivalent:
\begin{enumerate}\def\labelenumi{\rm (\roman{enumi})}
\item \text{$\mul A^{*}\subset \mul A^{**}$ and $(A^*)_{\rm op}\in
\boldsymbol B(\cdom A)$;} \item $\cdom A=\dom A^*$.
\end{enumerate}
\end{corollary}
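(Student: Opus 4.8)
The plan is to transcribe, via the adjoint, the argument used for Corollary~\ref{william4mary}, now exploiting that $A^*$ is \emph{automatically} closed. Because $A^*$ is closed it is decomposable (Corollary~\ref{closeddec}, Theorem~\ref{adjoin1}), so that $(A^*)_{\rm op}=(A^*)_{\rm reg}$ is a closed operator with $\dom (A^*)_{\rm op}=\dom A^*$ and, by construction, $\ran (A^*)_{\rm op}\subset\sH_{A^*}$. The first thing I would record is the identification $\sH_{A^*}=\cdom A^{**}=\cdom A$ (from \eqref{oppa1} applied to $A^*$ together with \eqref{impor}, or from Lemma~\ref{lemrelate}); thus $(A^*)_{\rm op}$ maps into $\cdom A$, and the assertion ``$(A^*)_{\rm op}\in\boldsymbol{B}(\cdom A)$'' means precisely that $(A^*)_{\rm op}$ is a bounded operator defined on all of the Hilbert space $\cdom A$.

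For (ii) $\Longrightarrow$ (i) I would proceed as follows. Assuming $\cdom A=\dom A^*$, two applications of Lemma~\ref{lemrelate} give $\mul A^*=(\dom A)^\perp=(\cdom A)^\perp=(\dom A^*)^\perp=\mul A^{**}$; in particular $\mul A^*\subset\mul A^{**}$, so that Proposition~\ref{how} shows the decomposition \eqref{einz} of $A^*$ to be orthogonal. Next, the closed operator $(A^*)_{\rm op}$ now has $\dom (A^*)_{\rm op}=\dom A^*=\cdom A$ and range inside $\cdom A$, hence it is a closed operator everywhere defined on the Hilbert space $\cdom A$; the closed graph theorem then gives boundedness, that is, $(A^*)_{\rm op}\in\boldsymbol{B}(\cdom A)$.

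For (i) $\Longrightarrow$ (ii) there is almost nothing left to do: if $(A^*)_{\rm op}\in\boldsymbol{B}(\cdom A)$ then $(A^*)_{\rm op}$ is defined on all of $\cdom A$, and since $\dom (A^*)_{\rm op}=\dom A^*$ this forces $\dom A^*=\cdom A$, which is (ii). The first clause of (i), $\mul A^*\subset\mul A^{**}$, is retained only for parallelism with Corollary~\ref{william4mary}; the computation in the preceding paragraph shows it is already a consequence of (ii).

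I do not expect a real obstacle: this is a short corollary whose content is entirely bookkeeping. The two facts I would be careful to pin down are $\sH_{A^*}=\cdom A$ and $\dom (A^*)_{\rm op}=\dom A^*$ — the latter holding because the closed relation $A^*$ is decomposable, so $(A^*)_{\rm op}=(A^*)_{\rm reg}$ by Theorem~\ref{reglem} (or Theorem~\ref{adjoin1}) and passing to the regular part leaves the domain unchanged. I would also emphasize that, in contrast to Corollary~\ref{william4mary}, no decomposability hypothesis on $A$ itself is required here, because the adjoint of any relation is closed.
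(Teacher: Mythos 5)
Your argument is correct and is essentially the paper's own proof: both directions rest on $\dom(A^*)_{\rm op}=\dom A^*$ together with the closed graph theorem for (ii)~$\Rightarrow$~(i), and the equivalence $\cdom A=\dom A^*\Leftrightarrow \mul A^*=\mul A^{**}$ (which you obtain from Lemma~\ref{lemrelate} where the paper cites Lemma~\ref{easy}) combined with Proposition~\ref{how}. Your added remarks that $\sH_{A^*}=\cdom A$ and that the first clause of (i) is redundant given (ii) are accurate and consistent with the paper's treatment.
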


\begin{proof}
(i) $\Longrightarrow$ (ii) If $\mul A^{*}\subset A^{**}$, then
\eqref{how1} holds by Proposition \ref{how}. Moreover, if
$(A^*)_{\rm op}\in \boldsymbol B(\cdom A)$, then
\begin{equation*}
\dom A^*=\dom (A^*)_{\rm op} =\cdom A.
\end{equation*}

(ii) $\Longrightarrow$ (i) If $\cdom A=\dom A^*$, then $\mul
A^*=\mul A^{**}$; cf. Lemma \ref{easy}. Hence \eqref{how1} holds
by Proposition \ref{how}. Furthermore, the closed operator
$(A^*)_{\rm op}$ is defined on all of $\cdom A$, so that it is
bounded by the closed graph theorem.
\end{proof}

Another way to decompose $A^*$ is to assume that $A$ has an
orthogonal componentwise decomposition as in \eqref{deco00}.  Hence,
the following orthogonal componentwise decomposition is with respect
to the orthogonal splitting $\sH=\sH_A \oplus \mul A^{**}$.

\begin{proposition}\label{vaaas}
Let $A$ be a decomposable relation in a Hilbert space $\sH$ which
satisfies \eqref{14.4.1}. Then $A^*$ has the orthogonal
componentwise decomposition
\begin{equation}\label{deco000*}
 A^*=(A_{\rm op})^* \hoplus (A_{\rm mul})^*,
\end{equation}
where $(A_{\rm op})^*$ and $(A_{\rm mul})^*$ stand for the
adjoints of $A_{\rm op}$ and $A_{\rm mul}$ in $\sH_{\rm op}$ and
$\mul A^{**}$, respectively. Moreover, $ (A_{\rm mul})^*=\{0\}
\times \mul A^{**}$ is selfadjoint in $\mul A^{**}$.
\end{proposition}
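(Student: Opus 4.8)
The plan is to obtain \eqref{deco000*} by taking adjoints in the orthogonal componentwise decomposition of $A$ itself, which is at our disposal precisely because $A$ is assumed decomposable and to satisfy \eqref{14.4.1}. First I would invoke Proposition~\ref{orthchar}: under these hypotheses the componentwise sum decomposition of $A$ is orthogonal,
\[
 A=A_{\rm op} \hoplus A_{\rm mul},
\]
relative to the orthogonal splitting $\sH=\sH_A\oplus\mul A^{**}$. Here $A_{\rm op}$ is a (closable) operator acting in $\sH_{\rm op}=\sH_A$, since $\ran A_{\rm op}\subset\sH_A$ by \eqref{oppaa} and $\dom A_{\rm op}=\dom A\subset\cdom A^*=\sH_A$ by \eqref{14.4.1}, while $A_{\rm mul}=\{0\}\times\mul A$ is a relation in $\mul A^{**}$.

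Next I would apply the elementary identity $(A_1\hoplus A_2)^*=A_1^*\hoplus A_2^*$ for componentwise orthogonal sums (recorded in Subsection~2.4, with the adjoints computed in the respective factor spaces), taken with $A_1=A_{\rm op}$ in $\sH_A$ and $A_2=A_{\rm mul}$ in $\mul A^{**}$. This yields directly
\[
 A^*=(A_{\rm op})^* \hoplus (A_{\rm mul})^*,
\]
which is \eqref{deco000*}, the adjoints being taken in $\sH_{\rm op}$ and $\mul A^{**}$ as claimed; and since the two summands lie in the mutually orthogonal subspaces $\sH_A$ and $\mul A^{**}$, the decomposition is automatically orthogonal with respect to $\sH=\sH_A\oplus\mul A^{**}$.

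It then remains to identify $(A_{\rm mul})^*$ inside $\mul A^{**}$ and to verify its selfadjointness there. By Lemma~\ref{ADj} the adjoint of $A_{\rm mul}=\{0\}\times\mul A$ in the Hilbert space $\mul A^{**}$ equals $(\mul A^{**}\ominus\mul A)\times\mul A^{**}$. Because $A$ is decomposable, Proposition~\ref{vasa} (equivalently the first equivalence in \eqref{mmuull}) gives $\cmul A=\mul A^{**}$, so the orthogonal complement $\mul A^{**}\ominus\mul A$ collapses to $\{0\}$; hence $(A_{\rm mul})^*=\{0\}\times\mul A^{**}$. Applying Lemma~\ref{ADj} once more to the closed relation $\{0\}\times\mul A^{**}$ (whose multivalued part is all of $\mul A^{**}$, hence equals its own closure) shows that this relation is selfadjoint in $\mul A^{**}$.

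I expect the only genuinely delicate point — and thus the main obstacle — to be the bookkeeping of ambient spaces: one must keep straight throughout that $(A_{\rm op})^*$ denotes the adjoint computed in $\sH_{\rm op}=\sH_A$ and $(A_{\rm mul})^*$ the adjoint computed in $\mul A^{**}$, rather than adjoints in $\sH$, so that the step ``take adjoints of $A=A_{\rm op}\hoplus A_{\rm mul}$'' is legitimately the componentwise-orthogonal-sum identity. Once the spaces are fixed by \eqref{14.4.1} together with Proposition~\ref{orthchar}, everything else is immediate.
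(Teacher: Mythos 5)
Your proposal is correct and follows essentially the same route as the paper: take adjoints in the orthogonal decomposition $A=A_{\rm op}\hoplus A_{\rm mul}$ furnished by Proposition~\ref{orthchar}, using $(A_1\hoplus A_2)^*=A_1^*\hoplus A_2^*$ with the adjoints computed in the factor spaces, and then identify $(A_{\rm mul})^*=\{0\}\times\mul A^{**}$ and its selfadjointness via Lemma~\ref{ADj} together with $\cmul A=\mul A^{**}$ from decomposability. Your version merely spells out the bookkeeping of ambient spaces that the paper leaves implicit.
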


\begin{proof}
Taking adjoints in \eqref{deco00} gives \eqref{deco000*}. It follows
from  Proposition \ref{orthchar} that $A_{\rm mul}$ is essentially
selfadjoint in $\mul A^{**}$ or, equivalently, that $ (A_{\rm
mul})^*$ is selfadjoint in $\mul A^{**}$, cf. Lemma \ref{ADj}.
\end{proof}

Since the closable operator $A_{\rm op}$ need not be densely
defined in $\sH_A$ its adjoint $(A_{\rm op})^*$ is a relation with
multivalued part $\mul (A_{\rm op})^*$. The following result is a
direct consequence of \eqref{deco000*}.

\begin{corollary}\label{vaaaas}
Let $A$ be a decomposable  relation in a Hilbert space $\sH$ which
satisfies \eqref{14.4.1}. Then
\begin{equation*}\label{deco00*}
 \mul A^*\ominus \mul A^{**}=\mul (A_{\rm op})^*,
\end{equation*}
so that
\begin{equation*}\label{deco00*+}
 (A^*)_{\rm mul}=\{0\}\times (\mul (A_{\rm op})^*\oplus \cmul A).
\end{equation*}
\end{corollary}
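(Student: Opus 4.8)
The plan is to obtain both identities by simply reading off the multivalued parts in the orthogonal componentwise decomposition of $A^*$ already furnished by Proposition~\ref{vaaas}. Under the standing hypotheses ($A$ decomposable and satisfying \eqref{14.4.1}) that proposition gives the orthogonal decomposition \eqref{deco000*}, namely
\[
 A^*=(A_{\rm op})^* \hoplus (A_{\rm mul})^*,
\]
with the orthogonal sum taken relative to the splitting $\sH=\sH_A\oplus\mul A^{**}$, and with $(A_{\rm mul})^*=\{0\}\times\mul A^{**}$, the adjoint here being computed in the Hilbert space $\mul A^{**}$.

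First I would take multivalued parts on both sides. Since the first summand $(A_{\rm op})^*$ is a relation in $\sH_A$, its multivalued part lies in $\sH_A$, while $\mul (A_{\rm mul})^*=\mul A^{**}$ by the description of $(A_{\rm mul})^*$ above; as the two summands are supported in the orthogonal subspaces $\sH_A$ and $\mul A^{**}$, the componentwise orthogonal sum yields
\[
 \mul A^*=\mul (A_{\rm op})^*\oplus\mul A^{**}.
\]
Now \eqref{14.4.1} guarantees $\mul A^{**}\subset\mul A^*$, so removing the closed subspace $\mul A^{**}$ is legitimate and produces the first assertion $\mul A^*\ominus\mul A^{**}=\mul (A_{\rm op})^*$.

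For the second assertion I would invoke that decomposability of $A$ forces $\cmul A=\mul A^{**}$ (Proposition~\ref{vasa}), so the previous display can be rewritten as $\mul A^*=\mul (A_{\rm op})^*\oplus\cmul A$. Substituting this into $(A^*)_{\rm mul}=\{0\}\times\mul A^*$, which is the definition of the multivalued-part relation applied to $A^*$, gives $(A^*)_{\rm mul}=\{0\}\times(\mul (A_{\rm op})^*\oplus\cmul A)$, as claimed.

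There is essentially no hard step here; the only point requiring care is to keep track of the ambient Hilbert spaces, that is, that $(A_{\rm op})^*$ is the adjoint of the closable operator $A_{\rm op}$ computed in $\sH_A$, so that $\mul (A_{\rm op})^*\subset\sH_A$ and the orthogonal direct-sum splitting of $\mul A^*$ in the display above is unambiguous. Together with the identification $\mul (A_{\rm mul})^*=\mul A^{**}$ coming from Proposition~\ref{vaaas}, this is all that the argument needs.
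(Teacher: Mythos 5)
Your proposal is correct and follows exactly the route the paper intends: the paper states the corollary as a direct consequence of the orthogonal decomposition \eqref{deco000*} from Proposition~\ref{vaaas}, and you simply make that explicit by reading off the multivalued parts of the two orthogonal summands and using $\cmul A=\mul A^{**}$ from Proposition~\ref{vasa}. The bookkeeping of ambient spaces (adjoints taken in $\sH_A$ and $\mul A^{**}$, respectively) is handled correctly, so nothing is missing.
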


A combination of Propositions \ref{orthchar} and \ref{vaaaas}
leads to a decomposition result for formally domain tight
relations.

\begin{proposition}\label{mabel}
Let $A$ be a decomposable relation, which is formally domain
tight. Then $A$ admits the orthogonal decomposition
\eqref{deco00}, where $A_{\rm op}$ is a formally domain tight
operator in $\sH_{\rm op}$ and $A_{\rm mul}$ is essentially
selfadjoint in $\mul A^{**}$.
\end{proposition}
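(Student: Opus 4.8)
The plan is to reduce the statement to results already available for orthogonal componentwise decompositions; the only point requiring a separate argument is the formal domain tightness of the operator part $A_{\rm op}$, and that tightness must be understood relative to the subspace $\sH_{\rm op}=\sH_A=\cdom A^*$ rather than the full space $\sH$.

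First I would note that, for a formally domain tight relation, the inclusion $\dom A\subset\dom A^*$ gives $\mul A^{**}\subset\mul A^*$ by \eqref{eq1+}; this is exactly condition \eqref{14.4.1}. Since $A$ is also assumed decomposable, Proposition \ref{orthchar} applies directly and yields the orthogonal decomposition \eqref{deco00}, namely $A=A_{\rm op}\hoplus A_{\rm mul}$, together with the essential selfadjointness of $A_{\rm mul}$ in $\mul A^{**}$. At this stage all the assertions of the proposition are established except the claim about $A_{\rm op}$.

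To treat $A_{\rm op}$ I would invoke Proposition \ref{vaaas}: under the present hypotheses the adjoint $A^*$ has the orthogonal componentwise decomposition $A^*=(A_{\rm op})^*\hoplus(A_{\rm mul})^*$ with respect to the splitting $\sH=\sH_{\rm op}\oplus\mul A^{**}$, where $(A_{\rm op})^*$ is the adjoint of $A_{\rm op}$ computed inside $\sH_{\rm op}$. Taking domains gives $\dom A^*=\dom(A_{\rm op})^*\oplus\dom(A_{\rm mul})^*$, with $\dom(A_{\rm op})^*\subset\sH_{\rm op}$ and $\dom(A_{\rm mul})^*\subset\mul A^{**}$. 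Now let $f\in\dom A_{\rm op}$; since $A$ is decomposable, $\dom A_{\rm op}=\dom A$ (Theorem \ref{reglem}), so by \eqref{14.4.1} one has $f\in\sH_{\rm op}$, and by formal domain tightness $f\in\dom A^*$. In the orthogonal decomposition $\sH=\sH_{\rm op}\oplus\mul A^{**}$ the element $f$ has trivial $\mul A^{**}$-component, while its $\dom A^*$-decomposition places it in $\dom(A_{\rm op})^*\oplus\dom(A_{\rm mul})^*$; uniqueness of orthogonal components then forces $f\in\dom(A_{\rm op})^*$. Hence $\dom A_{\rm op}\subset\dom(A_{\rm op})^*$, i.e. $A_{\rm op}$ is formally domain tight as an operator in $\sH_{\rm op}$, which completes the argument.

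I expect the only genuine subtlety to be bookkeeping with the ambient Hilbert space: the formal domain tightness of $A_{\rm op}$ refers to its adjoint in $\sH_{\rm op}$, not in $\sH$, and it is precisely the orthogonality (rather than mere componentwiseness) of the decompositions of $A$ and of $A^*$---both supplied by \eqref{14.4.1}---that makes the domain-matching step legitimate. Everything else is a routine use of the identities $\dom(A_1\hoplus A_2)=\dom A_1\oplus\dom A_2$ and $\sH=\sH_{\rm op}\oplus\mul A^{**}$.
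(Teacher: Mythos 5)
Your argument is correct and follows essentially the same route as the paper's proof: derive $\mul A^{**}\subset\mul A^*$ from formal domain tightness, apply Proposition \ref{orthchar} for the orthogonal decomposition and the essential selfadjointness of $A_{\rm mul}$, and then use the orthogonal decomposition $A^*=(A_{\rm op})^*\hoplus(A_{\rm mul})^*$ of Proposition \ref{vaaas} to get $\dom A_{\rm op}=\dom A\subset\dom A^*$ inside $\dom(A_{\rm op})^*$. The only cosmetic difference is that the paper notes directly that $(A_{\rm mul})^*=\{0\}\times\mul A^{**}$ has trivial domain, so $\dom A^*=\dom(A_{\rm op})^*$, whereas you reach the same conclusion via uniqueness of orthogonal components; both are fine.
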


\begin{proof}
Since $A$ is formally domain tight, it follows that $\mul A^{**}
\subset \mul A^*$. Since $A$ is assumed to be also decomposable, the
conditions of Proposition \ref{orthchar} are satisfied. Hence the
orthogonal decomposition of $A$ in \eqref{deco00} and the orthogonal
decomposition of $A^*$ in \eqref{deco000*} are valid.  Recall that
$A_{\rm mul}=\{0\} \times \mul A$ and $(A_{\rm mul})^*=\{0\} \times
\mul A^{**}$, cf. Proposition \ref{vaaas}. Hence, it follows from
\eqref{deco00} and \eqref{deco000*} that
\[
 \dom A_{\rm op}=\dom A \subset \dom A^*=\dom (A_{\rm op})^*.
\]
In other words, the operator $A_{\rm op}$ is formally domain tight
in $\sH_{\rm op}$.
\end{proof}

Let $A$ be a relation in a Hilbert space $\sH$, which satisfies
$\mul A^{**}=\mul A^*$. Then the orthogonal splitting $\sH=\sH_A
\oplus \mul A^{**}$ generated by $\mul A^{**}$ coincides with the
orthogonal splitting $\sH=\sH_{A^*} \oplus \mul A^{*}$ generated
by $\mul A^{*}$. Hence, in this case the orthogonal decompositions
\eqref{deco00}, \eqref{deco000*}, and \eqref{how1} (cf.
\eqref{how2}) are with respect to the same splitting.

\begin{proposition}\label{le0}
Let $A$ be a decomposable relation in a Hilbert space $\sH$, which
satisfies $\mul A^{**}=\mul A^*$. Then $A$ admits the orthogonal
decomposition \eqref{deco00} where $A_{\rm op}$ is a densely
defined operator in $\sH_{\rm op}$ and $A_{\rm mul}$ is an
essentially selfadjoint relation in $\mul A^{**}$. Moreover,
\begin{equation}\label{deco1B*}
 (A_{\rm op})^*=(A^*)_{\rm op}.
\end{equation}
\end{proposition}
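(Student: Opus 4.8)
The plan is to obtain all three assertions from results already established for decomposable relations satisfying \eqref{14.4.1}, after first checking that the hypothesis $\mul A^{**}=\mul A^{*}$ forces condition \eqref{14.4.1}.

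First I would observe that $\mul A^{**}=\mul A^{*}$ trivially implies $\mul A^{**}\subset \mul A^{*}$, which is precisely \eqref{14.4.1}; and, by \eqref{impor2} in Lemma~\ref{easy}, it is also equivalent to $\cdom A=\cdom A^{*}$. Since $A$ is assumed to be decomposable, Proposition~\ref{orthchar} now applies and yields the orthogonal decomposition \eqref{deco00} together with the fact that $A_{\rm mul}$ is essentially selfadjoint in $\mul A^{**}$. Next, to see that $A_{\rm op}$ is densely defined in $\sH_{\rm op}=\sH_{A}=\cdom A^{*}$: decomposability gives $\dom A_{\rm op}=\dom A$ by Theorem~\ref{reglem}, whence $\cdom A_{\rm op}=\cdom A=\cdom A^{*}=\sH_{A}$. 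Thus $A_{\rm op}$ is a (closable) operator densely defined in $\sH_{A}$, so its adjoint $(A_{\rm op})^{*}$, taken in $\sH_{A}$, is the graph of a closed operator; in particular both components of $(A_{\rm op})^{*}$ lie in $\sH_{A}$.

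For the remaining identity $(A_{\rm op})^{*}=(A^{*})_{\rm op}$, I would invoke Proposition~\ref{vaaas}: since $A$ is decomposable and satisfies \eqref{14.4.1}, it gives $A^{*}=(A_{\rm op})^{*}\hoplus (A_{\rm mul})^{*}$ with $(A_{\rm mul})^{*}=\{0\}\times \mul A^{**}$, the orthogonal sum being with respect to $\sH=\sH_{A}\oplus\mul A^{**}$. Because $\mul A^{*}=\mul A^{**}$ one has $\sH_{A^{*}}=\sH\ominus\mul A^{*}=\sH\ominus\mul A^{**}=\sH_{A}$, so by definition $(A^{*})_{\rm op}=A^{*}\cap(\sH\times\sH_{A})$. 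Now take $\{f,f'\}\in A^{*}$ and write $f'=f_{1}'+\varphi$ with $\{f,f_{1}'\}\in (A_{\rm op})^{*}$ (so that $f_{1}'\in\sH_{A}$) and $\varphi\in\mul A^{**}$; then $f'\in\sH_{A}$ holds if and only if $\varphi=0$, i.e.\ if and only if $\{f,f'\}=\{f,f_{1}'\}\in (A_{\rm op})^{*}$. This proves $(A^{*})_{\rm op}=(A_{\rm op})^{*}$, which is \eqref{deco1B*}.

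The only point that requires care — but that is not a genuine obstacle — is the bookkeeping of ambient Hilbert spaces: $(A_{\rm op})^{*}$ must be read in $\sH_{\rm op}=\sH_{A}$ and $(A_{\rm mul})^{*}$ in $\mul A^{**}$, and one must check that the $(A_{\rm mul})^{*}$-summand drops out of the intersection with $\sH\times\sH_{A}$, which is immediate since its second component exhausts $\mul A^{**}=\sH\ominus\sH_{A}$. Everything else is a direct application of Propositions~\ref{orthchar} and~\ref{vaaas}, Theorem~\ref{reglem}, and Lemma~\ref{easy}.
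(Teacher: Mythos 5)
Your proof is correct and follows essentially the same route as the paper: reduce to Propositions~\ref{orthchar} and~\ref{vaaas} via $\mul A^{**}=\mul A^*\Rightarrow\eqref{14.4.1}$, and exploit that the splittings $\sH=\sH_A\oplus\mul A^{**}$ and $\sH=\sH_{A^*}\oplus\mul A^*$ coincide. The only cosmetic differences are that the paper gets dense definedness of $A_{\rm op}$ from Corollary~\ref{vaaaas} (namely $\mul (A_{\rm op})^*=\mul A^*\ominus\mul A^{**}=\{0\}$) rather than from $\cdom A=\cdom A^*$, and deduces \eqref{deco1B*} by comparing the two orthogonal decompositions \eqref{how1} and \eqref{deco000*} of $A^*$ instead of your (equally valid) elementwise computation of $A^*\cap(\sH\times\sH_A)$.
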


\begin{proof}
It follows from the condition $\mul A^{**}=\mul A^*$ that the
identity \eqref{how1} is valid. Since $A$ is assumed to be
decomposable, the condition $\mul A^{**}=\mul A^*$ also implies
that the identity \eqref{deco00} holds. It follows from Corollary
\ref{vaaaas} that $A_{\rm op}$ is a densely defined operator in
$\sH_{\rm op}$. The identity \eqref{deco00} itself shows that the
identity \eqref{deco000*} holds. Furthermore, the condition $\mul
A^{**}=\mul A^*$ implies that both decompositions \eqref{how1} and
\eqref{deco000*} are relative to the same orthogonal splitting of
the Hilbert space $\sH$. Therefore, the identity \eqref{deco1B*}
is immediate.
\end{proof}

A combination of Propositions \ref{orthchar} and \ref{le0} leads
to a decomposition result for domain tight relations.

\begin{proposition}\label{le1}
Let $A$ be a decomposable relation in a Hilbert space $\sH$, which
is domain tight. Then $A$ admits the orthogonal decomposition
\eqref{deco00} where $A_{\rm op}$ is a densely defined domain
tight operator in $\sH_{A}$ and $A_{\rm mul}$ is essentially
selfadjoint in $\mul A^{**}$.
\end{proposition}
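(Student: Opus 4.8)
The plan is to deduce this from Propositions \ref{orthchar}, \ref{mabel}, and \ref{le0}, since domain tightness sits at the intersection of the two hypotheses used there: it forces both \eqref{14.4.1} and the identity $\mul A^{**}=\mul A^*$.

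First I would record the two structural consequences of domain tightness. Since $A$ is domain tight, it is in particular formally domain tight, so \eqref{eq1+} gives $\mul A^{**}\subset\mul A^*$; together with the assumption that $A$ is decomposable, this is exactly condition \eqref{14.4.1}, so Proposition \ref{orthchar} yields the orthogonal decomposition \eqref{deco00}, i.e.\ $A=A_{\rm op}\hoplus A_{\rm mul}$, with $A_{\rm mul}$ essentially selfadjoint in $\mul A^{**}$ (this also follows because decomposability gives $\cmul A=\mul A^{**}$, cf.\ Lemma \ref{ADj}). Next, domain tightness gives $\dom A=\dom A^*$, hence $\mul A^{**}=\mul A^*$ by \eqref{eq1++} (equivalently Lemma \ref{easy}, \eqref{impor2}). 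Thus the hypotheses of Proposition \ref{le0} are met, and I conclude that $A_{\rm op}$ is a densely defined operator in $\sH_{A}$, that $A_{\rm mul}$ is essentially selfadjoint in $\mul A^{**}$, and that $A^*$ has the orthogonal componentwise decomposition \eqref{deco000*}, namely $A^*=(A_{\rm op})^*\hoplus(A_{\rm mul})^*$ with $(A_{\rm mul})^*=\{0\}\times\mul A^{**}$.

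It then remains to upgrade ``formally domain tight'' to ``domain tight'' for the operator part $A_{\rm op}$ in $\sH_A$. From the orthogonal decomposition \eqref{deco000*}, and since $(A_{\rm mul})^*=\{0\}\times\mul A^{**}$ contributes nothing to the domain, we get $\dom (A_{\rm op})^*=\dom A^*$; on the other hand $\dom A_{\rm op}=\dom A$ from \eqref{deco00}. Combining these with domain tightness $\dom A=\dom A^*$ gives
\[
 \dom A_{\rm op}=\dom A=\dom A^*=\dom (A_{\rm op})^*,
\]
so $A_{\rm op}$ is domain tight in $\sH_A$ (the inclusion direction alone is already contained in Proposition \ref{mabel}, so the only new input is the reverse inclusion, which is exactly the extra information carried by $\dom A\supset\dom A^*$). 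This completes the argument.

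The main point requiring care — rather than a genuine obstacle — is bookkeeping with the ambient spaces: the adjoints $(A_{\rm op})^*$ and $(A_{\rm mul})^*$ appearing in \eqref{deco000*} are taken in $\sH_A$ and in $\mul A^{**}$ respectively (as in Proposition \ref{vaaas}), and one must check that the orthogonal splitting $\sH=\sH_A\oplus\mul A^{**}$ used for \eqref{deco00} agrees with the splitting $\sH=\sH_{A^*}\oplus\mul A^*$ used for the adjoint — this is precisely guaranteed by the identity $\mul A^{**}=\mul A^*$ derived above, and is already exploited in the proof of Proposition \ref{le0}. Once this identification is in place, reading off the domains from the orthogonal sums is routine.
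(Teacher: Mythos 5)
Your argument is correct and follows essentially the same route as the paper's own proof: reduce to Proposition \ref{le0} via $\mul A^{**}=\mul A^{*}$, then read off $\dom A_{\rm op}=\dom A=\dom A^{*}=\dom (A_{\rm op})^{*}$ from the orthogonal decompositions \eqref{deco00} and \eqref{deco000*}. The only cosmetic difference is that the paper routes the last equality through $(A_{\rm op})^{*}=(A^{*})_{\rm op}$ (identity \eqref{deco1B*}) rather than observing directly that $(A_{\rm mul})^{*}$ has trivial domain, which amounts to the same thing.
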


\begin{proof}
If $A$ is a domain tight relation, so that $\dom A=\dom A^*$, then
$\mul A^{**}=\mul A^*$ and Proposition \ref{le0} applies. It follows
from the decompositions \eqref{deco00} and \eqref{deco000*}, that
\[
 \dom (A_{\rm op})^*=\dom (A^*)_{\rm op}=\dom A^*=\dom A=\dom
 A_{\rm op},
\]
which shows that $A_{\rm op}$ is domain tight in $\sH_{A}$.
\end{proof}

The relations $A$ which are domain tight, i.e., $\dom A=\dom A^*$,
and which satisfy the additional condition $\mul A=\mul A^*$,
can be characterized in terms of orthogonal decompositions.

\begin{proposition}\label{mulba}
Let $A$ be a relation in a Hilbert space $\sH$. Then $A$ is domain
tight and $\mul A = \mul A^*$ if and only if $A=B \hoplus A_{\rm
mul}$ where $B$ is a densely defined domain tight (closable)
operator in $\sH_A$ and $A_{\mul}$ is selfadjoint in $\mul
A^{**}$. In this case $B =A_{\rm op}$.
\end{proposition}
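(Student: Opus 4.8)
The plan is to establish the two implications in the statement separately, using the orthogonal decomposition machinery developed in this section. The key ingredients will be Proposition~\ref{le1} for the necessity part, Proposition~\ref{orthchar+} together with Proposition~\ref{le0} for the sufficiency part, and, throughout, Corollary~\ref{muldec}, Lemma~\ref{ADj}, and the equivalences of Lemma~\ref{easy} relating closures of domains to multivalued parts of adjoints.

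For necessity, suppose $A$ is domain tight with $\mul A=\mul A^*$. Since a domain tight relation is formally domain tight, \eqref{eq1+} gives $\mul A^{**}\subset\mul A^*$; combining this with $\mul A\subset\mul A^{**}$ and the hypothesis $\mul A=\mul A^*$ forces $\mul A=\mul A^{**}=\mul A^*$. In particular $\mul A$ is closed, so $A$ is decomposable by Corollary~\ref{muldec}, and $A_{\rm mul}$ is selfadjoint in $\mul A^{**}$ by Lemma~\ref{ADj}. Being both decomposable and domain tight, $A$ satisfies the hypotheses of Proposition~\ref{le1}, which yields the orthogonal decomposition $A=A_{\rm op}\hoplus A_{\rm mul}$ with $A_{\rm op}$ a densely defined domain tight (hence closable) operator in $\sH_A$; the essential selfadjointness of $A_{\rm mul}$ obtained there is genuine selfadjointness because $\mul A=\mul A^{**}$. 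Taking $B=A_{\rm op}$ completes this direction.

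For sufficiency, suppose $A=B\hoplus A_{\rm mul}$ with $B$ a densely defined closable operator in $\sH_A$ that is domain tight there, and with $A_{\rm mul}$ selfadjoint in $\mul A^{**}$. First, Proposition~\ref{orthchar+} shows that $B$ must coincide with $A_{\rm op}$, that $A$ is decomposable, and that \eqref{14.4.1} holds; this already settles the closing clause $B=A_{\rm op}$. From decomposability one has $\dom A=\dom A_{\rm op}=\dom B$, and since $B$ is dense in $\sH_A=\cdom A^*$ this gives $\cdom A=\cdom A^*$, whence $\mul A^{**}=\mul A^*$ by Lemma~\ref{easy}. Selfadjointness of $A_{\rm mul}$ in $\mul A^{**}$ gives $\mul A=\mul A^{**}$ by Lemma~\ref{ADj}, so $\mul A=\mul A^*$. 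Finally, with $A$ decomposable and $\mul A^{**}=\mul A^*$, Proposition~\ref{le0} applies and yields $(A_{\rm op})^*=(A^*)_{\rm op}$, hence $\dom (A_{\rm op})^*=\dom (A^*)_{\rm op}=\dom A^*$; combined with the domain tightness of $A_{\rm op}=B$ in $\sH_A$ this gives $\dom A=\dom A_{\rm op}=\dom (A_{\rm op})^*=\dom A^*$, so $A$ is domain tight.

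Almost all of this is routine bookkeeping with the multivalued parts $\mul A$, $\mul A^{**}$, $\mul A^*$. The one place that needs attention is the sufficiency direction: the density of $B$ in $\sH_A$ must first be converted into the equality $\cdom A=\cdom A^*$ before Proposition~\ref{le0} can be invoked, and one must be careful that $(A_{\rm op})^*$ denotes the adjoint taken in the Hilbert space $\sH_A$, so that its domain can legitimately be identified with $\dom A^*$ via $(A^*)_{\rm op}$. I expect that identification to be the main, albeit minor, obstacle.
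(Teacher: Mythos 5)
Your proof is correct. The necessity direction coincides with the paper's: reduce to $\mul A=\mul A^{**}=\mul A^*$, invoke Corollary~\ref{muldec}, Lemma~\ref{ADj} and Proposition~\ref{le1}, and set $B=A_{\rm op}$. For sufficiency you take a longer route than the paper does. The paper simply takes adjoints in the orthogonal sum: from $A=B\hoplus A_{\rm mul}$ and $(A_{\rm mul})^*=A_{\rm mul}$ it gets $A^*=B^*\hoplus A_{\rm mul}$ in one step, so $\dom A^*=\dom B^*=\dom B=\dom A$ by the domain tightness of $B$ in $\sH_A$, and then $\mul A=\mul A^{**}$ from Lemma~\ref{ADj} together with $\mul A^{**}=\mul A^*$ (from \eqref{eq1++}) finishes the claim, with $B=A_{\rm op}$ coming from Proposition~\ref{orthchar+} exactly as in your argument. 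You instead first pin down $B=A_{\rm op}$, decomposability, and \eqref{14.4.1} via Proposition~\ref{orthchar+}, convert the density of $\dom B$ in $\sH_A$ into $\cdom A=\cdom A^*$ and hence $\mul A^{**}=\mul A^*$ via Lemma~\ref{easy}, and only then extract domain tightness of $A$ from that of $A_{\rm op}$ through the identity $(A_{\rm op})^*=(A^*)_{\rm op}$ of Proposition~\ref{le0}. Both arguments are valid; the paper's is shorter because the identity $(A_1\hoplus A_2)^*=A_1^*\hoplus A_2^*$ accomplishes in one line what you obtain from Proposition~\ref{le0}, while your detour has the minor merit of making explicit the identification of the adjoint of $A_{\rm op}$ taken in $\sH_A$ with $(A^*)_{\rm op}$ --- precisely the point you flagged as the delicate one.
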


\begin{proof}
($\Longrightarrow$) Assume that $A$ is domain tight and that $\mul
A = \mul A^*$. Then it follows that $\mul A^{**}=\mul A$. Hence,
$A$ is decomposable by Corollary \ref{muldec} and $A_{\rm mul}$ is
selfadjoint in $\mul A^{**}$ by Lemma \ref{ADj}. By Proposition
\ref{le1} it follows that $A_{\rm op}$ is a densely defined domain
tight operator in $\sH_A$. Furthermore, $A_{\rm op}$ is closable;
which is clear from the fact that $A$ is decomposable, but also
from the fact that $A_{\rm op}$ is domain tight and densely
defined. According to Proposition \ref{le1} the relation $A$
decomposes as $A=A_{\rm op} \hoplus A_{\mul}$.

($\Longleftarrow$) Assume that $A=B \hoplus A_{\rm mul}$ where $B$
is a densely defied domain tight operator in $\sH_A$ and
$A_{\mul}$ is selfadjoint in $\mul A^{**}$. Then $A^*=B^*\hoplus
A_{\rm mul}$, so that $\dom A=\dom B = \dom B^* =\dom A^*$, and
$A$ is domain tight. The condition that $A_{\rm mul}$ is
selfadjoint in $\mul A^{**}$ implies that $\mul A=\mul A^{**}$,
cf. Lemma \ref{ADj}. Since $B$ is densely defined and domain
tight, it follows that $B$ is a closable operator. Hence, by
Proposition \ref{orthchar+}, the identity $B=A_{\rm op}$ is
established.
\end{proof}

\subsection{Some classes of relations with orthogonal componentwise
decompositions}

This subsection describes orthogonal componentwise decompositions
for some classes of relations  described via the numerical range and
for some subclasses of domain tight relations.

Let $A$ be a decomposable relation in a Hilbert space $\sH$ and
assume that $\mul A^{**} \subset \mul A^*$. Then
\begin{equation}\label{nume}
\cW(A)=\cW(A_{\rm op}).
\end{equation}
To see this, note that Theorem \ref{reglem} shows that $A_{\rm
reg}=A_{\rm op}$, and then apply Remark \ref{numi}. Now some
consequences of the assumption $\cW(A) \neq \dC$ are listed.

\begin{proposition}\label{num}
Let $A$ be a decomposable relation in a Hilbert space $\sH$ such
that $\cW(A) \neq \dC$. Then the relation $A$ admits the
orthogonal decomposition \eqref{deco00}, $A_{\rm mul}$ is
essentially selfadjoint in $\mul A^{**}$, and $\cW(A_{\rm
op})=\cW(A)$. Moreover, if $\rho(A) \ne \emptyset$, then $A_{\rm
op}$ is a closed densely defined operator in $\sH_{\rm op}$,
$A_{\rm mul}$ is selfadjoint in $\mul A^{**}$, and $\rho(A_{\rm
op})\ne \emptyset$.
\end{proposition}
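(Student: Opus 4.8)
The plan is to feed the hypothesis $\cW(A)\ne\dC$ into Proposition~\ref{orthchar} for the first three assertions, and then, for the additional statement, into the maximality results leading up to Lemma~\ref{max2}.

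First I would check condition \eqref{14.4.1}. Since $\cW(A)\ne\dC$, the convex set $\clos\cW(A)$ lies in a halfplane and is in particular a proper subset of $\dC$; by \eqref{funda} this gives $\cW(A^{**})\subseteq\clos\cW(A)\ne\dC$, so Lemma~\ref{numRange2}(ii) applied to $A^{**}$ yields $\mul A^{**}\subseteq\mul(A^{**})^{*}=\mul A^{*}$, which is exactly \eqref{14.4.1}. As $A$ is decomposable, Proposition~\ref{orthchar} then produces the orthogonal componentwise decomposition $A=A_{\rm op}\hoplus A_{\rm mul}$ and the essential selfadjointness of $A_{\rm mul}$ in $\mul A^{**}$. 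For $\cW(A_{\rm op})=\cW(A)$ I would observe that decomposability together with $\mul A^{**}\subseteq\mul A^{*}$ gives $A_{\rm reg}=A_{\rm op}$ by Theorem~\ref{reglem}, so \eqref{nume} (that is, Remark~\ref{numi}) applies and $\cW(A_{\rm op})=\cW(A_{\rm reg})=\cW(A)$.

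For the additional statement, fix $\lambda_{0}\in\rho(A)$. Then $(A-\lambda_{0})^{-1}$ is a bounded, everywhere defined operator, hence closed by Lemma~\ref{first}(i), so $A$ is closed; in particular $\mul A=\mul A^{**}$. Combining $\rho(A)\ne\emptyset$ with $\cW(A)\ne\dC$, one checks that $A$ is maximal with respect to $\cW(A)$, whence Lemma~\ref{max2} gives $\mul A=\mul A^{*}$, so $\mul A=\mul A^{**}=\mul A^{*}$. Consequently: $A_{\rm mul}$ is selfadjoint in $\mul A^{**}$ by Lemma~\ref{ADj}; $A_{\rm op}$ is densely defined in $\sH_{A}$ by Proposition~\ref{le0} (equivalently, $\cdom A_{\rm op}=\cdom A=\cdom A^{*}=\sH_{A}$ by Lemma~\ref{easy}); and $A_{\rm op}$ is closed by Corollary~\ref{closeddec}. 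Finally, splitting the resolvent across the orthogonal sum gives $(A-\lambda)^{-1}=(A_{\rm op}-\lambda)^{-1}\hoplus(A_{\rm mul}-\lambda)^{-1}$, and since $A_{\rm mul}-\lambda=\{0\}\times\mul A^{**}$ has inverse the zero operator on $\mul A^{**}$ for every $\lambda$, one obtains $\rho(A)=\rho(A_{\rm op})$, so $\rho(A_{\rm op})\ne\emptyset$.

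The bookkeeping with adjoints and the splitting of the resolvent across $\hoplus$ are routine. The one delicate point is the step ``$\rho(A)\ne\emptyset$ and $\cW(A)\ne\dC$ $\Rightarrow$ $A$ maximal with respect to $\cW(A)$'': this is where the numerical range hypothesis is genuinely used, since it is what turns the one-sided inclusion $\mul A^{**}\subseteq\mul A^{*}$ supplied by $\cW(A)\ne\dC$ into the full chain $\mul A=\mul A^{**}=\mul A^{*}$ needed to promote $A_{\rm mul}$ from essentially selfadjoint to selfadjoint and to make $A_{\rm op}$ densely defined. I would argue it via Theorem~\ref{dimen} (the defect $\dim\ran(A-\lambda)^{\perp}$ is locally constant on $\gamma(A)\supseteq\Delta(A)$ and vanishes at $\lambda_{0}$, since $\ran(A-\lambda_{0})$ is dense) together with the criterion for maximality recorded just before Lemma~\ref{max2}, namely that some connected component of $\Delta(A)$ lies in $\rho(A)$.
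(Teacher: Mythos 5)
The first half of your argument is correct and is essentially the paper's: you verify \eqref{14.4.1} from $\cW(A)\neq\dC$, invoke Proposition~\ref{orthchar}, and get $\cW(A_{\rm op})=\cW(A)$ from Theorem~\ref{reglem} together with \eqref{nume}. The only (harmless) difference is that you apply Lemma~\ref{numRange2}(ii) to $A^{**}$, while the paper applies it to $A$ itself to get $\mul A\subset\mul A^*$ and then uses decomposability (via Proposition~\ref{vasa}, which gives $\cmul A=\mul A^{**}$) to reach $\mul A^{**}\subset\mul A^*$; both routes work.

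The second half contains a genuine gap, and you have put your finger on exactly the right spot. The step ``$\rho(A)\neq\emptyset$ and $\cW(A)\neq\dC$ imply that $A$ is maximal with respect to $\cW(A)$'' cannot be established the way you propose: Theorem~\ref{dimen} makes $\dim\ran(A-\lambda)^{\perp}$ constant only on each connected component of $\gamma(A)$, and a point $\lambda_0\in\rho(A)$ may lie in a component of $\gamma(A)$ contained in $\clos\cW(A)$, hence disjoint from $\Delta(A)$; the vanishing of the defect at $\lambda_0$ then says nothing about the defect on $\Delta(A)$. In fact the implication is false. Take $A=S^{-1}$ with $S$ the unilateral shift: $A$ is a closed operator (hence decomposable), $\cW(A)=\overline{\cW(S)}$ is the open unit disk, and $0\in\rho(A)$ because $(A-0)^{-1}=S\in\boldsymbol{B}(\sH)$; yet $\ran(A-\lambda)=\ran(I-\lambda S)$ has a one-dimensional orthogonal complement for every $|\lambda|>1$, so $A$ is not maximal, $\mul A=\{0\}\neq\lin\{e_1\}=\mul A^{*}$, and $\dom A_{\rm op}=\dom A=\ran S$ is not dense --- so the ``Moreover'' conclusion itself fails under the literal hypothesis. (A smaller related slip: $\lambda_0\in\rho(A)$ only makes $(A-\lambda_0)^{-1}$ a bounded densely defined operator, not an everywhere defined one, so closedness of $A$ is not free either.) The paper's own proof makes the identical jump, citing Lemma~\ref{max2} --- whose hypothesis is maximality --- directly from $\rho(A)\neq\emptyset$. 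The statement should be read with the stronger hypothesis $\rho(A)\cap\Delta(A)\neq\emptyset$, i.e.\ $A$ maximal with respect to $\cW(A)$, which is what Corollary~\ref{cad} actually supplies; under that hypothesis the remainder of your argument --- Lemma~\ref{max2}, then $\mul A=\mul A^{**}=\mul A^{*}$, Proposition~\ref{le0}, and the splitting of the resolvent across $\hoplus$ --- is correct and agrees with the paper.
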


\begin{proof}
By Lemma \ref{numRange2} the condition $\cW(A) \neq \dC$ implies
that $\mul A \subset \mul A^*$, and thus also $\cmul A \subset
\mul A^*$. By Proposition \ref{vasa} the condition that $A$ is
decomposable, implies that $\cmul A=\mul A^{**}$. Therefore the
inclusion $\mul A^{**} \subset \mul A^*$ is valid. Since $A$ is
assumed to be decomposable, Proposition \ref{orthchar} may be
applied. The identity $\cW(A_{\rm op})=\cW(A)$ follows from
\eqref{nume}.

If $\rho(A) \ne \emptyset$, then Lemma \ref{max2} shows that $A$
is closed and that $\mul A^*=\mul A$. Hence, Proposition \ref{le0}
applies, so that $A_{\rm op}$ is densely defined closed operator
in $\sH_{A}$ and $\mul A$ is closed. The decomposition $A=A_{\rm
op} \hoplus A_{\rm mul}$, where $A_{\rm mul}$ is selfadjoint in
$\mul A^{**}$, shows that $A$ and $A_{\rm op}$ have the same
resolvent set.
\end{proof}

Let $A$ be a relation in a Hilbert space $\sH$. Then $A$ is
symmetric if and only if $\cW(A) \subset \dR$. A relation $A$ is
said to be \textit{dissipative}\index{relation!dissipative} if
$\cW(A)$ is a subset of the upper halfplane:
\begin{equation*}\label{diss}
 \IM (f',f) \ge 0, \quad \{ f,f'\} \in A,
\end{equation*}
and a relation $A$ is said to be
\textit{accretive}\index{relation!accretive} if $\cW(A)$ is a
subset of the right halfplane:
\begin{equation*}\label{accr}
 \RE (f',f) \ge 0, \quad \{ f,f'\} \in A.
\end{equation*}
A relation $A$ is said to be \textit{sectorial with vertex at the
origin and semiangle} $\alpha$\index{relation!sectorial with
vertex at the origin and semiangle $\alpha$}, $\alpha \in (0,\pi /
2)$, if $\cW(A)$ is a subset of the corresponding sector in the
right halfplane:
\begin{equation}\label{sect}
 (\tan \alpha) \RE (f',f) \ge | \IM (f',f) |, \quad  \{ f,f'\} \in
A,
\end{equation}
cf. \cite{Ar2}, \cite{Yury}, \cite{HSSW??}, \cite{Sand}. A
relation $A$ is said to be
\textit{nonnegative}\index{relation!nonnegative}, if $\cW(A)$ is a
subset of $[0,\infty)$. In each of these cases the closure gives
rise to a similar inequality. Hence, if the relation $A$ belongs
to one of the above classes, it may be assumed in addition that
$A$ is closed. Therefore Proposition \ref{num} may be applied and
the operator part $A_{\rm op}$ in the orthogonal decomposition
\eqref{deco00} belongs to the same class as the original relation
$A$.

In each of these cases the relation $A$ is said to be maximal with
respect to the indicated property if the complement of $\clos
\cW(A)$ (or one of its components) belongs to the resolvent set so that
$\rho(A)$. It can be shown that maximality is equivalent to the
absence of nontrivial (relation) extensions with the same property;
cf. \cite{Ka}, \cite{Phillips}, \cite{DS}, \cite{HSSW??}.

\begin{corollary}\label{cad}
Let $A$ be a maximal symmetric (dissipative, accretive, sectorial,
nonnegative) relation in a Hilbert space $\sH$. Then $A$ admits an
orthogonal decomposition of the form $A=A_{\rm op} \hoplus A_{\rm
mul}$, where $A_{\rm op}$ is a closed, densely defined, maximal
symmetric (dissipative, accretive, sectorial, nonnegative)
operator in the Hilbert space $\sH_{A}$ and $A_{\rm mul}$ is a
selfadjoint relation in $\mul A^{**}$.
\end{corollary}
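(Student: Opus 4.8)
The plan is to deduce the statement from Proposition~\ref{num}, using that each of the five listed properties is a condition on the numerical range that is automatically inherited once $\cW(A_{\rm op})=\cW(A)$.

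First I would observe that in every one of the five cases $\cW(A)$ is contained in a proper convex subset of $\dC$: in $\dR$ for a symmetric relation, in a closed half-plane for a dissipative or accretive relation, in a closed sector (cf.\ \eqref{sect}) for a sectorial relation, and in $[0,\infty)$ for a nonnegative relation; in particular $\cW(A)\neq\dC$. By definition, $A$ being maximal with respect to the property in question means that some connected component of $\dC\setminus\clos \cW(A)$ lies in the resolvent set $\rho(A)$; hence $\rho(A)\neq\emptyset$, and then $A$ is closed, since for the corresponding $\lambda$ one has $\ran(A-\lambda)=\sH$ (cf.\ Theorem~\ref{gamma} and the discussion of maximality with respect to the numerical range). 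Being closed, $A$ is decomposable by Corollary~\ref{closeddec}.

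Next I would apply Proposition~\ref{num}. Since $A$ is decomposable with $\cW(A)\neq\dC$, it provides the orthogonal decomposition $A=A_{\rm op}\hoplus A_{\rm mul}$ with $A_{\rm mul}$ essentially selfadjoint in $\mul A^{**}$ and $\cW(A_{\rm op})=\cW(A)$; and since $\rho(A)\neq\emptyset$, the same proposition yields that $A_{\rm op}$ is a closed, densely defined operator in $\sH_A$, that $A_{\rm mul}$ is selfadjoint in $\mul A^{**}$, and (as its proof shows) that $\rho(A_{\rm op})=\rho(A)$.

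It remains to transfer the class and the maximality to $A_{\rm op}$. From $\cW(A_{\rm op})=\cW(A)$ it is immediate that $A_{\rm op}$ satisfies the same numerical-range inclusion as $A$, hence belongs to the same class; moreover $\clos \cW(A_{\rm op})=\clos \cW(A)$, so the component of $\dC\setminus\clos \cW(A_{\rm op})$ witnessing maximality of $A$ coincides with the relevant component of $\dC\setminus\clos \cW(A)$, and by $\rho(A_{\rm op})=\rho(A)$ it is contained in $\rho(A_{\rm op})$. By the characterization of maximality recalled before the statement of the corollary, $A_{\rm op}$ is then maximal symmetric (dissipative, accretive, sectorial, nonnegative) in $\sH_A$. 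The only point genuinely requiring attention is this last transfer of maximality, that is, checking that the maximality criterion for $A$ translates verbatim into the maximality criterion for $A_{\rm op}$ in its own Hilbert space $\sH_A$; once the identities $\cW(A_{\rm op})=\cW(A)$ and $\rho(A_{\rm op})=\rho(A)$ are in hand, no new estimate is involved and there is nothing further to prove.
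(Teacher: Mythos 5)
Your proposal is correct and follows essentially the same route as the paper: the corollary is presented there as a direct consequence of Proposition~\ref{num} together with the preceding remarks that maximality forces $\rho(A)\neq\emptyset$ and $A$ closed (hence decomposable by Corollary~\ref{closeddec}), and that $\cW(A_{\rm op})=\cW(A)$ and $\rho(A_{\rm op})=\rho(A)$ transfer both the class membership and the maximality to $A_{\rm op}$. You have simply made explicit the argument the paper leaves implicit; no gap.
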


The result for maximal symmetric relations can also be seen as a
consequence of Proposition \ref{mabel}, since symmetric relations
are formally domain tight. Selfadjoint and normal relations are
domain tight and there is a decomposition result for them
corresponding to Corollary \ref{cad}, as an application of
Proposition \ref{le0}; see \cite{Co73} and \cite{HSSz??} for further
details.

\begin{corollary}\label{cad1}
Let $A$ be a selfadjoint (normal) relation in a Hilbert space
$\sH$. Then $A$ admits an orthogonal decomposition of the form
$A=A_{\rm op} \hoplus A_{\rm mul}$, where $A_{\rm op}$ is a
selfadjoint (normal) operator in the Hilbert space $\sH_{A}$ and
$A_{\rm mul}$ is a selfadjoint relation in $\mul A^{**}$.
\end{corollary}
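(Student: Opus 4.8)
The plan is to derive the decomposition from Proposition \ref{le1} and the adjoint identity from Proposition \ref{le0}, and then to upgrade the conclusion separately in the two cases. First I would record the standing facts: a selfadjoint or normal relation $A$ is closed, hence decomposable by Corollary \ref{closeddec}, and it is domain tight; consequently $\mul A^{**}=\mul A^*$ by \eqref{eq1++}, and since $A$ is closed also $\mul A=\mul A^{**}$. Proposition \ref{le1} then yields the orthogonal decomposition $A=A_{\rm op}\hoplus A_{\rm mul}$ with respect to $\sH=\sH_A\oplus\mul A^{**}$, where $A_{\rm op}$ is a densely defined domain tight operator in $\sH_A$, closed because $A$ is closed (Corollary \ref{closeddec}); Proposition \ref{le0} supplies in addition the identity $(A_{\rm op})^*=(A^*)_{\rm op}$. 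Since $\mul A=\mul A^{**}$, Lemma \ref{ADj} gives that $A_{\rm mul}=\{0\}\times\mul A$ is selfadjoint in $\mul A^{**}$. After these preliminaries, only the assertion on $A_{\rm op}$ remains in each case.

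In the selfadjoint case I would argue directly: from $A=A^*$ one has $\sH_{A^*}=\sH_A$, hence $(A^*)_{\rm op}=A^*\cap(\sH\times\sH_A)=A_{\rm op}$, so that $(A_{\rm op})^*=(A^*)_{\rm op}=A_{\rm op}$ and $A_{\rm op}$ is selfadjoint in $\sH_A$.

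The normal case is where the work lies, and the main obstacle will be proving the norm identity $\|A_{\rm op}f\|=\|(A_{\rm op})^*f\|$ for every $f\in\dom A_{\rm op}$. Here I would let $V$ denote the isometry of $A$ onto $A^*$ that fixes first components, and let $P$ be the orthogonal projection of $\sH$ onto $\sH_A=\sH_{A^*}$ (the two spaces coincide since $\mul A^*=\mul A^{**}$). Given $f\in\dom A_{\rm op}\,(=\dom A=\dom A^*)$, applying $V$ to $\{f,A_{\rm op}f\}\in A_{\rm op}\subset A$ produces some $\{f,h\}\in A^*$ with $\|h\|=\|A_{\rm op}f\|$; since $A^*$ is closed its regular part satisfies $(A^*)_{\rm op}f=Ph$, whence $\|(A_{\rm op})^*f\|=\|(A^*)_{\rm op}f\|=\|Ph\|\le\|h\|=\|A_{\rm op}f\|$. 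Because $A^*$ is itself normal (via $V^{-1}$) and closed, with $(A^*)_{\rm op}=(A_{\rm op})^*$ and $\bigl((A^*)_{\rm op}\bigr)^*=(A_{\rm op})^{**}=A_{\rm op}$, the same inequality applied to $A^*$ in place of $A$ gives $\|A_{\rm op}f\|\le\|(A_{\rm op})^*f\|$, hence equality. Since $A_{\rm op}$ is domain tight one has $\dom A_{\rm op}=\dom(A_{\rm op})^*$, so the map $\{f,A_{\rm op}f\}\mapsto\{f,(A_{\rm op})^*f\}$ is a bijective isometry of $A_{\rm op}$ onto $(A_{\rm op})^*$ fixing first components, which means $A_{\rm op}$ is normal in $\sH_A$ and completes the argument.
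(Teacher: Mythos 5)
Your argument is correct, and for the decomposition itself it follows exactly the route the paper indicates: the corollary is stated in the text as ``an application of Proposition~\ref{le0}'', with the remaining details deferred to \cite{Co73} and \cite{HSSz??}. Your preliminary reductions (closedness $\Rightarrow$ decomposability and $\mul A=\mul A^{**}$; domain tightness $\Rightarrow$ $\mul A^{**}=\mul A^*$; then Propositions~\ref{le0}--\ref{le1} and Lemma~\ref{ADj}) are precisely what the paper has in mind, and your selfadjoint case, $(A_{\rm op})^*=(A^*)_{\rm op}=A_{\rm op}$ from $\sH_{A^*}=\sH_A$, is the standard one-line finish. The added value of your write-up is the normal case, which the paper does not prove: you verify $\|(A_{\rm op})^*f\|=\|A_{\rm op}f\|$ by pushing $\{f,A_{\rm op}f\}$ through the graph isometry $V:A\to A^*$ and projecting onto $\sH_{A^*}=\sH_A$ to get one inequality, then applying the same argument to $A^*$ (normal via $V^{-1}$, with $((A^*)_{\rm op})^*=(A_{\rm op})^{**}=A_{\rm op}$) for the reverse; combined with $\dom A_{\rm op}=\dom(A_{\rm op})^*$ this produces the required surjective first-component-fixing isometry of $A_{\rm op}$ onto $(A_{\rm op})^*$, i.e.\ normality in the sense of the paper's definition. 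All the identifications you rely on ($(A_{\rm op})^*=(A^*)_{\rm op}$ with the adjoint taken in $\sH_A$, and $(A^*)_{\rm op}f=Ph$ from the componentwise decomposition of the closed relation $A^*$) are justified by Proposition~\ref{le0} and Theorem~\ref{adjoin1}, so there is no gap.
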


Recall that selfadjoint and normal operators are automatically
densely defined; cf. \eqref{eq1++}.

\section{Cartesian decompositions of relations}

In this section the notions of real and imaginary parts of a
relation in a Hilbert space are confronted with the notion of a
Cartesian decomposition.

\subsection{Real and imaginary parts of relations}

Let $A$ be a relation in a Hilbert space $\sH$. The {\it real
part\,}\index{part of relation!real $\RE A$} $\RE A$ and the {\it
imaginary part\,}\index{part of relation!imaginary $\IM A$} $\IM
A$ of $A$ are defined by
\begin{equation}\label{re}
\RE A\okr\frac{1}{2}(A+A^*) =\left\{\, \left\{f, \frac{f'+f''}{2}
\right\}\,;\; \{f,f'\} \in A,\, \{f,f''\} \in A^* \, \right\},
\end{equation}
and
\begin{equation}
\label{im} \IM A\okr\frac{1}{2\I}(A-A^*) =\left\{\, \left\{f,
\frac{f'-f''}{2\I} \right\}\,;\; \{f,f'\} \in A,\, \{f,f''\} \in
A^* \, \right\},
\end{equation}
with the operatorwise sums defined as in \eqref{jan1}. It is clear
from the definitions that
\begin{equation}\label{14.4.2}
\left\{ \begin{array}{l}\dom \RE A=\dom \IM A =\dom A \cap \dom A^*,
\\\dom \RE A^*=\dom \IM A^* =\dom A^{**} \cap \dom A^*.
\end{array}\right.
\end{equation}
The real and imaginary parts of $A$ are connected by
\begin{equation}
\label{imre} \RE (\I A)=-\IM A,\quad \IM (\I A)=\RE A.
\end{equation}
In what follows the relations $\RE A \pm \I \IM A$ and their
connections to the original relation $A$ will be studied.

\begin{proposition}\label{reim}
Let $A$ be a relation in a Hilbert space $\sH$. Then
\begin{enumerate}\def\labelenumi{\rm (\roman{enumi})}
\item
$\RE A \subset \RE A^*=\RE A^{**}\subset (\RE
A)^*$ and $\IM A\subset - \IM A^*= \IM A^{**} \subset (\IM A)^*$;

\item
if $A$ is closed, then $\RE A =\RE A^*$ and $\IM A
=-\IM A^*$;

\item
$\mul \RE A=\mul \IM A =\mul A + \mul A^*$ and if, in addition, $A$
is formally domain tight, then  $\mul \RE A=\mul \IM A =\mul A^*$.
\end{enumerate}
\end{proposition}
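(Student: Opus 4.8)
The plan is to reduce the whole statement to the operatorwise-sum expressions
\[
 \RE A=\tfrac{1}{2}(A+A^*),\qquad \IM A=\tfrac{1}{2\I}(A-A^*),
\]
(together with the analogous ones for $A^*$ and $A^{**}$) and to use only the following routine facts: the operatorwise sum is commutative and monotone in each argument; $(A^{**})^*=A^*$ because $A^*$ is closed; $A\subset A^{**}$; the inclusion $A_1^*+A_2^*\subset(A_1+A_2)^*$ from \eqref{jan1+}; and $\mul(A_1+A_2)=\mul A_1+\mul A_2$ from \eqref{dommul}.

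For part (i) I would argue in three steps. First, commutativity of the operatorwise sum together with $(A^{**})^*=A^*$ gives at once $\RE A^*=\tfrac{1}{2}(A^*+A^{**})=\RE A^{**}$ and, after negating second components, $-\IM A^*=\tfrac{1}{2\I}(A^{**}-A^*)=\IM A^{**}$. Second, $A\subset A^{**}$ and monotonicity give $\RE A\subset\RE A^{**}$ and $\IM A\subset\IM A^{**}$, which combined with the first step yields the leftmost inclusions $\RE A\subset\RE A^*$ and $\IM A\subset-\IM A^*$. Third, to get $\RE A^{**}\subset(\RE A)^*$ and $\IM A^{**}\subset(\IM A)^*$ the cleanest route is a direct orthogonality check based on \eqref{re}, \eqref{im}: for $\{f,h\}\in\RE A^{**}$ write $h=\tfrac{1}{2}(f'+f'')$ with $\{f,f'\}\in A^{**}$, $\{f,f''\}\in A^*$, and for $\{g,k\}\in\RE A$ write $k=\tfrac{1}{2}(g'+g'')$ with $\{g,g'\}\in A$, $\{g,g''\}\in A^*$; pairing $\{f,f'\}\in A^{**}=(A^*)^*$ against $\{g,g''\}\in A^*$ gives $(f',g)=(f,g'')$, pairing $\{f,f''\}\in A^*$ against $\{g,g'\}\in A$ gives $(f'',g)=(f,g')$, and hence $\langle\{f,h\},\{g,k\}\rangle=(h,g)-(f,k)=0$; the imaginary-part case is identical up to the complex conjugates. (Alternatively, \eqref{jan1+} applied to $\RE A=\tfrac{1}{2}A+\tfrac{1}{2}A^*$, together with the elementary identity $(\lambda A)^*=\bar\lambda A^*$ for $\lambda\neq0$ — a consequence of Lemma \ref{prodlem*} — gives the same inclusions.)

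Part (ii) is then immediate: if $A$ is closed then $A=A^{**}$, so $\RE A=\RE A^{**}=\RE A^*$ and $\IM A=\IM A^{**}=-\IM A^*$ by part (i). For part (iii), an element $\{0,h\}$ of $\RE A$ (resp.\ $\IM A$) can only come from $\{0,f'\}\in A$ and $\{0,f''\}\in A^*$, so $h=\tfrac{1}{2}(f'+f'')$ (resp.\ $h=\tfrac{1}{2\I}(f'-f'')$); hence $\mul\RE A=\tfrac{1}{2}(\mul A+\mul A^*)=\mul A+\mul A^*$, and since $\mul A^*$ is a subspace also $\mul\IM A=\tfrac{1}{2\I}(\mul A+\mul A^*)=\mul A+\mul A^*$ (this is also just \eqref{dommul}). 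If in addition $A$ is formally domain tight, then $\dom A\subset\dom A^*$ forces $\mul A\subset\mul A^{**}\subset\mul A^*$ by \eqref{eq1+} (cf.\ Lemma \ref{easy}), so the sum $\mul A+\mul A^*$ collapses to $\mul A^*$, giving $\mul\RE A=\mul\IM A=\mul A^*$.

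I do not expect a serious obstacle: the proposition is essentially a bookkeeping exercise with operatorwise sums and adjoints. The only points that need attention are keeping the complex conjugates straight in the imaginary-part computations of (i) (the sign flip coming from $\overline{2\I}=-2\I$) and consistently using $(A^{**})^*=A^*$ rather than $A^{**}$ when taking adjoints of $A^*$.
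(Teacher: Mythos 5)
Your proposal is correct and follows essentially the same route as the paper: the paper's proof of (i) is exactly the chain $\tfrac12(A+A^*)\subset\tfrac12(A^{**}+A^*)\subset(\tfrac12(A+A^*))^*$ obtained from $A\subset A^{**}$ and \eqref{jan1+} (your parenthetical alternative), with your direct orthogonality check merely re-deriving the relevant instance of \eqref{jan1+}; parts (ii) and (iii) match the paper's argument verbatim in substance, including the use of \eqref{dommul} and the collapse $\mul A+\mul A^*=\mul A^*$ under formal domain tightness via \eqref{eq1+}.
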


\begin{proof}
(i) Since $A \subset A^{**}$, it follows from \eqref{jan1+} that
\[
 \half(A+A^*) \subset \half(A^{**}+A^*) \subset
 \left(\half(A+A^*)\right)^*,
\]
and
\[
 \frac{1}{2\I}(A-A^*) \subset -\frac{1}{2\I}(A^{*}-A^{**}) \subset
 \left(\frac{1}{2\I} (A-A^*) \right)^*.
\]
The assertions concerning $\RE A$ and $\IM A$ are now clear.

(ii) Here $A=A^{**}$ and thus the stated equalities are clear from
\eqref{re} and \eqref{im}.

(iii) The first assertion is immediate from \eqref{re} and
\eqref{im}. If $A$ is formally domain tight, then it follows from
\eqref{eq1+} that $\mul A \subset \mul A^*$ and thus $\mul A+\mul
A^*= \mul A^*$, which implies the second assertion.
\end{proof}

The real and the imaginary parts $\RE A$ and $\IM A$ of a relation
$A$ are symmetric relations, due to Proposition \ref{reim}.  They
are defined in terms of operatorwise sums involving $A$ and $A^*$.
There are also connections with the componentwise sum $A \hplus
A^*$.

\begin{proposition}\label{eenv}
Let $A$ be a linear relation in a Hilbert space $\sH$. Then
\begin{enumerate}\def\labelenumi{\rm (\roman{enumi})}
\item
$\RE A \subset A \hplus A^*$ and $\IM A \subset A
\hplus A^*$;

\item
$\ran (\RE A)=\mul (A \hplus A^*)$ and $\ran (\IM
A)=\mul (A \hplus A^*)$;

\item
$\RE A \pm \I \IM A \subset \RE A \hplus (\{0\}
\times \ran \IM A) \subset A \hplus A^*$;

\item
$\IM A \pm \I \RE A \subset \IM A \hplus (\{0\}
\times \ran \RE A) \subset A \hplus A^*$.
\end{enumerate}
\end{proposition}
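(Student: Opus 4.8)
The plan is to obtain the four items from the explicit descriptions \eqref{re}, \eqref{im}, the definitions \eqref{jan00} and \eqref{jan1} of the componentwise and operatorwise sums, and Proposition~\ref{reim}; items (iii) and (iv) will be mirror images of one another, each following from (i) and (ii). For (i) the key point is that $A$ and $A^*$ are linear subspaces of $\sH\times\sH$, hence stable under scalar multiplication: if $\{f,(f'+f'')/2\}\in\RE A$ with $\{f,f'\}\in A$ and $\{f,f''\}\in A^*$, then $\{f/2,f'/2\}\in A$ and $\{f/2,f''/2\}\in A^*$, and their componentwise sum is $\{f,(f'+f'')/2\}$, so $\RE A\subset A\hplus A^*$. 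The same scheme applies to $\IM A$, splitting the element $\{f,(f'-f'')/(2\I)\}$ of \eqref{im} into one summand drawn from $A$ and one from $A^*$. I expect this imaginary-part inclusion to be the step demanding the most care: the scalar $1/(2\I)$ must be distributed so that the two first components add up exactly to $f$ while the two second components add up to $(f'-f'')/(2\I)$, and where a purely scalar split is insufficient one absorbs the remaining discrepancy into $\mul A+\mul A^*$, using $\mul\RE A=\mul\IM A=\mul A+\mul A^*$ from Proposition~\ref{reim}(iii).

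For (ii), I would read $\ran(\RE A)$ and $\ran(\IM A)$ off \eqref{re} and \eqref{im} and compare with $\mul(A\hplus A^*)=\mul A+\mul A^*$, which is immediate from \eqref{jan00}. The inclusions $\mul A+\mul A^*\subset\ran(\RE A)$ and $\mul A+\mul A^*\subset\ran(\IM A)$ follow on taking $f=0$ and letting the $A$- and $A^*$-components run through $\mul A$ and $\mul A^*$; the reverse inclusions follow from the explicit form of the second components in \eqref{re}, \eqref{im} together with Proposition~\ref{reim}(iii). This yields $\ran(\RE A)=\ran(\IM A)=\mul(A\hplus A^*)$.

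For (iii) and (iv) it suffices to treat (iv), since (iii) is obtained by the identical argument with $\RE A$ and $\IM A$ interchanged. For the first inclusion in (iv), take $\{f,g\}\in\IM A\pm\I\RE A$ and write, by \eqref{jan1}, $g=g_1+g_2$ with $\{f,g_1\}\in\IM A$ and $\{f,g_2\}\in\pm\I\RE A$. Since a linear subspace is invariant under multiplication by nonzero scalars, $g_2\in\ran(\pm\I\RE A)=\ran(\RE A)$, whence $\{f,g\}=\{f,g_1\}+\{0,g_2\}\in\IM A\hplus(\{0\}\times\ran\RE A)$. For the second inclusion, $\IM A\subset A\hplus A^*$ by (i), and $\{0\}\times\ran\RE A=\{0\}\times\mul(A\hplus A^*)\subset A\hplus A^*$ by (ii); moreover $A\hplus A=A$ and $A^*\hplus A^*=A^*$, so $(A\hplus A^*)\hplus(A\hplus A^*)=A\hplus A^*$, and therefore $\IM A\hplus(\{0\}\times\ran\RE A)\subset(A\hplus A^*)\hplus(A\hplus A^*)=A\hplus A^*$. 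This establishes (iv), and (iii) follows by symmetry.
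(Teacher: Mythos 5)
Your handling of part (i) for $\RE A$ and of the first inclusions in (iii) and (iv) is correct and essentially the paper's argument ($2\{f,\tfrac{f'+f''}{2}\}=\{f,f'\}+\{f,f''\}$, and $\{f,\varphi\pm\I\psi\}=\{f,\varphi\}+\{0,\pm\I\psi\}$ with $\pm\I\psi\in\ran\IM A$). The rest has two genuine gaps, sitting exactly where you flagged delicacy. First, the formula $\mul(A\hplus A^*)=\mul A+\mul A^*$ is not ``immediate from \eqref{jan00}''; it is false. An element $\{0,g\}$ of $A\hplus A^*$ comes from $\{f,f'\}\in A$ and $\{-f,f''\}\in A^*$ with \emph{arbitrary} $f$ --- only the sum of first components must vanish --- so the correct identification is $\mul(A\hplus A^*)=\{f'-f''\,;\,\{f,f'\}\in A,\ \{f,f''\}\in A^*\}=\ran(A-A^*)=\ran(\IM A)$, which is exactly how the paper proves the $\IM$-identity in (ii), and which is in general strictly larger than $\mul A+\mul A^*$ and different from $\ran(\RE A)=\ran(A+A^*)$. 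Your ``reverse inclusion'' $\ran(\RE A)\subset\mul A+\mul A^*$ via Proposition~\ref{reim} also confuses $\ran\RE A$ with $\mul\RE A$. Second, $\IM A\subset A\hplus A^*$ cannot be reached by distributing scalars: in $\alpha\{f,f'\}+\beta\{f,f''\}$ the first components force $\alpha+\beta=1$ while matching $\tfrac{f'-f''}{2\I}$ forces $\alpha+\beta=0$; the leftover is $\{f,0\}$, an error in the \emph{first} component, which cannot be absorbed into $\{0\}\times(\mul A+\mul A^*)$.

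These two gaps are not repairable, because the corresponding assertions fail. Take $A$ to be the identity operator on $\sH=\dC$, so $A=A^*=A\hplus A^*$. Then $\IM A$ is the everywhere defined zero operator, whose graph contains $\{1,0\}\notin A$, so $\IM A\not\subset A\hplus A^*$; likewise $\ran(\RE A)=\dC\neq\{0\}=\mul(A\hplus A^*)$, and the second inclusion in (iv) fails since $\IM A\hplus(\{0\}\times\ran\RE A)=\dC\times\dC$. What does hold, and what your argument should be restructured around, is: (i) for $\RE A$, the identity $\ran(\IM A)=\mul(A\hplus A^*)$, all of (iii), and the first inclusion in (iv). In fairness, the paper's own proof is unsound at the very same points --- it dismisses $\IM A\subset A\hplus A^*$ with ``similarly,'' and it derives $\ran(\RE A)=\mul(-A\hplus A^*)$ and then silently replaces $-A\hplus A^*$ by $A\hplus A^*$ --- so you have reproduced its gaps along with its correct steps; but as written your proposal does not establish the statement, and for (ii) you must at minimum replace $\mul(A\hplus A^*)=\mul A+\mul A^*$ by $\mul(A\hplus A^*)=\ran(A-A^*)$.
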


\begin{proof}
(i) The first inclusion follows from \eqref{re} and
\[
 2 \left\{f, \frac{f'+f''}{2} \right\}=\{f,f'\}+\{f,f''\} \in A
 \hplus A^*, \quad \{f,f'\} \in A, \quad \{f,f''\} \in A^*.
\]
The second inclusion  can be shown similarly.

(ii) The second inclusion will be shown. Let $\{0,g\} \in A \hplus
A^*$, then
\[
 \{0,g\}=\{f,f'\}-\{f,f''\}, \quad \{f,f'\} \in A, \quad \{f,
 f''\} \in A^*,
\]
so that
\[
 \left\{f,\frac{g}{2\I} \right\}=\left\{f, \frac{f'-f''}{2\I}
 \right\} \in \IM A.
\]
Hence $\mul (A \hplus A^*) \subset \ran (\IM A)$. The reverse
inclusion follows immediately from \eqref{im}. This proves the
second identity. The first identity is now obtained as follows:
\[
 \ran (\RE A)=\ran (\IM iA)=\mul (iA \hplus (iA)^*)=\mul (-A
\hplus A^*).
\]

(iii) Let $\{f,\varphi\pm \I \psi\} \in \RE A \pm\I \IM A$ with
$\{f, \varphi\} \in \RE A$ and $\{f,\psi\} \in \IM A$. Then clearly
\[
  \{f,\varphi \pm \I \psi\}=\{f,\varphi\} \hplus \{0,\pm\I \psi\}
  \in \RE A \hplus (\{0\} \times \ran \IM A),
\]
which shows the first inclusion in (iii).  The second inclusion in
(iii)  follows from (i) and (ii).

(iv) This is obtained from (iii)  by means of \eqref{imre}.
\end{proof}

The next result gives necessary and sufficient conditions for a
relation $A$ to be formally domain tight.

\begin{theorem}\label{FDtight}
Let $A$ be a relation in a Hilbert space $\sH$. Then the following
statements are equivalent:
\begin{enumerate}\def\labelenumi{\rm (\roman{enumi})}
\item $A$ is formally domain tight;

\item $A\subset \RE A+\I \IM A$;

\item $(\RE A) \hplus A^* = A \hplus A^*$;

\item there exists a relation $B$ in $\sH$, such that $\dom A=\dom
B$ and $A\subset B^*$;

\item there exists a relation $C$ in $\sH$, such that $A\subset
\RE C+\I \IM C$.
\end{enumerate}
\end{theorem}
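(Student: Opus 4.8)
\emph{Strategy.} The plan is to prove the chain $\mathrm{(i)}\Rightarrow\mathrm{(ii)}\Rightarrow\mathrm{(iii)}\Rightarrow\mathrm{(i)}$ together with the two side implications $\mathrm{(i)}\Leftrightarrow\mathrm{(iv)}$ and $\mathrm{(ii)}\Leftrightarrow\mathrm{(v)}$; these five links give the full equivalence. The only ingredients are the arithmetic of componentwise and operatorwise sums, Propositions~\ref{reim} and~\ref{eenv}, and the bookkeeping identities \eqref{14.4.2}, \eqref{jan1+} and \eqref{eq1+}. It is convenient to dispose of $\mathrm{(i)}\Leftrightarrow\mathrm{(ii)}$ first, because the (easy) fact that $\mathrm{(ii)}$ implies $\mathrm{(i)}$ is then available inside the proof of $\mathrm{(ii)}\Rightarrow\mathrm{(iii)}$.

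\emph{The conditions $\mathrm{(i)}$--$\mathrm{(iii)}$.} For $\mathrm{(i)}\Rightarrow\mathrm{(ii)}$ take $\{f,f'\}\in A$; formal domain tightness yields $\{f,f''\}\in A^{*}$, and then $\{f,\tfrac12(f'+f'')\}\in\RE A$, $\{f,\tfrac{1}{2\I}(f'-f'')\}\in\IM A$ by \eqref{re}, \eqref{im}, while $\tfrac12(f'+f'')+\I\cdot\tfrac{1}{2\I}(f'-f'')=f'$, so $\{f,f'\}\in\RE A+\I\,\IM A$. For $\mathrm{(ii)}\Rightarrow\mathrm{(i)}$ and $\mathrm{(iii)}\Rightarrow\mathrm{(i)}$ only domains matter: by \eqref{14.4.2} the operatorwise sum $\RE A+\I\,\IM A$ has domain $\dom A\cap\dom A^{*}$, and by Proposition~\ref{eenv}\,(i) one has $\RE A\subset A\hplus A^{*}$, so $(\RE A)\hplus A^{*}\subset A\hplus A^{*}$ always (using associativity of $\hplus$ and $A^{*}\hplus A^{*}=A^{*}$); therefore $A\subset\RE A+\I\,\IM A$ (respectively $A\subset A\hplus A^{*}=(\RE A)\hplus A^{*}$) already forces $\dom A\subset\dom A^{*}+\dom A^{*}=\dom A^{*}$. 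Finally, for $\mathrm{(ii)}\Rightarrow\mathrm{(iii)}$: from $\mathrm{(ii)}$ we have $\mathrm{(i)}$, hence $\mul A\subset\mul A^{**}\subset\mul A^{*}$ by \eqref{eq1+}; combining $A\subset\RE A+\I\,\IM A$ with Proposition~\ref{eenv}\,(ii),(iii) gives $A\subset\RE A\hplus(\{0\}\times(\mul A+\mul A^{*}))=\RE A\hplus(\{0\}\times\mul A^{*})\subset(\RE A)\hplus A^{*}$, whence $A\hplus A^{*}\subset(\RE A)\hplus A^{*}$; the reverse inclusion was noted above.

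\emph{The existence conditions.} For $\mathrm{(i)}\Rightarrow\mathrm{(iv)}$ take $B=A^{*}\cap(\dom A\times\sH)$: then $\dom B=\dom A^{*}\cap\dom A=\dom A$, and $B\subset A^{*}$ gives $A\subset A^{**}\subset B^{*}$. For $\mathrm{(iv)}\Rightarrow\mathrm{(i)}$, $A\subset B^{*}$ yields $B\subset B^{**}\subset A^{*}$, so $\dom A=\dom B\subset\dom A^{*}$. The implication $\mathrm{(ii)}\Rightarrow\mathrm{(v)}$ is trivial ($C=A$). For $\mathrm{(v)}\Rightarrow\mathrm{(i)}$, which closes the circle, set $R=\RE C+\I\,\IM C$; by Proposition~\ref{reim}\,(i) the relations $\RE C$ and $\IM C$ are symmetric, and by \eqref{14.4.2} they share the domain $\dom C\cap\dom C^{*}=\dom R$. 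From symmetry, $\RE C\subset(\RE C)^{*}$ and $-\I\,\IM C\subset(\I\,\IM C)^{*}$, so by \eqref{jan1+} $\RE C-\I\,\IM C\subset(\RE C)^{*}+(\I\,\IM C)^{*}\subset(\RE C+\I\,\IM C)^{*}=R^{*}$; since $\dom(\RE C-\I\,\IM C)=\dom R$, this gives $\dom R\subset\dom R^{*}$. As $A\subset R$ gives $\dom A\subset\dom R$ and $R^{*}\subset A^{*}$, we conclude $\dom A\subset\dom R\subset\dom R^{*}\subset\dom A^{*}$.

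\emph{Main obstacle.} Every step but one is pure sum arithmetic; the content lies in $\mathrm{(ii)}\Rightarrow\mathrm{(iii)}$, and symmetrically in $\mathrm{(v)}\Rightarrow\mathrm{(i)}$, where one must pass from the operatorwise description of $\RE A+\I\,\IM A$ — in which multivaluedness is produced by the independent choices entering $\RE A$ and $\IM A$ — to the componentwise picture. What makes this possible is the sharpness of Proposition~\ref{eenv}\,(iii): the $A$-image of each $f$ together with $\ran\IM A=\mul A+\mul A^{*}$ is exactly the available room, so that in effect $\RE A+\I\,\IM A=\bigl(A\cap(\dom A^{*}\times\sH)\bigr)\hplus(\{0\}\times\mul A^{*})$. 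One must also keep track of the fact that the relevant multivalued part is $\mul A^{*}$ rather than $\mul A^{**}$; this is exactly why $\mathrm{(i)}$ is used inside $\mathrm{(ii)}\Rightarrow\mathrm{(iii)}$, to collapse $\mul A+\mul A^{*}$ to $\mul A^{*}$.
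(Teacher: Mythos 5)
Your proof is correct and follows essentially the same route as the paper: the same cycle of implications, resting on \eqref{14.4.2}, \eqref{jan1+}, and Propositions~\ref{reim} and~\ref{eenv}. The only genuine deviations are harmless (arguably cleaner) variants: for (i)$\Rightarrow$(iv) you take the witness $B=A^*\cap(\dom A\times\sH)$ where the paper uses $B=\RE A-\I\,\IM A$, and for (ii)$\Rightarrow$(iii) you obtain $A\subset(\RE A)\hplus A^*$ via Proposition~\ref{eenv}\,(ii),(iii) together with $\mul A\subset\mul A^*$ instead of the paper's direct manipulation $\{f,f'\}=\{2f,f'+f''\}-\{f,f''\}$.
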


\begin{proof}
(i) $\Longrightarrow$ (ii) Let $\{f,g\} \in A$. Since $\dom
A\subset\dom A^*$, there exists $h \in \sH$ such that $\{f,h\} \in
A^*$. Then clearly
\[
 \{f,g\}=\left\{f,\frac{g+h}{2}+\I \frac{g-h}{2\I} \right\} \in
 \RE A +\I \IM A.
\]
Hence $A\subset \RE A+\I \IM A$.

(ii) $\Longrightarrow$ (iii) By Proposition~\ref{eenv} $\RE A
\subset A \hplus A^*$ and hence
\begin{equation*}\label{mult5a}
 (\RE A) \hplus A^* \subset A \hplus A^*.
\end{equation*}
Thus it is enough to prove the reverse inclusion: $A \hplus A^*
\subset (\RE A) \hplus A^*$. It suffices to prove that $A \subset
(\RE A) \hplus A^*$. Therefore, let $\{f,f'\} \in A$. Then by (ii)
$\{f,f'\} \in \RE A +\I \IM A$, so that $f \in \dom A \cap \dom
A^*$ by \eqref{14.4.2} and, in particular, $f \in \dom A^*$.
Hence, there exists an element $f''$ such that $\{f,f''\} \in
A^*$. Then
\[
 \{f,f'\}=\{2f,f'+f''\}-\{f,f''\} \in (\RE A) \hplus A^*.
\]
This completes the proof of the equality in (iii).

(iii) $\Longrightarrow$ (i) Let $f \in \dom A$, then $\{f,f'\} \I
A$ for some $f'\in \sH$. By (iii)$\{f,f'\} \in (\RE A) \hplus
A^*$, so that $f=f_1+f_2$ with $f_ \in \dom \RE A$ and $f_2 \in
\dom A^*$. It follows from \eqref{14.4.2} that $f_1 \in \dom A^*$.
Hence, $f=f_1+f_2 \in \dom A^*$. Hence (i) has been shown.

(i), (ii) $\Longrightarrow$ (iv) Define $B\okr \RE A-\I \IM A$.
Then by Proposition~\ref{reim} and \eqref{jan1+}
\begin{equation*}\label{5eqB}
 B^*\supset (\RE A)^*+\I (\IM A)^*\supset \RE A+\I \IM A \supset
A.
\end{equation*}
Furthermore, it follows from $\dom A \subset \dom A^*$ and
\eqref{14.4.2} that
\[
 \dom B=\dom \RE A=\dom \IM A =\dom A \cap \dom A^*=\dom A.
\]
Hence (iv) has been shown.

(iv) $\Longrightarrow$ (i) By taking adjoints in $\subset B^*$ one
gets $B\subset B^{**}\subset A^*$, so that $\dom A=\dom B\subset
\dom A^*$. Hence $A$ is formally domain tight.

(v) $\Longrightarrow$ (i) Taking adjoints in $A\subset \RE C+\I
\IM C$ one obtains by Proposition~\ref{reim} and \eqref{jan1+}
that
\[
 A^*\supset (\RE C+\I \IM C)^* \supset(\RE C)^* - \I (\IM C)^*
 \supset \RE C-\I \IM C.
\]
Since $\dom A\subset \dom \RE C=\dom \IM C \subset \dom A^*$, this
shows that $A$ is formally domain tight.

(ii) $\Longrightarrow$ (v) This implication is trivial.
\end{proof}

The following lemma contains a result analogous to the equivalence
of (i) and (iii) in Theorem~\ref{FDtight}. Moreover, the
identities $\RE A = \RE A^*$ and $\IM A = -\IM A^*$ will be shown
under different conditions than in Proposition \ref{reim}.

\begin{lemma}\label{rmlem}
Let $A$ be a relation in a Hilbert space $\sH$. Then
\begin{enumerate}\def\labelenumi{\rm (\roman{enumi})}
\item $\dom A^*\subset \dom A$ if and only if
\begin{equation}\label{dom*}
  (\RE A) \hplus A= A \hplus A^*;
\end{equation}

\item if $\dom A^*\subset \dom A \subset \cdom A^*$, then
\begin{equation}\label{reim*}
 \RE A = \RE A^* , \quad \IM A = -\IM A^*.
\end{equation}
\end{enumerate}
\end{lemma}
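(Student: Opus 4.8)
The plan is to treat parts (i) and (ii) separately, each time reducing matters to an elementary membership computation of exactly the same flavour as in the proof of the equivalence (ii)$\Leftrightarrow$(iii) in Theorem~\ref{FDtight}; the only genuinely delicate point is a preliminary observation needed in (ii), namely that the two hypotheses together force $\mul A^{**}=\mul A^*$.

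For part (i), I would first record that one inclusion is automatic: since $\RE A\subset A\hplus A^*$ by Proposition~\ref{eenv}(i), since $A\subset A\hplus A^*$, and since $A\hplus A^*$ is a linear subspace (so $(A\hplus A^*)\hplus(A\hplus A^*)=A\hplus A^*$), one always has $(\RE A)\hplus A\subset A\hplus A^*$. Thus, assuming $\dom A^*\subset\dom A$, the task is the reverse inclusion $A\hplus A^*\subset(\RE A)\hplus A$, and since $A\subset(\RE A)\hplus A$ this reduces to $A^*\subset(\RE A)\hplus A$. Here I would take $\{f,f''\}\in A^*$, use $f\in\dom A^*\subset\dom A$ to pick some $\{f,f'\}\in A$, observe $\{f,\tfrac12(f'+f'')\}\in\RE A$, and then exhibit $\{f,f''\}$ as the componentwise sum of $\{2f,f'+f''\}\in\RE A$ and $\{-f,-f'\}\in A$, using linearity of $\RE A$ and of $A$. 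For the converse, assuming \eqref{dom*}, I would take $f\in\dom A^*$, choose $\{f,f''\}\in A^*\subset A\hplus A^*=(\RE A)\hplus A$, write $\{f,f''\}=\{f_1,g_1\}+\{f_2,g_2\}$ with the first summand in $\RE A$ and the second in $A$, and conclude $f=f_1+f_2\in\dom A$ because $\dom\RE A=\dom A\cap\dom A^*\subset\dom A$ by \eqref{14.4.2} while $f_2\in\dom A$. This settles (i).

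For part (ii), the first step is to extract from $\dom A^*\subset\dom A\subset\cdom A^*$ the identity $\mul A^{**}=\mul A^*$: the inclusion $\dom A\subset\cdom A^*$ gives $\mul A^{**}\subset\mul A^*$ by Lemma~\ref{easy} (the equivalences in \eqref{impor0}), while $\dom A^*\subset\dom A\subset\cdom A$ gives $\mul A^*\subset\mul A^{**}$ by the same lemma (the equivalences in \eqref{impor1}). With this in hand I would prove $\RE A^*\subset\RE A$, the opposite inclusion $\RE A\subset\RE A^*$ being free from Proposition~\ref{reim}(i). So take $\{f,g\}\in\RE A^*=\tfrac12(A^*+A^{**})$, say $g=\tfrac12(f'+f'')$ with $\{f,f'\}\in A^*$ and $\{f,f''\}\in A^{**}$; then $f\in\dom A^*\subset\dom A$ furnishes some $\{f,k\}\in A$, so $\{0,f''-k\}\in A^{**}$ gives $f''-k\in\mul A^{**}=\mul A^*$, hence $\{0,f''-k\}\in A^*$ and $\{f,f'+f''-k\}\in A^*$. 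Now $\tfrac12\bigl(\{f,k\}+\{f,f'+f''-k\}\bigr)=\{f,g\}$ exhibits $\{f,g\}$ as a member of $\tfrac12(A+A^*)=\RE A$, so $\RE A=\RE A^*$, which is the first half of \eqref{reim*}. For $\IM A=-\IM A^*$ I would either repeat this argument with the difference in $\tfrac1{2\I}(A-A^*)$ in place of the sum in $\tfrac12(A+A^*)$, checking that the two components again land in $A$ and $A^*$ respectively, or more briefly apply the already proven identity $\RE B=\RE B^*$ to $B=\I A$ — which satisfies the same hypotheses, since $\dom(\I A)=\dom A$ and $(\I A)^*=-\I A^*$, so $\dom(\I A)^*=\dom A^*$ and $\cdom(\I A)^*=\cdom A^*$ — and then translate via \eqref{imre} together with the identity $\RE((\I A)^*)=\IM A^*$, which gives $-\IM A=\IM A^*$.

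The routine work is the bookkeeping with $\hplus$ and the rescalings by $2$ and $2\I$; the step deserving attention is the derivation of $\mul A^{**}=\mul A^*$ in (ii), because it is exactly this equality that permits the element $f''$ coming from the closure $A^{**}$ to be replaced by an element $k$ of $A$ without leaving $A^*$ — if the upper bound $\dom A\subset\cdom A^*$ is dropped, this substitution fails and $\RE A^*$ can be strictly larger than $\RE A$.
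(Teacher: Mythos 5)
Your proof is correct and follows essentially the same route as the paper: the same componentwise identity $\{f,f''\}=\{2f,f'+f''\}-\{f,f'\}$ in (i), and in (ii) the same reduction to $\mul A^{**}=\mul A^*$ via Lemma~\ref{easy} followed by the substitution of an element of $A$ for the element of $A^{**}$, with the $\IM$ identity obtained by passing to $\I A$. The only cosmetic difference is that in (ii) you absorb the correction term $f''-k\in\mul A^*$ into the $A^*$-component before forming $\tfrac12(A+A^*)$, whereas the paper absorbs it into $\mul\RE A$ using Proposition~\ref{reim}(iii); both are equivalent.
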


\begin{proof}
(i) Assume that $A \hplus A^* = (\RE A) \hplus A$, which, in
particular, leads to $ A^* \subset (\RE A) \hplus A$. Since $\dom \RE
A=\dom A\cap\dom A^*$ (see \eqref{14.4.2}), it follows that $\dom
A^*\subset \dom A$.

Now assume $\dom A^*\subset \dom A$. It
suffices to show that $A\hplus A^*\subset (\RE A) \hplus A$, as
the reverse inclusion is always true by Proposition~\ref{eenv}.
Let $\{f,f''\}\in A^*$, then there exists $\{f,f'\}\in A$. Hence,
\[
 \{ f,f''\}=\{2f,f'+f''\}-\{f,f'\}\in (\RE A) \hplus A.
\]
It follows that $A^*\subset (\RE A) \hplus A$, but then also
$A\hplus A^*\subset (\RE A) \hplus A$. Therefore, \eqref{dom*} has
been proved.

(ii) By Lemma \ref{easy}, it follows from $\dom A^*\subset \dom A
\subset \cdom A^*$ that $\mul A^{**}=\mul A^*$. According to
Proposition~\ref{reim} $\RE A\subset \RE A^*$. To prove the reverse
inclusion assume that $\{f,g\}\in \RE A^*$. Then for some
$\{f,g'\}\in A^*$ and $\{f,g''\}\in A^{**}$ one has $2g=g'+g''$.
Here $f\in\dom A^*\cap\dom A^{**}$ and since $\dom A^*\subset \dom
A$, one has $\{f,f'\}\in A$ for some $f'$.  Consequently, it follows
that $\{f,g''\}-\{f,f'\}\in A^{**}$ and
\[
 g''-f'\in\mul A^{**}=\mul A^*\subset \mul \RE A,
\]
where the last inclusion is due to \liczp 3 in
Proposition~\ref{reim}. Therefore,
\[
\{f,g\}=\left\{f,\frac{f'+g'}{2}\right\}
+\left\{0,\frac{g''-f'}{2}\right\}\in \RE A,
\]
and hence $\RE A^*\subset \RE A$. This proves the identity $\RE
A=\RE A^*$. The second identity in \eqref{reim*} is obtained from
the first one by means of the equalities $\RE (iA)=-\IM A$ and
$\RE (iA)^*=\IM A^*$; cf. \eqref{imre}.
\end{proof}

The following characterizations for a relation to be domain tight
are consequences of Lemma \ref{rmlem}, cf. Theorem \ref{FDtight}.

\begin{proposition} \label{reaal}
Let $A$ be a relation in a Hilbert space $\sH$.  The following
conditions are equivalent:
\begin{enumerate}
\def\labelenumi{\rm (\roman{enumi})}
\item $A$ is domain tight;

\item $(\RE A) \hplus A = (\RE A) \hplus A^*$;

\item $\RE A \hplus (\{0\} \times \ran \IM A) = A \hplus A^*$.
\end{enumerate}
In this case,
\begin{equation}\label{mult5}
 \RE A \hplus (\{0\} \times \ran \IM A) =(\RE A) \hplus A=(\RE A)
\hplus A^* = A \hplus A^*.
\end{equation}
\end{proposition}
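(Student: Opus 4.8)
The plan is to reduce everything to the two earlier one-sided characterizations. Recall that Theorem~\ref{FDtight} identifies formal domain tightness $\dom A\subset\dom A^*$ with the identity $(\RE A)\hplus A^*=A\hplus A^*$, while Lemma~\ref{rmlem}(i) identifies the reverse inclusion $\dom A^*\subset\dom A$ with \eqref{dom*}, namely $(\RE A)\hplus A=A\hplus A^*$. Since domain tightness is precisely the conjunction of these two inclusions, the implication (i)$\Rightarrow$(ii) follows at once: under (i) both $(\RE A)\hplus A$ and $(\RE A)\hplus A^*$ equal $A\hplus A^*$, and in particular they equal each other. For the converse (ii)$\Rightarrow$(i) I would merely pass to domains. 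Using $\dom(B_1\hplus B_2)=\dom B_1+\dom B_2$ together with $\dom\RE A=\dom A\cap\dom A^*$ from \eqref{14.4.2}, one has $\dom((\RE A)\hplus A)=(\dom A\cap\dom A^*)+\dom A=\dom A$ and $\dom((\RE A)\hplus A^*)=(\dom A\cap\dom A^*)+\dom A^*=\dom A^*$; hence (ii) forces $\dom A=\dom A^*$.

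The equivalence of (iii) is handled in the same spirit. The inclusion $\RE A\hplus(\{0\}\times\ran\IM A)\subset A\hplus A^*$ holds in general by Proposition~\ref{eenv}(iii), so only the reverse inclusion is in question. For (iii)$\Rightarrow$(i) one counts domains: $\dom(\RE A\hplus(\{0\}\times\ran\IM A))=\dom\RE A=\dom A\cap\dom A^*$, while $\dom(A\hplus A^*)=\dom A+\dom A^*$, so (iii) forces $\dom A\cap\dom A^*=\dom A+\dom A^*$, which is possible only when $\dom A=\dom A^*$.

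For (i)$\Rightarrow$(iii) I would prove $A\hplus A^*\subset\RE A\hplus(\{0\}\times\ran\IM A)$ by a direct decomposition. Given $\{f,f'\}\in A$ and $\{h,h'\}\in A^*$, domain tightness provides $\{f,f''\}\in A^*$ and $\{h,h''\}\in A$, and then
\[
 \{f,f'\}=\Bigl\{f,\tfrac{f'+f''}{2}\Bigr\}+\Bigl\{0,\tfrac{f'-f''}{2}\Bigr\},\qquad
 \{h,h'\}=\Bigl\{h,\tfrac{h''+h'}{2}\Bigr\}+\Bigl\{0,\tfrac{h'-h''}{2}\Bigr\}.
\]
On each side the first summand lies in $\RE A$ by the definition \eqref{re}, and $\tfrac{f'-f''}{2\I}$, $\tfrac{h''-h'}{2\I}$ lie in $\ran\IM A$ by the definition \eqref{im}; since $\ran\IM A$ is a linear subspace, the scalar multiples $\tfrac{f'-f''}{2}$ and $\tfrac{h'-h''}{2}$ again lie in $\ran\IM A$. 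As $\RE A\hplus(\{0\}\times\ran\IM A)$ is a linear subspace of $\sH\times\sH$, summing gives $\{f+h,f'+h'\}\in\RE A\hplus(\{0\}\times\ran\IM A)$, the required inclusion. Finally, \eqref{mult5} is read off from the proof of (i)$\Rightarrow$(ii),(iii): when $A$ is domain tight, $(\RE A)\hplus A=A\hplus A^*$ by Lemma~\ref{rmlem}(i), $(\RE A)\hplus A^*=A\hplus A^*$ by Theorem~\ref{FDtight}, and $\RE A\hplus(\{0\}\times\ran\IM A)=A\hplus A^*$ by the implication just established, so all four relations coincide.

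I expect the only mildly delicate point to be the element-chasing in (i)$\Rightarrow$(iii) — specifically, noticing that $\ran\IM A$, being the range of a linear relation, is a subspace and hence stable under multiplication by $\I$, so that a term of the form $\{0,(f'-f'')/2\}$ can be absorbed into $\{0\}\times\ran\IM A$. Everything else is bookkeeping with the definitions \eqref{re}--\eqref{im} and with domains of componentwise sums.
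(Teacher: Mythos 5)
Your proof is correct and follows essentially the same route as the paper: (i)$\Leftrightarrow$(ii) via Theorem~\ref{FDtight}(iii) and Lemma~\ref{rmlem}(i), and (iii)$\Rightarrow$(i) by comparing domains using \eqref{14.4.2}. For (i)$\Rightarrow$(iii) you carry out the element decomposition by hand where the paper instead cites $A\subset\RE A+\I\IM A$, $A^*\subset\RE A-\I\IM A$ (via Theorem~\ref{FDtight} and Lemma~\ref{rmlem}(ii)) together with Proposition~\ref{eenv}(iii), but unwinding those references yields exactly your computation, so the two arguments coincide in substance.
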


\begin{proof}
(i) $\Longrightarrow$ (ii) If $\dom A = \dom A^*$ then $(\RE A)
\hplus A^* = A \hplus A^*$ by part (iii) in Theorem~\ref{FDtight},
while $(\RE A) \hplus A= A \hplus A^*$ due to \eqref{dom*} in
Lemma \ref{rmlem}. This gives the identity in (ii).

(ii) $\Longleftarrow$ (i) If $(\RE A) \hplus A = (\RE A) \hplus
A^*$, then, in particular, $A \subset (\RE A) \hplus A^*$. Since,
by \eqref{14.4.2}, $\dom \RE A=\dom A\cap\dom A^*$, it follows
that $\dom A \subset \dom A^*$. The inclusion $\dom A^* \subset
\dom A$ follows in a similar way. Hence, $A$ is domain tight.

(i) $\Longrightarrow$ (iii) In view of the second inclusion in
(iii) of Proposition~\ref{eenv} it suffices to show that the
inclusion $A \hplus A^*\subset \RE A \hplus (\{0\} \times \ran \IM
A)$ when $A$ is domain tight. Since $A$ is domain tight, $A^*$ is
formally domain tight, cf. Remark \ref{tight}. Hence,
Theorem~\ref{FDtight} implies
\[
 A\subset \RE A +\I \IM A,\quad A^*\subset \RE A^* +\I \IM A^*=\RE
 A -\I \IM A,
\]
where the last identity is obtained from Lemma~\ref{rmlem}. It
remains to use (i)  in Proposition~\ref{eenv} to get the claimed
inclusion.

(iii) $\Longrightarrow$ (i) The equality in (iii) implies that
$\dom A\cap\dom A^*=\dom A+\dom A^*$, cf. \eqref{14.4.2}. This
last identity is clearly equivalent to $\dom A=\dom A^*$.

Finally, the equalities stated in \eqref{mult5} are clear from the
above arguments.
\end{proof}

\subsection{Cartesian decompositions of relations}

A relation $A$ in a Hilbert space $\sH$ is said to have a
\textit{Cartesian decomposition}\index{decomposition of
relations!Cartesian} if there are two symmetric relations $A_1$
and $A_2$ in $\sH$ such that
\begin{equation}\label{e3.3}
 A=A_1+\I A_2,
\end{equation}
with the operatorwise sum defined as in \eqref{jan1}, so that $\dom
A=\dom A_1 \cap \dom A_2$ and $\mul (A_1+A_2)=\mul A_1+\mul A_2$,
cf. \eqref{dommul}. In particular, if $A$ is an operator, then
$A_1$ and $A_2$ in \eqref{e3.3} are operators.  The Cartesian
decomposition for operators is extensively considered in \cite{StSz}.

\begin{example}
Let $A$ be a maximal sectorial relation in $\sH$ with vertex at the
origin and semiangle $\alpha$, cf. \eqref{sect}. Then there exist a
nonnegative selfadjoint relation $H$ in $\sH$ and a selfadjoint
operator $B \in \boldsymbol{B}(\sH)$ with $\cran B \subset (\mul
A)^{\perp}$ and $\| B \| \le \tan \alpha$, such that
\[
A = H^{\half} (I + \I B) H^{\half},
\]
cf. \cite{HSSW??}, \cite{Ka}, \cite{Sand}. Clearly, the relations
$H$ and $H^{\half} B H^{\half}$ are symmetric relations, but $H+ \I
H^{\half} B H^{\half}$, the operatorlike sum of these relations,
need not be equal to $A$. In general, the inclusion
\[
 H+ \I H^{\half}B H^{\half} \subset A
\]
holds. There is equality if, for instance,  $\ran B \subset \dom H^\half$.
\end{example}

\begin{proposition} \label{t3.6}
Let $A$ be a relation in a Hilbert space $\sH$, let $A$ have a
Cartesian decomposition \eqref{e3.3}, and define the relation $B$
by $B=A_1-\I A_2$. Then $A$ and $B$ have the same domain $\dom
B=\dom A$, they are formally domain tight, and they form a dual
pair:
\[
  B \subset A^*, \quad A \subset B^*.
\]
Moreover, the symmetric components $A_1$ and $A_2$ satisfy
\begin{equation} \label{incl}
A_1\cap( \dom A \times \sH) \subset \RE A, \quad A_2\cap( \dom
A\times \sH) \subset \IM A,
\end{equation}
and $A_1\pm\I A_2\subset \RE A\pm\I \IM A$.
\end{proposition}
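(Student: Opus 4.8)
The plan is to unwind everything from the definition of the operatorwise sum and the adjoint inclusion \eqref{jan1+}, treating the assertions in the order stated.

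First I would dispose of the domains. Since $\dom(\I A_2)=\dom A_2$ and $B = A_1 + (-\I A_2)$ is an operatorwise sum, \eqref{dommul} gives $\dom B = \dom A_1 \cap \dom A_2 = \dom A$. Next, for the dual pair, I would record the elementary identity $(\I A_2)^* = -\I A_2^*$, which follows from Lemma~\ref{prodlem*} applied to $(\I I)A_2$, with $\I I \in \boldsymbol B(\sH)$, together with the linearity of the subspace $A_2^*$. Symmetry of $A_1$ and $A_2$ then gives $A_1 \subset A_1^*$ and $-\I A_2 \subset -\I A_2^* = (\I A_2)^*$, and since the operatorwise sum is monotone in each summand,
\[
 B = A_1 + (-\I A_2) \subset A_1^* + (\I A_2)^* \subset (A_1 + \I A_2)^* = A^*,
\]
the last inclusion being \eqref{jan1+}. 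Since $B = A_1 + \I(-A_2)$ is again a Cartesian decomposition, with $-A_2$ symmetric, applying the inclusion just proved with $A$ replaced by $B$ (so that the role of $B$ is played by $A_1 - \I(-A_2) = A$) gives $A \subset B^*$. Formal domain tightness of $A$ and of $B$ is then immediate: $\dom A = \dom B \subset \dom A^*$ from $B \subset A^*$, and $\dom B = \dom A \subset \dom B^*$ from $A \subset B^*$. (Alternatively, this is Theorem~\ref{FDtight}(iv).)

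For the inclusions \eqref{incl}, I would take $\{f,g\} \in A_1$ with $f \in \dom A$; then $f \in \dom A_2$, so there is $\{f,g_2\} \in A_2$, and hence $\{f, g+\I g_2\} \in A_1 + \I A_2 = A$. At the same time $\{f,g\} \in A_1 \subset A_1^*$ and $\{f,g_2\} \in A_2 \subset A_2^*$, so $\{f, g-\I g_2\} \in A_1^* + (\I A_2)^* \subset A^*$ by \eqref{jan1+}. Averaging these two memberships yields
\[
 \{f,g\} = \tfrac12\bigl(\{f,g+\I g_2\} + \{f,g-\I g_2\}\bigr) \in \tfrac12(A + A^*) = \RE A .
\]
The second inclusion in \eqref{incl} is proved in the same way, now using $\IM A = \tfrac{1}{2\I}(A - A^*)$: for $\{f,g\}\in A_2$ with $f\in\dom A$ pick $\{f,g_1\}\in A_1$, so that $\{f,g_1+\I g\}\in A$ and $\{f,g_1-\I g\}\in A^*$, and subtract. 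Finally, if $\{f, g_1 \pm \I g_2\} \in A_1 \pm \I A_2$ with $\{f,g_1\}\in A_1$ and $\{f,g_2\}\in A_2$, then $f \in \dom A_1 \cap \dom A_2 = \dom A$, so \eqref{incl} places $\{f,g_1\}$ in $\RE A$ and $\{f,g_2\}$ in $\IM A$, and therefore $\{f, g_1 \pm \I g_2\} \in \RE A \pm \I \IM A$ by the definition of the operatorwise sum.

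I expect the only point needing care to be the adjoint bookkeeping: the identity $(\I A_2)^* = -\I A_2^*$, and keeping track that \eqref{jan1+} is an inclusion pointed in the right direction (here one could equally invoke \eqref{jan1++}, since the scalar multiples are bounded). Everything else is a direct unwinding of the definitions of $\RE A$, $\IM A$ and the operatorwise sum.
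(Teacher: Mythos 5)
Your proof is correct and follows essentially the same route as the paper: the domain identity from \eqref{dommul}, the inclusion $B=A_1-\I A_2\subset A_1^*-\I A_2^*\subset(A_1+\I A_2)^*=A^*$ via \eqref{jan1+}, and the verification of \eqref{incl} by pairing an element of $A_1$ over $f\in\dom A$ with one of $A_2$ to land in both $A$ and $A^*$. The only cosmetic differences are that you obtain $A\subset B^*$ by symmetry of the roles of $A$ and $B$ (the paper takes adjoints, $A\subset A^{**}\subset B^*$) and you prove the second inclusion in \eqref{incl} directly rather than via \eqref{imre}; both variants are fine.
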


\begin{proof}
If $A$ has a Cartesian decomposition of the form \eqref{e3.3},
then clearly $A$ and $B$ have the same domain. By \eqref{jan1+}
and the symmetry of $A_1$ and $A_2$ it follows that
\begin{equation}\label{e3.4a}
A^* =(A_1+\I A_2)^* \supset A_1^* -\I A_2^* \supset A_1-\I A_2=B.
\end{equation}
Hence, $\dom A=\dom B \subset \dom A^*$, so that $A$ is formally
domain tight. A similar argument shows that $B$ is formally domain
tight. Moreover, \eqref{e3.4a} shows that $B \subset A^*$, which
also leads to $A \subset A^{**} \subset B^*$; hence $A$ and $B$
form a dual pair.

In order to show the first inclusion in \eqref{incl}, let
$\{f,f_1'\}\in A_1$ with $f\in \dom A$. Then there exists $f_2'\in
\sH$ such that $\{f,f_2'\}\in A_2$. Hence, $\{f,f_1'+\I f_2'\}\in A$
due to \eqref{e3.3} and $\{f,f_1'-\I f_2'\}\in A^*$ due to
\eqref{e3.4a}, so that $\{f,f_1'\}\in \RE A$. Thus, $A_1\cap( \dom A
\times \sH) \subset \RE A$  and then in view of \eqref{imre} the
second inclusion in \eqref{incl} follows as well.

The statements $A_1\pm\I A_2\subset \RE A\pm\I \IM A$ follow
directly from the inclusions in \eqref{incl}.
\end{proof}

A formally domain tight relation $A$ satisfies $A\subset \RE A+\I
\IM A$, cf. Theorem~\ref{FDtight}. By means of Cartesian
decompositions this inclusion can be made more precise, yielding
some characterizations for a relation $A$ to be formally domain
tight.

\begin{theorem}\label{cart}
Let $A$ be a relation in a Hilbert space $\sH$ and let the extension
$A_\infty$ of $A$  be as defined in \eqref{+SF}. Then the following
conditions are equivalent:
\begin{enumerate}
\def\labelenumi{\rm (\roman{enumi})}
\item $A$ is formally domain tight;

\item $A$ admits a Cartesian decomposition $A=A_1+\I A_2$ for some
symmetric relations $A_1$ and $A_2$ in $\sH$;

\item $A_\infty$ admits the Cartesian decomposition
\begin{equation}\label{cart1}
 A_\infty = \RE A + \I \IM A.
\end{equation}
\end{enumerate}
\end{theorem}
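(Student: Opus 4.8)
The plan is to establish the cycle (ii)$\Rightarrow$(i)$\Rightarrow$(iii)$\Rightarrow$(i) together with the separate implication (i)$\Rightarrow$(ii), which makes all three conditions equivalent. The implication (ii)$\Rightarrow$(i) is immediate from Proposition~\ref{t3.6}: a Cartesian decomposition $A=A_1+\I A_2$ with $A_1,A_2$ symmetric forces $\dom A\subset\dom A^*$. The implication (iii)$\Rightarrow$(i) is just a comparison of domains: $\dom A=\dom A_\infty=\dom(\RE A+\I\IM A)=\dom\RE A\cap\dom\IM A=\dom A\cap\dom A^*$ by \eqref{14.4.2}, hence $\dom A\subset\dom A^*$.

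For (i)$\Rightarrow$(iii) I would prove directly that $A_\infty=\RE A+\I\IM A$. Formal domain tightness gives $\dom A\subset\dom A^*$ and, by \eqref{eq1+}, $\mul A\subset\mul A^*$; moreover $\mul\RE A=\mul\IM A=\mul A^*$ by Proposition~\ref{reim}(iii). For the inclusion $A_\infty\subset\RE A+\I\IM A$, take $\{f,g_0+\varphi\}\in A\hplus(\{0\}\times\mul A^*)$ with $\{f,g_0\}\in A$ and $\varphi\in\mul A^*$; since $f\in\dom A\subset\dom A^*$ pick $h$ with $\{f,h\}\in A^*$, and note that $\{f,\frac{g_0+h}{2}+\varphi\}\in\RE A$ (as $\{0,\varphi\}\in\RE A$) while $\{f,\frac{g_0-h}{2\I}\}\in\IM A$, and these combine operatorwise to $\{f,g_0+\varphi\}$. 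For the reverse inclusion, take $\{f,a+\I b\}$ with $a=\frac12(p+q)$, $b=\frac{1}{2\I}(r-s)$, $\{f,p\},\{f,r\}\in A$, $\{f,q\},\{f,s\}\in A^*$; then $a+\I b=p+\bigl(\frac12(r-p)+\frac12(q-s)\bigr)$, and the bracketed term lies in $\mul A^*$ because $r-p\in\mul A\subset\mul A^*$ and $q-s\in\mul A^*$, so $\{f,a+\I b\}\in A\hplus(\{0\}\times\mul A^*)=A_\infty$. Since $\RE A$ and $\IM A$ are symmetric by Proposition~\ref{reim}(i), this identity is the asserted Cartesian decomposition of $A_\infty$.

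The remaining implication (i)$\Rightarrow$(ii) is where care is needed, and this is the main obstacle: one cannot simply take $A_1=\RE A$, $A_2=\IM A$, since by the previous step those sum to $A_\infty$, not to $A$ — their multivalued parts both equal $\mul A^*$, which in general strictly contains $\mul A$. Instead, using $\dom A\subset\dom A^*$, I would fix a single-valued linear operator $B$ with $\dom B=\dom A$ and $B\subset A^*$; such a $B$ exists, for instance as an (algebraic) operator part of the relation $A^*\cap(\dom A\times\sH)$, whose domain is $\dom A\cap\dom A^*=\dom A$. Because $B\subset A^*$ implies $A\subset A^{**}\subset B^*$ (apply \eqref{AB*}), an application of \eqref{jan1+} shows that $A_1:=\frac12(A+B)$ and $A_2:=\frac{1}{2\I}(A-B)$ (operatorwise sums, as in \eqref{jan1}) are symmetric. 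A direct computation with the componentwise sums then gives $A_1+\I A_2=A\hplus(\{0\}\times\mul B)$; since $B$ is single-valued, $\mul B=\{0\}$ and this is exactly $A$, so $A_1,A_2$ are the desired symmetric components. The domain and symmetry verifications, as well as the two inclusions in the middle step, are routine once the multivalued bookkeeping — the one place where $A$ and $A_\infty$ genuinely differ — is handled by passing to the auxiliary operator $B$ rather than to $A^*$ itself.
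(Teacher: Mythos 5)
Your proof is correct and follows essentially the same route as the paper: the same appeal to Proposition~\ref{t3.6} for (ii)$\Rightarrow$(i), the same two-inclusion computation for the identity \eqref{cart1}, and the same device for (i)$\Rightarrow$(ii) of replacing $A^*$ by a single-valued restriction $B\subset A^*$ before forming $A_1=\tfrac12(A+B)$ and $A_2=\tfrac{1}{2\I}(A-B)$. The only deviation is your choice of $B$: the paper takes the canonical operator part $(A^*)_{\rm op}$ of the closed relation $A^*$ (Theorem~\ref{adjoin1}), with $\dom (A^*)_{\rm op}=\dom A^*$, whereas you take a Hamel-basis linear selection of $A^*\cap(\dom A\times\sH)$ — also valid, but non-constructive where the paper's choice ties into the decomposition theory of Section~\ref{sec3}.
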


\begin{proof}
(ii) $\Longrightarrow$ (i) This implication follows from
Proposition ~\ref{t3.6}.

(iii) $\Longrightarrow$ (i) Since $A\subset A_\infty$ this
implication follows from Theorem~\ref{FDtight}. Another approach
is that $A_\infty$ is formally domain tight by Proposition
~\ref{t3.6}, but then $A$ is formally domain tight by Proposition
\ref{codi}.

(i) $\Longrightarrow$ (iii) Let $A$ be formally domain tight. Then
$A \subset \RE A + \I \IM A$ by Theorem~\ref{FDtight}.
Furthermore, (iii) in Proposition~\ref{reim} shows that
$\{0\}\times \mul A^* \subset \RE A + \I \IM A$. Hence, the
inclusion $A_\infty=A \hplus (\{0\} \times \mul A^*) \subset \RE A
+ \I \IM A$ in the identity \eqref{cart1} has been shown. Now the
reverse inclusion will be shown. An arbitrary element $\{f,g\} \in
\RE A+\I \IM A$ is given by
\begin{equation}\label{help}
 \{f,g\}=\left\{f,\frac{f'+f''}{2}+\I \frac{h'-h''}{2\I} \right\},
\end{equation}
where $\{f,f'\}, \{f,h'\} \in A$ and $\{f,f''\}, \{f,h''\} \in
A^*$. Then
   \begin{gather}  \label{helpB}
\begin{split}
 2\{f,g\}&=\{2f, f'+f''+h'-h''\} \\&=\{f,f'\}+\{f,h'\}+
\{0,f''-h''\}
 \\ &\in A \hplus (\{0\} \times \mul A^*).
\end{split}
\end{gather}
Hence the inclusion $\RE A + \I \IM A \subset A_\infty$ in the
identity \eqref{cart1} has been shown.

(i) $\Longrightarrow$ (ii) By Theorem \ref{adjoin1} $A^*$ can be
decomposed as $ A^*=(A^*)_{\rm op} \hplus (A^*)_{\rm mul}$; see
also Corollary~\ref{closeddec}. Here $(A^*)_{\rm op}$ is an
operator with $\dom (A^*)_{\rm op}=\dom A^*$. Now define
\[
 A_1 \okr \frac{1}{2}(A+(A^*)_{\rm op}), \quad A_2 \okr
 \frac{1}{2\I}(A-(A^*)_{\rm op}),
\]
compare \eqref{re}, \eqref{im}. The assumption $\dom A\subset \dom
A^*$ shows that $\dom A_1=\dom A_2=\dom A$; therefore $A_1\subset
\RE A$ and $A_2\subset \IM A$, so that $A_1$ and $A_2$ are
symmetric relations. The inclusion $A\subset A_1+\I A_2$ can be
proved in the same way as the implication (i) $\Longrightarrow$
(ii) in Theorem~\ref{FDtight}, when $\dom A\subset \dom A^*=\dom
(A^*)_{\rm op}$ is used. The reverse inclusion $A_1+\I A_2\subset
A$ can be seen with a similar, but simpler, calculation as used in
\eqref{help}, \eqref{helpB}. Therefore, the equality $A=A_1+\I
A_2$ holds.
\end{proof}

Domain tight relations can now be characterized via Cartesian
decompositions as follows.

\begin{theorem}\label{cart+}
Let $A$ be a relation in a Hilbert space $\sH$ and let the
extension $A_\infty$ be as defined in \eqref{+SF}. Then the
following conditions are equivalent:
\begin{enumerate}
\def\labelenumi{\rm (\roman{enumi})}
\item $A$ is domain tight;

\item $A_\infty$ and $(A^*)_\infty$ admit the Cartesian
decompositions
\begin{equation}
\label{cart11} A_\infty = \RE A + \I \IM A, \quad (A^*)_\infty =
\RE A -\I \IM A;
\end{equation}

\item $A$ and $A^*$ satisfy
\begin{equation}
\label{cart11B} A \subset \RE A + \I \IM A, \quad A^*= \RE A -\I
\IM A;
\end{equation}

\item for some symmetric relations $A_1$ and $A_2$ in $\sH$ one
has
\begin{equation}
\label{cart12} A = A_1 + \I A_2, \quad A^*= \RE A -\I \IM A.
\end{equation}
\end{enumerate}
\end{theorem}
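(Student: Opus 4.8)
The plan is to prove the chain (i) $\Longrightarrow$ (ii) $\Longrightarrow$ (iii) $\Longrightarrow$ (i), and then the separate equivalence (iii) $\Longleftrightarrow$ (iv). The whole argument rests on three earlier facts: the characterization of formally domain tight relations by Cartesian decompositions in Theorem~\ref{cart} together with the inclusion $A\subset \RE A+\I\IM A$ of Theorem~\ref{FDtight}; the identities $\RE A=\RE A^*$ and $\IM A=-\IM A^*$, valid under $\dom A^*\subset\dom A\subset\cdom A^*$, from Lemma~\ref{rmlem}~(ii); and the remark that a domain tight relation is formally domain tight together with its adjoint (Remark~\ref{tight}). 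The domain identities \eqref{14.4.2} handle the remaining bookkeeping.

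First I would treat (i) $\Longrightarrow$ (ii). If $A$ is domain tight then $A$ and $A^*$ are both formally domain tight, so Theorem~\ref{cart} applied to $A$ gives $A_\infty=\RE A+\I\IM A$, and applied to $A^*$ gives $(A^*)_\infty=\RE A^*+\I\IM A^*$. Since $\dom A=\dom A^*$ one has $\dom A^*\subset\dom A\subset\cdom A^*$, so Lemma~\ref{rmlem}~(ii) permits replacing $\RE A^*$ by $\RE A$ and $\IM A^*$ by $-\IM A$; this turns the second decomposition into $(A^*)_\infty=\RE A-\I\IM A$, which is exactly \eqref{cart11}.

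Next, for (ii) $\Longrightarrow$ (iii): from $A\subset A_\infty=\RE A+\I\IM A$ the first inclusion in \eqref{cart11B} is immediate, and by Theorem~\ref{FDtight} this inclusion already forces $A$ to be formally domain tight. Hence $\mul A^{**}\subset\mul A^*$ by \eqref{eq1+}, and therefore $(A^*)_\infty=A^*$ (recall that $A_\infty=A$ precisely when $\mul A^*\subset\mul A$, here applied to the relation $A^*$). Consequently the second equality in \eqref{cart11} collapses to $A^*=\RE A-\I\IM A$, completing \eqref{cart11B}. For (iii) $\Longrightarrow$ (i): the inclusion $A\subset\RE A+\I\IM A$ gives formal domain tightness, i.e. $\dom A\subset\dom A^*$, by Theorem~\ref{FDtight}; and the equality $A^*=\RE A-\I\IM A$ together with \eqref{14.4.2} yields $\dom A^*=\dom\RE A=\dom A\cap\dom A^*\subset\dom A$, so $\dom A=\dom A^*$ and $A$ is domain tight.

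Finally (iii) $\Longleftrightarrow$ (iv). Assuming (iii), the implication (iii) $\Longrightarrow$ (i) just established shows $A$ is domain tight, in particular formally domain tight, so Theorem~\ref{cart} supplies symmetric relations $A_1,A_2$ with $A=A_1+\I A_2$; combined with $A^*=\RE A-\I\IM A$ from (iii) this is (iv). Conversely, if $A=A_1+\I A_2$ with $A_1,A_2$ symmetric, then $A$ is formally domain tight by Proposition~\ref{t3.6}, hence $A\subset\RE A+\I\IM A$ by Theorem~\ref{FDtight}, and the equality $A^*=\RE A-\I\IM A$ is already part of (iv), so (iii) holds. The step I expect to require the most care is (ii) $\Longrightarrow$ (iii): one must notice that the first displayed inclusion in \eqref{cart11B} by itself forces $\mul A^{**}\subset\mul A^*$, which is exactly what is needed to identify $(A^*)_\infty$ with $A^*$ and thereby upgrade $A^*\subset\RE A-\I\IM A$ to an equality; everything else is a routine application of the quoted results.
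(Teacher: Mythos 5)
Your proof is correct and uses essentially the same ingredients as the paper's (Theorem~\ref{cart}, Theorem~\ref{FDtight}, Lemma~\ref{rmlem}, the domain identities \eqref{14.4.2}, and the identification $(A^*)_\infty=A^*$ under $\mul A^{**}\subset\mul A^*$); the only difference is a harmless reorganization of the implication cycle, plus the nice observation in (ii)~$\Longrightarrow$~(iii) that the inclusion $A\subset\RE A+\I\IM A$ alone already yields $\mul A^{**}\subset\mul A^*$, whereas the paper invokes the previously established statement (i) at that point.
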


\begin{proof}
(i) $\Longrightarrow$ (ii) Assume that $A$ is domain tight. Then
$A$ and $A^*$ are formally domain tight, cf. Remark \ref{tight}.
The first identity in \eqref{cart11} holds by \eqref{cart1} in
Theorem~\ref{cart}. Since $A$ is domain tight, Lemma~\ref{rmlem}
shows that $\RE A^* =\RE A$ and $\IM A^* =- \IM A$. Now
Theorem~\ref{cart} (applied with $A^*$) gives the second identity
in \eqref{cart11}:
\[
 (A^*)_\infty = \RE A^* +\I \IM A^*=\RE A -\I \IM A^.
\]

(ii) $\Longrightarrow$ (i) It follows from \eqref{14.4.2} and the
Cartesian decompositions in \eqref{cart11} that
\begin{equation*}
  \dom A=\dom A_\infty=\dom A \cap \dom A^*=\dom (A^*)_\infty=\dom
A^*,
\end{equation*}
which shows that $A$ is domain tight.

(i), (ii) $\Longrightarrow$ (iii) The inclusion in \eqref{cart11B}
is clear from \eqref{cart11} as $A\subset A_\infty$. Since $A$ is
domain tight, $\mul A^{**} = \mul A^*$ and therefore
$(A^*)_\infty=A^*\hplus (\{0\} \times \mul A^{**})=A^*$. Thus the
second identity in \eqref{cart11B} is also immediate from
\eqref{cart11}.

(iii) $\Longrightarrow$ (iv) The inclusion in \eqref{cart11B}
implies that $A$ is formally domain tight. Hence, the first
identity in \eqref{cart12} is obtained from part (ii) in
Theorem~\ref{cart}.

(iv) $\Longrightarrow$ (i) The first identity in \eqref{cart12}
shows that $A$ is formally domain tight by Theorem~\ref{cart},
while the second identity in \eqref{cart12} implies that $\dom
A^*\subset \dom \RE A=\dom \IM A=\dom A\cap\dom A^*$, cf.
\eqref{14.4.2}. Hence, $A$ is domain tight.
\end{proof}

In the above characterization some of the conditions do not look
symmetric. By turning to a more special class of domain tight
relations the description will be more symmetric.

\begin{theorem}\label{cart++}
Let $A$ be a relation in a Hilbert space $\sH$. Then the following
conditions are equivalent:
\begin{enumerate}
\def\labelenumi{\rm (\roman{enumi})}
\item $A$ is domain tight and $\mul A = \mul A^*$;

\item $A$ and $A^*$ admit the Cartesian decompositions
\begin{equation}\label{cart13}
A = \RE A + \I \IM A, \quad A^* = \RE A -\I \IM A;
\end{equation}

\item $A$ and $A^*$ admit the Cartesian decompositions
\begin{equation}\label{cart14}
A = A_1 + \I A_2, \quad A^*= A_1 - \I A_2
\end{equation}
for some symmetric relations $A_1$ and $A_2$ in $\sH$.
\end{enumerate}
\end{theorem}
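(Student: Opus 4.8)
The plan is to run the cycle (i)$\,\Longrightarrow\,$(ii)$\,\Longrightarrow\,$(iii)$\,\Longrightarrow\,$(i), using Theorem~\ref{cart+} for the one nontrivial direction and the elementary identities \eqref{dommul} for the rest. For (i)$\,\Longrightarrow\,$(ii), assume $A$ is domain tight with $\mul A=\mul A^*$. Domain tightness gives $\mul A^{**}=\mul A^*$ by \eqref{eq1++}, so $\mul A=\mul A^{**}=\mul A^*$. Moreover $A$ and $A^*$ are then formally domain tight by Remark~\ref{tight}, while $\mul A^*=\mul A$ and $\mul(A^*)^*=\mul A^{**}=\mul A^*$; hence Lemma~\ref{marci}(ii) applied to $A$ and then to $A^*$ yields $A_\infty=A$ and $(A^*)_\infty=A^*$. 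Now the implication (i)$\,\Longrightarrow\,$(ii) of Theorem~\ref{cart+} gives $A_\infty=\RE A+\I\IM A$ and $(A^*)_\infty=\RE A-\I\IM A$; substituting $A_\infty=A$ and $(A^*)_\infty=A^*$ produces exactly \eqref{cart13}.

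For (ii)$\,\Longrightarrow\,$(iii), take $A_1=\RE A$ and $A_2=\IM A$: these are symmetric relations by Proposition~\ref{reim}(i), hence so is $-A_2$, and therefore \eqref{cart13} is literally of the form \eqref{cart14}. For (iii)$\,\Longrightarrow\,$(i), from $A=A_1+\I A_2$ the first identity in \eqref{dommul} gives $\dom A=\dom A_1\cap\dom A_2$, and from $A^*=A_1-\I A_2$ it likewise gives $\dom A^*=\dom A_1\cap\dom A_2$, so $\dom A=\dom A^*$, i.e.\ $A$ is domain tight; the second identity in \eqref{dommul} gives $\mul A=\mul A_1+\mul A_2=\mul A^*$ in the same manner. (Proposition~\ref{t3.6} applied to the Cartesian decompositions $A=A_1+\I A_2$ and $A^*=A_1+\I(-A_2)$ gives an alternative, slightly longer route via formal domain tightness of both $A$ and $A^*$.)

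The only point requiring a little care is the collapse $A_\infty=A$, $(A^*)_\infty=A^*$ in the first implication: this is precisely where the extra hypothesis $\mul A=\mul A^*$ (beyond plain domain tightness) enters, and it is what makes the present statement a clean specialization of Theorem~\ref{cart+} with symmetric-looking decompositions on both sides. No genuine obstacle is anticipated; the argument is essentially bookkeeping on top of Theorems~\ref{cart+} and~\ref{cart}.
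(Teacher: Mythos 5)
Your proposal is correct and follows essentially the same route as the paper's proof: the key step (i)$\Rightarrow$(ii) is obtained by collapsing $A_\infty=A$ and $(A^*)_\infty=A^*$ under the hypothesis $\mul A=\mul A^*$ (plus $\mul A^{**}=\mul A^*$ from domain tightness) and then invoking Theorem~\ref{cart+}, while (ii)$\Rightarrow$(iii) is the trivial choice $A_1=\RE A$, $A_2=\IM A$ and (iii)$\Rightarrow$(i) is the bookkeeping with \eqref{dommul}. Your write-up is in fact slightly more explicit than the paper's (which only states ``$\mul A=\mul A^*$ implies $A_\infty=A$'' and leaves the $(A^*)_\infty=A^*$ step implicit), but there is no substantive difference.
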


\begin{proof}
(i) $\Longrightarrow$ (ii) The assumption $\mul A = \mul A^*$
implies that $A_\infty=A$. Therefore, the statement follows from
\eqref{cart11} and \eqref{cart12}.

(ii) $\Longrightarrow$ (iii) In \eqref{cart13} the relations $\RE
A$ and $\IM A$ are symmetric. Hence, this implication is trivial.

(iii) $\Longrightarrow$ (i) It is clear from \eqref{cart14} that
$\dom A=\dom A_1\cap \dom A_2=\dom A^*$ and $\mul A=\mul A_1 +
\mul A_2=\mul A^*$.
\end{proof}

The special domain tight relations in Theorem \ref{cart++} can be
also characterized by means of decomposable domain tight
relations; cf. Proposition \ref{mulba}.

\printindex

\end{document}